\documentclass[12pt]{article} 

\usepackage[width=180mm,top=25mm,bottom=25mm]{geometry} 

\usepackage[utf8]{inputenc}  

\usepackage{amssymb} 
\usepackage{mathptmx} 
\usepackage{mathtools} 
\usepackage{hyperref}
\usepackage[ocgcolorlinks]{ocgx2}
\usepackage{amsthm} 
\usepackage[noabbrev,nosort]{cleveref}
\crefname{ineq}{inequality}{inequalities}
\creflabelformat{ineq}{(#2{\upshape#1}#3)}
\usepackage{stmaryrd} 
\usepackage{amsmath}
\usepackage{todonotes}
\usepackage{comment}
\usepackage{color}
\usepackage{enumitem}
\usepackage{pgfplots} 
\usepackage{stmaryrd}
\usepackage[normalem]{ulem}
\usepackage{soul}
\usepackage{mathrsfs}
\usepackage{bbold}
\usepackage{graphicx,stackengine,scalerel}

\newtheorem{prob}{Problem}[section] 
\newtheorem{lem}[prob]{Lemma} 
\newtheorem{thm}[prob]{Theorem} 
\newtheorem{defin}[prob]{Definition}

\newtheorem{prop}[prob]{Proposition}
\newtheorem{rem}[prob]{Remark}
\newtheorem{assump}[prob]{Assumption}
\newtheorem{coro}[prob]{Corollary}
\newtheorem{examp}[prob]{Example}
\newtheorem{notat}[prob]{Notation}

\crefname{prob}{problem}{problems}
\crefname{lem}{lemma}{lemmas} 
\crefname{thm}{theorem}{theorems} 
\crefname{defin}{definition}{definitions}
\crefname{memo}{memo}{memos}
\crefname{prop}{proposition}{propositions}
\crefname{rem}{remark}{remarks}
\crefname{coro}{corollary}{corollaries}
\crefname{examp}{example}{examples}
\crefname{notat}{notation}{notations}
\crefname{assump}{assumption}{assumptions}

\newcommand*\samethanks[1][\value{footnote}]{\footnotemark[#1]}
\newcommand{\R}{\mathbb{R}}

\newcommand{\T}{\mathcal{T}_1(\R^n)}
\newcommand{\Ha}{\mathcal{H}^1}
\newcommand{\Hak}{\mathcal{H}^k}
\newcommand{\N}{\mathcal{N}}

\newcommand{\ws}{\stackrel{*}{\rightharpoonup}}
\newcommand{\mres}{\mathbin{\vrule height 1.3ex depth 0pt width 0.13ex\vrule height 0.13ex depth 0pt width 1ex}} 

\newcommand{\cyan}[1]{\textcolor{black}{#1}}

\newcommand{\dom}{\textup{dom}}
\newcommand{\Jm}{\mathrm{D}}
\newcommand{\Jac}{\mathrm{J}}
\newcommand{\E}{\mathcal{E}}
\newcommand{\G}{\mathcal{G}}
\newcommand{\adm}{\mathcal{A}}
\newcommand{\V}{\mathcal{V}}
\newcommand{\M}{\mathcal{M}}
\newcommand{\Mb}{\mathbb{M}}
\newcommand{\B}{\mathcal{B}}
\newcommand{\D}{\mathcal{D}}
\newcommand{\rct}{\mathcal{R}}
\newcommand{\e}{\cyan{\mathrm{d}}}

\newcommand{\rectn}{\mathcal{R}_1(\mathbb{R}^n)}
\newcommand{\apD}{\mathrm{ap\,D}}
\newcommand{\apJ}{\mathrm{ap\,J}}
\newcommand{\spt}{\textup{supp}}

\newcommand{\Leb}{\mathcal{L}}
\newcommand{\Lebn}{\mathcal{L}^n}
\newcommand{\clos}{\textup{clos}}

\newcommand{\Dt}{\Delta t}

\makeatletter
\newsavebox{\@brx}
\newcommand{\llangle}[1][]{\savebox{\@brx}{\(\m@th{#1\langle}\)}%
  \mathopen{\copy\@brx\kern-0.5\wd\@brx\usebox{\@brx}}}
\newcommand{\rrangle}[1][]{\savebox{\@brx}{\(\m@th{#1\rangle}\)}%
  \mathclose{\copy\@brx\kern-0.5\wd\@brx\usebox{\@brx}}}
\makeatother

\usepackage[backend=biber,maxnames=5,style=numeric]{biblatex}
\title{Geometric flows of branched transportation networks}
\author{Julius Lohmann\thanks{Department of Mathematics, Institute of Science Tokyo}\thanks{International Research Fellow of Japan Society for the Promotion of Science, lohmann.j.aa@m.titech.ac.jp}\and Yoshihiro Tonegawa\samethanks[1]\thanks{tonegawa.y.ab@m.titech.ac.jp}}

\begin{document}
\setstcolor{cyan}
\maketitle
\noindent

\begin{abstract}
The branched transport problem is a nonconvex and nonsmooth variational optimization problem on normal $1$-currents in $\R^n$ with prescribed boundary.
The optimality is with respect to some non-decreasing, lower semicontinuous, and subadditive function $\tau:\R_+\to\R_+$ with $\tau(0)=0$ describing the cost $\tau(m)$ to move an amount of mass $m$ per unit distance.
The subadditivity leads to complicated, hierarchically ramified patterns in the support of (suboptimal) solutions.
These network-like sets appear to have regularity properties similar to those of the singular surfaces arising in the so-called Brakke flow, a weak generalization of the mean curvature flow.
We construct a geometric flow of transportation networks, which correspond to normal real $1$-rectifiable currents, by modifying Brakke's variational approximation scheme.
We prove the existence of a limit that is Hölder continuous with respect to the flat norm and whose branched transport cost decreases along the geometric evolution.
We further analyze a closely related geometric flow approximated by $1$-varifolds, whose weight measures model the branched transport cost, and establish its $1$-rectifiability together with a motion law analogous to Brakke’s inequality.
\end{abstract}
\textbf{Keywords}: Brakke flow, branched transport, currents, geometric measure theory, varifolds
\tableofcontents

\section{Introduction}
\label{Intro}
The \textit{branched transport problem} was introduced by Maddalena, Solimini, and Morel \cite{MSM03} (Lagrangian formulation) as well as Xia \cite{X} (Eulerian formulation).
Let $\mu_-,\mu_+$ be two compactly supported Radon measures on $\R^n$ with $0\leq \mu_-(\R^n)=\mu_+(\R^n)<\infty$.
In the Eulerian formulation, the branched transport problem seeks an optimal \textit{mass flux} $T$ from the source $\mu_-$ to the sink $\mu_+$, which is a normal $1$-current with $\partial T=\mu_+-\mu_-$.
The optimality is with respect to a \textit{transportation cost} $\tau:\R_+\to\R_+$, where $\tau(m)$ represents the cost to move an amount of mass $m$ per unit distance.
Originally, the choice $\tau(m)=m^\alpha$ with $\alpha\in(0,1)$ was considered. 
It was generalized to arbitrary non-decreasing, lower semicontinuous, and subadditive transportation costs $\tau$ with $\tau(0)=0$ by Brancolini and Wirth \cite{BW}.
The subadditivity
\begin{equation*}
\tau(m_1+m_2)\leq\tau(m_1)+\tau(m_2)
\end{equation*}
leads to complex ramified transportation networks between $\mu_-$ and $\mu_+$.
Here we call a \textit{transportation network} any normal real rectifiable $1$-current $T$ with $\partial T=\mu_+-\mu_-$.
(Note that, in general, diffuse parts may also occur and transportation networks alone are not sufficient to describe the dynamics, see \cite[Prop.\ 2.32]{BW}.)
Any transporation network $T$ can be written $T=\theta\Ha\mres S$, where $\Ha\mres S$ denotes the restriction of the one-dimensional Hausdorff measure to some countably $\Ha$-rectifiable set $S$ and $\theta\in L^1_{loc}(\Ha\mres S;\R^n)$ is tangential to $S$ with $\theta\neq 0$ $\Ha$-a.e.\ on $ S$. Then $m(x)=|\theta(x)|$ represents the amount of mass transported through the point $x$ in direction $\theta(x)/|\theta(x)|$ for $\Ha$-a.e.\ $x\in S$, and the total branched transport cost is given by $|T|_\tau(\R^n)=\int_S\tau(|\theta|)\e\Ha$.

Transportation networks serve as models for pattern formation in mathematical biology, including anatomical networks such as blood vessels and the bronchial system, as well as botanical structures such as leaf venation and the crown and root networks of trees.
Furthermore, it has been shown \cite{BW16,LSW} that the branched transport problem with a concave transportation cost is equivalent to the generalized \textit{urban planning problem}.
The classical urban planning problem was proposed by Brancolini and Buttazzo \cite{BB} and by Buttazzo, Pratelli, Solimini, and Stepanov \cite{BPSS}, and later generalized by Schmitzer, Wirth, and the first author in \cite{LSW}.
This reformulation gives branched transport a distinctly microeconomic interpretation by separating a Wasserstein-type transportation cost induced by the generalized \textit{urban metric} from a maintenance cost term.
A good starting point into the literature on branched transport is provided in the textbooks by Bernot, Caselles, and Morel \cite{BCM} as well as Buttazzo, Pratelli, Solimini, and Stepanov \cite{BPSS}.

Over the past two decades, branched transport has been the subject of extensive research.
Many approaches have been proposed to approximate or reformulate the nonsmooth and nonconvex branched transport problem.
The most common strategy for smooth approximation draws inspiration from results on the \textit{elliptic approximation} of free discontinuity problems, such as those of Modica and Mortola \cite{MM77}, Aviles and Giga \cite{AG87}, and Ambrosio and Tortorelli \cite{AT90}.
It is based on $\Gamma$-convergence, introduced by De Giorgi, which provides a rigorous framework for approximating singular energies by smoother elliptic functionals in such a way that the associated minimizers converge as well.
To the best of our knowledge, the first attempt in the context of branched transport was made by Oudet and Santambrogio \cite{OS11}.
They defined a family of functionals on $W^{1,2}$ vector fields, indexed by some smoothing parameter $\varepsilon>0$, and replaced the double-well potential appearing in \cite{MM77} by a concave term.
Then the authors proved a $\Gamma$-convergence result for $n=2$ and $\tau(m)=m^\alpha$ with $\alpha\in(1/2,1)$.
A succesful approach related to convex reformulation was given by Brancolini, Rossmanith, and Wirth \cite{BRW18}.
They showed that, provided $n=2$, the branched transport problem with $\tau(m)=m^\alpha$ or $\tau(m)=\min\{ am,bm+c\}$, where $\alpha\in(0,1)$ and $a,b,c\in\R_+^*$ with $a>b$, can be reformulated as a Mumford\textendash Shah segmentation problem from image processing.
Subsequently, they used the idea of convex lifting \cite{PCCB09} to obtain a convex approximation from below.
Finally, we mention the idea of convexification via so-called \textit{multi-material transport}.
To the best of our knowledge, the first attempts in this context were made by Marchese and Massaccesi \cite{MM16,MM16b}.
The basic idea is to introduce $m$ artificial material types, rather than a single material type associated with the measures $\mu_-$ and $\mu_+$. 
This leads to a convex optimization problem on (a subspace of) the class of normal $1$-currents in $\R^n$ with coefficients in $\R^m$.
The actual multi-material transport problem was introduced by Marchese, Massaccesi, and Tione \cite{MMT17} and later generalized by Marchese, Massaccesi, Stuvard, and Tione \cite{MMSR16}.
Since the multi-material transport problem is a Plateau problem, one can then look for calibrations to certify the optimality of candidate minimizers.
What all the detailed approaches have in common is that they provide smooth or convex approximations, and in some cases tight ones, of specific classes of branched transport problems.

To the best of our knowledge, a rigorous mathematical model for the time-dependent (self-)optimization of (branched) transportation networks has not yet been investigated\footnote{Besides some ad hoc numerical approaches, Xia \cite{X10}, for example, proposed an algorithm that gradually optimizes the trivial normal $1$-current connecting a source with finitely many sinks, essentially by alternating local optimization and subdivision steps.}.
Such a model could support the dynamic redesign of urban transportation networks and help explain the adaptability of biological networks.
Moreover, studying geometric flows of transportation networks is mathematically interesting in its own right and may contribute to solving the nonconvex and nonsmooth branched transport problem, see below.
We propose a variant of the so-called \textit{Brakke flow} as a possible model for the (self-)optimization of transportation networks.
This geometric flow has been introduced by Brakke \cite{B78} and was further refined by Kim and the second author \cite{KT17}.
It is a generalization of the well-known \textit{mean curvature flow} and can be used as a model for the motion of grain boundaries in an annealing pure metal. 
In particular, such surfaces can have singularities and undergo topological changes over time.
In \cite{B78,KT17} the central objects to model such surfaces are \textit{varifolds}. 
A $k$-varifold is simply a nonnegative Radon measures on $\R^n\times G(n,k)$, where $G(n,k)$ denotes the Grassmannian.
In the original construction, Brakke derived a variational approximation scheme for the geometric flow. 
However, this scheme does not prevent the geometric flow---which is obtained, in essence, by letting the temporal resolution tend to zero---from vanishing instantaneously at $t=0$.
Therefore, the authors of \cite{KT17} introduced a modification by viewing the grain boundaries as topological boundaries---interpreting their union as a unit density $(n-1)$-varifold---of finitely many open subsets in $\R^n$ and by controlling the Lebesgue measure of the symmetric differences in the variational approximation when passing from one discrete time step to the next, see \cite[Def.\ 4.8]{KT17}. 
Instead, we will control the mass of a $2$-current in $\R^n$ which will be naturally induced through the geometric evolution of the transportation networks.
This approach will allow us to consider arbitrary codimension smaller than $n$, in contrast to \cite{KT17}.
Further, we will not need to introduce a weight function to model a growth condition near $|x|=\infty$ and the underlying structures of our transportation networks, which are the countably $\Ha$-rectifiable sets $S$ from above, do not necessarily need to be closed, as opposed to \cite{B78,KT17}.
In this article, we will restrict our attention to closed (in the sense of currents) transportation networks. 
We believe that this assumption provides the most natural basic setting for a transportation network geometric flow, by analogy with curve-shortening flow, which allows the evolving curve to collaps.
The evolution of the transportation networks will decrease their branched transport cost. 
In particular, the subadditivity of $\tau$ yields a branching rule forming angles different from $120^\circ$ at inner junctions of $S$, see \cref{fig0}.
\begin{figure}
	\centering
	\begin{tikzpicture}[scale=2, transform shape=false,rotate=-15] 
		\draw[line width=2,lightgray]  ({-1},{sqrt(3)}) -- (0,0);
		\draw[line width=3,lightgray]  (0,0) -- (2,0);
		\draw[line width=1,lightgray]  ({-1},{-sqrt(3)}) -- (0,0);
		\draw[->,line width=2,lightgray]  ({-1},{sqrt(3)}) -- ++({1/2},{-sqrt(3)/2});
		\draw[->,line width=1,lightgray]  ({-1},{-sqrt(3)}) -- ++({1/2},{sqrt(3)/2});
		\draw[->,line width=3,lightgray]  (0,0) -- ++(1,0);
		
		\draw[line width=2,gray]  ({-1},{sqrt(3)}) -- (0.5,0.25);
		\draw[line width=1,gray]  ({-1},{-sqrt(3)}) -- (0.5,0.25);
		\draw[line width=3,gray]  (0.5,0.25) -- (2,0);
		\draw[->,line width=2,gray]  ({-1},{sqrt(3)}) -- ++({(0.5+1)/sqrt(1.5*1.5+(0.25-sqrt(3))*(0.25-sqrt(3)))},{(0.25-sqrt(3))/sqrt(1.5*1.5+(0.25-sqrt(3))*(0.25-sqrt(3)))});
		\draw[->,line width=1,gray]  ({-1},{-sqrt(3)}) -- ++({(0.5+1)/sqrt(1.5*1.5+(0.25+sqrt(3))*(0.25+sqrt(3)))},{(0.25+sqrt(3))/sqrt(1.5*1.5+(0.25+sqrt(3))*(0.25+sqrt(3)))});
		\draw[->,line width=3,gray]  (0.5,0.25) -- ++({1.5/sqrt(1.5*1.5+0.25*0.25)},{-0.25/sqrt(1.5*1.5+0.25*0.25)});
		
		\draw[->,line width=2]  ({-1},{sqrt(3)}) -- ++({sqrt(3)/2},{-1/2});
		\draw[->,line width=1]  ({-1},{-sqrt(3)}) -- ++({sqrt(3)/2},{1/2});
		\draw[line width=2]  ({-1},{sqrt(3)}) -- (2,0);
		\draw[line width=1]  ({-1},{-sqrt(3)}) -- (2,0);
		
		\node[lightgray,fill,circle,minimum size=3mm,inner sep=0pt] at (0,0) {};
		\node[gray,fill,circle,minimum size=3mm,inner sep=0pt] at (0.5,0.25) {};
		\node[fill,circle,minimum size=3mm,inner sep=0pt] at (2,0) {};
		
		\node at (0.22,0.7) {$\textcolor{gray}{e_1}$};
		\node at (0,-0.6) {$\textcolor{gray}{e_2}$};
		\node at (1.2,0.24) {$\textcolor{gray}{e_3}$};
		
	\end{tikzpicture}
	\caption{The optimal angles at an inner junction are such that $\tau(m_1)\vec{e}_1+\tau(m_2)\vec{e}_2=\tau(m_1+m_2)\vec{e}_3$, where $m_i$ is transported along edge $e_i$ in unit direction $\vec{e}_i$ \cite{X}. If $\tau(m)=m^\alpha$ with $\alpha\in[0,1]$, then the limit cases $\alpha=0$ and $\alpha=1$ correspond to the Steiner problem (all angles are equal to $120^\circ$) and Wasserstein-$1$ transport (the inner junction degenerates for $\alpha\to 1$). Here, the thickness of the lines indicates the amount of mass transported along each edge; in particular, we have $m_1>m_2=m_3-m_1$ (Kirchhoff’s law).}
	\label{fig0}
\end{figure}
We will impose a suitable growth condition on $\tau$, in particular to ensure compactness properties.
Thanks to the closure theorem for integral currents, we will directly obtain the integrality of our geometric flow of transportation networks provided the initial datum $T_0$ is integral.
In order to derive a motion law, which will be a Brakke-type inequality, we will investigate a closely related geometric flow of varifolds whose weight measures model the branched transport cost up to possible cancellations due to opposite orientations.
Formally, our primary result is as follows (see \cref{MainResults} for details):
\\\\
\textit{If the initial datum $T_0$ is closed and integral, then, under some growth condition on $\tau$, there exists a Hölder continuous (with respect to the flat norm) geometric flow of integral transportation networks $(T_t)_{t\in\R_+}$ and a corresponding geometric flow of varifolds $(V_t)_{t\in\R_+}$ such that $V_t$ is $1$-rectifiable for a.e.\ $t\in\R_+$ and $(V_t)_{t\in\R_+}$ satisfies a Brakke-type inequality which represents the geometric evolution of $(T_t)_{t\in\R_+}$ up to cancellations.}
\\\\
Here ``up to cancellations'' means that $|T_t|_\tau(\R^n)\leq \| V_t\|(\R^n)$ and $T_t$ is approximated by a sequence of transportation networks $T_t^k=\theta_t^k\Ha\mres S_t^k$ whose induced ``$\tau$-weighted'' varifolds $v(\tau(|\theta_t^k|),S_t^k)$ approximate $V_t$ for a.e.\ $t\in\R_+$ (both in the weak-$*$ sense). The geometric flows $(T_t^k)_{t\in\R_+}$ are obtained by a suitable modification of Brakke's variational approximation scheme.

Our construction lays the foundation for many interesting new research directions. 
The most natural being the relaxation of the closedness condition to the general boundary condition $\partial T=\mu_+-\mu_-$.
In the classical setup, the corresponding case was investigated by Stuvard and the second author under some mild assumptions \cite{ST21}.
The essential idea in \cite{ST21} is a suitable damping of the motion near the fixed boundary.
A rigorous investigation of the case $\partial T=\mu_+-\mu_-$ would provide the foundation for a numerical analysis and implementation.
A classical contribution in this direction is Brakke’s \textit{Surface Evolver} \cite{B92}.
Another direction would be to study the regularity of the transportation network geometric flow at $t=\infty$.
For example, it is natural to expect a ``conservation of momentum'' condition, as in \cref{fig0}, to hold.
Similar properties were studied by Kim and the second author in \cite{KT20}.

To the best of found knowledge, the only attempt to construct a Brakke flow (its precise definition depends on the reference) by employing the theory of currents can be found in Ilmanen's work \cite{I94}.
He considered (assuming $M=\R^n$ and $k=1$ in \cite{I94}) the initial datum $T_0$ as an integral $1$-current in $\R^n\times\{ 0 \}$ and a family of energy functionals, indexed by $\varepsilon>0$, on integral $2$-currents in $\R^n\times\R$ whose boundary equals $T_0$.
Then he proved that there exist stationary points $P_\varepsilon$ approximating (after rescaling the time variable by $\varepsilon$) a Brakke flow: for $\varepsilon\to 0$ one obtains (in a suitable sense) an integral $2$-current $P$ in $\R^n\times\R_+$ with $\partial P=T_0$ and a family $(\mu_t)_{t\in\R_+}\subset\M_+(\R^n)$ representing the weight measures of $1$-varifolds moving in the sense of Brakke. Further, the time slices $t\mapsto T_t=\partial (P\mres (\R^n\times [t,\infty)))$ evolve continuously with respect to the weak-$*$ topology and $|T_t|(\R^n)\leq \mu_t(\R^n)$ for all $t\in\R_+$.
It may be possible to extend this construction by incorporating a transportation cost.
However, it should be noted that it heavily relies on the assumption that $T_0$ is given as a boundary.
We believe that our approach is more suitable for a later extension to the case $\partial T_0=\mu_+-\mu_-$.

The article is organized as follows. In \cref{MainResults} we summarize our main results and describe the variational approximation scheme for the transportation network geometric flow.
In \cref{Preliminaries}, we state the definitions and notation, along with several auxiliary results relevant to this work.
\Cref{GFInduced,ExTNFlow} are devoted to the the existence and Hölder regularity of a transportation network geometric flow $(T_t)$ and a corresponding geometric flow of measures $(\mu_t)$.
In \cref{RectifiablitySection} we prove, for a.e.\ time, the $1$-rectifiability of a family of varifolds $(V_t)$ whose weight measures correspond to $(\mu_t)$.
Finally, in \cref{MotionLawSection} we show that the family $(V_t)$ satisfies a Brakke-type inequality which is closely related to the geometric evolution of the $(T_t)$.
\section{Main results and variational approximation scheme}
\label{MainResults}
The main results of this article are
\begin{itemize}
	\item the existence and Hölder regularity of a geometric flow of transportation networks that decreases the branched transport cost (\cref{LimitCurrent}),
	\item the $1$-rectifiability of a closely related geometric flow of $1$-varifolds (\cref{rectifiability}),
	\item and a motion law similar to Brakke's inequality thereof (\cref{MotionLaw}).
\end{itemize}
Recall from \cref{Intro} the definition of a transportation cost.
\begin{defin}[{Transportation cost \cite[Def.\ 1.1]{BW}}]
	\label{TransportationCost}
	A \textbf{transportation cost} is a non-decreasing, lower semicontinuous, and subadditive function $\tau:\R_+\to\R_+$ with $\tau(0)=0$.
\end{defin}
If $\tau$ is a transportation cost with $\tau(m)=0$ for some $m\in\R_+^*$, then $\tau\equiv 0$ using that $\tau$ is non-decreasing and subadditive.
If $\tau:\R_+\to\R_+$ is concave with $\tau(0)=0$, then one can check that $\tau$ is a transportation cost.
We will need the following assumption.
\begin{assump}[Growth condition on $\tau$]
	\label{GrowthCondition}
	There exists $C\in\R_+^*$ such that $Cm\leq\tau(m)$ for all $m\in\R_+$.
\end{assump}
\begin{figure}
	\centering
	\begin{tikzpicture}
		\begin{axis}[axis on top=true,
			width=0.48\textwidth,
			scale only axis,
			axis equal image,
			samples = 500, 
			domain = 0:5, 
			axis line style={thick},
			xmax = 5,
			ymax = 3,
			axis x line=bottom, 
			axis y line=left, 
			xlabel = { $m$}, 
			ylabel={},
			ytick=\empty,
			x label style={at={(axis description cs:1,0)},anchor=south},
			y label style={at={(axis description cs:.07,0.9)},rotate=270,anchor=west},
			xtick = {1,2},
			xticklabels = {$1$,$2$},
			legend style = {cells={anchor=west},legend pos=outer north east},
			]
			\addplot[gray, line width = 1.8pt] expression {x};
			\addplot[lightgray, line width = 2.9pt] expression {min(2*x,x+1)};
			\addplot[black,domain=0:2,line width = 1.5pt] expression {pow(x,0.5)};
			\addplot[dashed] expression {ceil(x)};
			\addplot[black,domain=2:5,line width = 1.5pt] expression {sqrt(2)+(x-2)/(2*sqrt(2))};
			\addplot[dotted, mark=none] expression {pow(x,0.5)};
			\addplot[only marks,mark=*,mark options={fill=black,scale=.8},text mark as node=true] coordinates{(1,1)};
			\addplot[only marks,mark=*,mark options={fill=white,scale=.7},text mark as node=true] coordinates{(1,2)};
			\addplot[only marks,mark=*,mark options={fill=black,scale=.8},text mark as node=true] coordinates{(0,0)};
			\addplot[only marks,mark=*,mark options={fill=white,scale=.7},text mark as node=true] coordinates{(0,1)};
			\addplot[only marks,mark=*,mark options={fill=black,scale=.8},text mark as node=true] coordinates{(2,2)};
			\addplot[only marks,mark=*,mark options={fill=white,scale=.7},text mark as node=true] coordinates{(2,3)};
			\addplot[only marks,mark=*,mark options={fill=black,scale=.8},text mark as node=true] coordinates{(3,3)};
			\legend{Wasserstein cost $\mathcal W(m)=m$\\
				urban planning cost $\mathcal U(m)=\min\{ 2m,m+1\}$\\
				$\tau_2(m)$ approximating $\tau(m)=\sqrt{m}$\\
				capacity cost $\mathcal C(m)=\lceil m\rceil$\\};
		\end{axis}
	\end{tikzpicture}
	\caption{Transportation costs satisfying \cref{GrowthCondition}.}
	\label{fig1}
\end{figure}
The simplest example of a transportation cost satisfying \cref{GrowthCondition} is the so-called \textit{Wasserstein cost} $\mathcal W(m)=m$ modeling Wasserstein-$1$ transport without any branching. 
Another important example is the so-called \textit{urban planning cost} $\mathcal U(m)=\min\{ am,bm+c\}$ for some $a,b,c\in\R_+^*$ with $a>b$ introduced in \cite{BW16}.
If a transportation cost $\tau$ is concave and increasing, for example $\tau(m)=m^\alpha$ for some $\alpha\in(0,1)$, then one may replace it by
\begin{equation*}
\tau_M(m)=\begin{cases*}
\tau(m)&if $m\leq M$,\\
\tau(M)+(m-M)\tau'(M)&else,
\end{cases*}
\end{equation*}
and take $M\in\R_+^*$ arbitrarily large such that $\tau'(M)$ exists (which is the case for a.e.\ $M\in\R_+$).
By definition $\tau_M$ satisfies \cref{GrowthCondition}.
Hence, for sufficiently large masses, there is no efficiency gain when the particles of a mass flux bundle.
A nonconcave example of a transportation cost satisfying \cref{GrowthCondition} is given by $\mathcal C(m)=\lceil m/K\rceil$ for some $K\in\R_+^*$, which may be used to model a jump in the cost if the capacity $K$ of a means of transport is reached. See \cref{fig1}.

First, let us sketch the variational approximation scheme for the geometric flow of transportation networks.
To this end, fix a transportation cost $\tau:\R_+\to\R_+$ and an initial transportation network, that is a normal real rectifiable $1$-current $T_0=\theta_0\Ha\mres S_0\in \T=\N_1(\R^n)\cap\rct_1(\R^n)$.
Here $\theta_0\in L^1_{loc}(\Ha\mres S_0;\R^n)$ is tangential to countably $\Ha$-rectifiable set $S_0$ and $\Ha\mres S_0$ denotes the restriction of the one-dimensional Hausdorff measure to $S_0$, see \cref{Preliminaries}.
Write $|T_0|_\tau(B)$ for the branched transport cost of $T_0$ inside a Borel set $B\subset\R^n$, cf.\ \cref{BTC}.
Further, set $\Omega=\textup{conv}(\spt(T_0))$, the convex hull of the support of $T_0$.
Assume that $|T_0|_\tau(\R^n)<\infty$.
For each $j\in\mathbb N$ we will choose $p_j\in\mathbb N$ and define a finite sequence of transportation networks
\begin{equation*}
T^{j,0},T^{j,1},\ldots,T^{j,j2^{p_j}}
\end{equation*}
as follows:
First, set $T^{j,0}=T_0$ for all $j\in\mathbb N$.
Now fix $j\in\mathbb N$.
We sketch how $T^{j,\ell+1}$ is obtained from $T^{j,\ell}$ for $\ell=0,\ldots,j2^{p_j}-1$. 
It constitutes a modification of the variational approximation schemes in \cite{B78,KT17}, see \cref{Prop6.1} and its proof.
\begin{itemize}
	\item First, we choose an appropriate \textit{admissible deformation} (see \cref{AdmDeform})
	\begin{equation*}
		f_1\in\adm_j(T^{j,\ell}) \qquad\textup{and set}\qquad\widetilde T^{j,\ell+1}=(f_1)_\# T^{j,\ell}.
	\end{equation*}
	\item Then we project $\widetilde T^{j,\ell+1}$ onto $\Omega$ via orthogonal projection $f_{1.5}:\R^n\to\Omega$,
	\begin{equation*}
		\widehat T^{j,\ell+1}=(f_{1.5})_\# \widetilde T^{j,\ell+1}.
	\end{equation*}
	\item Finally, we define a diffeomorphism (where $\Dt_j=2^{-p_j}$ and $\varepsilon_j<j^{-6}$)
	\begin{equation*}
		f_2=\textup{id}+\Dt_j h_{\tau,\varepsilon_j}(\widehat T^{j,\ell+1})\qquad\textup{and set}\qquad T^{j,\ell+1}=(f_2)_\#(\widehat T^{j,\ell+1}).
	\end{equation*}
\end{itemize}
Here $h_{\tau,\varepsilon_j}(\widehat T^{j,\ell+1})$ denotes an \textit{approximate $\tau$-curvature vector field}, cf.\ \cref{ApproxCurveVF}.
Afterwards, for every $j\in\mathbb N$, we set
\begin{equation*}
	T_t^j=T^{j,\lceil t/\Dt_j\rceil}
\end{equation*} 
for all $t\in[0,j]$. Note that $[0,j]$ increases linearly in $j$ while the decrease of $\Dt_j$ is exponentially ($j\mapsto p_j$ will be increasing).
Then we will show that there is a subsequence $(j_k)$ with (\cref{Prop6.4})
\begin{equation*}
	|T_t^{j_k}|_\tau\ws\mu_t\in\M_+(\R^n)
\end{equation*} 
for $k\to\infty$ for all $t\in\R_+$. This will require us to carefully adapt the proofs in \cite[Secs.\ 5 \&\ 6]{KT17} to our setting.
The first self-contained main result is an existence and regularity theorem related to the transportation networks from above.
It will use the following two assumptions.
\begin{assump}[$T_0$ closed]
	\label{ZeroBoundary}
	The transportation network $T_0$ is closed as a current: $\partial T_0=0$.
\end{assump}
\begin{assump}[Integrality of $T_0$]
	\label{IntegralityT0}
	The initial datum $T_0$ is integral: $T_0\in\mathcal I_1(\R^n)$.
\end{assump}
\begin{thm}[Existence and regularity of transportation network geometric flow $(T_t)$]
	\label{LimitCurrent}
	Consider the setup from above, see \cref{Prop6.1,Prop6.4}. Let \cref{GrowthCondition,ZeroBoundary} be satisfied.
	Then, possibly after refining the subsequence $(j_k)$, there exists a family of normal $1$-currents $(T_t)_{t\in\R_+}\subset\mathcal N_1(\R^n)$ with 
	\begin{equation*}
	\spt(T_t)\subset\Omega\qquad\textup{and}\qquad T_t^{j_k}\ws T_t
	\end{equation*}
	for $k\to\infty$ for all $t\in\R_+$.
	Moreover, the map $t\mapsto T_t$ satisfies the Hölder condition
	\begin{equation*}
	|T_s-T_t|^\flat\leq C_0|s-t|^{1/2}
	\end{equation*}
	 for all $s,t\in\R_+$ and some constant $C_0\in\R_+$, where $|\cdot|^\flat$ denotes the flat norm, see \cref{Currents}. If \cref{IntegralityT0} is satisfied, then $T_t\in\mathcal I_1(\R^n)$ for all $t\in\R_+$.
\end{thm}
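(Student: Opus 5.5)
We obtain $(T_t)$ by compactness of the discrete flows $T_t^{j_k}$, a diagonal argument over a countable dense set of times, and then extend to all $t$ using a uniform Hölder estimate in the flat norm. Closedness enters through the discrete scheme: push-forward commutes with $\partial$, so $\partial T^{j,\ell}=0$ for all $j,\ell$ by \cref{ZeroBoundary}.

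First we establish uniform bounds. By \cref{GrowthCondition}, $C|T_t^{j}|(\R^n)\leq |T_t^j|_\tau(\R^n)$. By construction (\cref{Prop6.1}), the branched transport cost is essentially non-increasing along the scheme up to errors summable in $j$, so $|T_t^j|_\tau(\R^n)\le |T_0|_\tau(\R^n)+o(1)$. Hence the masses $\mathbf M(T_t^j)$ are uniformly bounded. Since the projection step $f_{1.5}$ keeps supports in $\Omega$ and the final diffeomorphism $f_2$ moves points only by $\Dt_j\|h_{\tau,\varepsilon_j}\|_\infty$, the supports stay in a fixed neighborhood of $\Omega$. The boundaries vanish. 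Thus $\{T_t^j\}$ lies in a weak-$*$ (equivalently flat) compact set of normal currents.

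The key step is the time estimate
\[
|T_s^j-T_t^j|^\flat\le C_0\big(|s-t|^{1/2}+\Dt_j^{1/2}\big).
\]
This is the main obstacle. In one step, $T^{j,\ell+1}-T^{j,\ell}$ is the boundary of the $2$-current swept by the straight-line homotopy, using $\partial T^{j,\ell}=0$. For $f_2$ the homotopy formula gives a $2$-current of mass at most $\Dt_j\int|h_{\tau,\varepsilon_j}|\,\e|\widehat T^{j,\ell+1}|$. For $f_1$ and $f_{1.5}$ we use the mass control of the swept $2$-current built into the admissibility in \cref{AdmDeform} (the analogue of the Lebesgue-measure control in \cite{KT17}). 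Summing over steps $\ell$ between $s$ and $t$ and applying Cauchy–Schwarz in time yields
\[
\sum \Dt_j\|h\|_{L^1}\le |s-t|^{1/2}\Big(\sum\Dt_j\|h\|_{L^2(|\widehat T|)}^2\Big)^{1/2}\mathbf M^{1/2}.
\]
The remaining factor is bounded by the dissipation estimate from \cref{Prop6.1}: the cost drop per step dominates $\Dt_j\int|h|^2$ up to the $\tau$-weighting, which \cref{GrowthCondition} lets us compare with mass. I expect the delicate point to be relating the $\tau$-weighted curvature dissipation to the unweighted $L^2$ norm, which needs lower bounds of $\tau(|\theta|)$ against $|\theta|$.

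Finally, on the dyadic rationals $t$, compactness and diagonalization give a refined subsequence with $T_t^{j_k}\ws T_t$. The flat norm is lower semicontinuous under weak-$*$ convergence with supports in a compact set (indeed flat convergence holds there), so the estimate passes to the limit as $|T_s-T_t|^\flat\le C_0|s-t|^{1/2}$. We extend $T_t$ to all $t$ by uniform continuity in the complete flat-norm space of normal currents with bounded mass. Convergence $T_t^{j_k}\ws T_t$ for every $t$ then follows from the uniform estimate by a $3\varepsilon$ argument. Supports of the limits lie in $\Omega$, since the supports of the $T_t^j$ lie in shrinking neighborhoods of $\Omega$. If $T_0$ is integral, each push-forward by a Lipschitz map preserves integrality, so $T_t^{j}\in\mathcal I_1$ with uniformly bounded mass and zero boundary. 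The closure theorem for integral currents then gives $T_t\in\mathcal I_1(\R^n)$.
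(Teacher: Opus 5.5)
The gap is in how you handle the projection step. You say that for $f_1$ and $f_{1.5}$ the swept $2$-current is controlled by admissibility. But \cref{AdmDeform} constrains only $f_1$. The orthogonal projection $f_{1.5}$ onto $\Omega$ is not an admissible deformation, and nothing in the scheme controls its homotopy current. The only direct bound is $\mathbb M\bigl((h_{f_{1.5}})_\#(\llbracket 0,1\rrbracket\times\widetilde T^{j,\ell+1})\bigr)\le\int\textup{dist}(\cdot,\Omega)\,\e|\widetilde T^{j,\ell+1}|$. Admissibility bounds the swept mass of $f_1$ but not its displacement, which may be as large as $j^{-2}$ and may push points out of $\Omega$. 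So this bound is only $O\bigl((j^{-2}+\varepsilon_j^{c_1-2})M/C\bigr)$ per step. Summed over the $|s-t|/\Dt_j$ steps, with $\Dt_j\le\varepsilon_j^{c_1}\ll j^{-2}$, it blows up. Hence the step-by-step scheme ``$T^{j,\ell+1}-T^{j,\ell}=\partial(\text{swept current})$'' does not give the flat estimate.

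The missing idea is to absorb $f_{1.5}$ rather than estimate it, using that it is a $1$-Lipschitz retraction onto $\Omega$. Telescope along the projected currents $\widehat T^{j,\ell}$, which lie in $\Omega$. Write $\widehat T_p$ for the previous projected current, $T=(f_2)_\#\widehat T_p$, $\widetilde T=(f_1)_\#T$ and $\widehat T=(f_{1.5})_\#\widetilde T$. Take $Q_1,Q_2$ optimal for $|\widehat T_p-T|_K^\flat$ and $|T-\widetilde T|_K^\flat$. Since $(f_{1.5})_\#\widehat T_p=\widehat T_p$, the competitor $(f_{1.5})_\#(Q_1+Q_2)$ gives $|\widehat T_p-\widehat T|_K^\flat\le|\widehat T_p-T|_K^\flat+|T-\widetilde T|_K^\flat$. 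So each step costs one $f_2$-term and one $f_1$-term, plus two $f_2$-terms at the ends, and your estimates for those go through.

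Two smaller corrections. First, the $f_1$-terms sum, via the factor $j^{-1}$ in admissibility together with \cref{in1:6.4,in1:6.3}, to $O(j^{-1})$. That is neither $O(\Dt_j^{1/2})$ nor proportional to $|s-t|^{1/2}$, so the uniform estimate should read $C_0|s-t|^{1/2}+o(1)$ as $j\to\infty$; this is harmless in the limit. Second, the weighted/unweighted comparison you flag as delicate is immediate from \cref{GrowthCondition}. The real point is that \cref{in1:6.4,in1:6.19} control the smoothed dissipation of $T^{j,\ell+1}=(f_2)_\#\widehat T^{j,\ell+1}$. You therefore need \cref{in1:5.24} and \cref{in1:Q4} to bound $\int|h_{\tau,\varepsilon_j}(\widehat T^{j,\ell+1})|^2\,\e|\widehat T^{j,\ell+1}|_\tau$ by it.

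Your compactness, extension to all $t$, and integrality arguments agree with the paper.
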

The order in which the deformations $f_1,f_{1.5}$, and $f_2$ are applied will be of great importance.
In particular, we will need to carefully estimate the flat norm between transportation networks arising from different iterations, drawing in part on the homotopy formula for currents, upper bounds in terms of integrals involving the approximate $\tau$-curvature vector fields, and a triangle-type inequality for currents corresponding to different iterations, both obtained after applying a deformation of type $f_{1.5}$.
These estimates will also yield the Hölder continuity.

Now let the sequence $(j_k)$ be as in \cref{LimitCurrent}. 
Since $T_t^{j_k}$ is a transportation network, we can write $T_t^{j_k}=\theta_t^{j_k}\Ha\mres S_t^{j_k}$ for all $t\in [0,j_k]$ and $k\in\mathbb N$.
The next statment is related to the $1$-varifolds induced by $\tau(|\theta_t^{j_k}|)\Ha\mres S_t^{j_k}$, cf.\ \cref{Varifolds}.
\begin{thm}[$1$-rectifiability of limit varifolds]
	\label{rectifiability}
	Let the setup be as above, see \cref{Prop6.1,Prop6.4}. Then the following holds under \cref{GrowthCondition,IntegralityT0}: For a.e.\ $t\in\R_+$ the induced varifolds
	\begin{equation*}
		V_t^{j_{k}}=v(\tau(|\theta_t^{j_{k}}|),S_t^{j_{k}})\in\M_+(\R^n\times G(n,1))
	\end{equation*}
	satisfy $V_t^{j_{k}}\ws V_t\in \M_+(\R^n\times G(n,1))$ for $k\to\infty$ with $V_t$ fulfilling all the properties in \cref{AllardRect} $=$ Allard's rectifiability theorem \cite[Sec.\ 5.5, Thm.\ 1]{A72}, in particular $V_t$ is $1$-rectifiable. Note that automatically $\| V_t \|=\mu_t$, see above or \cref{Prop6.4}.
	Further, for a.e.\ $t\in\R_+$, there is a generalized curvature vector field $h_{V_t}\in L^2(\| V_t\|;\R^n)$ with $\delta V_t(g)=-\int g\cdot h_{V_t}\e\| V_t \|$ for all $g\in C_c^1(\R^n;\R^n)$.
	Moreover, we have
	\begin{equation}
		\label[ineq]{in1:timeIntmean}
		\int_{[0,R]}\int |h_{V_t}|^2\e\| V_t \|\e\Leb(t)<\infty
	\end{equation}
	for all $R\in\R_+$.
\end{thm}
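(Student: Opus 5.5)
The plan follows \cite[Secs.\ 7 \&\ 8]{KT17} and Brakke's original argument. The key input is a time-integrated curvature bound from the approximation scheme. By the construction in \cref{Prop6.1}, each step $f_2=\textup{id}+\Dt_j h_{\tau,\varepsilon_j}(\widehat T^{j,\ell+1})$ decreases the branched transport cost by roughly $\Dt_j\int|h_{\tau,\varepsilon_j}|^2\Phi_{\varepsilon_j}\e\|V\|/(\|V\|*\Phi_{\varepsilon_j}+\varepsilon_j)$. The steps $f_1$ and $f_{1.5}$ do not increase the cost: admissibility gives this for $f_1$, and $f_{1.5}$ is $1$-Lipschitz with $\tau$ non-decreasing.

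Summing over $\ell$ and using $|T_0|_\tau(\R^n)<\infty$, I expect
\begin{equation*}
\int_0^{R}\int\frac{|\Phi_{\varepsilon_j}*(\delta V_t^j)|^2}{\Phi_{\varepsilon_j}*\|V_t^j\|+\varepsilon_j}\e\Leb(t)\leq C(|T_0|_\tau(\R^n),R)
\end{equation*}
uniformly in $j$. Fatou's lemma then gives, for a.e.\ $t$, a subsequence (depending on $t$, refined from $(j_k)$) along which the integrand in $t$ stays bounded. Along it, $V_t^{j_k}\ws V_t$ by compactness, since the masses are bounded by $|T_0|_\tau(\R^n)$ and the supports lie in $\Omega$. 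Moreover $\|V_t\|=\mu_t$, because $\|V_t^{j_k}\|=|T_t^{j_k}|_\tau\ws\mu_t$. Since the bound holds for the full time integral, Fatou also yields \cref{in1:timeIntmean} once the a.e.\ $t$ statement is established.

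For fixed such $t$, I would first show that $\|\delta V_t\|$ is absolutely continuous with respect to $\|V_t\|$ and has an $L^2$ density. The argument is the standard one in \cite[Prop.\ 5.5]{KT17}. For $g\in C_c^1$, write $\delta V_t(g)=\lim_k\delta V_t^{j_k}(g)$. Replace $\delta V_t^{j_k}$ by its mollification at scale $\varepsilon_{j_k}$; the error is controlled by $\varepsilon_{j_k}\|g\|_{C^2}$ times a mass bound, which tends to zero. Cauchy--Schwarz against the weight $\Phi_{\varepsilon}*\|V\|+\varepsilon$ then gives $|\delta V_t(g)|\leq C_t\bigl(\int|g|^2\e\|V_t\|\bigr)^{1/2}$. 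Riesz representation produces $h_{V_t}\in L^2(\|V_t\|;\R^n)$, and the norm bound yields \cref{in1:timeIntmean}.

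For rectifiability via \cref{AllardRect}, I also need a lower density bound $\Theta^{1}(\|V_t\|,x)\geq c>0$ for $\|V_t\|$-a.e.\ $x$. This is where \cref{GrowthCondition} and \cref{IntegralityT0} enter. The multiplicity of $T_t^{j_k}$ is an integer, since pushforwards preserve integrality, so $\tau(|\theta|)\geq C|\theta|\geq C$ on the support. In one dimension, the first variation bound through the monotonicity formula gives, for $r$ small, that $\|V\|(B_r(x))/(2r)$ is almost monotone with error controlled by $\int|h|^2$. Combining a lower density of $C$ at support points of $V_t^{j_k}$ with the uniform curvature control, I would pass to the limit. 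The outcome should be that any point of $\spt\|V_t\|$ has density at least $C$ up to lower order terms, which is the analogue of \cite[Sec.\ 8]{KT17}.

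The main obstacle is to make this density argument work with only the smoothed curvature control, since the approximating varifolds are not exactly of bounded first variation in a usable sense. I would try the Kim--Tonegawa route. First show that on most balls the smoothed $L^2$ bound forces $\|\delta V^{j_k}\|(B_r)\lesssim r^{1/2}$ up to $\varepsilon_{j_k}$ errors. Then run the one-dimensional monotonicity, which is simple since $\int|h|\e\|V\|\leq(\int|h|^2)^{1/2}\|V\|^{1/2}$ is small on small balls. Together with integer multiplicity from the closure theorem, this should give a uniform density lower bound that survives the weak-$*$ limit.
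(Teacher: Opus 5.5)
Your time-integrated bound, the Fatou selection of a $t$-dependent subsequence, the construction of $h_{V_t}\in L^2(\|V_t\|;\R^n)$ via Cauchy--Schwarz and Riesz (as in \cref{Prop5.6}), and \cref{in1:timeIntmean} agree with the paper.

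\textbf{The gap is the lower density bound.} It is not a technicality. Closedness, integrality, the growth condition and the smoothed curvature bound together do not imply it, because $\int|\Psi_\varepsilon\ast\delta_\tau T|^2/(\Psi_\varepsilon\ast|T|_\tau+\varepsilon)\e\Lebn$ does not see structure below scale $\varepsilon$.

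A counterexample: in $\R^2$ take many disjoint unit-multiplicity circles of radius $\rho_j\ll\varepsilon_j$, distributed with smooth density $\varrho=\phi^2$.
\begin{itemize}
\item Each circle is a closed integral current with density $\tau(1)$ on its support.
\item Each contributes $\pi\rho\tau(1)\nabla\Psi_\varepsilon(c-\cdot)+O(\rho^3\varepsilon^{-5})$ to $\Psi_\varepsilon\ast\delta_\tau T$, since the first-order Taylor term cancels by symmetry.
\item Hence the smoothed energy tends to at most $\frac14\int|\Psi_\varepsilon\ast\nabla\varrho|^2/(\Psi_\varepsilon\ast\varrho)\e\Lebn\le\int|\nabla\phi|^2\e\Lebn$.
\item The limit is the diffuse varifold $\varrho\Lebn$ with uniformly distributed tangent lines, which has $\Theta^{*1}=0$ everywhere.
\end{itemize}
In the same example each loop has $\|\delta V\|=2\pi\tau(1)$ independently of $\rho$, so your intermediate claim $\|\delta V^{j_k}\|(B_r)\lesssim r^{1/2}$ fails. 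Monotonicity can only be run for the smoothed varifold $\Psi_\varepsilon\ast V$, with errors controlled only at scales well above $\varepsilon$. Nothing in your plan bridges the infinitesimal density $\tau(1)$ to those scales. The closure theorem does not help either: it concerns $T_t$, and $\|V_t\|$ can be strictly larger than $|T_t|_\tau$, because oppositely oriented strands cancel in $T_t$ but not in $V_t$.

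\textbf{The missing idea is the other half of \cref{in1:6.19}, which you never use.} The integral $\int_{[0,R]}-\Dt_{j_k}^{-1}\Delta_{j_k}|T_t^{j_k}|_\tau(\R^n)\e\Leb(t)$ is also bounded. So for a.e.\ $t$, along a subsequence, no admissible deformation lowers the cost by more than $c(t)\Dt_{j_k}$, which is far smaller than $r_k^n$ with $r_k=j_k^{-5/2}$. \Cref{Lemma1} turns this into small-scale density control:
\begin{itemize}
\item The points of $S_t^{j_k}$ where $|T_t^{j_k}|_\tau(\overline B_r(x))<\lambda r$ for some $r<r_k$ are covered by disjoint Besicovitch balls. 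Their radii are chosen, via \cref{GrowthSBall}, so that $r\mapsto|T_t^{j_k}|_\tau(\overline B_r(x))$ has zero derivative there. This is where integrality enters: that derivative is $0$ or at least $\tau(1)$ a.e.
\item The map collapsing each ball to its centre, while stretching a thin annulus of negligible mass, is then admissible. Its homotopy mass is at most $r_k|T_t^{j_k}|\le r_kC^{-1}|T_t^{j_k}|_\tau$, and this is where \cref{GrowthCondition} enters.
\item Hence the bad set has $\tau$-mass at most $-\hat c r_k^{-n}\Delta_{j_k}|T_t^{j_k}|_\tau(\R^n)\to0$.
\end{itemize}
Only after this does the monotonicity formula for $\Psi_\varepsilon\ast V$, with the bound $\check c(t)$, carry the estimate from scale $r_k$ to a fixed scale $c_2(t)$ (\cref{LowerBound}). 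Approximating points of $\spt\|V_t\|$ by good points then gives $\Theta^{*1}(\|V_t\|,x)\ge\lambda/11$.

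Two smaller points:
\begin{itemize}
\item You only get convergence along a $t$-dependent subsequence, whereas the statement asserts $V_t^{j_k}\ws V_t$ for the whole sequence. The paper uses that the rectifiable limit is determined by $\mu_t$, see \cref{9.22}.
\item Cost monotonicity of $f_{1.5}$ needs subadditivity of $\tau$, because projection can merge strands (\cref{ProjCost}). It is not enough that $f_{1.5}$ is $1$-Lipschitz and $\tau$ non-decreasing.
\end{itemize}
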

Here $\| V_t\|$ denotes the weight measure and $\delta V_t$ is the first variation of $V_t$.
Note that a decrease of $\| V_t^{j_k}\|(\R^n)=|T_t^{j_k}|_\tau(\R^n)$ corresponds to a reduction of the branched transport cost.
Further, using $T_t^{j_k}\ws T_t$ and $V_t^{j_{k}}\ws V_t$ together with the lower semicontinuity of $|\cdot|_\tau(\R^n)$ \cite[Def.\ 2.2]{BW} (see also \cite[Thm.\ 5]{MW19} or the introduction of \cite{CDMS17}), we obtain
\begin{equation*}
|T_t|_\tau(\R^n)\leq\liminf_k|T_t^{j_k}|_\tau(\R^n)=\liminf_k\| V_t^{j_k}\|(\R^n)=\| V_t\|(\R^n)
\end{equation*}
for a.e.\ $t\in\R_+$.
The inequality is due to possible cancellations through opposite orientations in the limit.
The network-like sets $S_t^{j_{k}}$ are not necessarily closed, as opposed to the setup in \cite{B78,KT17}.
We will use a strategy different to the proof in \cite[Sec.\ 7]{KT17} by employing estimates similar to the inequalities in \cite[Lem.\ 5.2]{KT20} and \cite[Lem.\ 5.4]{KT20} together with Allard's rectifiability theorem.
The work here is to construct, for a.e.\ $t\in\R_+$, suitable admissible deformations and prove that, in the limit and up to a subsequence, the varifolds $V_t^{j_{k}}$ are concentrated in points where the one-dimensional density with respect to their weight measures $|T_t^{j_{k}}|_\tau$ is bounded away from zero.

Finally, we modify the proof of \cite[Thm.\ 9.3]{KT17} to prove a Brakke-type inequality for the geometric flow $(V_t)$, which may be seen as a $\tau$-weighted variant of the classical Brakke flow.
In contrast to \cite{KT17}, the varifolds $V_t^{j_k}$ are not unit density and their weights are given by $\tau(|\theta_t^{j_{k}}|)$ for all $t\in[0,j_k]$, thus Brakke’s perpendicularity theorem \cite[Sec.\ 5]{B78} is not applicable.
This explains the presence of the term $P^\perp\nabla\phi(x)$ below, the orthogonal projection of $\nabla\phi(x)$ onto the orthogonal complement of $P\in G(n,1)$.
\begin{thm}[{Motion law of $t\mapsto V_t$, cf.\ \cite[Secs.\ 3 \&\ 4]{B78} and \cite[Thm.\ 9.3]{KT17}}]
	\label{MotionLaw}
	Let the setup be as above, see \cref{Prop6.1,Prop6.4}, and $V_t, h_{V_t}$ as in \cref{rectifiability} for a.e.\ $t\in\R_+$. Under \cref{GrowthCondition,IntegralityT0}, the following holds for a.e.\ $(t_1,t_2)\in\R_+^2$ with $t_1<t_2$:
	\begin{multline*}
		\| V_{t_2} \|(\phi(.,t_2))-\| V_{t_1} \|(\phi(.,t_1)) \leq-\int_{[t_1,t_2]}\int |h_{V_t}|^2\phi\e\| V_t\|\e\Leb(t)\\
		+\int_{[t_1,t_2]}\int h_{V_t}(x)\cdot (P^\perp\nabla\phi(x))\e V_t(x,P)\e\Leb(t)
		+\int_{[t_1,t_2]}\| V_t\|(\partial_t\phi(.,t))\e\Leb(t)
	\end{multline*}
	for all $\phi\in C_c^\infty(\R^n\times\R_+;\R_+)$.
\end{thm}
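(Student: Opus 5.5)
We follow the strategy of \cite[Thm.\ 9.3]{KT17}: first prove a discrete version of the inequality at the level of the approximating flows $V_t^{j_k}$, then pass to the limit using the compactness and rectifiability already established. Two ingredients from the scheme are used. One is the one-step energy decrease produced by the admissible deformation $f_1$ and the projection $f_{1.5}$: the projection onto the convex set $\Omega$ is $1$-Lipschitz, and since $\tau$ is non-decreasing it does not increase $|\cdot|_\tau$. The other is the explicit first-order expansion of $|(f_2)_\#\widehat T|_\tau(\phi)$ along $f_2=\textup{id}+\Dt_j h$ with $h=h_{\tau,\varepsilon_j}(\widehat T^{j,\ell+1})$.

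\textbf{Step 1 (one-step inequality).} For a pushforward by a diffeomorphism $f$ close to the identity, the multiplicity is transported and only the length element changes. Hence $|f_\#T|_\tau(\phi)=\int \phi(f(x))\,\tau(|\theta|)\,|\nabla f\, P|\,\e\Ha$ with $P$ the tangent line. Expanding gives
\begin{equation*}
|T^{j,\ell+1}|_\tau(\phi)-|\widehat T^{j,\ell+1}|_\tau(\phi)\le \Dt_j\Big(\delta V(\phi h)+\int h\cdot\nabla\phi\,\e\|V\|\Big)+O(\Dt_j^2)\int|h|^2\e\|V\|,
\end{equation*}
where $V=v(\tau(|\theta|),S)$ is the varifold of $\widehat T^{j,\ell+1}$. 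We use the smoothing estimates for the approximate $\tau$-curvature from \cref{ApproxCurveVF}, namely $\delta V(\phi h)\approx-\int\phi|h|^2\e\|V\|$ up to errors $O(\varepsilon_j^{1/4})$, as in \cite[Prop.\ 5.4]{KT17}. We split $\nabla\phi=P\nabla\phi+P^\perp\nabla\phi$. The tangential part, together with $\delta V(\phi h)$, combines into the $-\phi|h|^2$ term. Because Brakke's perpendicularity is unavailable, the normal part must stay as $\int h\cdot P^\perp\nabla\phi\,\e V$. Combining this with the $f_1,f_{1.5}$ decrease, and adding the explicit $\partial_t\phi$ increment, we obtain the discrete inequality in each step. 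We then sum over $\ell$ with $t_1\le \ell\Dt_j\le t_2$.

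\textbf{Step 2 (limit).} Take $j=j_k\to\infty$. The left side converges for a.e.\ $t_1,t_2$ by \cref{Prop6.4}, and the $\partial_t\phi$ term converges by dominated convergence, using the uniform mass bound $\|V_t^{j_k}\|(\R^n)\le|T_0|_\tau(\R^n)$. For the quadratic term we use Fatou in $t$. For a.e.\ $t$ with $\liminf_k\int\phi|h^{j_k}|^2<\infty$, after passing to a further $t$-dependent subsequence, $V_t^{j_k}\ws V_t$ by \cref{rectifiability}. The measures $h^{j_k}\|V_t^{j_k}\|$ then converge to $h_{V_t}\|V_t\|$, and lower semicontinuity of $\int\phi|h|^2$ under this measure-function convergence gives the $-\int|h_{V_t}|^2\phi$ term. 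We must also check that $\liminf_k$ along the subsequence matching the a.e.\ statement is compatible with Fatou, using the same device as \cite[Sec.\ 9]{KT17}: apply Fatou to the $\liminf$ of the time integrals.

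\textbf{Main obstacle.} The hardest step is passing to the limit in the cross term $\int h^{j_k}\cdot P^\perp\nabla\phi\,\e V_t^{j_k}$. It depends on the Grassmannian variable jointly with $h^{j_k}$, so weak convergence of $h^{j_k}\|V^{j_k}\|$ alone is not enough. We plan to show strong $L^2$-type convergence of the pairs $(h^{j_k},V^{j_k})$ as measures on $\R^n\times G(n,1)$. The idea is to bound the defect by the gap between $\liminf\int\phi|h^{j_k}|^2$ and $\int\phi|h_{V_t}|^2$, and to absorb it with a Young inequality. Concretely, we write $h\cdot P^\perp\nabla\phi\le \tfrac12\phi|h|^2+\tfrac12|P^\perp\nabla\phi|^2/\phi$; the ratio $|\nabla\phi|^2/\phi$ is bounded for nonnegative smooth $\phi$. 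We then treat the weak limit of $h^{j_k}\otimes V^{j_k}$ on $\R^n\times G(n,1)$ via the first variation of $V_t$: the limit vector measure is absolutely continuous with $L^2$ density, and its $P$-disintegration agrees with $h_{V_t}$ by rectifiability. If strong convergence fails, the fallback is to keep only the inequality delivered by the Young splitting, applied to the defect measure.
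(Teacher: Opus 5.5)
Your overall architecture matches the paper's: one-step inequality, telescoping over $\ell$, Fatou in $t$, lower semicontinuity via \cref{Prop5.6}. But Step 1 and the passage to the limit have genuine gaps as written.

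(a) \emph{Test functions.} A nonzero $\phi\in C_c^\infty(\R^n;\R_+)$ is never in $\mathcal F_j^+$, because $|\nabla\phi|\le j\phi$ forces $\phi>0$ everywhere. The admissibility of $f_1$ only gives $|(f_1)_\#T|_\tau(\phi)\le|T|_\tau(\phi)$ for $\phi\in\mathcal F_j^+$, and \cref{LongProof} and \cref{in1:6.5} are likewise restricted. So none of them apply to your test function. The paper normalizes $\phi<1$ and works throughout with $\hat\phi=\phi+1/i\in\mathcal F_p^+$. It pays an error $\frac1i|T_0|_\tau(\R^n)$ and lets $i\to\infty$ only at the very end. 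The lower bound $\hat\phi\ge 1/i$ is also what turns control of the $\hat\phi$-weighted smoothed curvature integral $\hat I$ into the unweighted bound \cref{in1:9.19} that \cref{Prop5.5,Prop5.6} require.

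(b) \emph{Subsequences.} At a fixed $t$ you only know $\liminf_k I_t^{j_k}<\infty$. Picking a subsequence that realizes a $\liminf$ of the quadratic term and applying Fatou gives the wrong direction. You need $\limsup_k\int_{[t_1,t_2]}\le\int_{[t_1,t_2]}\limsup_k$, and along the full sequence the cross term is uncontrolled. The fix is to apply reverse Fatou to the whole integrand $\delta_\tau(T_t^{j_k},\hat\phi)(h_{\tau,\varepsilon_{j_k}}(T_t^{j_k}))$. It is bounded above by a constant $A$, being roughly $-\hat I+c\hat I^{1/2}$. Then fix a subsequence realizing the $\limsup$ at $t$ and show $\limsup_m\hat I_t^{j_{k_m}}\le 2\liminf_k\hat I_t^{j_k}+2A$. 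Only along that subsequence do you have the $L^2$ bound on $h$ that your Step 2 uses.

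(c) \emph{Bookkeeping.} The first variation of $|T|_\tau(\phi)$ is $\delta_\tau(T,\phi)(h)=\delta V(\phi h)+\int h\cdot P^\perp\nabla\phi\,\e\|V\|$ (\cref{9.10}). Your $\delta V(\phi h)+\int h\cdot\nabla\phi$ counts the tangential part twice, which is inconsistent with your use of \cref{LongProof}. Also, the one-step inequality must be expressed at $T^{j,\ell+1}$, not $\widehat T^{j,\ell+1}$ (this transfer is \cref{in1:Q3}), because $V_t$ is the limit of the varifolds of $T_t^{j_k}$. It is simplest to cite \cref{in1:6.5}. Finally, the projection step is \cref{ProjCost}, which needs subadditivity of $\tau$ and not only monotonicity, since projecting merges branches.

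For the cross term, drop both the strong-convergence plan and the Young fallback. Strong convergence is neither needed nor available from these estimates, which only give lower semicontinuity of $\int\phi|h|^2$. The Young fallback would prove only a weaker law, with $-\tfrac12\int\phi|h_{V_t}|^2\,\e\|V_t\|+\tfrac12\int\phi^{-1}|P^\perp\nabla\phi|^2\,\e V_t$ in place of the stated terms.

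The sentence you place in between is already a complete argument. Write $h_m=h_{\tau,\varepsilon_{j_{k_m}}}(T_t^{j_{k_m}})$ along the realizing subsequence.
\begin{itemize}
\item The vector measures $\Phi\mapsto\int\Phi(x,P)\cdot h_m(x)\,\e V_t^{j_{k_m}}(x,P)$ on $\R^n\times G(n,1)$ have bounded total variation.
\item By Cauchy--Schwarz and $V_t^{j_{k_m}}\ws V_t$, every weak-$*$ limit has the form $\tilde h\,V_t$ with $\tilde h\in L^2(V_t;\R^n)$.
\item Testing with $P$-independent $g\in C_c^\infty(\R^n;\R^n)$ and using \cref{Prop5.5} identifies the fiber average of $\tilde h$ with $h_{V_t}$.
\item By \cref{9.22}, the fiber of $V_t$ over $\|V_t\|$-a.e.\ $x$ is the Dirac mass at $\textup{Tan}^1(\|V_t\|,x)$, so $\tilde h=h_{V_t}$.
\item Testing with the continuous, compactly supported $(x,P)\mapsto P^\perp\nabla\phi(x)$ then gives convergence of the cross term, even with equality.
\end{itemize}

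The paper does the same by hand. It approximates $x\mapsto\textup{Tan}^1(\|V_t\|,x)^\perp\nabla\hat\phi(x)$ in $L^2(\|V_t\|)$ by some $g\in C_c^\infty$ and splits into $E_1,\dots,E_4$ (\cref{9.24}). It controls $E_1$ through varifold convergence of $|P^\perp\nabla\hat\phi-g|^2$ on the Grassmann bundle. Both routes rest on the same two inputs: the $L^2$ bound along the realizing subsequence, and the rectifiability of $V_t$. Yours is the cleaner compactness formulation; the paper's avoids disintegration and weak compactness of vector-valued measures.
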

\section{Preliminaries}
\label{Preliminaries}
\subsection{Basic definitions and notation}
\label{Basic}
Let $U\subset\R^n$ be open.
\begin{itemize}
	\item $\mathbb N,\mathbb N_0$: \textit{natural numbers} $\mathbb N=\{ 1,2,\ldots \}$ and $\mathbb N_0=\mathbb N\cup\{ 0 \}$
	\item $\R_-,\R_+,\R_+^*$: non-positive, non-negative, positive \textit{real numbers}
	\item $\lceil x\rceil$: smallest integer $z$ with $x\leq z$, where $x\in\R$
	\item $\mathcal S^{n-1}$: $(n-1)$-dimensional \textit{unit sphere} in $\R^n$
	\item $[x,y],(x,y)$: \textit{line segments} $x+[0,1](y-x)$ and $x+(0,1)(y-x)$ for $x,y\in\R^n$
	\item $B_r(x),\overline{B}_r(x)$: \textit{open and closed ball} with radius $r\in\R_+^*$ and centre $x$ in metric space 
	\item $v\cdot w,M : N$: \textit{Euclidean inner product} of vectors and \textit{Frobenius inner product} of matrices
	\item $v\otimes w$: \textit{dyadic product} between vectors $v,w\in\R^n$
	\item $c(x_1,\ldots,x_N)$: often used as a \textit{generic constant} depending only on data $x_1,\ldots,x_N$
	\item $\int$: often replaces $\int_A$ if domain of integration $A$ is clear from the context
	\item $1_A$: \textit{characteristic function} of set $A$, $1_A(x)=1$ if $x\in A$, $1_A(x)=0$ else
	\item $|\cdot|,|\cdot|_2$: Euclidean norm on $\R^n$, $2$-norm on $\R^{n\times n}$
	\item $|\cdot|_{2,2}$: norm on $\R^{n\times n\times n}$ defined by $|M|_{2,2}=\sup_{|x|\leq 1}|Mx|_2$, where $(Mx)_{i,j}=\sum_kM_{i,j,k}x_k$ for $x\in\R^n$
	\item $|f|_\infty$: \textit{uniform norm} $|f|_\infty=\sup_A\| f \|_V$ of map $f$ on $A$ with values in normed vectors space $V$
	\item $\textup{dom}(f)$: \textit{effective domain} $\{ x\in A\:|\: f(x)<\infty \}$ of function $f:A\to\R\cup\{ \infty \}$
	\item $\textup{Lip}(f)$: \textit{Lipschitz constant} of $f:U\to\R^m$ with respect to Euclidean norms
	\item $\partial_i f$: $i$-th \textit{partial derivative} of partially differentiable $f:U\to\R^m$
	\item $\nabla f,\nabla^2 f$: \textit{gradient} and \textit{Hessian matrix} of once respectively twice partially differentiable $f:U\to\R$
	\item $\Jm f,\Jac f$: \textit{Jacobian matrix} and \textit{Jacobian} $\Jac f=\textup{det}(\Jm f^\intercal \Jm f)^{1/2}$ of partially differentiable  $f:U\to\R^n$ 
	\item $\Jm^2f$: if $f:U\to\R^n$ twice partially differentiable, then $\Jm^2f\in \R^{n\times n\times n}$ given by $(\Jm^2f)_{i,j,k}=\partial_k\partial_j f_i$
	\item $\partial A$, $\clos(A)$: \textit{boundary, closure} of subset $A$ of topological space
	\item $\textup{conv}(A)$: \textit{convex hull} of subset $A$ of real vector space
	\item $\textup{dist}(A,B),\textup{dist}(x,B)$: distance between two subset $A, B$ of a metric space, $\textup{dist}(x,B)=\textup{dist}(\{ x\},B)$
	\item $\langle x,x'\rangle$: \textit{dual pairing} between $x\in X$ and $x\in X'$, where $X'$ topological dual space of TVS $X$
	\item $GL(k)$: linear group of invertible matrices in $\R^{k\times k}$
	\item $A^\intercal$: \textit{transpose} of $A\in\R^{m\times n}$
	\item $P^\perp$: matrix realizing the projection onto orthogonal complement of $k$-dimensional subspace $V$ of $\R^n$, where $P\in\R^{n\times n}$ projection onto $V$
	\item $\mathbb 1 $: identity matrix in $\R^{n\times n}$
	\item $G(n,k)$: \textit{Grassmannian} of $k$-dimensional subspaces of $\R^n$ \cite[Sec.\ 1.6.2]{F} with final topology induced by bijection $f:X/{\sim}\to G(n,k)$, where $X\subset \R^{n\times k}$ subset of matrices with linearly independent columns, $M_1\sim M_2$ if $M_1=M_2G$ for some $G\in GL(k)$, and $X/{\sim}$ equipped with quotient topology
	\item $\mathcal G_k(U)$: defined by $U\times G(n,k)$ (with product topology)
	\item $\mathcal L^k, \mathcal L, \mathcal H^k$: $k$-dimensional \textit{Lebesgue measure}, $\mathcal L=\mathcal L^1$; $k$-dimensional \textit{Hausdorff measure}
	\item $\B(X)$: \textit{Borel $\sigma$-algebra} over topological space $X$
	\item $\M(X),\M_+(X),\M(X;\R^n)$: $\R$-valued, $\R_+$-valued, and $\R^n$-valued Radon measures on Polish space $X$
	\item $\Lambda_k\R^n,\Lambda^k\R^n$: \textit{$k$-vectors} in $\R^n$, \textit{alternating multilinear $k$-forms} on $\R^n$
	\item $\D^k(U)$: LCTVS of smooth and compactly supported \textit{differential $k$-forms} on $U$ \cite[Sec.\ 4.1.7]{F} with topology induced by seminorms $p_{\ell,K}(\sum_{1\leq i_1<\ldots <i_k\leq n}f_{i_1\ldots i_k}\e x_{i_1}\wedge\ldots\wedge \e x_{i_k})=\sup_{x,\alpha,j_1,\ldots,j_k}|\partial^\alpha f_{j_1\ldots j_k}(x)|$ for $\ell\in\mathbb N_0$ and $K\subset U$ compact, where supremum over $x\in K$, $\alpha\in\mathbb N_0^n$ with $\alpha_1+\cdots+\alpha_n\leq \ell$, and $1\leq j_1<\ldots <j_k\leq n$ 
	\item $\e\omega$: \textit{exterior derivative} of $\omega\in\mathcal{D}^k(U)$
	\item $|\alpha|_{\textup{co}}$: \textit{comass} of $\alpha\in\Lambda^k\R^n$,
	\begin{equation*}
		|\alpha|_{\textup{co}}=\sup\{ \langle\xi,\alpha\rangle\:|\:\xi\in\Lambda_k\R^n\textup{ simple},|\xi|_{\textup{vol}}=1 \},
	\end{equation*}
	where $|\xi|_{\textup{vol}}$ denotes the volume of the parallelpiped spanned by the vectors in $\xi=v_1\wedge\ldots\wedge v_k$
	\item $|\omega|_K^\flat$: (dual) \textit{flat seminorm} of $\omega\in\mathcal{D}^k(U)$ with respect to compact $K\subset U$ \cite[Sec.\ 4.1.12]{F},
	\begin{equation*}
		|\omega|_K^\flat=\sup_{x\in K}\,\max\{ |\omega_x|_{\textup{co}},|\e\omega_x|_{\textup{co}} \}
	\end{equation*}
	\item $\mu\mres A$: \textit{restriction} of outer measure $\mu$ on set $X$ to $A\subset X$ \cite[Sec.\ 2.1.2]{F},
	\begin{equation*}
	(\mu\mres A)(B)=\mu(A\cap B)
	\end{equation*}
	for all $B\subset X$
	\item $L^1(\Hak\mres A;\Lambda_\ell\R^n),L_{loc}^1(\Hak\mres A;\Lambda_\ell\R^n)$: spaces of equivalence classes of ($\Hak\mres A$)-measurable maps $f:\R^n\to \Lambda_\ell\R^n $ (that is $f^{-1}(B)\cap A$ is $\Hak$-measurable for all $B\in\B(\Lambda_\ell\R^n)$) with
	\begin{equation*}
	\begin{cases*}
	\int_A\|f\|\e\Hak<\infty&if $f\in L^1(\Hak\mres A;\Lambda_\ell\R^n)$,\\
	\int_{A\cap K}\|f\|\e\Hak<\infty \textup{ for all compact $K\subset\R^n$}&if $f\in L_{loc}^1(\Hak\mres A;\Lambda_\ell\R^n)$,
	\end{cases*}
	\end{equation*}
	where $A\subset \R^n$ $\Hak$-measurable, $\|\cdot\|$ norm on $\Lambda_\ell\R^n$, and two maps equivalent if equal $\Hak$-a.e.\ in $A$
	\item $\textup{Tan}(A,x)$: \textit{tangent cone} of subset $A$ of normed vector space $X$ at $x\in X$ \cite[Sec.\ 3.1.21]{F},
	\begin{equation*}
	\textup{Tan}(A,x)=\{ y\in X\:|\: \forall \varepsilon\in\R_+^*\exists a\in A,r\in\R_+^{*}:|x-a|+|r(a-x)-y|<\varepsilon \};
	\end{equation*}
	we have $\textup{Tan}(A,x)\cap \mathcal S^{n-1}=\bigcup_{r\in\R_+^*}\textup{clos}(\{ (y-x)/|y-x|\:|\: y\in A\cap B_r(x)\setminus\{  x \}\})$, cf.\ \cite[Sec.\ 2.1]{A72}
	\item $\Theta_{*}^k(\mu,x),\Theta^{*k}(\mu,x)$: $k$-dimensional \textit{lower and upper densities} of outer measure $\mu$ on metric space $X$ in $x\in X$ \cite[Sec.\ 2.10.19]{F},
	\begin{equation*}
	\Theta_*^k(\mu,x)=\liminf_{r\searrow 0}   \frac{\mu(B_r(x))}{\omega_kr^k}\leq\limsup_{r\searrow 0}   \frac{\mu(B_r(x))}{\omega_kr^k}=\Theta^{*k}(\mu,x),
	\end{equation*}
	where $\omega_k=\mathcal L^k(\{ y\in\R^k\:|\:|y|<1 \})$; if $\Theta_*^k(\mu,x)=\Theta^{*k}(\mu,x)$, then we write $\Theta^k(\mu,x)=\Theta_*^k(\mu,x)$
	\item $\textup{Tan}^k(\mu,x)$: \textit{\textit{$(\mu,k)$}-approximate tangent cone} at $x\in X$ \cite[Sec.\ 3.2.16]{F},
	\begin{equation*}
	\textup{Tan}^k(\mu,x)=\bigcap\left\{ \textup{Tan}(B,x)\:\middle|\: B\subset X,\Theta^k(\mu\mres(X\setminus B),x)=0 \right\},
	\end{equation*}
	where $\mu$ outer measure on normed vector space $X$
\end{itemize}
In this article, we make use of several constants. Accordingly, we indicate below the corresponding cross-references within this document.
\begin{itemize}
	\item $C$: \cref{GrowthCondition} (growth condition on $\tau$)
	\item $c_\varepsilon$: \cref{ApproxTauVectorfields} (normalizing constant of $\Psi_{\varepsilon}$)
	\item $c$: \cref{EstimatesNabla} (estimates related to $\nabla^k\Psi_{\varepsilon}$)
	\item $c_{\varepsilon,n}$: defined in the proof of \cref{Prop5.3}
	\item $c_1$: $c_1=3n+20$ (defined in \cref{Prop5.7})
	\item $C_0$: \cref{LimitCurrent} (constant related to Hölder condition)
	\item $\hat c$: \cref{Lemma1}
	\item $c_2$: \cref{LowerBound} (constant depending implicitly on $t$)
\end{itemize}
\subsection{Currents}
\label{Currents}
In this section, we define the space of $k$-currents (in the sense of DeRham) in $\R^n$ and related notions. We follow the terminology by Federer and Fleming in \cite{F,FF60} up to some small modifications.
Let $U\subset\R^n$ be open.
\begin{defin}[$k$-current]
The space of \textbf{$k$-currents} in $U$, denoted by $\D_k(U)$, is defined as the topological dual space of $\D^k(U)$. 
\end{defin}
If $T\in \D_k(U)$, then we use the following standard notation:
\begin{itemize}
	\item $\partial T$: \textit{boundary} $\partial T\in\D_{k-1}(U)$ of $T$ defined by $\partial T(\omega)=T(\e\omega)$ for $\omega\in\D^k(U)$ if $k\in\mathbb N$ 
	\item $\spt(T)$: \textit{support} of $T$ \cite[Sec.\ 4.1.1]{F},
	\begin{equation*}
		\spt(T)=\bigcap\{ A\subset U\textup{ relatively closed}\:|\: T(\omega)=0\textup{ for all }\omega\in\D^k(U)\textup{ with }\textup{supp}(\omega)\subset U\setminus A \}
	\end{equation*}
	\item $\Mb(T)$: \textit{mass} of $T$, 
	\begin{equation*}
		\Mb(T)=\sup\{ T(\omega)\:|\:\omega\in\D^1(U),|\omega_x|_{\textup{co}} \leq 1\textup{ for all }x\in U \}
	\end{equation*}
	\item $|T|_K^\flat$: \textit{flat seminorm} of $T$ with respect to compact $K\subset U$ \cite[Sec.\ 4.1.12]{F},
	\begin{equation*}
		|T|_K^\flat=\sup\{ \langle\omega,T\rangle\:|\:\omega\in\D^k(U),|\omega|_K^\flat\leq 1 \}
	\end{equation*}
		\item $|T|^\flat$: defined by (called \textit{flat norm} if $\D_k(U)$ is restricted to the \textit{flat $k$-chains} in $U$)
	\begin{equation*}
		|T|^\flat=\inf\{ \mathbb M(\widetilde{T})+\mathbb M(\partial \widetilde{T} -T)\:|\:\widetilde{T}\in\D_{k+1}(U) \}
	\end{equation*}
\end{itemize}
\begin{defin}[{Cartesian product of currents \cite[Sec.\ 4.1.8]{F}}]
\label{ProdCurrents}
Let $V\subset\R^m$ and $T_U\in\D_k(U),T_V\in\D_\ell(V)$. Write $p_U:U\times V\to U$ and $p_V:U\times V\to V$ for the natural projections. Then there exists a unique $(k+\ell)$-current $T_U\times T_V\in\D_{k+\ell}(U\times V)$, called the \textbf{cartesian product of $T_U$ and $T_V$}, characterized by the property\footnote{If $W\subset\R^n$ open and $\omega\in\D^p(W),\widetilde{\omega}\in\D^q(W)$, then $\omega\wedge\widetilde{\omega}\in\D^{p+q}(W)$ is defined by $(\omega\wedge\widetilde{\omega})(x)=\omega(x)\wedge\widetilde{\omega}(x)$ for all $x\in W$ \cite[Sec.\ 4.1.6]{F}. Moreover, we have $p_U^\#\omega_1\in\D^i(U\times V)$ with $\langle\xi,(p_U^\#\omega_1)(u,v)\rangle=\langle (\Jm p_U(u,v))_\#\xi,\omega_1(p_U(u,v)) \rangle=\langle u_1\wedge\ldots\wedge u_i,\omega_1(u)\rangle$ for all $(u,v)\in U\times V$ and $\xi=(u_1,v_1)\wedge\ldots\wedge (u_i,v_i)\in\Lambda_i(\R^n\times\R^m)$ and similar for $p_V^\#\omega_2$ \cite[Secs.\ 1.3.1,1.4.1, 4.1.6]{F}.}
\begin{equation*}
(T_U\times T_V)(p_U^{\#}\omega_1\wedge p_V^{\#}\omega_2)=\begin{cases*}
\langle\omega_1,T_U\rangle\langle\omega_2,T_V\rangle&if $i=k$,\\
0&else,
\end{cases*}
\end{equation*}
for all $\omega_1\in\D^i(U),\omega_2\in\D^{k+\ell-i},i=0,\ldots,k+\ell$.
\end{defin}
In particular, we will use $\llbracket 0,1\rrbracket\times T\in\D_2(\R\times\R^n)$ for $T\in\D_1(\R^n)$, where $\llbracket 0,1\rrbracket\in\D_1(\R)$ is defined by \cite[p.\ 362]{F}
\begin{equation*}
	\llbracket 0,1\rrbracket (\phi)=\int_{[0,1]}\phi\e\Leb
\end{equation*}
for all $\phi\in\D^1(\R)$.
Next, we introduce countably $\Hak$-rectifiable sets and related notions. Afterwards, we will define rectifiable $k$-currents.
Every countably $\Hak$-rectifiable set $S\subset U$ is $(\mathcal H^k,k)$ rectifiable in the sense of \cite[Def.\ 3.2.14]{F}.
\begin{defin}[Countably $\Hak$-rectifiable]
\label{CountablyHkrectifiable}
A set $S\subset U$ is called \textbf{countably $\Hak$-rectifiable} if it is $\Hak$-measurable (in the sense of Carath\'eodory) and
\begin{equation*}
	\Hak\left( S\setminus\bigcup f_i(\R^k) \right)=0,
\end{equation*}
where the union is over countably many Lipschitz maps $f_i:\R^k\to U$.
\end{defin}
If $S\subset U$ is countably $\Hak$-rectifiable with $\Hak(S)<\infty$, then $\Theta^k(\Hak\mres S,x)=1$ and $\textup{Tan}^k(\Hak\mres S,x)\in G(n,k)$ for $\Hak$-a.e.\ $x\in S$ \cite[Thm.\ 3.2.19]{F}, cf.\ \cite[Ch.\ 3, § 1, Thm.\ 1.6]{LS}. We will often identify $\textup{Tan}^k(\Hak\mres S,x)$ with the projection $\textup{Tan}^k(\Hak\mres S,x):\R^n\to \textup{Tan}^k(\Hak\mres S,x)$. In the case $k=1$, we will also write $T_xS=\textup{Tan}^1(\Ha\mres S,x)$.
\begin{defin}[{Divergence relative to countably $\mathcal{H}^k$-rectifiable set, cf.\ \cite[Def.\ 4.2]{A72}}]
	Let $S\subset U$ be countably $\mathcal{H}^k$-rectifiable and locally $\Hak$-finite and $f\in C_c^1(U;\R^n)$. Then the \textbf{divergence of $f$ relative to $S$} is defined by
	\begin{equation*}
		\textup{div}_S(f(x))=P(x):\Jm f(x)
	\end{equation*}
	for $\mathcal{H}^k$-a.e.\ $x\in S$, where $P(x)=\sum_{i=1}^{k}e_i\otimes e_i$ for any orthonormal basis $e_1,\ldots,e_k\in \textup{Tan}^k(\Hak\mres S,x)$.
\end{defin}
\begin{defin}[Tangentiality to countably $\mathcal{H}^k$-rectifiable set]
	Let $S\subset U$ be countably $\Hak$-rectifiable and locally $\Hak$-finite. If $\theta\in L_{loc}^1(\Hak\mres S;\Lambda_k\R^n)$, then $\theta$ is \textbf{tangential to $S$} if 
	\begin{equation*}
		\theta(x)=v_1(x)\wedge\ldots\wedge v_k(x)
	\end{equation*}
	for certain $v_i(x)\in\textup{Tan}^k(\Hak\mres S,x)$ for $\Hak$-almost every $x\in S$.
\end{defin}
\begin{defin}[Real and integer-multiplicity rectifiable $k$-current]
	\label{RectCurrent}
	Let $T\in\D_k(U)$. Then $T$ is a \textbf{(real) rectifiable $k$-current} if there exists a countably $\Hak$-rectifiable and locally $\Hak$-finite set $S\subset U$ and $\theta\in L_{loc}^1(\Hak\mres S;\Lambda_k\R^n)$ tangential to $S$ such that $T=\theta\Hak\mres S$, that is
	\begin{equation*}
		\langle \omega,T\rangle=\int_S\langle\theta(x),\omega_x\rangle\e\Hak(x)
	\end{equation*}
	for all $\omega\in\D^k(U)$.
	We write $\rct_k(U)$ for the set of rectifiable $k$-currents in $U$.
	If $T=\theta\Hak\mres S\in \rct_k(U)$ with $|\theta(x)|_{\textup{vol}}\in\mathbb{N}$ for $\Hak$-a.e.\ $x\in S$, then we say that $T$ is an \textbf{integer-multiplicity rectifiable $k$-current}.
\end{defin}
Note that the definition of the space $\rct_k(U)$ (and $\mathcal I_k(U)$ below) is different from the definition in \cite[Sec.\ 4.1.24]{F}, see also \cite[Thm.\ 4.1.28]{F}.
\begin{defin}[Integral $k$-current]
	Let $T\in\D_k(U)$. If both $T$ and $\partial T$ are integer-multiplicity rectifiable currents, then $T$ is called an \textbf{integral $k$-current}. We write $\mathcal I_k(U)$ for the set of integral $k$-currents in $U$.
\end{defin}
Next we use the same definition as in \cite[Sec.\ 4.1.7]{F}.
\begin{defin}[Normal $k$-currents]
The subspace of \textbf{normal $k$-currents} $\mathcal{N}_k(U)\subset\D_k(U)$ is given by
\begin{equation*}
	\mathcal{N}_k(U)=\D_k(U)\cap\{\spt\textup{ compact}\}\cap\{\Mb+\Mb\circ\partial<\infty\}.
\end{equation*}
\end{defin}
The following definition can be found in \cite[Sec.\ 4.1.12]{F}.
\begin{defin}[Flat $k$-chains]
For compact $K\subset U$ let $\mathbb F_{k,K}(U)$ be the closure of $\N_k(U)\cap\{ \spt\subset K \}$ with respect to $|\cdot|_K^\flat$. Then the space of \textbf{flat $k$-chains} in $U$ is defined by
\begin{equation*}
\mathbb F_k(U)=\bigcup_{K}\mathbb F_{k,K}(U),
\end{equation*}
where the union is over all compact $K\subset U$.
\end{defin}
If $f:U\to V$ is a locally Lipschitz map, where $V\subset \R^m$ is open, then there is a natural notion of \textit{pushforward} $f_\#:\mathbb F_k(U)\to\mathbb F_k(V)$ \cite[Sec.\ 4.1.14]{F}. It is a linear map with $\spt(f_\# T)\subset f(\spt(T))$ and $\partial (f_\# T)=f_\#(\partial T)$ for all $T\in \mathbb F_k(U)$. Other properties relevant to our setting are summarized in \cref{FedererPushforward}.
\subsection{Transportation networks}
\label{TransportationNetworks}
In this section, we recall the definition of a transportation network and related notions from \cref{Intro,MainResults}. We will also consider the first variation with respect to the branched transport cost.
\begin{defin}[Transportation network]
	A \textbf{transportation network} is any $T=\theta\Ha\mres S\in\N_1(\R^n)\cap\rct_1(\R^n)$. We write $\T$ for the space of transportation networks.
\end{defin}
	Note that automatically $\Ha(S)<\infty$ because $T$ is compactly supported and $S$ locally $\Ha$-finite.
\begin{rem}[Interpretation of transportation network]
	Let $T\in\T$. Since $\Mb(\partial T)<\infty$, the $0$-current $\partial T$ can be represented by a Radon measure in $\M(\R^n)$ (Riesz representation theorem). The Hahn decomposition theorem implies $\partial T=\mu_+-\mu_-$ with $\mu_-,\mu_+\in\M_+(\R^n)$. By $\partial T(\R^n)=0$ we have $\mu_-(\R^n)=\mu_+(\R^n)$. Therefore, we can interpret $T$ as a \textit{mass flux} from the source $\mu_-$ to the sink $\mu_+$. Two important structure theorems can be found in \cite[Thm.\ 5.5]{Sil} and \cite[Thm.\ C]{S}.
\end{rem}
Now fix a transportation cost $\tau$ (recall \cref{TransportationCost}).
\begin{defin}[Branched transport cost]
	\label{BTC}
	Let $T=\theta\Ha\mres S\in\mathcal R_1(\R^n)$. The \textbf{branched transport cost} of $T$ inside $B\in\B(\R^n)$ is defined by
	\begin{equation*}
		|T|_\tau(B)=\int_{B\cap S}\tau(|\theta|)\e\Ha\in[0,\infty].
	\end{equation*}
\end{defin}
\begin{defin}[$\tau$-weight measure]
	\label{TauWeight}
	Let $T=\theta\Ha\mres S\in \rectn$. We write $|T|_\tau(\phi)=\int_S\phi\tau(|\theta|)\e\Ha$ for $\phi\in C(\R^n;\R_+)$. If $|T|_\tau(\R^n)<\infty$, then the \textbf{$\tau$-weight measure} $|T|_\tau\in\M_+(\R^n)$ is defined by
	\begin{equation*}
		|T|_\tau(\phi)=\int_S\phi\tau(|\theta|)\e\Ha
	\end{equation*}
	for all $\phi\in C_c(\R^n)$. If $\tau(m)=m$, then we write $|T|(\R^n)=|T|_\tau(\R^n)$ respectively $|T|(\phi)=|T|_\tau(\phi)$ if $|T|(\R^n)<\infty$ and $\phi\in C_c(\R^n)$, which just indicates the usual (total) variation.
\end{defin}
For the next statement, we will need the following definition.
\begin{defin}[{$(\mu,k)$-approximate differential \cite[Sec.\ 3.2.16]{F}}]
Let $X,Y$ be normed vector spaces, $\mu$ outer measure on $X$, and $A\subset X$. A map $f:A\to Y$ is called \textbf{$(\mu,k)$-approximately differentiable at $x\in A$} if there exist $r\in\R_+^*$ and map $g:B_r(x)\to Y$ such that $g$ is Fr\'echet differentiable in $x$ and $\Theta^k(\mu\mres (B_r(x)\cap\{ f\neq g \}),x)=0$. If $f$ is $(\mu,k)$-approximately differentiable at $x$, then we write
\begin{equation*}
	(\mu,k)\text{-}\apD f(x)=\Jm g(x)\bigg|_{\textup{Tan}^k(\mu,x)}.
\end{equation*}
\end{defin}
If $S\subset \R^n$ is countably $\Hak$-measurable with $\Hak(S)<\infty$, then any Lipschitz map $f:S\to\R^m$ is $(\Hak\mres S,k)$-approximately differentiable at $\Hak$-a.e.\ $x\in S$ \cite[Thm.\ 3.2.19]{F}. If $m\geq k$, then the \textit{$(\Hak\mres S,k)$-approximate Jacobian} is naturally defined by \cite[Cor.\ 3.2.20]{F}
\begin{equation*}
(\Hak\mres S,k)\text{-}\apJ f(x)=|((\Hak\mres S,k)\text{-}\apD f(x))_\#\xi|_{\textup{vol}}
\end{equation*}
for $\Hak$-a.e.\ $x\in S$, where $\#$ indicates the pushforward\footnote{If $L:\R^\ell\to \R^m$ is linear and $k\leq\min\{ \ell,m \}$, then $L_\#:\Lambda_k\R^\ell\to\Lambda_k\R^m$ is uniquely characterized by $L_\#(v_1\wedge\ldots\wedge v_k)=Lv_1\wedge\ldots\wedge v_k$ for all $v_1,\ldots,v_k\in\R^\ell$ \cite[Ch.\ 6, § 1]{LS}.} and $\xi\in\Lambda_k\textup{Tan}^k(\Hak\mres S,x)$ with $|\xi|_{\textup{vol}}=1$. If $k=1$, then we abbreviate $\apD f(x)=(\Ha\mres S,1)\text{-}\apD f(x)$ and $\apJ f(x)=(\Ha\mres S,1)\text{-}\apJ f(x)$ for $\Ha$-a.e.\ $x\in S$. 
We will frequently use the following statement, which is a straightforward implication of \cite[Prop.\ 4.1.30]{F}.
\begin{prop}[Transportation network properties preserved under locally Lipschitz pushforward]
	\label{FedererPushforward}
	Let $T=\theta\Ha\mres S\in \rectn$ be compactly supported. Further, suppose that $f:\R^n\to\R^n$ is locally Lipschitz. Then $f_\# T=\eta\Ha\mres f(S)\in \rectn$. In particular, if $T\in\T$, then $f_\# T\in\T$.
	More specifically, the following is satisfied.
	\begin{enumerate}
		\item For all $\omega\in\D^1(\R^n)$ we have
		\begin{equation*}
			\langle\omega, f_\# T\rangle=\int_S\langle \apD(f|_S)(x)\theta(x),\omega_{f(x)}\rangle\e\Ha(x).
		\end{equation*}
		\item There exist representatives $\bar\theta,\bar\eta$ such that\footnote{Summation takes into account that disconnected transportation paths in $T$ can be joined through application of $f_\#$. }
		\begin{equation*}
			\bar\eta(y)=\sum_{x\in(f|_S)^{-1}(y)}\apD(f|_S)(x)\bar\theta(x)/\apJ(f|_S)(x)
		\end{equation*}
		for $\Ha$-a.e.\ $y\in f(S)$. Further, we have\footnote{Divisions by $\apJ(f|_S)(x)$ ensure that $\bar\eta(y)$ has correct multiplicity $\sum_{x\in(f|_S)^{-1}(y)}(\pm)_x|\bar\theta(x)|$, where signs $(\pm)_x$ model possible cancellations due to opposite orientation.}
		\begin{equation*}
			\apD(f|_S)(x)\bar\theta(x)/\apJ(f|_S)(x)=\pm|\bar\theta(x)|\bar\eta(y)/|\bar\eta(y)|
		\end{equation*}
		if $x\in(f|_S)^{-1}(y)$ and $\bar\eta(y)\neq 0$ for $\Ha$-a.e.\ $y\in f(S)$.
	\end{enumerate}
\end{prop}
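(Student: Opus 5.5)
The plan is to reduce the statement to the area/coarea formula for Lipschitz maps on rectifiable sets, that is, to Federer's description of pushforwards of rectifiable currents in \cite[Prop.\ 4.1.30]{F} (with \cite[Cor.\ 3.2.20, Thm.\ 3.2.22]{F}). Since $T$ is compactly supported and $f$ is locally Lipschitz, we may replace $f$ by a globally Lipschitz map agreeing with it on a neighbourhood of $\spt(T)$. This changes neither $f_\#T$ nor $f|_S$ up to an $\Ha$-null set, because $\Ha(S\setminus \spt(T))$ may be assumed zero after discarding the points where $\theta=0$.

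\textbf{Step 1 (formula 1).} First I treat the case where $S\subset f_i(\R)$ for a single Lipschitz curve, or better a $C^1$ embedded curve. By \cite[Thm.\ 3.2.29]{F}, $S$ is covered up to an $\Ha$-null set by countably many $C^1$ curves, so a partition of $S$ into countably many disjoint Borel pieces reduces to this case. For a piece of a $C^1$ curve, the formula follows from $f_\#$ being given by pulling back $\omega$ via $f$. On smooth $f$ one has $\langle\omega,f_\#T\rangle=\langle f^\#\omega,T\rangle=\int_S\langle \Jm f(x)\theta(x),\omega_{f(x)}\rangle\e\Ha$. For Lipschitz $f$, I approximate $f$ by mollifications $f_\varepsilon$. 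Then $(f_\varepsilon)_\#T\to f_\#T$ by \cite[Sec.\ 4.1.14]{F}, and $\Jm f_\varepsilon\,\theta\to\apD(f|_S)\theta$ in $L^1(\Ha\mres S)$. This convergence is the delicate point; it is exactly what \cite[Lem.\ 4.1.25, Prop.\ 4.1.30]{F} supply, so I would cite it rather than reprove it. Countable additivity, together with $\theta\in L^1$ and $\mathrm{Lip}(f)<\infty$ for dominated convergence, gives item 1 for general $S$.

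\textbf{Step 2 (representation and item 2).} I apply the area formula for Lipschitz maps from a $1$-rectifiable set into $\R^n$ \cite[Cor.\ 3.2.20]{F}. Let $g(x)=\langle\apD(f|_S)(x)\theta(x)/\apJ(f|_S)(x),\omega_{f(x)}\rangle$ on $\{\apJ(f|_S)>0\}$. The set where $\apJ(f|_S)=0$ contributes nothing to item 1, because there $\apD(f|_S)(x)$ annihilates the tangent direction and hence $\apD(f|_S)(x)\theta(x)=0$. The area formula therefore gives
\begin{equation*}
\langle\omega,f_\#T\rangle=\int_{f(S)}\sum_{x\in(f|_S)^{-1}(y)}g(x)\e\Ha(y),
\end{equation*}
so $f_\#T=\bar\eta\Ha\mres f(S)$ with $\bar\eta$ as stated. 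The area formula also shows that $\Ha$-a.e.\ fibre is finite or countable, with summable terms since $\int_{f(S)}\sum|g|\le \mathrm{Lip}(f)\int_S|\theta|<\infty$. Moreover $f(S)$ is countably $\Ha$-rectifiable, being a countable union of Lipschitz images plus a null set. It is locally $\Ha$-finite by $\Ha(f(S))\le\mathrm{Lip}(f)\Ha(S)<\infty$.

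For tangentiality and the sign statement, I use that for $\Ha$-a.e.\ $y\in f(S)$ and every $x$ in the fibre with $\apJ>0$, the image $\apD(f|_S)(x)(T_xS)$ equals $T_y f(S)$. This holds because the approximate tangent lines of the image pieces $f(S_i)$ coincide a.e.\ with that of $f(S)$ (locality of approximate tangents, \cite[Thm.\ 3.2.19]{F}). Hence each summand is a multiple of a unit vector spanning $T_yf(S)$. Its length is $|\apD(f|_S)\theta|/\apJ=|\theta|$, because $\theta$ is tangential and $\apJ$ is precisely the stretching factor on $T_xS$. This gives $\apD(f|_S)(x)\bar\theta(x)/\apJ(f|_S)(x)=\pm|\bar\theta(x)|\bar\eta(y)/|\bar\eta(y)|$ wherever $\bar\eta(y)\neq0$, and $\bar\eta$ is tangential. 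Finally, $\spt f_\#T\subset f(\spt T)$ is compact and $\partial f_\#T=f_\#\partial T$. For the normal case, $\Mb(f_\#T)\le\mathrm{Lip}(f)\Mb(T)$ and the analogous bound for the boundary give $f_\#T\in\N_1\cap\rct_1$.

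I expect the main obstacle to be the measure-theoretic bookkeeping in Step 2, namely choosing Borel representatives so that the fibre sums are well defined and the tangent-line identification holds for $\Ha$-a.e.\ $y$ simultaneously for all preimages. I would handle it by decomposing $S$ into countably many pieces on which $f|_S$ is injective with bi-Lipschitz inverse, via \cite[Lem.\ 3.2.2]{F}, and working piece by piece.
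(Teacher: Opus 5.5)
Your route is essentially the paper's. The paper offers no argument beyond calling the statement a straightforward consequence of \cite[Prop.\ 4.1.30]{F}. Your Step 2 is a correct unpacking of that proposition:
\begin{itemize}
\item the area formula on $\{\apJ(f|_S)>0\}$;
\item the observation that $\{\apJ(f|_S)=0\}$ contributes nothing;
\item a decomposition into pieces on which $f|_S$ is injective with Lipschitz inverse;
\item the identification $\apD(f|_S)(x)[T_xS]=T_{f(x)}f(S)$ at a.e.\ relevant fibre point.
\end{itemize}

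One step in Step 1 is false as written. For ambient mollifications $f_\varepsilon=f\ast\rho_\varepsilon$ it is \emph{not} true in general that $\Jm f_\varepsilon\,\theta\to\apD(f|_S)\theta$ in $L^1(\Ha\mres S)$. Mollifying in $\R^n$ does not see the restriction of $f$ to the $\Lebn$-null set $S$, so this cannot be what \cite[4.1.25, 4.1.30]{F} supply.

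Here is a counterexample. Take $n=2$, $S=[0,1]\times\{0\}$, $\theta=e_1$, and $f(x)=\sum_{k\geq 1}2^{-k}\varphi(2^kx_2)\sin(2^kx_1)$ with $\varphi\in C_c^\infty((1,2))$ positive on $(1,2)$.
\begin{itemize}
\item Then $f$ is Lipschitz and $f|_S\equiv 0$, so $\apD(f|_S)\theta=0$.
\item Take $\varepsilon=\tfrac32 2^{-K}$ and a positive radial mollifier supported in the unit ball. Only the layers $k\geq K$ meet the support, and $\partial_1 f_\varepsilon(x_1,0)=\sum_{k\geq K}\mathrm{Re}(a_k e^{i2^kx_1})$.
\item Here $a_K=\int\varphi(-\tfrac32u_2)e^{-i\frac32u_1}\rho(u)\,\e u$ has positive real part and is independent of $K$, while $\sum_{k\geq K}|a_k|$ stays bounded.
\item Testing against $\cos(2^Kx_1)$ therefore shows $\|\partial_1 f_\varepsilon(\cdot,0)\|_{L^1(0,1)}\not\to 0$.
\end{itemize}

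The repair is easy, because only weak convergence is needed. On a $C^1$ arc $\gamma$ you have $\Jm f_\varepsilon(\gamma)\gamma'=(f_\varepsilon\circ\gamma)'$. This converges to $(f\circ\gamma)'$ weakly-$*$ in $L^\infty$, by uniform convergence with uniformly bounded Lipschitz constants. Moreover $(f\circ\gamma)'=\apD(f|_S)(\gamma)\gamma'$ a.e.\ on $\gamma^{-1}(S)$, and $\omega\circ f_\varepsilon\to\omega\circ f$ uniformly. Since $|\theta|\circ\gamma$ is integrable, you get item 1 on each arc, and then on $S$ by dominated convergence together with mass-continuity of $f_\#$. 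Alternatively, cite \cite[Prop.\ 4.1.30]{F} directly for item 1, as the paper does. With this correction your proposal is sound.
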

\begin{coro}[First variation of branched transport cost]
	\label{EnergyPushforward}
	Let $T\in\T$. Consider a vector field $g\in C_c^1(\R^n;\R^n)$ and set $f_t(x)=x+tg(x)$ for $t\in\R\setminus\{ 0\}$ and $x\in\R^n$.
	Then, for $|t|$ sufficiently small, we have
	\begin{equation*}
		|(f_t)_\# T|_\tau(\R^n)=\int_S\tau(|\theta|)\Jac f_t\e\Ha.
	\end{equation*}
	In particular, if $|T|_\tau(\R^n)<\infty$, then
	\begin{equation*}
		\partial_t|(f_t)_\# T|_\tau(\R^n)\bigg|_{t=0}=\int_S\tau(|\theta|)\textup{div}_S(g)\e\Ha.
	\end{equation*}
\end{coro}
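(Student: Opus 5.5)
The first formula follows from the area formula applied to the pushforward representation in \cref{FedererPushforward}, and the derivative formula then follows by differentiating under the integral sign. Since $g\in C_c^1$, the map $f_t=\textup{id}+tg$ is a $C^1$-diffeomorphism of $\R^n$ for $|t|<1/\textup{Lip}(g)$. Indeed, it is injective, because $|f_t(x)-f_t(y)|\geq(1-|t|\textup{Lip}(g))|x-y|$, and it is surjective, being a proper local diffeomorphism equal to the identity outside a compact set. In particular $f_t|_S$ is injective, so every fibre $(f_t|_S)^{-1}(y)$, $y\in f_t(S)$, consists of exactly one point. The sum in part 2 of \cref{FedererPushforward} therefore has a single term, and no cancellations occur. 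For $\Ha$-a.e.\ $y=f_t(x)$ we thus obtain
\begin{equation*}
|\bar\eta(y)|=|\apD(f_t|_S)(x)\bar\theta(x)|/\apJ(f_t|_S)(x)=|\bar\theta(x)|.
\end{equation*}
This holds because $\bar\theta(x)$ is tangential, so $\apD(f_t|_S)(x)\bar\theta(x)=\Jm f_t(x)\bar\theta(x)$, and the approximate Jacobian equals $|\Jm f_t(x)e|$ for a unit tangent vector $e$ spanning $T_xS$.

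Next I apply the area formula for the injective Lipschitz map $f_t|_S$ on the countably $\Ha$-rectifiable set $S$ (\cite[Cor.\ 3.2.20]{F}):
\begin{equation*}
|(f_t)_\#T|_\tau(\R^n)=\int_{f_t(S)}\tau(|\bar\eta|)\e\Ha=\int_S\tau(|\bar\theta|)\,\apJ(f_t|_S)\e\Ha.
\end{equation*}
Here $\apJ(f_t|_S)(x)=|\Jm f_t(x)e|=\Jac(f_t)$ in the one-dimensional tangential sense, which I read as the meaning of $\Jac f_t$ in the statement. One should also note that $\eta\neq0$ $\Ha$-a.e.\ on $f_t(S)$, up to the null set where $\theta=0$, so the representation $\eta\Ha\mres f_t(S)$ is legitimate for computing $|\cdot|_\tau$.

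For the derivative, write $J_t(x)=|e+t\Jm g(x)e|=(1+2t\,e\cdot\Jm g(x)e+t^2|\Jm g(x)e|^2)^{1/2}$. This is smooth in $t$, with $\partial_tJ_t|_{t=0}=e\cdot\Jm g(x)e=P(x):\Jm g(x)=\textup{div}_S g(x)$. Moreover $|\partial_tJ_t|\leq c|\Jm g|_\infty$ uniformly for small $|t|$, and $J_t-1$ vanishes outside $\spt g$. Since $\tau(|\theta|)\in L^1(\Ha\mres S)$ by the assumption $|T|_\tau(\R^n)<\infty$, dominated convergence applied to the difference quotients $(J_t-1)/t$ gives the formula.

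The only delicate point is the identification of the multiplicity $|\bar\eta|\circ f_t=|\bar\theta|$, which is where injectivity of $f_t$ for small $|t|$ is essential. Without injectivity, the sum over fibres could merge or cancel masses, and subadditivity of $\tau$ would then only yield an inequality.
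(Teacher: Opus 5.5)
Your proposal is correct and follows the paper's proof essentially step by step. You show $f_t$ is a diffeomorphism for small $|t|$, then combine \cref{FedererPushforward} with the area formula to get $|(f_t)_\# T|_\tau(\R^n)=\int_S\tau(|\theta|)\Jac f_t\e\Ha$ with $\Jac f_t=|e+t\Jm g\,e|$, and finally differentiate under the integral with an $L^1$ dominant to obtain $\textup{div}_S(g)$. The only differences are cosmetic: you prove bijectivity of $f_t$ directly (lower Lipschitz bound plus properness) instead of citing Hadamard--Caccioppoli, and you spell out the single-point-fibre argument giving $|\bar\eta\circ f_t|=|\bar\theta|$, which the paper leaves implicit.
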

\begin{proof}
	Note that $f_t$ is proper and $\textup{det}(\Jm f_t)\neq 0$ for $|t|$ sufficiently small, hence $f_t$ is a diffeomorphism for $|t|$ sufficiently small (Hadamard\textendash Cacciopoli theorem).
	By the first bullet point in \cref{FedererPushforward}, the area formula \cite[Cor.\ 3.2.20]{F}, and the second bullet point in \cref{FedererPushforward} we get
	\begin{equation*}
	|(f_t)_\# T|_\tau(\R^n)=\int_{f_t(S)}\tau(|\eta|)\e\Ha=\int_S\tau(|\eta(f_t(x))|)\Jac f_t\e\Ha=\int_S\tau(|\theta|)\Jac f_t\e\Ha,
	\end{equation*}
	which shows the first statement.
	We have $\Jac f_t=|e+t\mathrm{D}ge|$, where $e(x)\in T_xS$ for $\Ha$-a.e.\ $x\in S$. Thus
	\begin{equation*}
		\partial_t\Jac f_t\bigg|_{t=0}=\left(\frac{(e+t\mathrm{D}ge)^\intercal}{|e+t\mathrm{D}ge|}\mathrm{D}ge\right)\bigg|_{t=0}=\textup{div}_S(g)
	\end{equation*}
	$\Ha$-a.e.\ on $ S$.
	The second statement follows from the Leibniz integral rule:
	Since $|T|_\tau(\R^n)<\infty$ and $\Jac f_t$ is bounded, we have $\tau(|\theta|)|e+t\mathrm{D}ge|\in L^1(\Ha\mres S;\R)$ for all $t\in\R$.
	Further, for $|t|$ sufficiently small, we get $|\partial_t\tau(|\theta|)\Jac f_t|\leq \tau(|\theta|)(1+|\mathrm{D}g |_2)\in L^1(\Ha\mres S;\R)$. Therefore
	\begin{equation*}
		\partial_t|(f_t)_\# T|_\tau(\R^n)\bigg|_{t=0}=\int_S\partial_t(\tau(|\theta|)\Jac f_t)\e\Ha\bigg|_{t=0}=\int_S\tau(|\theta|)\textup{div}_S(g)\e\Ha.\qedhere
	\end{equation*}
\end{proof}
\Cref{EnergyPushforward} motivates the following definition.
\begin{defin}[$\tau$-variation]
	\label{TauVariation}
	Let $T\in\T$ with $|T|_\tau(\R^n)<\infty$. The \textbf{$\tau$-variation} $\delta_\tau T$ of $T$ is defined by
	\begin{equation*}
		\delta_\tau T(g)=\int_S\textup{div}_S(g)\tau(|\theta|)\e\Ha
	\end{equation*}
	for all $g\in C_c^1(\R^n;\R^n)$. 
\end{defin}
\begin{defin}[{$\phi$-weighted $\tau$-variation, cf.\ \cite[Sec.\ 4.9]{A72}}]
	\label{phiWeightedTauVariation}
	Let $T=\theta\Ha\mres S\in\rectn$ with $|T|_\tau(\R^n)<\infty$ and $\phi\in C^1(\R^n;\R_+)$. The \textbf{$\phi$-weighted $\tau$-variation} of $T$ is defined by
	\begin{equation*}
		\delta_\tau (T,\phi)(g)=\int_S(\phi\textup{div}_S(g)+\nabla\phi\cdot g)\tau(|\theta|)\e\Ha
	\end{equation*}
	for all $g\in C_c^1(\R^n;\R^n)$.
\end{defin}
Finally, we will need the following definitions, cf.\ \cite[Sec.\ 4.3]{B78} and \cite[Sec.\ 4.7]{KT17}.
\begin{defin}[Smoothed $\tau$-variation and $\tau$-weight measure]
	\label{Smoothed}
	Let $T=\theta\Ha\mres S\in\rectn$ with $|T|_\tau(\R^n)<\infty$ and $\Psi\in C_c^\infty(\R^n)$. Define
	\begin{equation*}
		(\Psi\ast\delta_\tau T)(y)=\int_ST_xS\nabla\Psi(x-y)\tau(|\theta(x)|)\e\Ha(x)\qquad\textup{and}\qquad (\Psi\ast |T|_\tau)(y)=\int\Psi(x-y)\e|T|_\tau(x)
	\end{equation*}
	for all $y\in\R^n$.
\end{defin}
It is easy to check that $\Psi\ast\delta_\tau T\in C^\infty(\R^n;\R^n)$ and $\Psi\ast |T|_\tau\in C^\infty(\R^n)$ and one can differentiate under the integrals (Leibniz integral rule). 
Further, the definitions are such that
\begin{equation*}
	\delta_\tau T(\Psi\ast g)=\int g\cdot (\Psi\ast\delta_\tau T)\e\Leb^n\qquad\textup{and}\qquad (\Psi\ast |T|_\tau)(\phi)=\int\phi(\Psi\ast |T|_\tau)\e\Leb^n
\end{equation*}
for all $g\in C_c^1(\R^n;\R^n)$ and $\phi\in C_c(\R^n)$, where the left-hand side in the second equation is defined by $(\Psi\ast |T|_\tau)(\phi)= |T|_\tau(\Psi\ast\phi)$.
\subsection{Varifolds}
\label{Varifolds}
Varifolds were introduced by Almgren \cite{A65}. A comprehensive treatment of varifolds can be found in Allard's work \cite{A72}.
Let $U\subset\R^n$ be open.
Recall from \cref{Basic} that $\mathcal G_k(U)=\R^n\times G(n,k)$.
\begin{defin}[{$k$-varifold and weight measure \cite[Def.\ 3.1]{A72}}]
A \textbf{$k$-varifold} in $U$ is any $[0,\infty]$-valued Radon measure on $\mathcal G_k(U)$. We write $\mathcal V_k(U)$ for the set of $k$-varifolds in $U$.
The \textbf{weight measure} $\| V\|:\B(U)\to[0,\infty]$ of $V\in\mathcal V_k(U)$ is defined by
\begin{equation*}
	\langle \phi, \| V\|\rangle =\int \phi(x)\e V(x,P)
\end{equation*}
for all $\phi\in C_c(U)$.
\end{defin}
Hence $\| V\|$ is just the pushforward of $V$ under the natural projection map $p_U:U\times G(n,k)\to U$. Note that, if $C_c(U)$ is equipped with the uniform norm, then $\| V\|$ may not be an element of the topological dual because we may have $\| V\|(U)=\infty$. However, if one takes the canonical LC topology, then $\| V\|$ is always an element of the topological dual which justifies the bracket notation.
\begin{defin}[$k$-varifold induced by rectifiable $k$-current]
\label{inducedV}
Let $T=\theta\Hak\mres S\in\mathcal R_k(U)$. Then the \textbf{induced varifold} $V=v(|\theta|, S)\in \mathcal V_k(U)$ by $T$ is given by
\begin{equation*}
\langle\phi,V\rangle=\int_S \phi(x,\textup{Tan}^k(\Hak\mres S,x))|\theta(x)|\Hak(x)
\end{equation*}
for all $\phi\in C_c(\mathcal G_k(U))$.
\end{defin}
\subsection{Admissible deformations and functions}
In this section, we introduce classes of admissible (test) functions and deformations. The former are precisely as in \cite[Sec.\ 4.4]{KT17}.
\begin{notat}[{Admissible functions, cf.\ \cite[Sec.\ 4.1]{B78} and \cite[Sec.\ 4.4]{KT17}}]
	We set
	\begin{equation*}
		\mathcal F_j^+=\{ \phi\in C^2(\R^n;[0,1])\:|\:|\nabla\phi|\leq j\phi\textup{ and }|\nabla^2\phi|_2\leq j\phi\textup{ in }\R^n \}
	\end{equation*}
	and
	\begin{equation*}
		\mathcal F_j^n=\{ g\in C^2(\R^n;\R^n)\:|\:|g|\leq j,|\Jm g|_2\leq j,|\Jm^2g|_{2,2}\leq j\textup{, and }|g|_{L^2}\leq j \},
	\end{equation*}
	where $|g|_{L^2}=\left(\int|g|^2\e\Lebn\right)^{1/2}$.
\end{notat}
If $\phi\in \mathcal F_j^+$, then $\phi>0$ in $\R^n$ or $\phi\equiv 0$, which can be read off from the following lemma.
\begin{lem}[{Properties of functions in $\mathcal F_j^+$, see \cite[Sec.\ 4.2]{B78} and \cite[Lem.\ 4.6]{KT17}}]
	\label{propertiesF}
	Let $\phi\in\mathcal F_j^+$. Then
	\begin{multline*}
		\phi(x)\leq\phi(y)e^{j|x-y|},\qquad|\phi(x)-\phi(y)|\leq j|x-y|\phi(x)e^{j|x-y|},\qquad\textup{and}\\|\phi(x)-\phi(y)-\nabla\phi(y)\cdot(x-y)|\leq j|x-y|^2\phi(y)e^{j|x-y|}
	\end{multline*}
	for all $x,y\in\R^n$.
\end{lem}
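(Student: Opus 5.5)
The three inequalities follow from a Grönwall-type argument along the segment joining $x$ and $y$, using only $|\nabla\phi|\leq j\phi$ and $|\nabla^2\phi|_2\leq j\phi$. If $\phi\equiv 0$ there is nothing to prove, so assume $\phi\not\equiv 0$.

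\emph{First inequality.} Fix $x,y\in\R^n$ and set $\gamma(s)=y+s(x-y)$ and $u(s)=\phi(\gamma(s))\geq 0$ for $s\in[0,1]$. Then
\begin{equation*}
|u'(s)|=|\nabla\phi(\gamma(s))\cdot(x-y)|\leq j|x-y|u(s).
\end{equation*}
Grönwall's inequality (or differentiating $u(s)e^{-j|x-y|s}$ when $u>0$) gives $u(1)\leq u(0)e^{j|x-y|}$, that is $\phi(x)\leq\phi(y)e^{j|x-y|}$. If $u$ vanishes somewhere on $[0,1]$, uniqueness for the linear differential inequality $|u'|\leq Lu$ forces $u\equiv 0$ on the segment, and the estimate still holds. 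The same argument shows that the zero set of $\phi$ is open and closed, so $\phi>0$ everywhere, which is the remark preceding the lemma. By symmetry we also get $\phi(\gamma(s))\leq\phi(x)e^{j|x-y|}$ and $\phi(\gamma(s))\leq\phi(y)e^{j|x-y|}$ for all $s\in[0,1]$, since every point of the segment lies within distance $|x-y|$ of both endpoints.

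\emph{Second inequality.} By the mean value theorem,
\begin{equation*}
|\phi(x)-\phi(y)|\leq\sup_{s}|\nabla\phi(\gamma(s))||x-y|\leq j|x-y|\sup_s\phi(\gamma(s))\leq j|x-y|\phi(x)e^{j|x-y|},
\end{equation*}
where the last step uses the bound $\phi(\gamma(s))\leq\phi(x)e^{j|x-y|}$ from above.

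\emph{Third inequality.} Taylor's formula with integral remainder gives
\begin{equation*}
\phi(x)-\phi(y)-\nabla\phi(y)\cdot(x-y)=\int_0^1(1-s)\,(x-y)^\intercal\nabla^2\phi(\gamma(s))(x-y)\e s.
\end{equation*}
Its absolute value is at most $\tfrac12 j|x-y|^2\sup_s\phi(\gamma(s))\leq j|x-y|^2\phi(y)e^{j|x-y|}$, now using the bound $\phi(\gamma(s))\leq\phi(y)e^{j|x-y|}$. The factor $\tfrac12$ is not needed, so the stated constant has room to spare.

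The only delicate point is the degenerate case where $\phi$ vanishes somewhere, which the uniqueness argument in the first step handles. Everything else is routine, and these are exactly the arguments of \cite[Sec.\ 4.2]{B78} and \cite[Lem.\ 4.6]{KT17}.
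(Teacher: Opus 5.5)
Your proof is correct, and it is the standard argument from \cite[Sec.\ 4.2]{B78} and \cite[Lem.\ 4.6]{KT17}, which the paper cites without reproducing: a Gr\"onwall bound along the segment, then the mean value theorem and Taylor's formula, each combined with the first estimate applied at intermediate points of the segment. Your separate handling of zeros of $\phi$ is harmless but unnecessary: $\frac{d}{ds}\bigl(u(s)e^{-j|x-y|s}\bigr)=\bigl(u'(s)-j|x-y|u(s)\bigr)e^{-j|x-y|s}\leq 0$ holds whether or not $u$ vanishes, so the first estimate, and with it the dichotomy $\phi>0$ or $\phi\equiv 0$, follows directly.
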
 
\begin{lem}[Elements of $\Jm\mathcal F_j^n$ are $j$-Lipschitz]
	\label{propertiesFn}
	Let $g\in\mathcal F_j^n$. Then
	\begin{equation*}
		|\Jm g(x)-\Jm g(y)|_2\leq j|x-y|
	\end{equation*}
	for all $x,y\in\R^n$.
\end{lem}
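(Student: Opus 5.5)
The plan is to reduce the claim to the mean value theorem applied along the segment $[x,y]$, using only the bound $|\Jm^2 g|_{2,2}\leq j$ from the definition of $\mathcal F_j^n$. Since $g\in C^2(\R^n;\R^n)$, the map $s\mapsto \Jm g(y+s(x-y))$, $s\in[0,1]$, is $C^1$ with values in $\R^{n\times n}$. The fundamental theorem of calculus, applied entrywise, gives
\begin{equation*}
\Jm g(x)-\Jm g(y)=\int_0^1 \Jm^2 g(y+s(x-y))(x-y)\e s,
\end{equation*}
where $(\Jm^2 g(z)v)_{i,j}=\sum_k\partial_k\partial_j g_i(z)v_k$. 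This contraction is exactly the one used in the definition of $|\cdot|_{2,2}$ in \cref{Basic}.

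Next, we bound the integrand. We take the $2$-norm and use Minkowski's inequality for the integral, which holds since $|\cdot|_2$ is a norm. For $x\neq y$, write $v=(x-y)/|x-y|$, so that $|v|\leq 1$. The definition of $|\cdot|_{2,2}$ then gives
\begin{equation*}
|\Jm^2 g(z)(x-y)|_2=|x-y|\,|\Jm^2 g(z)v|_2\leq |x-y|\,|\Jm^2 g(z)|_{2,2}\leq j|x-y|
\end{equation*}
for every $z$. Integrating over $s\in[0,1]$ yields $|\Jm g(x)-\Jm g(y)|_2\leq j|x-y|$. The case $x=y$ is trivial.

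No step here is a real obstacle. The only point requiring care is to check that the index convention $(\Jm^2f)_{i,j,k}=\partial_k\partial_jf_i$ matches the contraction in $|M|_{2,2}$, namely that the contracted index is the differentiation direction $k$. Both conventions are as stated in \cref{Basic}, so they agree. Whichever matrix norm $|\cdot|_2$ denotes, operator or Frobenius, the argument works verbatim, since all it uses is the triangle inequality for integrals.
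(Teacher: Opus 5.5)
Your proposal is correct and is essentially the paper's proof: the paper also writes $\Jm g(x)-\Jm g(y)=\int_{[0,1]}\Jm^2g(y+t(x-y))(x-y)\,\e\Leb(t)$ and then bounds the integrand by $|\Jm^2 g|_{2,2}|x-y|\leq j|x-y|$. Your check that the contracted index in $|\cdot|_{2,2}$ is the differentiation direction is the only point that needs care, and you handle it correctly.
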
 
\begin{proof}
	We have (recall that $(Mx)_{i,j}=\sum_kM_{i,j,k}x_k$ for all $M\in\R^{n\times n\times n}$)
	\begin{equation*}
		\Jm g(x)-\Jm g(y)=\int_{[0,1]}\partial_t(\Jm g(y+t(x-y)))\e\Leb(t)=\int_{[0,1]}\Jm^2g(y+t(x-y))(x-y)\e\Leb(t). 
	\end{equation*}
	Using this one easily estimates
	\begin{equation*}
		|\Jm g(x)-\Jm g(y)|_2\leq\int_{[0,1]}|\Jm^2g(y+t(x-y))|_{2,2}\e\Leb(t)|x-y|\leq j|x-y|
	\end{equation*}
	because $|\Jm^2g|_{2,2}\leq j$.
\end{proof}
Next, fix $0<\delta\ll 1$ and consider the \textit{homotopy} $h_f:(-\delta,1+\delta)\times\R^n\to\R^n$ from $\textup{id}$ to $f:\R^n\to\R^n$ defined by (cf.\ \cite[Sec.\ 4.1.9]{F})
\begin{equation*}
	h_f(t,x)=(1-t)x+tf(x).
\end{equation*}
The following definition is a modification of the admissible classes in \cite[Sec.\ 4.9]{B78} and \cite[Def.\ 4.8]{KT17}.
\begin{defin}[Admissible deformations $\adm_j(T),\adm_j(C,T)$ and $\Delta_j|T|_\tau(A)$]
	\label{AdmDeform}
	Fix a transportation network $T\in\T$ and $j\in\mathbb{N}$. Assume that $|T|_\tau(\R^n)<\infty$. Then we write $\adm_j(T)$ for the set of \textbf{admissible deformations} $f:\R^n\to\R^n$ which are required to satisfy (recall \cref{ProdCurrents})
	\begin{itemize}
		\item $f$ locally Lipschitz with $|f-\textup{id}|_\infty\leq j^{-2}$,
		\item $\Mb\left((h_f)_\#(\llbracket 0,1\rrbracket\times T)\right)\leq j^{-1}(|T|_\tau(\R^n)-|f_\# T|_\tau(\R^n))$ and $|f_\# T|_\tau(\phi)\leq|T|_\tau(\phi)$ for all $\phi\in \mathcal F_j^+$.
	\end{itemize}
	If $B\subset \R^n$ is closed, then we also write
	\begin{equation*}
		\Delta_j|T|_\tau(B)=\inf_f|f_\#T|_\tau(B)-|T|_\tau(B)\in\R_-,
	\end{equation*}
	where the infimum is over $f$ in
	\begin{equation*}
		\adm_j(B,T)=\{ f\in\adm_j(T)\:|\:\{ x\:|\:f(x)\neq x \}\cup\{ f(x)\:|\:f(x)\neq x \}\subset B\}.
	\end{equation*}
\end{defin}
Note that $|T|_\tau(\R^n)<\infty$ implies $|f_\#T|_\tau(\R^n)<\infty$ for $f\in \adm_j(T)$ because $\phi\equiv 1\in\mathcal F_j^+$.
\begin{figure}
	\centering
	\begin{tikzpicture}[scale=2, transform shape=false]
		\draw[dashed] (0,0) circle (2);
		\draw[dashed] (-2,0) -- (0,0);
		
		\draw (0,0) (-0.07,-0.07) -- (0.07,0.07);
		\draw (0,0) (-0.07, 0.07) -- (0.07,-0.07);
		
		\filldraw[draw=lightgray, fill=lightgray] ({-1},{sqrt(3)}) -- (2,0) -- ({-1},{-sqrt(3)}) -- (1,0);
		\draw[gray,->,line width=1] (1,0.4) arc (0:300:0.1);
		\draw[gray,->,line width=1] (1,-0.4) arc (300:0:0.1);
		
		\draw  ({-1},{sqrt(3)}) -- (2,0);
		\draw  ({-1},{-sqrt(3)}) -- (2,0);
		\draw[line width=2]  ({-1},{sqrt(3)}) -- (1,0);
		\draw[line width=2]  ({-1},{-sqrt(3)}) -- (1,0);
		\draw[line width=2]  (1,0) -- (2,0);
		
		\draw[->]  ({-1},{sqrt(3)}) -- ++({sqrt(3)/4},{-1/4});
		\draw[->]  ({-1},{-sqrt(3)}) -- ++({sqrt(3)/4},{1/4});
		\draw[->,line width=2]  (1,0) -- ++(0.5,0);
		\draw[->,line width=2]  ({-1},{sqrt(3)}) -- ++({1/sqrt(7)},{-sqrt(3)/(2*sqrt(7))});
		\draw[->,line width=2]  ({-1},{-sqrt(3)}) -- ++({1/sqrt(7)},{sqrt(3)/(2*sqrt(7))});
		
		\node[fill,circle,minimum size=3mm,inner sep=0pt] at (2,0) {};
		\node[white,fill,circle,minimum size=2.5mm,inner sep=0pt] at (2,0) {};
		\node[fill,circle,minimum size=3mm,inner sep=0pt] at (1,0) {};
		\node[white,fill,circle,minimum size=2.5mm,inner sep=0pt] at (1,0) {};
		
		\node[fill,circle,minimum size=3mm,inner sep=0pt] at ({-1},{sqrt(3)}) {};
		\node[fill,circle,minimum size=3mm,inner sep=0pt] at ({-1},{-sqrt(3)}) {};
		
		\node at (-1,-0.2) {$r=2j^{-2}$};
		
		\node at (2.2,0) {$y$};
		\node at (0.8,0) {$\tilde y$};
		\node at ({-1.2},{sqrt(3)}) {$x_1$};
		\node at ({-1.2},{-sqrt(3)}) {$x_2$};
		
		\node at (0.6,1) {$T$};
		\node at (-0.5,1) {$\boldmath{f_\# T}$};
		\node at (0.7,-1.3) {$\textcolor{gray}{(h_f)_\#(\llbracket 0,1\rrbracket\times T)}$};
		
	\end{tikzpicture}
	\caption{Admissible deformation $f$ from \cref{RelocTriple}.}
	\label{fig2}
\end{figure}
\begin{examp}[Relocation of triple junction]
\label{RelocTriple}
Let $j\in\mathbb N$, $r=2j^{-2}$, and $x_k=r(\cos(\alpha_k),\sin(\alpha_k))$ for $\alpha_k=2k\pi/3$ and $k=1,2$. 
Further, set $y=(r,0)$ and $\tilde y=(r/2,0)$.
Now define $K=\textup{conv}(\{ x_1,x_2,y \})$ and $\widetilde K=\textup{conv}(\{ x_1,x_2,\tilde y \})$.
Note that $\widetilde K\subset K$.
Assume that $T\in\T$ is given by $T=\vec{e}_1\Ha\mres e_1+\vec{e}_2\Ha\mres e_2$, where $e_i=[x_i,y]$ and $\vec{e}_i=(y-x_i)/|y-x_i|$ for $i=1,2$.
See \cref{fig2} for a sketch.
It may also be seen as the local picture of a transportation network inside a closed ball of radius $r$.
Let the transportation cost $\tau$ be given by $\tau(m)=m^\alpha$ for some $\alpha\in(0,\log_2(4\sqrt{3}-2\sqrt{7}))$.
We will define a locally Lipschitz map $f:\R^n\to\R^n$ such that $f\in\mathcal A_j(T)$ for $j$ sufficiently large.
It is enough to specify the definition of $f$ on $A=\textup{clos}(K\setminus \widetilde K)$ since $\spt(T)\subset A$.
Let $f|_A$ be given as the orthogonal projection onto $\widetilde K$.
Then we may extend $f$ to locally Lipschitz $f:\R^n\to\R^n$ such that $|f-\textup{id}|_\infty\leq j^{-2}$.
The homotopy formula for currents \cite[p.\ 363]{F} implies
\begin{equation*}
f_\# T-T=\partial (h_f)_\#(\llbracket 0,1\rrbracket\times T)+(h_f)_\#(\llbracket 0,1\rrbracket\times \partial T).
\end{equation*}
In \cref{fig2}, the current $(h_f)_\#(\llbracket 0,1\rrbracket\times T)$ is highlighted in gray.
Note that its boundary contains the term $-(h_f)_\#(\llbracket 0,1\rrbracket\times \partial T)$ by definition of $f$.
Next, let us show that $f$ is admissible for $j$ sufficiently large.
The mass of $(h_f)_\#(\llbracket 0,1\rrbracket\times T)$ can be calculated as the area of the parallelogram induced by $[\tilde y,y]$ and $[x_1,\tilde y]$,
\begin{equation*}
\Mb\left((h_f)_\#(\llbracket 0,1\rrbracket\times T)\right)=|\tilde y-y|r\sin\left( \frac{2\pi}{3} \right)=j^{-4}\sqrt{3}.
\end{equation*}
Moreover, we have 
\begin{multline*}
|T|_\tau(\R^n)=2\tau(1)\Ha([x_1,y])=j^{-2}4\sqrt{3}\qquad\textup{and}\qquad |f_\#T|_\tau(\R^n)=2\tau(1)\Ha([x_1,\tilde y])+\tau(2)\Ha([\tilde y,y])\\
=j^{-2}4\frac{\sqrt{7}}{2}+j^{-2}4\frac{2^\alpha}{4}.
\end{multline*}
Thus, the inequality
\begin{equation*}
\Mb\left((h_f)_\#(\llbracket 0,1\rrbracket\times T)\right)\leq j^{-1}(|T|_\tau(\R^n)-|f_\# T|_\tau(\R^n))\qquad\Leftrightarrow\qquad j^{-1}\frac{\sqrt{3}}{4}\leq \sqrt{3}-\frac{\sqrt{7}}{2}-\frac{2^\alpha}{4}
\end{equation*}
is true for $j$ sufficiently large---parameter $\alpha<\log_2(4\sqrt{3}-2\sqrt{7})$ ensures that the right-hand side is positive, otherwise $f_\#$ would increase the branched transport cost.
The triple junction $\tilde y$ is optimally placed if 
\begin{equation*}
\frac{\tilde y-x_1}{|\tilde y-x_1|}+\frac{\tilde y-x_2}{|\tilde y-x_2|}=2^\alpha\frac{y-\tilde y}{|y-\tilde y|}.
\end{equation*}
This follows from the well-known ``momentum conservation law'' \cite[p.\ 255]{X}.
This equality is satisfied for $\alpha=\log_2(4/\sqrt{7})$.
Otherwise, we have
\begin{equation*}
\begin{cases*}
\textup{the optimal triple junction lies on $(0,\tilde y)$}&if $\alpha<\log_2(4/\sqrt{7})$,\\
\textup{the optimal triple junction lies on $(\tilde y,y)$}&if $\alpha\in(\log_2(4/\sqrt{7}),\log_2(4\sqrt{3}-2\sqrt{7}))$,\\
\textup{the junction $y$ is already optimal (locally)}&if $\alpha\in[\log_2(4\sqrt{3}-2\sqrt{7}),1)$.
\end{cases*}
\end{equation*}
In the limit case $\alpha=0$ respectively $\tau(m)=1_{\R_+^*}(m)$, the optimal triple junction would be equal to the centre of the circle (Steiner tree).
Finally, we need to show $|f_\# T|_\tau(\phi)\leq|T|_\tau(\phi)$ for all $\phi\in \mathcal F_j^+$.
We use a trick similar to \cite[Sec.\ 4.10]{B78}, see also \cite[Lem.\ 4.12]{KT17},
\begin{equation*}
|f_\# T|_\tau(\phi)-|T|_\tau(\phi)\leq \max_A\phi|f_\# T|_\tau(1_A)-\min_A\phi |T|_\tau(1_A)\leq \min_A\phi e^{1/j}|f_\# T|_\tau(1_A)-\min_A\phi |T|_\tau(1_A)\leq 0
\end{equation*}
for $j$ sufficiently large (independent of $\phi$), where \cref{propertiesF} was used in the second inequality.
This shows $f\in\mathcal A_j(T)$ for $\alpha\in (0,\log_2(4\sqrt{3}-2\sqrt{7}))$ and $j$ sufficiently large.
\end{examp}
\subsection{Approximate $\tau$-curvature vector fields}
\label{ApproxTauVectorfields}
We define $\Psi_\varepsilon=c_\varepsilon\psi\widehat\Psi_\varepsilon\in C_c^\infty(\R^n)$ as in \cite[Sec.\ 4.6]{KT17}, which is different from the kernel in \cite[Sec.\ 4.3]{B78},
\begin{itemize}
	\item $\psi\in C_c^\infty(\R^n;[0,1])$ is a \textit{cut-off function} with $\psi\equiv 1$ in $\overline{B}_{1/2}(0)$, $\psi\equiv 0$ in $\R^n\setminus \overline{B}_1(0)$, and 
	\begin{equation*}
	\max(|\nabla\psi|,3^{-1}|\nabla^2\psi|_2)\leq 3
	\end{equation*}
	in $\R^n$,
	\item $\widehat\Psi_\varepsilon\in C^\infty(\R^n)$ is a \textit{Gaussian} given by
	\begin{equation*}
		\widehat\Psi_\varepsilon(x)=\frac{e^{-|x|^2/(2\varepsilon^2)}}{(2\pi\varepsilon^2)^{n/2}},
	\end{equation*}
	\item and $c_\varepsilon$ is a \textit{normalizing constant} such that $\int\Psi_{\varepsilon}\e\Lebn=1$.
\end{itemize}
It satisfies the following estimates \cite[Lem.\ 4.13]{KT17} (the proof does not depend on the codimension).
\begin{lem}[Estimates related to $\nabla \Psi_\varepsilon,\nabla^2\Psi_\varepsilon$]
	\label{EstimatesNabla}
	There exists $c\in\R_+^*$ such that
	\begin{equation*}
		|\nabla\Psi_\varepsilon(x)|\leq\frac{|x|}{\varepsilon^2}\Psi_\varepsilon(x)+c 1_{\overline B_1(0)\setminus\overline B_{1/2}(0)}(x)e^{-1/\varepsilon}
	\end{equation*}
	and
	\begin{equation*}
		|\nabla^2\Psi_\varepsilon(x)|_2\leq\frac{|x|^2}{\varepsilon^4}\Psi_\varepsilon(x)+\frac{c}{\varepsilon^2}\Psi_\varepsilon(x)+c 1_{\overline B_1(0)\setminus\overline B_{1/2}(0)}(x)e^{-1/\varepsilon}
	\end{equation*}
	for all $x\in\R^n$ for $\varepsilon\in\R_+^*$ sufficiently small.
\end{lem}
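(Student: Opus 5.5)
The plan is a direct computation with the product rule: every derivative that lands on $\psi$ is supported in the annulus $\overline B_1(0)\setminus\overline B_{1/2}(0)$, where the Gaussian is exponentially small; every derivative that lands on $\widehat\Psi_\varepsilon$ reproduces $\Psi_\varepsilon$ with polynomial factors.

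\emph{Step 1: the normalizing constant.} First I will show that $c_\varepsilon\leq 2$ for $\varepsilon$ small. Since $\psi\equiv 1$ on $\overline B_{1/2}(0)$,
\begin{equation*}
c_\varepsilon^{-1}=\int\psi\widehat\Psi_\varepsilon\e\Lebn\geq\int_{B_{1/2}(0)}\widehat\Psi_\varepsilon\e\Lebn\to 1
\end{equation*}
as $\varepsilon\to 0$. Indeed, the Gaussian mass outside $B_{1/2}(0)$ equals $\int_{|z|\geq 1/(2\varepsilon)}(2\pi)^{-n/2}e^{-|z|^2/2}\e z$, which tends to $0$.

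\emph{Step 2: the first derivative.} We have
\begin{equation*}
\nabla\widehat\Psi_\varepsilon(x)=-\frac{x}{\varepsilon^2}\widehat\Psi_\varepsilon(x).
\end{equation*}
Hence
\begin{equation*}
\nabla\Psi_\varepsilon=-\frac{x}{\varepsilon^2}\Psi_\varepsilon+c_\varepsilon\widehat\Psi_\varepsilon\nabla\psi .
\end{equation*}
The first term has modulus $\frac{|x|}{\varepsilon^2}\Psi_\varepsilon(x)$. The second term vanishes outside the annulus, because $\nabla\psi=0$ in $B_{1/2}(0)$ and outside $\overline B_1(0)$. On the annulus, $|x|\geq 1/2$ and $|\nabla\psi|\leq 3$ give
\begin{equation*}
c_\varepsilon\widehat\Psi_\varepsilon|\nabla\psi|\leq 6(2\pi\varepsilon^2)^{-n/2}e^{-1/(8\varepsilon^2)}.
\end{equation*}
The only real check is that this is bounded by $e^{-1/\varepsilon}$ for $\varepsilon$ small. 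This holds because $\varepsilon^{-n}e^{-1/(8\varepsilon^2)+1/\varepsilon}\to 0$: the Gaussian exponent $1/\varepsilon^2$ dominates both $1/\varepsilon$ and the polynomial prefactor. So the first estimate holds with, for example, $c=1$.

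\emph{Step 3: the Hessian.} Expanding,
\begin{equation*}
\nabla^2\Psi_\varepsilon=c_\varepsilon\bigl(\psi\nabla^2\widehat\Psi_\varepsilon+\nabla\psi\otimes\nabla\widehat\Psi_\varepsilon+\nabla\widehat\Psi_\varepsilon\otimes\nabla\psi+\widehat\Psi_\varepsilon\nabla^2\psi\bigr),
\end{equation*}
where
\begin{equation*}
\nabla^2\widehat\Psi_\varepsilon=\Bigl(\frac{x\otimes x}{\varepsilon^4}-\frac{1}{\varepsilon^2}\mathbb 1\Bigr)\widehat\Psi_\varepsilon .
\end{equation*}
The first term is therefore bounded by
\begin{equation*}
\Bigl(\frac{|x|^2}{\varepsilon^4}+\frac{|\mathbb 1|_2}{\varepsilon^2}\Bigr)\Psi_\varepsilon ,
\end{equation*}
which is the source of the constant $c\geq|\mathbb 1|_2$ (this is $\sqrt n$ if $|\cdot|_2$ is the Frobenius norm). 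The remaining three terms are supported in the annulus. There they are bounded by
\begin{equation*}
c_\varepsilon\bigl(6|x|\varepsilon^{-2}+9\bigr)(2\pi\varepsilon^2)^{-n/2}e^{-1/(8\varepsilon^2)}\leq C\varepsilon^{-n-2}e^{-1/(8\varepsilon^2)},
\end{equation*}
using $|\nabla\psi|\leq 3$ and $|\nabla^2\psi|_2\leq 9$. As in Step 2, this is at most $e^{-1/\varepsilon}$ for small $\varepsilon$.

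There is no genuine obstacle here. The only points needing care are the uniform bound on $c_\varepsilon$ and the elementary limit comparing $e^{-1/(8\varepsilon^2)}$ with $\varepsilon^{n+2}e^{-1/\varepsilon}$. The codimension enters only through $n$ in the prefactor and in $|\mathbb 1|_2$, both of which are absorbed into $c$ or into the smallness of $\varepsilon$.
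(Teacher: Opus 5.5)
Your proposal is correct. It is the direct product-rule computation on which \cite[Lem.~4.13]{KT17} rests; the paper cites that lemma instead of reproving it, remarking only that the argument does not depend on the codimension. The ingredients you isolate are exactly what is needed: the uniform bound $c_\varepsilon\leq 2$, the vanishing of $\nabla\psi$ and $\nabla^2\psi$ off the annulus (including on $\partial B_{1/2}(0)$, by continuity), and the comparison $\varepsilon^{-n-2}e^{-1/(8\varepsilon^2)}\leq e^{-1/\varepsilon}$ for small $\varepsilon$.
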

\begin{defin}[{Approximate $\tau$-curvature vector field, cf.\ \cite[Sec.\ 4.3]{B78} and \cite[Sec.\ 5]{KT17}}]
	\label{ApproxCurveVF}
	Let $\varepsilon\in\R_+^*$ and $T=\theta\Ha\mres S\in\rectn$ with $|T|_\tau(\R^n)<\infty$. We write
	\begin{equation*}
		h_{\tau,\varepsilon}(T)=-\Psi_\varepsilon\ast\left( \frac{\Psi_\varepsilon\ast\delta_\tau T}{\Psi_\varepsilon\ast|T|_\tau+\varepsilon} \right)\in C^\infty(\R^n;\R^n)
	\end{equation*}
	and
	\begin{equation*}
		\tilde h_{\tau,\varepsilon}(T)=- \frac{\Psi_\varepsilon\ast\delta_\tau T}{\Psi_\varepsilon\ast|T|_\tau+\varepsilon} \in C^\infty(\R^n;\R^n).
	\end{equation*}
\end{defin}
Note that $ h_{\tau,\varepsilon}(T)$ has compact support if $T$ has compact support.
By the proof of \cite[Lem.\ 5.1]{KT17} we obtain the following statement, cf.\ \cite[Sec.\ 4.4]{B78}. 
\begin{lem}[Estimates related to $\tilde h_{\tau,\varepsilon}(T),\Jm^kh_{\tau,\varepsilon}(T)$]
	\label{hEstimates}
	Let $M\in\R_+^*$. Then there exists $\varepsilon_1=\varepsilon_1(M,n)\in(0,1)$ such that
	\begin{equation*}
		|\tilde h_{\tau,\varepsilon}(T)|\leq 2\varepsilon^{-2},\quad |h_{\tau,\varepsilon}(T)|\leq 2\varepsilon^{-2},\quad |\Jm h_{\tau,\varepsilon}(T)|_2\leq 4\varepsilon^{-4},\quad\textup{and}\quad|\Jm^2 h_{\tau,\varepsilon}(T)|_{2,2}\leq 4\varepsilon^{-6}
	\end{equation*}
	for all $\varepsilon\in(0,\varepsilon_1)$ and $T\in\rectn$ with $|T|_\tau(\R^n)\leq M$.
\end{lem}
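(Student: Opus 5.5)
The plan is to follow the proof of \cite[Lem.\ 5.1]{KT17} and track only where the weight $\tau(|\theta|)$ replaces the unit density. The key observation is that all four bounds come from pointwise comparisons of the numerator $\Psi_\varepsilon\ast\delta_\tau T$ with the denominator $\Psi_\varepsilon\ast|T|_\tau+\varepsilon$. Both are integrals against the same measure $|T|_\tau=\tau(|\theta|)\Ha\mres S$. Since $|T_xS|_2\le 1$, we have
\begin{equation*}
|(\Psi_\varepsilon\ast\delta_\tau T)(y)|\le\int|\nabla\Psi_\varepsilon(x-y)|\e|T|_\tau(x).
\end{equation*}
Hence the codimension and the weight play no role beyond the total mass bound $|T|_\tau(\R^n)\le M$.

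First, bound $\tilde h_{\tau,\varepsilon}(T)$. Apply \cref{EstimatesNabla}:
\begin{equation*}
|\Psi_\varepsilon\ast\delta_\tau T|(y)\le\int\frac{|x-y|}{\varepsilon^2}\Psi_\varepsilon(x-y)\e|T|_\tau(x)+cMe^{-1/\varepsilon}.
\end{equation*}
Split the first integral into $|x-y|\le\varepsilon^{1/2}$, say, and the complement. On the near region the factor $|x-y|/\varepsilon^2$ is at most $\varepsilon^{-3/2}$, which gives at most $\varepsilon^{-3/2}(\Psi_\varepsilon\ast|T|_\tau)(y)$. On the far region the Gaussian factor is $\le c_\varepsilon(2\pi\varepsilon^2)^{-n/2}e^{-1/(2\varepsilon)}$ (with $c_\varepsilon\to1$), so that contribution is exponentially small times $M$. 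Dividing by $\Psi_\varepsilon\ast|T|_\tau+\varepsilon$ then gives
\begin{equation*}
|\tilde h_{\tau,\varepsilon}(T)|\le\varepsilon^{-3/2}+\varepsilon^{-1}c(M,n)e^{-1/(4\varepsilon)}\le2\varepsilon^{-2}
\end{equation*}
for $\varepsilon<\varepsilon_1(M,n)$. The bound $|h_{\tau,\varepsilon}(T)|\le2\varepsilon^{-2}$ follows because $\Psi_\varepsilon\ge0$ and $\int\Psi_\varepsilon\e\Lebn=1$.

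For the derivatives, write $\Jm^kh_{\tau,\varepsilon}(T)=-(\nabla^k\Psi_\varepsilon)\ast\tilde h_{\tau,\varepsilon}(T)$, so that
\begin{equation*}
|\Jm^kh_{\tau,\varepsilon}(T)|\le|\tilde h_{\tau,\varepsilon}(T)|_\infty\int|\nabla^k\Psi_\varepsilon|\e\Lebn.
\end{equation*}
By \cref{EstimatesNabla} and the Gaussian moments $\int|x|^m\widehat\Psi_\varepsilon\,\e\Lebn\le c(n)\varepsilon^m$, we get $\int|\nabla\Psi_\varepsilon|\le c(n)\varepsilon^{-1}$ and $\int|\nabla^2\Psi_\varepsilon|\le c(n)\varepsilon^{-2}$. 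Combined with $2\varepsilon^{-3/2}$-type control of $\tilde h$, this gives $c(n)\varepsilon^{-5/2}\le4\varepsilon^{-4}$ and $c(n)\varepsilon^{-7/2}\le4\varepsilon^{-6}$ for small $\varepsilon$. For the norm $|\cdot|_{2,2}$, only the elementary inequality relating it to componentwise bounds is needed, which costs a dimensional constant absorbed in the same way.

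The main obstacle is the first step, specifically handling $y$ where $\Psi_\varepsilon\ast|T|_\tau$ is tiny. There the ratio is controlled only by the additive $\varepsilon$ in the denominator, and the exponentially small tail must beat $\varepsilon^{-1}$ uniformly in $T$ with $|T|_\tau(\R^n)\le M$. This is exactly why $\varepsilon_1$ depends on $M$ and $n$. I will track the constant $c_\varepsilon$ and the cutoff annulus $\overline B_1\setminus\overline B_{1/2}$ term carefully at this point.
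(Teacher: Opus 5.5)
Your proposal is correct and follows essentially the paper's route: the paper simply applies the proof of \cite[Lem.\ 5.1]{KT17} to the induced varifold $V=v(\tau(|\theta|),S)$, using $\| V\|=|T|_\tau$, $\Psi_\varepsilon\ast\delta V=\Psi_\varepsilon\ast\delta_\tau T$ and $\Psi_\varepsilon\ast\| V\|=\Psi_\varepsilon\ast|T|_\tau$, and that proof is exactly the computation you carry out directly with the measure $|T|_\tau$. Your split at $|x-y|=\varepsilon^{1/2}$ and the Gaussian moment bounds are sharper than needed but valid; since $\spt(\Psi_\varepsilon)\subset\overline B_1(0)$, one already gets $|\tilde h_{\tau,\varepsilon}(T)|\le\varepsilon^{-2}+c(n)M\varepsilon^{-1}e^{-1/\varepsilon}$, and crude $L^1$ bounds on $\nabla\Psi_\varepsilon$ and $\nabla^2\Psi_\varepsilon$ then give the derivative estimates.
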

 The proof essentially relies on the estimates in \cite[Lem.\ 4.13]{KT17} respectively \cref{EstimatesNabla}. It can be adapted to our setting with codimension equal to $n-1$. The idea is to apply the proof of \cite[Lem.\ 5.1]{KT17} to $V=v(\tau(|\theta|),S)\in\V_1(\R^n)$, where $T=\theta\Ha\mres S$ is as in \cref{hEstimates}. 
 Note that
\begin{equation*}
	\| V\|=|T|_\tau,\qquad \Psi_\varepsilon\ast\delta V=\Psi_\varepsilon\ast\delta_\tau T,\qquad\textup{and}\qquad \Psi_\varepsilon\ast\| V\|=\Psi_\varepsilon\ast|T|_\tau.
\end{equation*}
Here $\delta V$ denotes the first variation of $V$ \cite[Def.\ 4.2]{A72}, $\Psi_\varepsilon\ast\delta V$ and $\Psi_\varepsilon\ast\| V\|$ are defined analogous to \cite[Sec.\ 4.3]{B78}.
In particular, these relations will be frequently used in \cref{Auxresults}.
We also have $\delta_\tau (T,\phi)=\delta (V,\phi)(g)$ for $\phi\in C^1(\R^n;\R_+)$ and $g\in C_c^1(\R^n;\R^n)$, where $\delta (V,\phi)(g)$ is as in \cite[Sec.\ 4.9]{A72}. 
\subsection{Estimates related to approximate $\tau$-curvature vector fields}
\label{Auxresults}
In this section, we list some auxiliary results whose proofs follow \cite[Sec.\ 5]{KT17}, cf.\ \cite[Sec.\ 4]{B78}.
Essentially, we employ the relations at the end of \cref{ApproxTauVectorfields} and note that the proofs do not depend on the codimension.
The following lemma will be used to show \cref{Prop5.3,LongProof}, see \cite[Lem.\ 4.17, (4.38)]{KT17} (note that we have $=$ instead of $\leq$).
\begin{lem}[Expression for $\int |T|_\tau(\overline{B}_r(\cdot))$]
	\label{intBall}
	Let $T\in\rectn$ with $|T|_\tau(\R^n)<\infty$. Then
	\begin{equation*}
		\int |T|_\tau(\overline{B}_r(x))\e\Leb^n(x)=\omega_nr^n|T|_\tau(\R^n)
	\end{equation*}
	for all $r\in\R_+^*$.
\end{lem}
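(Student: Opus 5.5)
The identity follows from Tonelli's theorem applied to the indicator of the set $\{(x,y)\:|\:|x-y|\leq r\}$ on $\R^n\times\R^n$, integrated against the product measure $\Lebn\otimes|T|_\tau$. The plan is to rewrite the integrand as
\begin{equation*}
|T|_\tau(\overline{B}_r(x))=\int 1_{\overline{B}_r(x)}(y)\e|T|_\tau(y)=\int 1_{\overline{B}_r(y)}(x)\e|T|_\tau(y),
\end{equation*}
using the symmetry $|x-y|\leq r\Leftrightarrow |y-x|\leq r$. Interchanging the order of integration then gives
\begin{equation*}
\int |T|_\tau(\overline{B}_r(x))\e\Lebn(x)=\int \Lebn(\overline{B}_r(y))\e|T|_\tau(y)=\omega_nr^n|T|_\tau(\R^n).
\end{equation*}

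To justify the interchange, I would check two things. First, $|T|_\tau$ is a finite nonnegative Borel measure. It is the measure $\tau(|\theta|)\Ha\mres S$, which is $\sigma$-finite and in fact finite by the hypothesis $|T|_\tau(\R^n)<\infty$. Lebesgue measure $\Lebn$ is $\sigma$-finite. Second, the integrand $(x,y)\mapsto 1_{\{|x-y|\leq r\}}$ is the indicator of a closed set, hence Borel on $\R^n\times\R^n$, and it is nonnegative. Tonelli's theorem therefore applies without any integrability assumption. As a by-product it also yields the measurability of $x\mapsto|T|_\tau(\overline{B}_r(x))$, which is in any case upper semicontinuous.

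The only point needing care is measurability. Since $\tau$ is lower semicontinuous, $\tau(|\theta|)$ is $\Ha\mres S$-measurable, so $|T|_\tau$ is a genuine (Radon) measure and the product-measure argument is legitimate. I expect no real obstacle here. Note that the result is an equality rather than the inequality in \cite[(4.38)]{KT17} because we integrate over all of $\R^n$ with no weight function, so nothing is lost in the interchange.
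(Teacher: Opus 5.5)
Your proposal is correct and is the same argument as the paper's: Fubini/Tonelli together with the symmetry $1_{\overline{B}_r(x)}(y)=1_{\overline{B}_r(y)}(x)$. The paper states it in one line, and your additional checks (finiteness of $|T|_\tau$, Borel measurability of the integrand) are exactly what justifies that line.
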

\begin{proof}
Using Fubini's theorem and $1_{\overline{B}_r(x)}(y)=1_{\overline{B}_r(y)}(x)$, we get
\begin{equation*}
\int |T|_\tau(\overline{B}_r(x))\e\Leb^n(x)=\int\int 1_{\overline{B}_r(x)}(y)\e\Lebn(x)\e|T|_\tau(y)=\omega_nr^n|T|_\tau(\R^n).\qedhere
\end{equation*}
\end{proof}
The following statement corresponds to \cite[Prop.\ 5.3]{KT17}, compare with \cite[Sec.\ 4.6]{B78}.
\begin{prop}
	\label{Prop5.3}
	Let $M\in\R_+^*$. Then, for $\varepsilon\in\R_+^*$ sufficiently small (depending on $M,n$), we have
	\begin{equation*}
		\left|\int h_{\tau,\varepsilon}(T)\cdot g\e|T|_\tau+\int(\Psi_\varepsilon\ast\delta_\tau T)\cdot g\e\Lebn\right|\leq\varepsilon^{1/4}\left(\int\frac{|\Psi_\varepsilon\ast\delta_\tau T|^2}{\Psi_\varepsilon\ast|T|_\tau+\varepsilon}\e\Lebn\right)^{1/2}
	\end{equation*}
	for all $j\leq 2^{-1}\varepsilon^{-1/6},g\in\mathcal F_j^n$, and $T\in \rectn$ compactly supported with $|T|_\tau(\R^n)\leq M$.
\end{prop}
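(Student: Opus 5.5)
We follow the proof of \cite[Prop.\ 5.3]{KT17}, applied to the varifold $V=v(\tau(|\theta|),S)$, and use the identities $\| V\|=|T|_\tau$, $\Psi_\varepsilon\ast\delta V=\Psi_\varepsilon\ast\delta_\tau T$ and $\Psi_\varepsilon\ast\| V\|=\Psi_\varepsilon\ast|T|_\tau$ from the end of \cref{ApproxTauVectorfields}.

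First we rewrite the left-hand side. Put $\tilde h=\tilde h_{\tau,\varepsilon}(T)$, so that $h_{\tau,\varepsilon}(T)=\Psi_\varepsilon\ast\tilde h$. Since $\Psi_\varepsilon$ is even, Fubini gives
\begin{equation*}
\int h_{\tau,\varepsilon}(T)\cdot g\e|T|_\tau=\int \tilde h\cdot (\Psi_\varepsilon\ast(g|T|_\tau))\e\Lebn .
\end{equation*}
By definition $-\Psi_\varepsilon\ast\delta_\tau T=\tilde h\,(\Psi_\varepsilon\ast|T|_\tau+\varepsilon)$. Hence the quantity to be estimated equals
\begin{equation*}
\int\tilde h\cdot\big(\Psi_\varepsilon\ast(g|T|_\tau)-g\,\Psi_\varepsilon\ast|T|_\tau\big)\e\Lebn-\varepsilon\int\tilde h\cdot g\e\Lebn .
\end{equation*}
We then apply Cauchy--Schwarz with weight $\Psi_\varepsilon\ast|T|_\tau+\varepsilon$. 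This works because
\begin{equation*}
\int|\tilde h|^2(\Psi_\varepsilon\ast|T|_\tau+\varepsilon)\e\Lebn=\int\frac{|\Psi_\varepsilon\ast\delta_\tau T|^2}{\Psi_\varepsilon\ast|T|_\tau+\varepsilon}\e\Lebn,
\end{equation*}
which is exactly the square of the quantity on the right-hand side of the claim.

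The second term is then bounded by $\varepsilon^{1/2}|g|_{L^2}\le \varepsilon^{1/2}j\le\varepsilon^{1/3}/2$, using $|g|_{L^2}\le j$ and $j\le 2^{-1}\varepsilon^{-1/6}$.

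For the first term we use $|g(x)-g(y)|\le j|x-y|$. This gives the pointwise bound
\begin{equation*}
|\Psi_\varepsilon\ast(g|T|_\tau)-g\,\Psi_\varepsilon\ast|T|_\tau|(y)\le j\int|x-y|\Psi_\varepsilon(x-y)\e|T|_\tau(x).
\end{equation*}
Cauchy--Schwarz in $x$ with respect to the measure $\Psi_\varepsilon(x-y)\e|T|_\tau(x)$ bounds the square of this by $j^2\,(\Psi_\varepsilon\ast|T|_\tau)(y)\int|x-y|^2\Psi_\varepsilon(x-y)\e|T|_\tau(x)$. After dividing by the weight $\Psi_\varepsilon\ast|T|_\tau+\varepsilon$ and integrating in $y$, the $L^2$ norm we need is at most
\begin{equation*}
j\Big(\int\int|x-y|^2\Psi_\varepsilon(x-y)\e|T|_\tau(x)\e\Lebn(y)\Big)^{1/2}\le j\,c(n)\,\varepsilon\,M^{1/2}.
\end{equation*}
Here we used Fubini and $\int|z|^2\Psi_\varepsilon(z)\e\Lebn(z)\le c(n)\varepsilon^2$, which holds because $\Psi_\varepsilon$ is a normalized truncated Gaussian (in the spirit of \cref{intBall}). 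With $j\le\varepsilon^{-1/6}/2$ the first term is therefore bounded by $c(n)M^{1/2}\varepsilon^{5/6}$, times the same square root factor.

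Adding the two bounds, the left-hand side is at most $(c(n)M^{1/2}\varepsilon^{5/6}+\varepsilon^{1/3}/2)$ times the square root factor, and this coefficient is at most $\varepsilon^{1/4}$ once $\varepsilon$ is small depending on $M$ and $n$. The main obstacle is bookkeeping rather than any single step: the Fubini and evenness manipulations must be justified for a measure $|T|_\tau$ that is finite but not of unit density. This is fine because everything involved is smooth, compactly supported and integrable, and the argument never depends on codimension.
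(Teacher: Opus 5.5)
Your proof is correct. It begins with the same reduction as the paper: the identity $-\Psi_\varepsilon\ast\delta_\tau T=\tilde h\,(\Psi_\varepsilon\ast|T|_\tau+\varepsilon)$ splits the left-hand side into two terms. The first is $\varepsilon\int\tilde h\cdot g\,\e\Lebn$, the paper's $I_1$, which you bound exactly as the paper does by $j\varepsilon^{1/2}I^{1/2}$. The second is the commutator term, the paper's $I_2$.

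You differ from the paper in how you handle the commutator.

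\emph{The paper's route.} It bounds the commutator pointwise by $j\int_{\overline B_1(y)}|x-y|\Psi_\varepsilon(x-y)\,\e|T|_\tau(x)$ and then splits at $|x-y|=\sqrt\varepsilon$. The near part gives $j\varepsilon^{1/2}(\Psi_\varepsilon\ast|T|_\tau)(y)$. The far part gives $jc_{\varepsilon,n}|T|_\tau(\overline B_1(y))$, with the exponentially small constant from \cref{in1:B2}. Each part is then handled by Hölder's inequality. The far part additionally needs \cref{intBall} and the total-mass identity $(\Psi_\varepsilon\ast|T|_\tau)(\R^n)=|T|_\tau(\R^n)$.

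\emph{Your route.} You apply Cauchy--Schwarz in $x$ against the kernel measure $\Psi_\varepsilon(x-y)\,\e|T|_\tau(x)$. This produces the factor $(\Psi_\varepsilon\ast|T|_\tau)(y)$, which the weight $\Psi_\varepsilon\ast|T|_\tau+\varepsilon$ then absorbs. Tonelli reduces everything to the second moment $\int|z|^2\Psi_\varepsilon\,\e\Lebn\le c_\varepsilon n\varepsilon^2$, which is fine since $c_\varepsilon\le c(n)$.

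\emph{Comparison.} Your argument avoids the near/far decomposition and the tail estimate altogether. It also gives the better rate $\varepsilon^{5/6}M^{1/2}$ for the commutator term, against the paper's $\varepsilon^{1/3}M^{1/2}$. The final coefficient is nonetheless dominated in both proofs by the $\varepsilon^{1/3}/2$ coming from $I_1$. The paper's route buys a line-by-line correspondence with \cite[(5.12)--(5.21)]{KT17}, which is the purpose of that section. Its split-and-Hölder template is also reused in \cref{Prop5.5,LongProof}. Yours is shorter and more self-contained.
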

\begin{proof}
	Abbreviate $h=h_{\tau,\varepsilon}(T),\tilde h=\tilde h_{\tau,\varepsilon}(T)$, and 
	\begin{equation*}
		I=\int\frac{|\Psi_\varepsilon\ast\delta_\tau T|^2}{\Psi_\varepsilon\ast|T|_\tau+\varepsilon}\e\Lebn.
	\end{equation*}
	By a calculation similar to \cite[(5.10) \&\ (5.11)]{KT17} we get the analogue of \cite[(5.12)]{KT17},
	\begin{multline}
		\label[ineq]{in1:a}
		\left|\int h_{\tau,\varepsilon}\cdot g\e|T|_\tau+\int(\Psi_\varepsilon\ast\delta_\tau T)\cdot g\e\Lebn\right|\\
		\leq \underbrace{\int|g||\tilde{h}|\varepsilon\Lebn}_{I_1=}+\underbrace{\int|\tilde{h}(y)|\left|\int\Psi_\varepsilon(x-y)g(x)\e|T|_\tau(x)-(\Psi_\varepsilon\ast|T|_\tau)(y)g(y)\right|\e\Lebn(y)}_{I_2=}.
	\end{multline}
	We also get inequalities corresponding to \cite[(5.13) \&\ (5.14)]{KT17} by using the definition of $\mathcal F_j^n$,
	\begin{equation}
		\label[ineq]{in1:b}
		I_1\leq j\varepsilon|\tilde h|_{L^2}\leq j\varepsilon^{1/2}I^{1/2}.
	\end{equation}
	Instead of \cite[Lem.\ 4.7]{KT17}, we observe that $g$ is $j$-Lipschitz. Using this and the definition of $\Psi_\varepsilon$, we get the analogue of \cite[(5.15)]{KT17},
	\begin{equation}
		\label[ineq]{in1:B}
		\left|\int\Psi_\varepsilon(x-y)g(x)\e|T|_\tau(x)-(\Psi_\varepsilon\ast|T|_\tau)(y)g(y)\right|\leq j\int_{\overline{B}_1(y)}|x-y|\Psi_\varepsilon(x-y)\e|T|_\tau(x).
	\end{equation}
	In \cite[(5.16)]{KT17} we need to replace $n+1$ by $n$,
	\begin{equation}
		\label[ineq]{in1:B2}
		\sup_{\overline{B}_1(y)\setminus\overline{B}_{\sqrt \varepsilon}(y)}|x-y|\Psi_\varepsilon(x-y)\leq c(n)\varepsilon^{-n}e^{-1/(2\varepsilon)}=c_{\varepsilon,n}.
	\end{equation}
	Combining \cref{in1:B,in1:B2}, we obtain
	\begin{equation}
		\label[ineq]{in1:c}
		I_2\leq\underbrace{j\varepsilon^{1/2}\int|\tilde h|(\Psi_\varepsilon\ast|T|_\tau)\e\Lebn}_{I_{2,1}=}+\underbrace{jc_{\varepsilon,n}\int|\tilde{h}(y)||T|_\tau(\overline{B}_1(y))\e\Lebn(y)}_{I_{2,2}=}.
	\end{equation}
	Note that $(\Psi_\varepsilon\ast|T|_\tau)(\R^n)=|T|_\tau(\Psi_\varepsilon\ast 1)=|T|_\tau(\R^n)$. Using this and Hölder's inequality, one obtains (see \cite[(5.18) \&\ (5.19)]{KT17})
	\begin{equation}
		\label[ineq]{in1:d}
		I_{2,1}\leq j\varepsilon^{1/2}\left( \int|\tilde{h}|^2(\Psi_\varepsilon\ast |T|_\tau)\e\Lebn \right)^{1/2}(\Psi_\varepsilon\ast|T|_\tau)(\R^n)^{1/2}\leq j\varepsilon^{1/2}I^{1/2}M^{1/2}.
	\end{equation}
	Using Hölder's inequality again, we can estimate (see \cite[(5.20) \&\ (5.21)]{KT17})
	\begin{equation*}
		I_{2,2}\leq jc_{\varepsilon,n} I^{1/2}\varepsilon^{-1/2}M^{1/2}\left(\int|T|_\tau(\overline{B}_1(x))\e\Lebn(x)\right)^{1/2}.
	\end{equation*}
	Now recall that $\int|T|_\tau(\overline{B}_1(x))\e\Lebn(x)=\omega_n|T|_\tau(\R^n)\leq \omega_nM$ (\cref{intBall}), which yields
	\begin{equation}
		\label[ineq]{in1:e}
		I_{2,2}\leq jc_{\varepsilon,n} \varepsilon^{-1/2}M\omega_n^{1/2}I^{1/2}.
	\end{equation}
	Now combine \cref{in1:a,in1:b,in1:c,in1:d,in1:e} and use the assumption $j\leq 2^{-1}\varepsilon^{-1/6}$,
	\begin{multline*}
		\left|\int h_{\tau,\varepsilon}\cdot g\e|T|_\tau+\int(\Psi_\varepsilon\ast\delta_\tau T)\cdot g\e\Lebn\right|\leq I_1+I_{2,1}+I_{2,2}\\
		\leq j\varepsilon^{1/2}I^{1/2}+j\varepsilon^{1/2}I^{1/2}M^{1/2}+jc_{\varepsilon,n}\varepsilon^{-1/2}M\omega_n^{1/2}I^{1/2}
		\leq 2^{-1}\varepsilon^{1/3}(1+M^{1/2}+c_{\varepsilon,n}\varepsilon^{-1}M\omega_n^{1/2})I^{1/2}\\
		\leq \varepsilon^{1/4}I^{1/2}
	\end{multline*}
	for $\varepsilon$ sufficiently small because $c_{\varepsilon,n}\varepsilon^{-1}\to 0$ for $\varepsilon\to 0$.
\end{proof}
The next proposition is the analogue of \cite[Prop.\ 5.5]{KT17}, cf.\ \cite[Sec.\ 4.8]{B78}.
\begin{prop}
	\label{Prop5.5}
	Let $M\in\R_+^*$. Then, for $\varepsilon\in\R_+^*$ sufficiently small (depending on $M,n$), we have
	\begin{equation*}
		\left|\int h_{\tau,\varepsilon}(T)\cdot g\e|T|_\tau+\partial_\tau T(g)\right|\leq\varepsilon^{1/4}+\varepsilon^{1/4}\left(\int\frac{|\Psi_\varepsilon\ast\delta_\tau T|^2}{\Psi_\varepsilon\ast|T|_\tau+\varepsilon}\e\Lebn\right)^{1/2}
	\end{equation*}
	for all $j\leq 2^{-1}\varepsilon^{-1/6},g\in\mathcal F_j^n$, and $T\in \rectn$ compactly supported with $|T|_\tau(\R^n)\leq M$. (We write $\delta_\tau T(g)=\int_S\tau(|\theta|)\textup{div}_S(g)\e\Ha$, which is well-defined because $|\Jm g|_2\leq j$.)
\end{prop}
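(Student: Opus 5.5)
By \cref{Prop5.3}, it suffices to compare $\int(\Psi_\varepsilon\ast\delta_\tau T)\cdot g\e\Lebn$ with $\delta_\tau T(g)$. (I read the $\partial_\tau T(g)$ in the statement as $\delta_\tau T(g)$, per the parenthetical remark.) The triangle inequality then gives
\begin{equation*}
\left|\int h_{\tau,\varepsilon}(T)\cdot g\e|T|_\tau+\delta_\tau T(g)\right|\leq\varepsilon^{1/4}I^{1/2}+\left|\delta_\tau T(g)-\int(\Psi_\varepsilon\ast\delta_\tau T)\cdot g\e\Lebn\right|,
\end{equation*}
where $I$ is as in the proof of \cref{Prop5.3}. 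So the plan reduces to showing that the last term is at most $\varepsilon^{1/4}$ for $\varepsilon$ small, uniformly in $j\leq 2^{-1}\varepsilon^{-1/6}$, $g\in\mathcal F_j^n$ and $|T|_\tau(\R^n)\leq M$.

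The first step uses the identity $\delta_\tau T(\Psi_\varepsilon\ast g)=\int g\cdot(\Psi_\varepsilon\ast\delta_\tau T)\e\Lebn$ recorded after \cref{Smoothed}. Because $\Psi_\varepsilon$ is radially symmetric, convolution and correlation coincide, so signs are harmless. The term to bound is then $|\delta_\tau T(g-\Psi_\varepsilon\ast g)|$. Since $g$ is $C^2$, the bound $|\textup{div}_S f|\leq\sqrt{n}\,|\Jm f|_2$ gives
\begin{equation*}
|\delta_\tau T(g-\Psi_\varepsilon\ast g)|\leq \sqrt n\,M\,\sup_{x}|\Jm g(x)-(\Psi_\varepsilon\ast\Jm g)(x)|_2 .
\end{equation*}

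The second step estimates this supremum with \cref{propertiesFn}: $\Jm g$ is $j$-Lipschitz and $\int\Psi_\varepsilon=1$, so
\begin{equation*}
|\Jm g(x)-(\Psi_\varepsilon\ast\Jm g)(x)|_2\leq j\int|z|\Psi_\varepsilon(z)\e\Lebn(z).
\end{equation*}
The first moment of the truncated Gaussian is at most $c(n)\varepsilon$, using $c_\varepsilon\to1$ and $\psi\leq 1$. Combining this with $j\leq 2^{-1}\varepsilon^{-1/6}$, the error is at most $\sqrt n\,M\,c(n)\,\varepsilon^{5/6}/2$, which is $\leq\varepsilon^{1/4}$ once $\varepsilon$ is small depending on $M$ and $n$. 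The smallness threshold is chosen as the maximum of the one required in \cref{Prop5.3} and this one.

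The main point needing care is justifying $\delta_\tau T(\Psi_\varepsilon\ast g)=\int g\cdot(\Psi_\varepsilon\ast\delta_\tau T)$ when $g$ is not compactly supported. Elements of $\mathcal F_j^n$ are only $C^2$ with bounded derivatives and $L^2$ bounds. However, $T$ has compact support, so $\delta_\tau T$ only sees $g$ near $\spt T$. One can multiply $g$ by a cutoff equal to $1$ on the $1$-neighbourhood of $\spt T$: this changes neither side, since $\Psi_\varepsilon$ is supported in $\overline B_1(0)$. After that, Fubini applies directly because $|T|_\tau(\R^n)<\infty$ and $\nabla\Psi_\varepsilon$ is bounded.
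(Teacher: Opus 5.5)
Your proof is correct and follows essentially the same route as the paper. After reducing to \cref{Prop5.3}, both arguments bound $\bigl|\int(\Psi_\varepsilon\ast\delta_\tau T)\cdot g\e\Lebn-\delta_\tau T(g)\bigr|$ by a dimensional constant times $j\,|T|_\tau(\R^n)\int|z|\Psi_\varepsilon(z)\e\Lebn(z)$, using the $j$-Lipschitz continuity of $\Jm g$ from \cref{propertiesFn}, and then absorb it with $j\leq 2^{-1}\varepsilon^{-1/6}$. The only difference is that you bound the first moment of $\Psi_\varepsilon$ directly by $c(n)\varepsilon$, whereas the paper splits it at $|z|=\sqrt\varepsilon$ and gets $c(n)\varepsilon^{1/2}$; either bound suffices.
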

\begin{proof}
	Instead of \cite[(5.49)]{KT17}, we have
	\begin{equation*}
		\left|\int (\Psi_\varepsilon\ast\delta_\tau T)\cdot g\Lebn-\partial_\tau T(g)\right|\leq c(n) j\int\int\Psi_\varepsilon(x-y)|x-y|\e\Lebn(y)\e|T|_\tau(x)
	\end{equation*}
	using \cref{TauVariation,propertiesFn}. An estimate similar to \cref{in1:c} yields
	\begin{equation*}
		j\int\int\Psi_\varepsilon(x-y)|x-y|\e\Lebn(y)\e|T|_\tau(x)
		\leq jc_{\varepsilon,n}\int|T|_\tau(\overline{B}_1(y))\e\Lebn(y)+j\varepsilon^{1/2}\int(\Psi_\varepsilon\ast |T|_\tau)\e\Lebn.
	\end{equation*}
	Note that $c_{\varepsilon,n}\leq\varepsilon^{1/2}$ for $\varepsilon$ sufficiently small, thus
	\begin{equation*}
		jc_{\varepsilon,n}\int|T|_\tau(\overline{B}_1(y))\e\Lebn(y)+j\varepsilon^{1/2}\int(\Psi_\varepsilon\ast |T|_\tau)\e\Lebn\leq c(n) j\varepsilon^{1/2}|T|_\tau(\R^n),
	\end{equation*}
	where we also used \cref{intBall} and $\int(\Psi_\varepsilon\ast |T|_\tau)\e\Lebn=|T|_\tau(\R^n)$. We have shown
	\begin{equation*}
		\left|\int (\Psi_\varepsilon\ast\delta_\tau T)\cdot g\Lebn-\partial_\tau T(g)\right|\leq c(n) j\varepsilon^{1/2}|T|_\tau(\R^n).
	\end{equation*}
	Combining this with \cref{Prop5.3}, $j\leq 2^{-1}\varepsilon^{-1/6}$, and $|T|_\tau(\R^n)\leq M$ gives
	\begin{multline*}
		\left|\int h_{\tau,\varepsilon}(T)\cdot g\e|T|_\tau+\partial_\tau T(g)\right|\\
		\leq \left|\int h_{\tau,\varepsilon}(T)\cdot g\e|T|_\tau+\int (\Psi_\varepsilon\ast\delta_\tau T)\cdot g\Lebn\right|+\left|-\int (\Psi_\varepsilon\ast\delta_\tau T)\cdot g\Lebn+\partial_\tau T(g)\right|\\
		\leq c(n) \varepsilon^{1/4}\left(\int\frac{|\Psi_\varepsilon\ast\delta_\tau T|^2}{\Psi_\varepsilon\ast|T|_\tau+\varepsilon}\e\Lebn\right)^{1/2}+2^{-1}\varepsilon^{-1/6}\varepsilon^{1/2}M.
	\end{multline*}
	Now use that $2^{-1}\varepsilon^{-1/6}\varepsilon^{1/2}M=2^{-1}M\varepsilon^{1/3}\leq\varepsilon^{1/4}$ for $\varepsilon$ sufficiently small.
\end{proof}
The next statement is \cite[Lem.\ 4.14]{KT17} (a simple calculation shows that it is valid in our setting), compare with the calculations in \cite[Sec.\ 4.7]{B78}.
\begin{lem}[Formula for $\textup{id}\Psi_\varepsilon+\varepsilon^2\nabla\Psi_\varepsilon$]
	\label{FormulaPsi}
	We have 
	\begin{equation*}
		x\Psi_\varepsilon(x)+\varepsilon^2\nabla\Psi_\varepsilon(x)=\varepsilon^2c_\varepsilon\nabla\psi(x)\widehat{\Psi}_\varepsilon(x).
	\end{equation*}
\end{lem}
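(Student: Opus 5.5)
This identity follows from differentiating the product $\Psi_\varepsilon=c_\varepsilon\psi\widehat\Psi_\varepsilon$ with the product rule and using the explicit form of the Gaussian $\widehat\Psi_\varepsilon$.

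First, compute the gradient of the Gaussian. Since $\widehat\Psi_\varepsilon(x)=(2\pi\varepsilon^2)^{-n/2}e^{-|x|^2/(2\varepsilon^2)}$, the chain rule gives
\begin{equation*}
\nabla\widehat\Psi_\varepsilon(x)=-\frac{x}{\varepsilon^2}\widehat\Psi_\varepsilon(x),
\qquad\textup{that is}\qquad
x\widehat\Psi_\varepsilon(x)+\varepsilon^2\nabla\widehat\Psi_\varepsilon(x)=0
\end{equation*}
for all $x\in\R^n$.

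Next, apply the product rule to $\Psi_\varepsilon=c_\varepsilon\psi\widehat\Psi_\varepsilon$, which gives $\nabla\Psi_\varepsilon=c_\varepsilon\widehat\Psi_\varepsilon\nabla\psi+c_\varepsilon\psi\nabla\widehat\Psi_\varepsilon$. Hence
\begin{equation*}
x\Psi_\varepsilon(x)+\varepsilon^2\nabla\Psi_\varepsilon(x)=c_\varepsilon\psi(x)\bigl(x\widehat\Psi_\varepsilon(x)+\varepsilon^2\nabla\widehat\Psi_\varepsilon(x)\bigr)+\varepsilon^2c_\varepsilon\nabla\psi(x)\widehat\Psi_\varepsilon(x).
\end{equation*}
By the Gaussian identity the bracket vanishes, which leaves exactly the claimed expression $\varepsilon^2c_\varepsilon\nabla\psi(x)\widehat\Psi_\varepsilon(x)$.

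There is no real obstacle: the argument is a two-line calculation. It uses only the definitions from \cref{ApproxTauVectorfields} and does not depend on the dimension or codimension, which is why \cite[Lem.\ 4.14]{KT17} carries over verbatim.

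A consequence worth noting is that $\nabla\psi$ vanishes outside $\overline B_1(0)\setminus\overline B_{1/2}(0)$. The right-hand side is therefore supported in this annulus, where $\widehat\Psi_\varepsilon$ is exponentially small. This is how the lemma will be used later.
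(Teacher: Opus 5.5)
Your proposal is correct and follows the same route as the paper, which only defers to \cite[Lem.\ 4.14]{KT17} with ``a simple calculation''. That calculation is exactly what you write out: the product rule applied to $\Psi_\varepsilon=c_\varepsilon\psi\widehat\Psi_\varepsilon$, combined with the Gaussian identity $\varepsilon^2\nabla\widehat\Psi_\varepsilon=-x\widehat\Psi_\varepsilon$.
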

We also use the following estimates.
\begin{lem}[Auxiliary estimates]
	\label{AuxEst}
	We have (see \cite[(5.6)]{KT17})
	\begin{equation*}
		|(\Psi_\varepsilon\ast\delta_\tau T)(x)|\leq\varepsilon^{-2}(\Psi_\varepsilon\ast|T|_\tau)(x)+ce^{-1/\varepsilon}|T|_\tau(\overline{B}_1(x)),
	\end{equation*}
	where $c$ is the constant from \cref{EstimatesNabla}. Moreover,
	\begin{equation*}
		(\Psi_\varepsilon\ast|T|_\tau)(x)\leq c(\varepsilon,n)|T|_\tau(\overline{B}_1(x)).
	\end{equation*}
\end{lem}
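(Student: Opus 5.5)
The first inequality is obtained by writing out the convolution and differentiating the kernel. By \cref{Smoothed},
\begin{equation*}
(\Psi_\varepsilon\ast\delta_\tau T)(x)=\int_S T_yS\,\nabla\Psi_\varepsilon(y-x)\tau(|\theta(y)|)\e\Ha(y).
\end{equation*}
Since $T_yS$ is an orthogonal projection, $|T_yS\,\nabla\Psi_\varepsilon(y-x)|\leq|\nabla\Psi_\varepsilon(y-x)|$. We then apply the first estimate of \cref{EstimatesNabla} pointwise with argument $y-x$:
\begin{equation*}
|\nabla\Psi_\varepsilon(y-x)|\leq\frac{|y-x|}{\varepsilon^2}\Psi_\varepsilon(y-x)+c\,1_{\overline B_1(0)\setminus\overline B_{1/2}(0)}(y-x)e^{-1/\varepsilon}.
\end{equation*}
On $\spt\Psi_\varepsilon\subset\overline B_1(0)$ we have $|y-x|\leq1$, so the first term is at most $\varepsilon^{-2}\Psi_\varepsilon(y-x)$. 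The indicator in the second term is bounded by $1_{\overline B_1(x)}(y)$. Integrating against $|T|_\tau$ gives
\begin{equation*}
|(\Psi_\varepsilon\ast\delta_\tau T)(x)|\leq\varepsilon^{-2}(\Psi_\varepsilon\ast|T|_\tau)(x)+ce^{-1/\varepsilon}|T|_\tau(\overline B_1(x)),
\end{equation*}
where we use the symmetry $\Psi_\varepsilon(y-x)=\Psi_\varepsilon(x-y)$, which holds because $\psi$ may be taken radial (as in \cite{KT17}). If $\psi$ is not radial, the same computation goes through with the convention of \cref{Smoothed}, since both sides involve the same kernel orientation. This step inherits the requirement that $\varepsilon$ be small enough for \cref{EstimatesNabla} to apply.

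The second estimate is elementary. Since $0\leq\psi\leq1$ and $\spt\psi\subset\overline B_1(0)$,
\begin{equation*}
\Psi_\varepsilon\leq c_\varepsilon(2\pi\varepsilon^2)^{-n/2}1_{\overline B_1(0)}.
\end{equation*}
Integrating against $|T|_\tau$ yields
\begin{equation*}
(\Psi_\varepsilon\ast|T|_\tau)(x)\leq c(\varepsilon,n)|T|_\tau(\overline B_1(x)),\qquad c(\varepsilon,n)=c_\varepsilon(2\pi\varepsilon^2)^{-n/2}.
\end{equation*}
The constant $c_\varepsilon$ is finite and depends only on $\varepsilon$ and $n$, because $\int\psi\widehat\Psi_\varepsilon\,\mathrm{d}\Lebn>0$.

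The only point requiring any care is checking the sign and orientation of the kernel argument against \cref{Smoothed}. Everything else is a direct application of \cref{EstimatesNabla}.
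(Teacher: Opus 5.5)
Your proposal is correct and is essentially the argument behind \cite[(5.6)]{KT17}, to which the paper defers without giving a proof of its own. That argument bounds the projected gradient by $|\nabla\Psi_\varepsilon|$, applies \cref{EstimatesNabla} using $|y-x|\le 1$ on the support, and for the second estimate bounds $\Psi_\varepsilon$ by its supremum times $1_{\overline{B}_1(0)}$. Your remark about symmetry and radiality is unnecessary: both convolutions in \cref{Smoothed} use the same orientation $\Psi(x-y)$ with $x$ the integration variable, so the first term integrates exactly to $(\Psi_\varepsilon\ast|T|_\tau)(x)$.
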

The following statement is the analogue of \cite[Prop.\ 5.4]{KT17}, cf.\ \cite[Sec.\ 4.7]{B78}.
\begin{prop}
	\label{LongProof}
	Let $M\in\R_+$. Then, for $\varepsilon$ sufficiently small (depending on $M,n$), we have
	\begin{equation}
		\label[ineq]{in1:5.23}
		\left| \delta_\tau T(\phi h_{\tau,\varepsilon}(T))+\int\frac{\phi|\Psi_\varepsilon\ast\delta_\tau T|^2}{\Psi_\varepsilon\ast|T|_\tau+\varepsilon}\e\Lebn \right|\leq \varepsilon^{1/4}\left(\int\frac{\phi|\Psi_\varepsilon\ast\delta_\tau T|^2}{\Psi_\varepsilon\ast|T|_\tau+\varepsilon}\e\Lebn+1\right)
	\end{equation}
	and
	\begin{equation}
		\label[ineq]{in1:5.24}
		\int |h_{\tau,\varepsilon}(T)|^2\phi\e|T|_\tau\leq \int\frac{\phi|\Psi_\varepsilon\ast\delta_\tau T|^2}{\Psi_\varepsilon\ast|T|_\tau+\varepsilon}(1+\varepsilon^{1/4})\e\Lebn+\varepsilon^{1/4}
	\end{equation}
	for all $j\leq 2^{-1}\varepsilon^{-1/6}$, $\phi\in\mathcal F_j^+$, and $T\in\rectn$ compactly supported with $\E_\tau(T)\leq M$.
\end{prop}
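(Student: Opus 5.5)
The plan is to follow the proof of \cite[Prop.\ 5.4]{KT17}. We pass to the induced varifold $V=v(\tau(|\theta|),S)$ and use the identities $\|V\|=|T|_\tau$, $\Psi_\varepsilon\ast\delta V=\Psi_\varepsilon\ast\delta_\tau T$ and $\Psi_\varepsilon\ast\|V\|=\Psi_\varepsilon\ast|T|_\tau$ from the end of \cref{ApproxTauVectorfields}. Abbreviate $h=h_{\tau,\varepsilon}(T)$, $\tilde h=\tilde h_{\tau,\varepsilon}(T)$ and
\begin{equation*}
I_\phi=\int\frac{\phi|\Psi_\varepsilon\ast\delta_\tau T|^2}{\Psi_\varepsilon\ast|T|_\tau+\varepsilon}\e\Lebn.
\end{equation*}

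\textbf{First inequality.} Write $h=\Psi_\varepsilon\ast\tilde h$ and expand $\delta_\tau T(\phi h)$ via \cref{TauVariation}. Since $\textup{div}_S(\phi h)=\phi\,\textup{div}_S h+\nabla\phi\cdot T_xS\,h$, exchanging the order of integration (Fubini) gives
\begin{equation*}
\delta_\tau T(\phi h)=\int\tilde h(y)\cdot\int_S\phi(x)\,T_xS\nabla\Psi_\varepsilon(x-y)\tau(|\theta|)\e\Ha(x)\e\Lebn(y)+\int_S\nabla\phi\cdot T_xS\,h\,\tau(|\theta|)\e\Ha.
\end{equation*}
In the first term we replace $\phi(x)$ by $\phi(y)$. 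The inner integral then becomes $\phi(y)(\Psi_\varepsilon\ast\delta_\tau T)(y)$, and since $\tilde h=-(\Psi_\varepsilon\ast\delta_\tau T)/(\Psi_\varepsilon\ast|T|_\tau+\varepsilon)$, the main term is exactly $-I_\phi$.

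\textbf{Error terms.} The replacement error is controlled by \cref{propertiesF}: $|\phi(x)-\phi(y)|\le j|x-y|\phi(y)e^{j|x-y|}$. Combined with \cref{EstimatesNabla}, this bounds the error by $j\int|\tilde h(y)|\phi(y)\int(|x-y|^2\varepsilon^{-2}\Psi_\varepsilon+\dots)\e|T|_\tau\e\Lebn$. The moment $\int|x-y|^2\Psi_\varepsilon$ contributes a factor of order $\varepsilon^{0}$ times $\Psi_\varepsilon\ast|T|_\tau$, which is too crude, so I would split at $|x-y|\le\sqrt\varepsilon$ as in \cref{in1:B2}. On the near region the kernel $|x-y|^2\varepsilon^{-2}\Psi_\varepsilon$ gains a factor $\varepsilon^{-1}\cdot\varepsilon$. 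Together with $j\le\varepsilon^{-1/6}/2$ this gives a bound $\varepsilon^{1/3}$-ish times $\int|\tilde h|\phi(\Psi_\varepsilon\ast|T|_\tau)$. On the far region we get exponentially small terms, which are handled by \cref{intBall} and \cref{AuxEst} exactly as for $I_{2,2}$ in \cref{Prop5.3}. The remaining $\int|\tilde h|\phi(\Psi_\varepsilon\ast|T|_\tau)$ is bounded by Cauchy--Schwarz by $I_\phi^{1/2}M^{1/2}$. The second term, involving $\nabla\phi$, is bounded by $j\int|h|\phi\e|T|_\tau\le j(\int|h|^2\phi\e|T|_\tau)^{1/2}M^{1/2}$. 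Finally, absorb everything using $ab\le\varepsilon^{\kappa}a^2+\varepsilon^{-\kappa}b^2$ so that the total error is at most $\varepsilon^{1/4}(I_\phi+1)$.

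\textbf{Second inequality.} By Jensen's inequality with the probability kernel $\Psi_\varepsilon$, $|h(x)|^2\le(\Psi_\varepsilon\ast|\tilde h|^2)(x)$. Hence
\begin{equation*}
\int|h|^2\phi\e|T|_\tau\le\int|\tilde h(y)|^2\int\Psi_\varepsilon(x-y)\phi(x)\e|T|_\tau(x)\e\Lebn(y).
\end{equation*}
We then replace $\phi(x)$ by $\phi(y)$, using $\phi(x)\le\phi(y)e^{j|x-y|}$ on the near region. This gives $|\tilde h|^2\phi(\Psi_\varepsilon\ast|T|_\tau)(1+\varepsilon^{1/4})\le(1+\varepsilon^{1/4})\phi|\Psi_\varepsilon\ast\delta_\tau T|^2/(\Psi_\varepsilon\ast|T|_\tau+\varepsilon)$. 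On the far region we use the pointwise bound $|\tilde h|\le2\varepsilon^{-2}$ from \cref{hEstimates} together with the $e^{-1/(2\varepsilon)}$ decay and \cref{intBall}, which yields an additive error below $\varepsilon^{1/4}$. This bound can in turn be fed back into the $\nabla\phi$-term of the first part.

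\textbf{Main obstacle.} The delicate step is the near/far splitting in the first part. The bound on $\nabla\Psi_\varepsilon$ carries the factor $\varepsilon^{-2}$, and it must be compensated by the factor $|x-y|\le\sqrt\varepsilon$ coming from the Lipschitz-type estimate on $\phi$. With $j\le\varepsilon^{-1/6}$ the exponents $\varepsilon^{-2}\cdot\varepsilon\cdot\varepsilon^{1/2}\cdot j$ do not close naively. I would first try the symmetrization trick used in \cite[Sec.\ 5]{KT17}: use \cref{FormulaPsi} to write $\varepsilon^2\nabla\Psi_\varepsilon=-x\Psi_\varepsilon+$ (cut-off term). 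This converts the $\varepsilon^{-2}$ into moments of the Gaussian, so that only a factor $j\sqrt\varepsilon$ survives.
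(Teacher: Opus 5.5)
Your argument for the second inequality is fine and is essentially the paper's: Jensen's inequality, then $\phi(x)\le\phi(y)e^{j|x-y|}$ near the diagonal, and $|\tilde h|\le 2\varepsilon^{-2}$ together with \cref{intBall} on the tail.

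The first inequality has a genuine gap. You estimate two terms separately: the replacement error from $\phi(x)\to\phi(y)$, and the term $\int_S\nabla\phi\cdot T_xS\,h\,\tau(|\theta|)\,\mathrm{d}\mathcal{H}^1$. Neither is small; each has size comparable to $j\,M^{1/2}I_\phi^{1/2}$.

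The reason is that the Gaussian concentrates at $|x-y|\sim\varepsilon$. There the kernel $j|x-y|\,|\nabla\Psi_\varepsilon(x-y)|\approx j\varepsilon^{-2}|x-y|^2\Psi_\varepsilon(x-y)$, integrated against $|T|_\tau$, is comparable to $j\,\Psi_\varepsilon\ast|T|_\tau$ wherever you truncate. On $|x-y|\le\sqrt\varepsilon$ the factor $\varepsilon^{-2}|x-y|^2$ is only bounded by $\varepsilon^{-1}$, so the splitting gains nothing. Rewriting $\nabla\Psi_\varepsilon$ via \cref{FormulaPsi} does not change this either.

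Consequently no choice of $\kappa$ in $ab\le\varepsilon^\kappa a^2+\varepsilon^{-\kappa}b^2$ makes both $\varepsilon^\kappa I_\phi\le\varepsilon^{1/4}I_\phi$ and $\varepsilon^{-\kappa}j^2M\le\varepsilon^{1/4}$. So the absorption step fails.

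This is not an artefact of crude estimation. Take $T$ on a circle with smoothly varying weight $\tau(|\theta|)=w$. Then $I_\phi$ stays bounded as $\varepsilon\to0$. Meanwhile $h_{\tau,\varepsilon}(T)$ converges to the generalized curvature of $v(w,S)$, whose tangential part is $(w'/w)\mathbf{t}$. Hence your $\nabla\phi$-term tends to $\int w'\,\partial_{\mathbf{t}}\phi\,\mathrm{d}\mathcal{H}^1$, which is generically nonzero. Since $\delta_\tau T(\phi h)+I_\phi\to 0$, your replacement error must tend to minus the same quantity.

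The missing idea is the cancellation between these two terms, which is how the paper (following KT17) proceeds.

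1. Expand $\phi(x)=\phi(y)+\nabla\phi(y)\cdot(x-y)+R$ to first order, and in the divergence term split $\nabla\phi(x)=\nabla\phi(y)+(\nabla\phi(x)-\nabla\phi(y))$.
2. The remainders give $I_1$ and $I_2$, which are small. The kernel now carries $j|x-y|^3\varepsilon^{-2}\Psi_\varepsilon$, and $|x-y|^3\varepsilon^{-2}\le\sqrt\varepsilon$ on $\overline{B}_{\varepsilon^{5/6}}(y)$.
3. The two first-order pieces combine into an exact derivative:
   $\nabla\phi(y)\cdot(x-y)\,T_xS\nabla\Psi_\varepsilon(x-y)+\Psi_\varepsilon(x-y)\,T_xS\nabla\phi(y)=T_xS\nabla_x\bigl((x-y)\cdot\nabla\phi(y)\,\Psi_\varepsilon(x-y)\bigr)$.
4. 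Only at this point does \cref{FormulaPsi} enter. Up to $O(je^{-1/\varepsilon})$ it converts this term into $I_4=-\varepsilon^2\int\int T_xS\nabla^2\Psi_\varepsilon(x-y)\nabla\phi(y)\,\mathrm{d}|T|_\tau(x)\cdot\tilde h(y)\,\mathrm{d}\mathcal{L}^n(y)$.
5. Even $I_4$ is not small pointwise: by \cref{EstimatesNabla}, $\varepsilon^2|\nabla^2\Psi_\varepsilon|_2$ is again comparable to $\Psi_\varepsilon$. What makes it small is the identity $I_4=-\frac{\varepsilon^2}{2}\int\frac{\nabla\phi\cdot\nabla(|\Psi_\varepsilon\ast\delta_\tau T|^2)}{\Psi_\varepsilon\ast|T|_\tau+\varepsilon}\,\mathrm{d}\mathcal{L}^n$. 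Integrate by parts to move the derivative onto $\nabla\phi/(\Psi_\varepsilon\ast|T|_\tau+\varepsilon)$.
6. Then use $|\nabla^2\phi|_2\le j\phi$ and $|\nabla(\Psi_\varepsilon\ast|T|_\tau)|\le\varepsilon^{-3/2}\Psi_\varepsilon\ast|T|_\tau+ce^{-\varepsilon^{-1/2}}|T|_\tau(\overline{B}_1(\cdot))$. This gives $|I_4|\le c(n)j\varepsilon^{1/2}I_\phi+je^{-\varepsilon^{-1/6}}$, and with $j\le\frac12\varepsilon^{-1/6}$ all terms close to $\varepsilon^{1/4}(I_\phi+1)$.
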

\begin{proof}
	Fix $T=\theta\Ha\mres S\in\rectn$ compactly supported with $|T|_\tau(\R^n)\leq M$. For simplicity, we write $h_{\tau,\varepsilon}=h_{\tau,\varepsilon}(T)$ and $\tilde h_{\tau,\varepsilon}=\tilde h_{\tau,\varepsilon}(T)$. 
	Define $V=v(\tau(|\theta|),S)$ induced by $T$ as in \cref{inducedV}. Note that it also satisfies $\delta V(\phi h_{\varepsilon})=\delta_\tau T(\phi h_{\tau,\varepsilon})$ ($h_\varepsilon$ defined as in \cite[(5.1)]{KT17}). Hence, the two inequalities can be written as in \cite[Prop.\ 5.4]{KT17}. Therefore, it only needs to be checked whether the proof of \cite[Prop.\ 5.4]{KT17} is independent of the codimension.
	As in the beginning of the proof of \cite[Prop.\ 5.4]{KT17} (the calculations are algebraically the same), we can write 
	\begin{equation*}
		\delta_\tau T(\phi h_{\tau,\varepsilon})+\int\frac{\phi|\Psi_\varepsilon\ast\delta_\tau T|^2}{\Psi_\varepsilon\ast|T|_\tau+\varepsilon}\e\Lebn=I_1+I_2+I_3,
	\end{equation*}
	where 
	\begin{align*}
		I_1&=\int\int(\phi(x)-\phi(y)-\nabla\phi(y)\cdot(x-y))T_xS\nabla\Psi_\varepsilon(x-y)\e|T|_\tau(x)\cdot \tilde h_{\tau,\varepsilon}(y)\e\Leb^n(y),\\
		I_2&=\int\int\Psi_\varepsilon(x-y)T_xS(\nabla\phi(x)-\nabla\phi(y))\e|T|_\tau(x)\cdot \tilde h_{\tau,\varepsilon}(y)\e\Leb^n(y),\\
		I_3&=\int\int\nabla\phi(y)\cdot(x-y)T_xS\nabla\Psi_\varepsilon(x-y)+\Psi_\varepsilon(x-y)T_xS\nabla\phi(y)\e|T|_\tau(x)\cdot\tilde{h}_{\tau,\varepsilon}(y)\e\Leb^n(y).
	\end{align*}
	Further, we also define
	\begin{equation*}
		I_4=-\varepsilon^2\int\int T_xS\nabla_x(\nabla\phi(y)\cdot\nabla\Psi_\varepsilon(x-y))\e|T|_\tau(x)\cdot\tilde h_{\tau,\varepsilon}(y)\e\Leb^n(y).
	\end{equation*}
	\underline{Estimation of $I_1$}: We can estimate as in \cite[(5.30)]{KT17}. However, since the dimension equals $n$, we obtain a term $\varepsilon^{-n-2}$ instead of $\varepsilon^{-n-3}$. More precisely, by \cref{propertiesF} and \cref{EstimatesNabla} we have (recall that $\phi\in[0,1]$)
	\begin{multline*}
		|I_1|\leq \int\int|\phi(x)-\phi(y)-\nabla\phi(y)\cdot(x-y)||T_xS|_2|\nabla\Psi_\varepsilon(x-y)|\e|T|_\tau(x)| \tilde h_{\tau,\varepsilon}(y)|\e\Leb^n(y)\\
		\leq j\int| \tilde h_{\tau,\varepsilon}(y)|\int |x-y|^2e^{j|x-y|}\frac{|x-y|}{\varepsilon^2}\Psi_\varepsilon(x-y)\e|T|_\tau(x)\e\Leb^n(y)\\
		+j\int| \tilde h_{\tau,\varepsilon}(y)|\int |x-y|^2e^{j|x-y|}c 1_{\overline B_1\setminus\overline B_{1/2}}(x-y)e^{-1/\varepsilon}\e|T|_\tau(x)\e\Leb^n(y),
	\end{multline*}
	where we use the notation $\overline {B}_r=\overline {B}_r(0)$. We obtain 
	\begin{multline*}
		j\int| \tilde h_{\tau,\varepsilon}(y)|\int |x-y|^2e^{j|x-y|}\frac{|x-y|}{\varepsilon^2}\Psi_\varepsilon(x-y)\e|T|_\tau(x)\e\Leb^n(y)\\
		= j\int| \tilde h_{\tau,\varepsilon}(y)|\int_{S\cap\overline{B}_{\varepsilon^{5/6}}(y)} |x-y|^2e^{j|x-y|}\frac{|x-y|}{\varepsilon^2}\Psi_\varepsilon(x-y)\tau(|\theta(x)|)\e\Ha(x)\e\Leb^n(y)\\
		+ j\int| \tilde h_{\tau,\varepsilon}(y)|\int_{S\setminus\overline{B}_{\varepsilon^{5/6}}(y)} |x-y|^2e^{j|x-y|}\frac{|x-y|}{\varepsilon^2}\Psi_\varepsilon(x-y)\tau(|\theta(x)|)\e\Ha(x)\e\Leb^n(y).
	\end{multline*}
	We have (using $\frac{|x-y|^3}{\varepsilon^2}\leq \sqrt{\varepsilon}$ in $\overline{B}_{\varepsilon^{5/6}}(y)$)
	\begin{multline*}
		j\int| \tilde h_{\tau,\varepsilon}(y)|\int_{S\cap\overline{B}_{\varepsilon^{5/6}}(y)} |x-y|^2e^{j|x-y|}\frac{|x-y|}{\varepsilon^2}\Psi_\varepsilon(x-y)\tau(|\theta(x)|)\e\Ha(x)\e\Leb^n(y)\\
		\leq j\sqrt{\varepsilon}e^{j\varepsilon^{5/6}}\int| \tilde h_{\tau,\varepsilon}(y)|\int_{S\cap\overline{B}_{\varepsilon^{5/6}}(y)} \Psi_\varepsilon(x-y)\tau(|\theta(x)|)\e\Ha(x)\e\Leb^n(y)\\
		\leq  j\sqrt{\varepsilon}e^{j\varepsilon^{5/6}}\int| \tilde h_{\tau,\varepsilon}(y)|\int\Psi_\varepsilon(x-y)\e|T|_\tau(x)\e\Leb^n(y).
	\end{multline*}
	For the second summand, we get (using $j\leq 2^{-1}\varepsilon^{-1/6}$ and $\Psi_\varepsilon=c_\varepsilon\psi\widehat\Psi_\varepsilon$ with $c_\varepsilon\leq c(n)$, see \cite[p.\ 63]{KT17})
	\begin{multline*}
		j\int| \tilde h_{\tau,\varepsilon}(y)|\int_{S\setminus\overline{B}_{\varepsilon^{5/6}}(y)} |x-y|^2e^{j|x-y|}\frac{|x-y|}{\varepsilon^2}\Psi_\varepsilon(x-y)\tau(|\theta(x)|)\e\Ha(x)\e\Leb^n(y)\\
		=j\int| \tilde h_{\tau,\varepsilon}(y)|\int_{S\setminus\overline{B}_{\varepsilon^{5/6}}(y)} |x-y|^2e^{j|x-y|}\frac{|x-y|}{\varepsilon^2}c_\varepsilon\psi(x-y)\frac{1}{(2\pi)^{n/2}\varepsilon^n}e^{-\frac{|x-y|^2}{2\varepsilon^2}}\tau(|\theta(x)|)\e\Ha(x)\e\Leb^n(y)\\
		\leq j\varepsilon^{-n-2}c(n)\int| \tilde h_{\tau,\varepsilon}(y)|\int_{S\setminus\overline{B}_{\varepsilon^{5/6}}(y)} |x-y|^3e^{j|x-y|}1_{\overline{B}_1(y)}(x)e^{-\frac{|x-y|^2}{2\varepsilon^2}}\tau(|\theta(x)|)\e\Ha(x)\e\Leb^n(y)\\
		\leq j\varepsilon^{-n-2}c(n)\int| \tilde h_{\tau,\varepsilon}(y)|\int_{S\setminus\overline{B}_{\varepsilon^{5/6}}(y)} 1^3e^{j\cdot 1}1_{\overline{B}_1(y)}(x)e^{-\frac{(\varepsilon^{5/6})^2}{2\varepsilon^2}}\tau(|\theta(x)|)\e\Ha(x)\e\Leb^n(y)\\
		\leq j\varepsilon^{-n-2}c(n)e^je^{-\frac{\varepsilon^{1/3}}{2}}\int | \tilde h_{\tau,\varepsilon}(y)||T|_\tau(\overline{B}_1(y))\e\Leb^n(y).
	\end{multline*}
	(Note that this estimate also works when $|x-y|^3$ is replaced by $|x-y|$, which is used below in the argument related to $|I_2|$.)
	Further, we have
	\begin{multline*}
		j\int| \tilde h_{\tau,\varepsilon}(y)|\int |x-y|^2e^{j|x-y|}c 1_{\overline B_1\setminus\overline B_{1/2}}(x-y)e^{-1/\varepsilon}\e|T|_\tau(x)\e\Leb^n(y)\\
		\leq  je^{j}ce^{-1/\varepsilon}\int| \tilde h_{\tau,\varepsilon}(y)|\int_S 1_{\overline B_1\setminus\overline B_{1/2}}(x-y) \tau(|\theta(x)|)\e\Ha(x)\e\Leb^n(y)\\
		\leq je^{j}ce^{-1/\varepsilon}\int| \tilde h_{\tau,\varepsilon}(y)||T|_\tau(\overline{B}_1(y))\e\Leb^n(y).
	\end{multline*}
	In summary, we obtain
	\begin{multline*}
		|I_1|\leq j\sqrt{\varepsilon}e^{j\varepsilon^{5/6}}\int| \tilde h_{\tau,\varepsilon}(y)|\int \Psi_\varepsilon(x-y)\e|T|_\tau(x)\e\Leb^n(y)\\
		+j\varepsilon^{-n-2}c(n)e^je^{-\frac{\varepsilon^{1/3}}{2}}\int| \tilde h_{\tau,\varepsilon}(y)||T|_\tau(\overline{B}_1(y))\e\Leb^n(y)\\
		+je^{j}ce^{-1/\varepsilon}\int| \tilde h_{\tau,\varepsilon}(y)||T|_\tau(\overline{B}_1(y))\e\Leb^n(y).
	\end{multline*}
	Now we can estimate as in \cite[(5.31) \&\ (5.32)]{KT17} using also \cref{intBall} (note that it does not matter that above we have $\varepsilon^{-n-2}$ instead of $\varepsilon^{-n-3}$ as in \cite[(5.30)]{KT17}). This yields
	\begin{equation}
		\label[ineq]{in1:I1Estimate}
		|I_1|\leq j\sqrt\varepsilon\int\frac{(\Psi_\varepsilon\ast\partial_\tau T)^2}{\Psi_\varepsilon\ast|T|_\tau+\varepsilon}\e\Leb^n+j  c(M,n)\sqrt\varepsilon +j c(M,n)e^{-\frac{1}{2}\varepsilon^{-1/6}}. 
	\end{equation}
	for $\varepsilon$ sufficiently small.
	
	\underline{Estimation of $I_2$}: By \cite[p.\ 73]{KT17} we have $|\nabla\phi(x)-\nabla\phi(y)|\leq j|x-y|\phi(x)e^{j|x-y|}$. This can be used to obtain
	\begin{multline*}
		|I_2|\leq \int\int\Psi_\varepsilon(x-y)|\nabla\phi(x)-\nabla\phi(y)|\e|T|_\tau(x)|\tilde h_{\tau,\varepsilon}(y)|\e\Leb^n(y)\\
		\leq j\int\int\Psi_\varepsilon(x-y)|x-y|\phi(x)e^{j|x-y|}\e|T|_\tau(x)|\tilde h_{\tau,\varepsilon}(y)|\e\Leb^n(y).
	\end{multline*}
	This looks like the first summand in the first estimate for $|I_1|$ with $|x-y|^3\varepsilon^{-2}$ replaced by $|x-y|$. We may write $|x-y|\varepsilon^{-2}$ instead of $|x-y|$. Then it also holds $|x-y|\varepsilon^{-2}\leq\sqrt{\varepsilon}$ for $x\in \overline{B}_{\varepsilon^{5/6}}(y)$. Thus, splitting the integration as in the estimate for $|I_1|$ yields that \cref{in1:I1Estimate} also holds for $|I_1|$ replaced by $|I_2|$ (in principal, one could use a slightly better estimate because there is no integral involving $1_{\overline{B}_1\setminus\overline{B}_{1/2}}$).
	
	In the remainder, we only sketch the proof since the arguments are as in the proof of \cite[Prop.\ 5.4]{KT17}.
	\underline{Estimation of $I_3-I_4$}: Note that \cite[(5.33)]{KT17} also holds in our setting (product rule),
	\begin{equation*}
		\nabla\phi(y)\cdot(x-y)T_xS\nabla\Psi_\varepsilon(x-y)+\Psi_\varepsilon(x-y)T_xS\nabla\phi(y)=T_xS\nabla_x((x-y)\cdot\nabla\phi(y)\Psi_\varepsilon(x-y)).
	\end{equation*}
	Using this and \cref{FormulaPsi}, we obtain \cite[(5.34)]{KT17},
	\begin{equation*}
		I_3-I_4=\int\int T_xS\nabla_x\left(\nabla\phi(y)\cdot\varepsilon^2c_\varepsilon\nabla\psi(x-y)\widehat{\Psi}_\varepsilon(x-y)\right)\e|T|_\tau(x)\cdot\tilde{h}_{\tau,\varepsilon}(y)\e\Leb^n(y).
	\end{equation*}
	Now, using similar estimates as for $|I_1|$, we obtain
	\begin{equation}
		\label[ineq]{in1:I34Estimate}
		|I_3-I_4|\leq c(M,n) je^{-1/\varepsilon}.
	\end{equation}
	\underline{Estimation of $I_4$}:
	Taking $V=v(\tau(|\theta|),S)$ and applying \cite[(5.37)]{KT17} we get
	\begin{equation*}
		I_4=-\frac{\varepsilon^2}{2}\int\frac{\nabla\phi\cdot\nabla(|\Psi_\varepsilon\ast\delta_\tau T|^2)}{\Psi_\varepsilon\ast|T|_\tau+\varepsilon}\e\Leb^n.
	\end{equation*}
	\cite[(5.38)]{KT17} can also be transferred (using $\psi_r(x)=\psi(x/r)$),
	\begin{multline*}
		\int\frac{\nabla\phi\cdot\nabla(|\Psi_\varepsilon\ast\delta_\tau T|^2)}{\Psi_\varepsilon\ast|T|_\tau+\varepsilon}\e\Leb^n\\
		=-\int\nabla\cdot\left(\frac{\nabla\phi}{\Psi_\varepsilon\ast|T|_\tau+\varepsilon}\right)|\Psi_\varepsilon\ast\delta_\tau T|^2\e\Leb^n-\lim_{r\to\infty}\int\frac{(\nabla\psi_r\cdot\nabla\phi)|\Psi_\varepsilon\ast\delta_\tau T|^2}{\Psi_\varepsilon\ast|T|_\tau+\varepsilon}\e\Leb^n.
	\end{multline*}
	\cite[(5.39)]{KT17} also holds (using \cref{hEstimates}),
	\begin{equation*}
		\int\frac{(\nabla\psi_r\cdot\nabla\phi)|\Psi_\varepsilon\ast\delta_\tau T|^2}{\Psi_\varepsilon\ast|T|_\tau+\varepsilon}\e\Leb^n\leq 2j\varepsilon^{-2}\int|\nabla\psi_r||\Psi_\varepsilon\ast\delta_\tau T|\e\Leb^n.
	\end{equation*}
	Now we can use the first two estimates in \cref{AuxEst}, which yields (see \cite[(5.40)]{KT17})
	\begin{equation*}
		\int|\nabla\psi_r||\Psi_\varepsilon\ast\delta_\tau T|\e\Leb^n\leq c(\varepsilon,n)r^{-1}\int_{\overline{B}_r\setminus\overline{B}_{r/2}}|T|_\tau(\overline{B}_1(x))\e\Leb^n(x).
	\end{equation*}
	It can also be checked that \cite[(5.41)]{KT17} holds in our setting,
	\begin{equation*}
		|I_4|\leq\frac{\varepsilon^2}{2}\int\left(\frac{nj\phi}{\Psi_\varepsilon\ast|T|_\tau+\varepsilon}+\frac{j\phi|\nabla\Psi_\varepsilon\ast|T|_\tau|}{(\Psi_\varepsilon\ast|T|_\tau+\varepsilon)^2}\right)|\Psi_\varepsilon\ast\delta_\tau T|^2\e\Lebn.
	\end{equation*}
	The last estimate in \cite[(5.42)]{KT17} can also be copied because it does not depend on the dimension,
	\begin{equation*}
		|(\nabla\Psi_\varepsilon\ast|T|_\tau)(y)|\leq\varepsilon^{-3/2}(\Psi_\varepsilon\ast|T|_\tau)(y)+ce^{-\varepsilon^{-1/2}}|T|_\tau(\overline{B}_1(y)).
	\end{equation*}
	We also have (see \cite[(5.7)]{KT17})
	\begin{equation*}
		|\tilde{h}_{\tau,\varepsilon}|\leq\varepsilon^{-2}+c(n)M\varepsilon^{-1}e^{-1/\varepsilon}.
	\end{equation*}
	\cite[(5.43)]{KT17} can also be obtained,
	\begin{equation}
		\label[ineq]{in1:I4Estimate}
		|I_4|\leq c(n) j\varepsilon^{1/2}\int\frac{\phi|\Psi_\varepsilon\ast\delta_\tau T|^2}{\Psi_\varepsilon\ast|T|_\tau+\varepsilon}\e\Lebn+je^{-\varepsilon^{-1/6}}.
	\end{equation}
	Using \cref{in1:I1Estimate,in1:I34Estimate,in1:I4Estimate} (recall that \cref{in1:I1Estimate} also holds for $|I_2|$), we get \cref{in1:5.23}.
	Finally, let us show \cref{in1:5.24}.
	Calculations \cite[(5.44)\&\ (5.45)]{KT17} can be transferred, which gives
	\begin{equation*}
		\int |h_{\tau,\varepsilon}|^2\phi\e|T|_\tau\leq\int|\tilde{h}_{\tau,\varepsilon}(y)|^2\left(\phi(y)(\Psi_\varepsilon\ast|T|_\tau)(y)+j\phi(y)\int e^{j|x-y|}|x-y|\Psi_\varepsilon(x-y)\e|T|_\tau(x)   \right)\e\Lebn(y).
	\end{equation*}
	We also obtain an inequality similar to \cite[(5.46)]{KT17},
	\begin{equation*}
		\int e^{j|x-y|}|x-y|\Psi_\varepsilon(x-y)\e|T|_\tau(x)\leq\varepsilon^{1/2}e^{j\varepsilon^{1/2}}(\Psi_\varepsilon\ast|T|_\tau)(y)+c(n)e^{j-\varepsilon^{-1/2}}|T|_\tau(\overline{B}_1(y)).
	\end{equation*}
	The analogue of \cite[(5.47)]{KT17} is
	\begin{equation*}
		\int |h_{\tau,\varepsilon}|^2\phi\e|T|_\tau\leq\int|\tilde{h}_{\tau,\varepsilon}(y)|^2\left((\Psi_\varepsilon\ast|T|_\tau)(y)\phi(y)(1+j\varepsilon^{1/2}e^{j\varepsilon^{1/2}})+jc(n)e^{-\frac{1}{2}\varepsilon^{-1/2}}|T|_\tau(\overline{B}_1(y))  \right)\e\Lebn(y).
	\end{equation*}
	The final argument in the proof of \cite[Prop.\ 5.4]{KT17} then yields \cref{in1:5.24}. 
\end{proof}
The following statement is the analogue of \cite[Prop.\ 5.7]{KT17}, cf.\ \cite[Sec.\ 4.11]{B78}.
\begin{prop}
	\label{Prop5.7}
	Fix some $j\in\mathbb N$ and $M\in\R_+^*$.
	Define
	\begin{equation*}
		c_1=3n+20.
	\end{equation*}
	Set (implicitly depending on $\varepsilon\in\R_+^*,\Dt\in\R_+$, and $T=\theta\Ha\mres S\in \rectn$ with $|T|_\tau(\R^n)<\infty$)
	\begin{equation*}
		f=\mathrm{id}+\Delta t\, h_{\tau,\varepsilon}(T). 
	\end{equation*}
	Then, for $\varepsilon$ sufficiently small (depending on $M,n$), we have
	\begin{equation}
		\label[ineq]{in1:Q1}
		\left| \frac{|f_\#T|_\tau(\phi)-|T|_\tau(\phi)}{\Dt}-\delta_\tau(T,\phi)(h_{\tau,\varepsilon}(T)) \right|\leq\varepsilon^{c_1-10}  
	\end{equation}
	and
	\begin{equation}
		\label[ineq]{in1:Q2}
		\frac{|f_\#T|_\tau(\R^n)-|T|_\tau(\R^n)}{\Dt}+\frac{1}{4}\int\frac{|\Psi_\varepsilon\ast\delta_\tau T|^2}{\Psi_\varepsilon\ast|T|_\tau+\varepsilon}\e\Leb^n\leq 3\varepsilon^{1/4}  
	\end{equation}
	for all $\Dt\in(2^{-1}\varepsilon^{c_1},\varepsilon^{c_1}),\phi\in\mathcal F_j^+$, and $T=\theta\Ha\mres S\in \rectn$ compactly supported with $\Ha(S)<\infty$ and $|T|_\tau(\R^n)\leq M$.
	Further, for $\varepsilon$ sufficiently small, we get
	\begin{equation}
		\label[ineq]{in1:Q3}
		\left| \partial_\tau(T,\phi)(h_{\tau,\varepsilon}(T))-\partial_\tau(f_\# T,\phi)(h_{\tau,\varepsilon}(f_\# T)) \right|\leq \varepsilon^{c_1-2n-17}
	\end{equation}
	and
	\begin{equation}
		\label[ineq]{in1:Q4}
		\left| \int\frac{|\Psi_\varepsilon\ast\delta_\tau T|^2}{\Psi_\varepsilon\ast|T|_\tau+\varepsilon}\e\Lebn -\int\frac{|\Psi_\varepsilon\ast\delta_\tau (f_\# T)|^2}{\Psi_\varepsilon\ast| f_\# T|_\tau+\varepsilon}\e\Lebn\right|\leq\varepsilon^{c_1-3n-15}
	\end{equation}
	for all $\Dt\in(2^{-1}\varepsilon^{c_1},\varepsilon^{c_1}),\phi\in\mathcal F_j^+$, and $T=\theta\Ha\mres S\in \rectn$ compactly supported with $\Ha(S)<\infty$, $|T|_\tau(\R^n)\leq M$, and $|f_\# T|_\tau(\R^n)\leq M$.
\end{prop}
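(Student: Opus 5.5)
The plan is to reduce everything to the varifold setting of \cite[Prop.\ 5.7]{KT17}, as in the proofs of \cref{Prop5.3,LongProof}. Put $V=v(\tau(|\theta|),S)$ and $h=h_{\tau,\varepsilon}(T)$. By \cref{hEstimates} we have $|h|\leq 2\varepsilon^{-2}$, $|\Jm h|_2\leq 4\varepsilon^{-4}$ and $|\Jm^2h|_{2,2}\leq 4\varepsilon^{-6}$. Since $\Dt<\varepsilon^{c_1}$ with $c_1=3n+20$, this gives $\Dt|\Jm h|_2\ll 1$, so $f=\textup{id}+\Dt h$ is a diffeomorphism (Hadamard--Caccioppoli, as in \cref{EnergyPushforward}). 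The central identity I need is
\begin{equation*}
|f_\#T|_\tau(\phi)=\int_S\phi(f(x))\,\tau(|\theta(x)|)\,\Jac f(x)\e\Ha(x),
\end{equation*}
which is $\|f_\#V\|(\phi)$. It follows from \cref{FedererPushforward} together with the area formula. Because $f$ is injective, the sum in the second bullet of \cref{FedererPushforward} has only one term, so no cancellation occurs and $\tau(|\eta(f(x))|)=\tau(|\theta(x)|)$. With this identity all four inequalities become statements about $V$ and $f_\#V$. The quantities $\Psi_\varepsilon\ast\delta_\tau T$, $\Psi_\varepsilon\ast|T|_\tau$ and $\delta_\tau(T,\phi)$ match $\Psi_\varepsilon\ast\delta V$, $\Psi_\varepsilon\ast\|V\|$ and $\delta(V,\phi)$. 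Pushforward also commutes: $f_\#T$ induces $f_\#V$, because the approximate tangent line of $f(S)$ at $f(x)$ is $\Jm f(x)T_xS$.

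For \cref{in1:Q1} I would Taylor expand in $\Dt$. The expansions are
\begin{equation*}
\phi(x+\Dt h)=\phi(x)+\Dt\nabla\phi\cdot h+O\bigl(\Dt^2|h|^2\sup|\nabla^2\phi|\bigr),\qquad
\Jac f=1+\Dt\,\textup{div}_S h+O\bigl(\Dt^2|\Jm h|_2^2\bigr).
\end{equation*}
Multiplying them, the first order term is exactly $\delta_\tau(T,\phi)(h)$. The remainder is bounded by $c\,\Dt\,\varepsilon^{-8}M$, where $j\leq\varepsilon^{-1/6}$ controls $\nabla^2\phi$ and $\phi\leq 1$. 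Since $\Dt<\varepsilon^{c_1}$, this is at most $\varepsilon^{c_1-10}$ for small $\varepsilon$.

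For \cref{in1:Q2} I take $\phi\equiv 1$ in \cref{in1:Q1} and use \cref{in1:5.23} of \cref{LongProof} with $\phi\equiv1$:
\begin{equation*}
\delta_\tau T(h)\leq-(1-\varepsilon^{1/4})\int\frac{|\Psi_\varepsilon\ast\delta_\tau T|^2}{\Psi_\varepsilon\ast|T|_\tau+\varepsilon}\e\Lebn+\varepsilon^{1/4}.
\end{equation*}
Adding the error $\varepsilon^{c_1-10}\leq\varepsilon^{1/4}$ and using $1-\varepsilon^{1/4}\geq 1/4$ gives the claim, with room to spare.

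For \cref{in1:Q3,in1:Q4} the strategy is Lipschitz dependence on the varifold. Both $h_{\tau,\varepsilon}$ and the integrand of \cref{in1:Q4} are built from convolutions of $\|V\|$ and $\delta V$ against $\Psi_\varepsilon$ and $\nabla\Psi_\varepsilon$. These are compared pointwise through $\Psi_\varepsilon(f(x)-y)-\Psi_\varepsilon(x-y)$ and through $\Jac f-1$ and $\Jm f\,T_xS-T_xS$, all of which are $O(\Dt\,\varepsilon^{-k})$. This yields bounds like
\begin{equation*}
|\Psi_\varepsilon\ast\|f_\#V\|-\Psi_\varepsilon\ast\|V\||\leq c(n)\,\Dt\,\varepsilon^{-n-5}\,\|V\|(\overline B_2(y)),
\end{equation*}
and similar bounds for $\delta V$ and for their derivatives. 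The denominators are at least $\varepsilon$, so quotients lose only a factor $\varepsilon^{-1}$ or $\varepsilon^{-2}$. Integrating over $y$ with \cref{intBall} and $M$, and combining with the $C^2$ bounds of \cref{hEstimates}, each difference is bounded by $c(M,n)\Dt\,\varepsilon^{-3n-c}$. This yields \cref{in1:Q3,in1:Q4} with the stated exponents.

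I expect the main obstacle to be this bookkeeping of powers of $\varepsilon$ and $n$ in \cref{in1:Q3,in1:Q4}, since the pointwise convolution bounds carry $\varepsilon^{-n}$ factors. The codimension-$(n-1)$ change should only improve exponents relative to \cite{KT17}, as it did in \cref{LongProof}. I would therefore follow \cite[Prop.\ 5.7]{KT17} line by line and check each exponent.
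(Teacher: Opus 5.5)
Your proposal is correct and follows essentially the same route as the paper. The first estimate comes from a direct Taylor expansion of $\phi\circ f$ and of the tangential Jacobian $\Jac f$ under the area formula, using injectivity of $f$ so that $\tau(|\eta\circ f|)=\tau(|\theta|)$. The second comes from taking $\phi\equiv 1$ in the first and combining it with the first inequality of Proposition~\ref{LongProof}. The last two come from the varifold-level Lipschitz comparison of $\Psi_\varepsilon\ast\|V\|$, $\Psi_\varepsilon\ast\delta V$ and $h_{\tau,\varepsilon}$ (with its derivatives) along the lines of \cite[Prop.\ 5.7]{KT17}, whose exponent bookkeeping the paper likewise defers to that reference.
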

\begin{proof}
	\underline{\Cref{in1:Q1}:}
	Always assume that $\Dt\in(2^{-1}\varepsilon^{c_1},\varepsilon^{c_1})$.
	Write 
	\begin{equation*}
		F=f-\mathrm{id}.
	\end{equation*}
	We have (\cref{hEstimates})
	\begin{equation}
		\label[ineq]{in1:5.59}
		|F|=|\Delta t\, h_{\tau,\varepsilon}(T)|\leq \Dt2\varepsilon^{-2}\leq 2\varepsilon^{c_1-2}\qquad\textup{and}\qquad|\Jm F|_2=|\Delta t\, \Jm h_{\tau,\varepsilon}(T)|_2\leq \Dt4\varepsilon^{-4}\leq 4\varepsilon^{c_1-4}.
	\end{equation}
	By \cref{propertiesF} we have (recall that $\phi\in[0,1]$)
	\begin{equation}
		\label[ineq]{in1:I}
		|\phi(f)-\phi|\leq j|f-\textup{id}|\phi e^{j|f-\textup{id}|}\leq j2\varepsilon^{c_1-2}e^{j2\varepsilon^{c_1-2}}\leq\varepsilon^{c_1-3}
	\end{equation}
	for $\varepsilon$ sufficiently small. For $\Ha$-a.e.\ $x\in S$ we have $\Jac f(x)=|\Jm f(x)u_x |$, where $u_x\in T_xS$ with $|u_x|=1$. This yields
	\begin{equation}
		\label[ineq]{in1:II}
		|\Jac f(x)-1|=||\Jm f(x)u_x |-|u_x||\leq |\Jm f(x)u_x -u_x|=|\Jm F(x)u_x|\leq 4\varepsilon^{c_1-4}\leq 2^{-1}\varepsilon^{c_1-5}\leq\Delta t\varepsilon^{-5}
	\end{equation}
	for $\Ha$-a.e.\ $x\in S$ and $\varepsilon$ sufficiently small (independent of $x$).
	Using \cref{propertiesF} again, we obtain
	\begin{equation}
		\label[ineq]{in1:III}
		|\phi(f)-\phi-\nabla\phi\cdot F|\leq j|F|^2\phi e^{j|F|}\leq j4\varepsilon^{2c_1-4}e^{j2\varepsilon^{c_1-2}}\leq 2^{-1}\varepsilon^{2c_1-5}\leq\Delta t\varepsilon^{c_1-5}
	\end{equation}
	for $\varepsilon$ sufficiently small.
	Next, we estimate $|\Jac f-1-\Jm F:T_\cdot S|$. To this end, define
	\begin{equation*}
		g_x(\Delta t)=|(\mathbb 1 +\Delta t\Jm h_{\tau,\varepsilon}(T)(x))u_x|,
	\end{equation*}
	where $u_x$ is as above, for $\Ha$-a.e.\ $x\in S$. We get
	\begin{equation*}
		g_x^\prime(\Delta t)=\Jm h_{\tau,\varepsilon}(T)(x)u_x\cdot\frac{(\mathbb 1 +\Delta t\Jm h_{\tau,\varepsilon}(T)(x))u_x}{|(\mathbb 1 +\Delta t\Jm h_{\tau,\varepsilon}(T)(x))u_x|}.
	\end{equation*}
	Further, we have (with $v=(\mathbb 1 +\Delta t\Jm h_{\tau,\varepsilon}(T)(x))u_x$)
	\begin{multline*}
		\left|g_x^{\prime\prime}(\Delta t)\right|=\left||v|^{-1}(\Jm h_{\tau,\varepsilon}(T)(x)u_x)^\intercal\left(\mathbb 1-|v|^{-2}v\otimes v  \right)\Jm h_{\tau,\varepsilon}(T)(x)u_x\right|\\
		=\left||v|^{-1}|\Jm h_{\tau,\varepsilon}(T)(x)u_x|^2-|v|^{-3}|v^\intercal \Jm h_{\tau,\varepsilon}(T)(x)u_x|^2\right|\\
		=|v|^{-1}|\Jm h_{\tau,\varepsilon}(T)(x)u_x|^2-|v|^{-3}|v^\intercal \Jm h_{\tau,\varepsilon}(T)(x)u_x|^2\leq |v|^{-1}|\Jm h_{\tau,\varepsilon}(T)(x)u_x|^2\leq(1-4\varepsilon^{c_1-4})^{-1}(\Delta t)^{-2}|\Jm F|_2^2\\
		\leq (1-4\varepsilon^{c_1-4})^{-1}(\Delta t)^{-2}16\varepsilon^{2c_1-8}\leq (\Delta t)^{-2}\varepsilon^{2c_1-9}\leq (\Delta t)^{-1}2\varepsilon^{c_1-9}
	\end{multline*}
	for $\varepsilon$ sufficiently small. 
	Using this and Taylor's theorem, we obtain
	\begin{equation}
		\label[ineq]{in1:IV}
		|\Jac f(x)-1-\Jm F(x):T_x S|=|g_x(\Delta t)-g_x(0)-g_x^\prime(0)\Delta t|\leq\sup_{\xi\in(-2\Delta t,2\Delta t)}|2^{-1}g_x^{\prime\prime}(\xi)(\Delta t)^2|\leq \Delta t\varepsilon^{c_1-9}.
	\end{equation}
	Applying \cite[Cor.\ 3.2.20]{F} (note that $f$ is a diffeomorphism for $\varepsilon$ sufficiently small by the Hadamard\textendash Cacciopoli theorem) we get (writing $f_\# T=\eta\Ha\mres f(S)$ as in \cref{FedererPushforward})
	\begin{multline*}
		|f_\#T|_\tau(\phi)-|T|_\tau(\phi)-\Dt\delta_\tau(T,\phi)(h_{\tau,\varepsilon}(T))\\
		=\int_{f(S)}\phi\tau(|\eta|)\e\Ha-\int_S\phi\tau(|\theta|)\e\Ha-\Dt \int_S(\phi\textup{div}_S(h_{\tau,\varepsilon}(T))+\nabla\phi\cdot h_{\tau,\varepsilon}(T))\tau(|\theta|)\e\Ha\\
		=\int_S\phi(f)\tau(|\eta(f)|)\Jac f-\phi\tau(|\theta|)\e\Ha-\int_S(\phi\Jm F:T_\cdot S+\nabla\phi\cdot F)\tau(|\theta|)\e\Ha\\
		=\int_S\left((\phi(f)-\phi)(\Jac f-1)+\phi(f)-\phi-\nabla\phi\cdot F+(\Jac f-1-\Jm F:T_\cdot S)\phi\right)\tau(|\theta|)\e\Ha
	\end{multline*}
	for all $\varepsilon$ sufficiently small, $\Dt\in(2^{-1}\varepsilon^{c_1},\varepsilon^{c_1}),\phi\in\mathcal F_j^+$, and $T=\theta\Ha\mres S\in \rectn$ compactly supported with $\Ha(S)<\infty$ and $|T|_\tau(\R^n)\leq M$. Finally, application of \cref{in1:I,in1:II,in1:III,in1:IV} yields (note that $\varepsilon$ needs to satisfy $3M\leq\varepsilon^{-1}$)
	\begin{multline*}
		||f_\#T|_\tau(\phi)-|T|_\tau(\phi)-\Dt\delta_\tau(T,\phi)(h_{\tau,\varepsilon}(T))|\\
		\leq \int_S\left(|\phi(f)-\phi||\Jac f-1|+|\phi(f)-\phi-\nabla\phi\cdot F|+|\Jac f-1-\Jm F:T_\cdot S||\phi|\right)\tau(|\theta|)\e\Ha\\
		\leq  \int_S\left(\varepsilon^{c_1-3}\Dt\varepsilon^{-5}+\Dt\varepsilon^{c_1-5}+\Dt\varepsilon^{c_1-9}\right)\tau(|\theta|)\e\Ha\leq \Dt 3\varepsilon^{c_1-9}M\leq \Dt \varepsilon^{c_1-10}
	\end{multline*}
	for all $\varepsilon$ sufficiently small, $\Dt\in(2^{-1}\varepsilon^{c_1},\varepsilon^{c_1}),\phi\in\mathcal F_j^+$, and $T=\theta\Ha\mres S\in \rectn$ compactly supported with $\Ha(S)<\infty$ and $|T|_\tau(\R^n)\leq M$. This shows \cref{in1:Q1}.
	
	\underline{\Cref{in1:Q2}:} Note that $\phi\equiv 1\in\mathcal F_j^+$ for all $j\in\mathbb N$. Thus, taking $\varepsilon$ sufficiently small and applying \cref{in1:5.23}, we get (cf.\ \cite[(5.66)]{KT17})
	\begin{equation*}
		\delta_\tau(T,1)(h_{\tau,\varepsilon})\leq(\varepsilon^{1/4}-1)I+\varepsilon^{1/4}
	\end{equation*}
	using the abbreviation $I=\int\frac{|\Psi_\varepsilon\ast\delta_\tau T|^2}{\Psi_\varepsilon\ast|T|_\tau+\varepsilon}\e\Lebn$. Applying this and \cref{in1:Q1}, we obtain
	\begin{multline*}
		\frac{|f_\#T|_\tau(\R^n)-|T|_\tau(\R^n)}{\Dt}+\frac{1}{4}I=\frac{|f_\#T|_\tau(\R^n)-|T|_\tau(\R^n)}{\Dt}-\delta_\tau(T,1)(h_{\tau,\varepsilon})+\delta_\tau(T,1)(h_{\tau,\varepsilon})+\frac{1}{4}I\\
		\leq\varepsilon^{c_1-10}+(\varepsilon^{1/4}-3/4)I+\varepsilon^{1/4}\leq 3\varepsilon^{1/4},
	\end{multline*}
	which yields \cref{in1:Q2}.
	
	\underline{\Cref{in1:Q3,in1:Q4}:} Write $f_\# T=\eta\Ha\mres f(S)$ as in \cref{FedererPushforward} (recall that $f$ is a diffeomorphism for $\varepsilon$ sufficiently small).
	Considering the varifolds $v(\tau(|\theta|),S)$ and $v(\tau(|\eta|),f(S))$ and applying the area formula \cite[Cor.\ 3.2.20]{F} and \cref{FedererPushforward} we get the analogue of \cite[(5.67)]{KT17},
	\begin{multline}
		\label[ineq]{in1:V}
		|(\Psi_\varepsilon\ast|f_\# T|_\tau)(x)-(\Psi_\varepsilon\ast|T|_\tau)(x)|\\
		\leq\int|\Psi_\varepsilon(f(y)-x)-\Psi_\varepsilon(y-x)|\Jac f(y)\e|T|_\tau(y)+\int\Psi_\varepsilon(y-x)|\Jac f(y)-1|\e|T|_\tau(y).
	\end{multline}
	Using $|F|\leq 2\varepsilon^{c_1-2}$ (beginning of proof) and \cref{EstimatesNabla}, we get (see \cite[(5.68)]{KT17})
	\begin{equation}
		\label[ineq]{in1:VI}
		|\Psi_\varepsilon(f(y)-x)-\Psi_\varepsilon(y-x)|\leq c(n) \varepsilon^{c_1-n-4}1_{\overline{B}_1}(\hat y)\leq c(n) \varepsilon^{c_1-n-4}1_{B_2(x)}( y)
	\end{equation}
	for some $\hat y\in[y-x,f(y)-x]$, where the last inequality follows from
	\begin{equation*}
		1\geq|\hat y|=|y-x|+\lambda|f(y)-y|\geq|y-x|-2\varepsilon^{c_1-2}
	\end{equation*}
	for some $\lambda\in[-1,1]$. 
	By \cref{in1:II} and $\Dt<\varepsilon^{c_1}$ we get an inequality similar to \cite[(5.69)]{KT17},
	\begin{equation}
		\label[ineq]{in1:VII}
		\Psi_\varepsilon(x-y)|\Jac f(y)-1|\leq c(n)\varepsilon^{c_1-n-5}1_{\overline{B}_1(x)}(y).
	\end{equation}
	Using \cref{in1:V,in1:VI,in1:VII} and $\Jac f(y)\leq 1+\varepsilon^{c_1-5}$ by \cref{in1:II}, we obtain
	\begin{equation}
		\label[ineq]{in1:VIII}
		|(\Psi_\varepsilon\ast|f_\# T|_\tau)(x)-(\Psi_\varepsilon\ast|T|_\tau)(x)|\leq\varepsilon^{c_1-n-6}|T|_\tau(B_2(x))
	\end{equation}
	for $\varepsilon$ sufficiently small, see \cite[(5.70)]{KT17}.
	A calculation similar to \cite[(5.71)]{KT17} yields (using \cite[Cor.\ 3.2.20]{F}, \cref{FedererPushforward}, and \cref{FedererPushforward})
	\begin{equation*}
		|(\Psi_\varepsilon\ast\delta_\tau(f_\# T))(x)-(\Psi_\varepsilon\ast\delta_\tau T)(x)|=\left| \int\left(\nabla f(y)T_yS\nabla\Psi_\varepsilon(f(y)-x)\Jac f(y)-T_yS\nabla\Psi_\varepsilon(y-x)\right)\e |T|_\tau(y) \right|.
	\end{equation*}
	An argument as in the passage before \cite[(5.72)]{KT17} gives 
	\begin{equation}
		\label[ineq]{in1:VIIII}
		|(\Psi_\varepsilon\ast\delta_\tau(f_\# T))(x)-(\Psi_\varepsilon\ast\delta_\tau T)(x)|\leq \varepsilon^{c_1-n-8}|T|_\tau(B_2(x))
	\end{equation}
	for $\varepsilon$ sufficiently small. 
	The rough estimates in \cite[(5.73)]{KT17} can also be replaced,
	\begin{equation}
		\label[ineq]{in1:X}
		|(\Psi_\varepsilon\ast\delta_\tau T)(x)|, |(\Psi_\varepsilon\ast\delta_\tau (f_\# T))(x)|\leq\varepsilon^{-n-3}|T|_\tau(B_2(x)).
	\end{equation}
	Now use \cref{in1:VIII,in1:VIIII,in1:X},
	\begin{multline}
		\label[ineq]{in1:XI}
		\left| \frac{\Psi_\varepsilon\ast\delta_\tau(f_\# T)}{\Psi_\varepsilon\ast|f_\# T|_\tau+\varepsilon}-\frac{\Psi_\varepsilon\ast\delta_\tau T}{\Psi_\varepsilon\ast|T|_\tau+\varepsilon}  \right|\\
		\leq \left| \frac{\Psi_\varepsilon\ast\delta_\tau(f_\# T)-\Psi_\varepsilon\ast\delta_\tau T}{\Psi_\varepsilon\ast|f_\# T|_\tau+\varepsilon}\right|+\left|\frac{\Psi_\varepsilon\ast\delta_\tau T}{\Psi_\varepsilon\ast|f_\# T|_\tau+\varepsilon}-\frac{\Psi_\varepsilon\ast\delta_\tau T}{\Psi_\varepsilon\ast|T|_\tau+\varepsilon}  \right|\\
		\leq\varepsilon^{c_1-n-9}|T|_\tau(B_2(\cdot))+\varepsilon^{c_1-2n-11}|T|_\tau(B_2(\cdot))^2,
	\end{multline}
	see \cite[(5.74)]{KT17}.
	Applying \cref{in1:VIII,in1:VIIII,in1:X} again,
	\begin{equation}
		\label[ineq]{in1:XII}
		\left| \frac{(\Psi_\varepsilon\ast\delta_\tau(f_\# T))^2}{\Psi_\varepsilon\ast|f_\# T|_\tau+\varepsilon}-\frac{(\Psi_\varepsilon\ast\delta_\tau T)^2}{\Psi_\varepsilon\ast|T|_\tau+\varepsilon}  \right|
		\leq 2\varepsilon^{c_1-2n-12}|T|_\tau(B_2(\cdot))^2+\varepsilon^{c_1-3n-14}|T|_\tau(B_2(\cdot))^3,
	\end{equation}
	see \cite[(5.75)]{KT17}.
	Using \cref{in1:XII} and $\int|T|_\tau(\overline{B}_2(x))^k\e\Lebn(x)\leq M^{k-1}\omega_n2^nM=M^k\omega_n2^n$ (\cref{intBall}), we get \cref{in1:Q4}.
	Calculating $\nabla\Psi_\varepsilon$ and $\nabla^2\Psi_\varepsilon$ and applying \cref{in1:XI} gives the analogue of \cite[(5.76)]{KT17},
	\begin{align}
		|h_{\tau,\varepsilon}(T)-h_{\tau,\varepsilon}(f_\# T)|&\leq\varepsilon^{c_1-2n-12}(M+M^2),\label[ineq]{in1:5.76a}\\
		|\Jm h_{\tau,\varepsilon}(T)-\Jm h_{\tau,\varepsilon}(f_\# T)|&\leq\varepsilon^{c_1-2n-14}(M+M^2),\textup{ and}\label[ineq]{in1:5.76b}\\
		|\Jm^2h_{\tau,\varepsilon}(T)-\Jm^2 h_{\tau,\varepsilon}(f_\# T)|_2&\leq\varepsilon^{c_1-2n-16}(M+M^2).\label[ineq]{in1:5.76c}
	\end{align}
	Further, \cite[(5.77)]{KT17} becomes (using the definition of $\phi$-weighted $\tau$-variation, \cite[Cor.\ 3.2.20]{F}, \cref{FedererPushforward}, and \cref{FedererPushforward})
	\begin{multline*}
		\delta_\tau(T,\phi)(h_{\tau,\varepsilon}(T))-\delta_\tau(f_\# T,\phi)(h_{\tau,\varepsilon}(f_\# T)) \\
		=  \int_S[\Jm h_{\tau,\varepsilon}(T):T_\cdot S\phi+h_{\tau,\varepsilon}(T)\cdot\nabla\phi\\
		-\left( \Jm h_{\tau,\varepsilon}(f_\# T)(f):(\Jm fT_\cdot S)\phi(f)+h_{\tau,\varepsilon}(f_\# T)(f)\cdot\nabla\phi(f) \right)\Jac f]\tau(|\theta|)\e\Ha.
	\end{multline*}
	The last argument in the proof of \cite[Prop.\ 5.7]{KT17} uses \cite[(5.59)--(5.62)]{KT17} and \cite[(5.76)]{KT17} which correspond to \cref{in1:5.59,in1:I,in1:II,in1:5.76a,in1:5.76b,in1:5.76c}.
	In comparison, the exponent of $\varepsilon$ in \cref{in1:5.59,in1:I,in1:II,in1:5.76a,in1:5.76b,in1:5.76c} can be obtained by replacing $n$ in the proof of \cite[Prop.\ 5.7]{KT17} by $n-1$.
	Hence, applying the same argument as in the last two lines in the proof of \cite[Prop.\ 5.7]{KT17} gives \cref{in1:Q3}.
\end{proof}
\section{Geometric flow $(\mu_t)$ as a limit of induced $\tau$-weight measures $|T_t^{j_k}|_\tau$}
\label{GFInduced}
In this section, we will introduce our modification of the variational approximation schemes in \cite{B78,KT17} and state some related estimates.
Up to some small modifications, we follow the proofs in \cite[Sec.\ 6]{KT17}, see also \cite[Sec.\ 4.14]{B78}.
The following auxiliary statement will be used in the proof of \cref{Prop6.1}.
\begin{lem}[Orthogonal projection onto compact, convex set does not increase branched transport cost]
\label{ProjCost}
Let $\Omega\subset\R^n$ be compact and convex. Write $p:\R^n\to\Omega$ for the orthogonal projection onto $\Omega$. Then we have
\begin{equation*}
|p_\# T|_\tau(\R^n)\leq |T|_\tau(\R^n)
\end{equation*}
for all $T\in\T$.
\end{lem}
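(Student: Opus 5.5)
The plan is to use that the orthogonal projection $p$ onto a closed convex set is $1$-Lipschitz, and then read off the branched transport cost of $p_\# T$ from the representation in \cref{FedererPushforward}. Write $T=\theta\Ha\mres S$ and $p_\# T=\eta\Ha\mres p(S)$. The second bullet point of \cref{FedererPushforward} gives, for $\Ha$-a.e.\ $y\in p(S)$,
\begin{equation*}
|\bar\eta(y)|\leq\sum_{x\in(p|_S)^{-1}(y)}|\bar\theta(x)|,
\end{equation*}
since $|\apD(p|_S)(x)\bar\theta(x)|/\apJ(p|_S)(x)=|\bar\theta(x)|$ (the vector $\bar\theta(x)$ is tangential and $\apJ$ is exactly the stretching factor of a unit tangent vector; where $\apJ(p|_S)(x)=0$ the point contributes nothing to $p(S)$ up to an $\Ha$-null set by the area formula). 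Because $\tau$ is non-decreasing and subadditive (and lower semicontinuity handles countable sums if the fibres are infinite), this yields
\begin{equation*}
\tau(|\bar\eta(y)|)\leq\sum_{x\in(p|_S)^{-1}(y)}\tau(|\bar\theta(x)|).
\end{equation*}

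Next I integrate over $p(S)$ and apply the area formula \cite[Cor.\ 3.2.20]{F} for the Lipschitz map $p|_S$ on the countably $\Ha$-rectifiable set $S$ with $\Ha(S)<\infty$:
\begin{equation*}
|p_\#T|_\tau(\R^n)=\int_{p(S)}\tau(|\eta|)\e\Ha\leq\int_{p(S)}\sum_{x\in(p|_S)^{-1}(y)}\tau(|\theta(x)|)\e\Ha(y)=\int_S\tau(|\theta|)\apJ(p|_S)\e\Ha.
\end{equation*}
Finally, $\textup{Lip}(p)\leq 1$ implies $\apJ(p|_S)\leq 1$ $\Ha$-a.e.\ on $S$, which gives $|p_\# T|_\tau(\R^n)\leq|T|_\tau(\R^n)$. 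If $|T|_\tau(\R^n)=\infty$, there is nothing to prove.

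The only delicate step is the first one. There I need to justify the pointwise bound on $|\bar\eta|$ from the sum formula of \cref{FedererPushforward}, and also handle the subadditivity of $\tau$ over possibly countably infinite fibres. For the latter I would use the finite partial sums $\tau(\sum_{i\leq N}m_i)\leq\sum_{i\leq N}\tau(m_i)$. Since $\tau$ is non-decreasing, the left-hand side increases to $\sup_N\tau(\sum_{i\leq N}m_i)$, and this supremum dominates $\tau(\sum_i m_i)$ by lower semicontinuity. The 1-Lipschitz property of $p$ itself is standard for projections onto closed convex sets, via the variational inequality $(z-p(z))\cdot(w-p(z))\leq 0$ for $w\in\Omega$.
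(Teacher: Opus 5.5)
Your proof is correct, but it takes a different route from the paper.

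The paper first proves the inequality for polyhedral chains $P=\sum_e m_e\vec e\,\Ha\mres e$. There it uses $\Ha(p(e))\leq\Ha(e)$, decomposes $p(\spt P)$ into the pieces $S_I$ covered by exactly the edges in $I$, and applies subadditivity of $\tau$ together with \cref{FedererPushforward}. It then passes to a general $T\in\T$ through three further ingredients:
\begin{itemize}
\item the relaxation formula $|T|_\tau(\R^n)=\inf\liminf_i|P_i|_\tau(\R^n)$ over polyhedral $P_i\to T$ in flat norm, taken from \cite[Prop.\ 2.32]{BW} and \cite[Thm.\ 5]{MW19};
\item the flat continuity of $p_\#$;
\item a diagonal re-approximation, needed because $p_\#P_i$ is in general not polyhedral.
\end{itemize}

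You instead apply the same pointwise mechanism directly on the rectifiable set $S$: the fibre-sum formula for $\eta$, the triangle inequality, and monotonicity plus subadditivity of $\tau$. You replace $\Ha(p(e))\leq\Ha(e)$ by the area formula with multiplicity and $\apJ(p|_S)\leq 1$.

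Your route gives a shorter, self-contained proof that avoids the external relaxation theorem entirely. It also yields the stronger bound $|f_\#T|_\tau(\R^n)\leq\int_S\tau(|\theta|)\apJ(f|_S)\e\Ha\leq\textup{Lip}(f)\,|T|_\tau(\R^n)$ for every Lipschitz $f$; convexity and compactness of $\Omega$ enter only through $\textup{Lip}(p)\leq 1$.

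The paper's approximation route keeps the combinatorial core in the polyhedral setting. It would carry over to any version of $|\cdot|_\tau$ defined by lower semicontinuous relaxation, at the price of relying on that nontrivial identification.

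One minor point: your concern about countably infinite fibres is moot. Since $\int_{p(S)}\#(p|_S)^{-1}(y)\e\Ha(y)=\int_S\apJ(p|_S)\e\Ha\leq\Ha(S)<\infty$, the fibres are finite for $\Ha$-a.e.\ $y$. Your lower semicontinuity argument is nonetheless valid.
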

\begin{proof}
First, let us verify the inequality for the polyhedral $1$-chains $\mathbb P_1(\R^n)$. Each $P\in\mathbb P_1(\R^n)$ can be written
\begin{equation*}
	P=\sum_em_e\vec{e}\Ha\mres e,
\end{equation*}
where the sum if over finitely many non-overlapping edges $e=e_-+[0,1](e_+-e_-)\subset\R^n$ with orientation $(e_+-e_-)/|e_+-e_-|\in\mathcal S^{n-1}$ and mass $m_e\in\R_+$. 
Now label the edges $e_1,\ldots,e_N$ and write $p_\# P=\theta\Ha\mres S$. 
We can write (possibly neglecting some $\Ha$-zero set) $S$ as a disjoint union $S=\bigcup_I S_I$, where $I\subset\{ 1,\ldots,N \}$ and $S_I=\bigcap_{i\in I}p(e_i)\setminus\bigcup_{j\in\{ 1,\ldots, N \}\setminus I} p(e_j)$.
Using this decomposition (in the second equation), we obtain
\begin{multline*}
|P|_\tau(\R^n)=\sum_e\tau(m_e)\Ha(e)\geq\sum_e\tau(m_e)\Ha(p(e))=\sum_{I\subset\{ 1,\ldots,N \}}\sum_{i\in I}\tau(m_{e_i})\Ha(S_I)=\int_{S}\sum_{x\in p(e)}\tau(m_e)\e\Ha(x)\\
\geq \int_{S}\tau\bigg(\sum_{x\in p(e)} m_e\bigg)\e\Ha(x)\geq \int_{S}\tau(|\theta|)\e\Ha=|p_\# P|_\tau(\R^n),
\end{multline*}
where we also used the $1$-Lipschitz continuity of $p$ in the first inequality, the subadditivity of $\tau$ in the second inequality, and \cref{FedererPushforward} in the last inequality.
Now let $T\in\T$ and $\varepsilon\in\R_+^*$ be arbitrary. Take any compact and convex $K\subset\R^n$ with $\spt(T)\subset K$. By \cite[Prop.\ 2.32]{BW} and \cite[Thm.\ 5]{MW19} we have the following formula:
\begin{equation*}
|T|_\tau(\R^n)=\inf_{(P_i)}\,\liminf_i|P_i|_\tau(\R^n),
\end{equation*}
where the infimum is over sequences $(P_i)\subset\mathbb P_1(\R^n)$ with $\spt(P_i)\subset K$ and $|P_i-T|^\flat\to 0$. 
Fix any such sequence $(P_i)$ with $\lim_i|P_i|_\tau(\R^n)\leq |T|_\tau(\R^n)+\varepsilon/2$.
Write $T_i=p_\# P_i$. 
By \cite[p.\ 371]{F} and \cite[Memo 2.5]{LSW25} we obtain
\begin{equation*}
|T_i-p_\# T|_\Omega^\flat\leq |P_i-T|_K^\flat=|P_i-T|^\flat\to 0.
\end{equation*}
Using the formula from above, for each $i\in\mathbb N$, we can pick a sequence $(Q_i^j)\subset\mathbb P_1(\R^n)$ with $\lim_j|T_i-Q_i^j|^\flat= 0$ and $|Q_i^j|_\tau(\R^n)\leq |T_i|_\tau(\R^n)+\varepsilon/2$ for all $j$. Further, we can let $j=j(i)$ depend on $i$ such that $|Q_i^{j(i)}-p_\# T|^\flat\to 0$ for $i\to\infty$ (triangle inequality). This gives
\begin{equation*}
|p_\# T|_\tau(\R^n)\leq\liminf_i|Q_i^{j(i)}|_\tau(\R^n)\leq\liminf_i|T_i|_\tau(\R^n)+\frac{\varepsilon}{2}\leq \liminf_i|P_i|_\tau(\R^n)+\frac{\varepsilon}{2}\leq |T|_\tau(\R^n)+\varepsilon
\end{equation*}
where the formula from above was used in the first inequality and $T_i=p_\# P_i$ with $P_i\in\mathbb P_1(\R^n)$ in the third inequality (beginning of proof). Since $\varepsilon$ was arbitrary, this proves the claim. 
\end{proof}
The next statement corresponds to \cite[Prop.\ 6.1]{KT17}, cf.\ \cite[Secs.\ 4.13 \&\ 4.14]{B78}.
We will modify the scheme in \cite{KT17}. A brief summary of the essential steps is given in \cref{MainResults}.
\begin{prop}
	\label{Prop6.1}
	Let $T_0\in\T$ with $|T_0|_\tau(\R^n)<\infty$. 
	Write $\Omega=\textup{conv}(\spt(T_0))$.
	Define $T^{j,0}=T_0$ for all $j\in\mathbb N$. 
	For all $j\in\mathbb N$ there exist $\varepsilon_j\in(0,j^{-6}),p_j\in\mathbb N$, and $T^{j,1},\ldots,T^{j,j2^{p_j}}\in\T$ with $\spt(T^{j,\ell})\subset\{ \textup{dist}(\cdot,\Omega)\leq 2\varepsilon_j^{c_1-2} \}$ and
	\begin{align}
		|T^{j,\ell}|_\tau(\R^n)&\leq |T_0|_\tau(\R^n)+\varepsilon_j^{1/8}\ell\Dt_j,\label[ineq]{in1:6.3} \\
		\frac{|T^{j,\ell}|_\tau(\R^n)-|T^{j,\ell-1}|_\tau(\R^n)}{\Dt_j}&+\frac{1}{4}\int\frac{| \Psi_{\varepsilon_j}\ast\delta_\tau T^{j,\ell} |^2}{\Psi_{\varepsilon_j}\ast|T^{j,\ell}|_\tau+\varepsilon_j  }\e\Lebn-\frac{1-j^{-5}}{\Dt_j}\Delta_j|T^{j,\ell-1}|_\tau(\R^n) \leq\varepsilon_j^{1/8},\qquad\textup{and} \label[ineq]{in1:6.4}\\
		\frac{|T^{j,\ell}|_\tau(\phi)-|T^{j,\ell-1}|_\tau(\phi)}{\Dt_j} &\leq \delta_\tau(T^{j,\ell},\phi)(h_{\tau,\varepsilon_j}(T^{j,\ell}))+\varepsilon_j^{1/8}\label[ineq]{in1:6.5}
	\end{align}
	for all $\ell=1,\ldots,j2^{p_j}$ and $\phi\in\mathcal F_j^+$, where  $\Dt_j=2^{-p_j}$. Further, we can choose $p_j$ such that $j\mapsto p_j$ is increasing.
\end{prop}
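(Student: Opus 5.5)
We follow the construction in the proof of \cite[Prop.\ 6.1]{KT17}, inserting the projection step $f_{1.5}$. Fix $j$. First we choose $\varepsilon_j\in(0,j^{-6})$ so small that \cref{hEstimates,Prop5.3,Prop5.5,LongProof,Prop5.7} hold with $M=|T_0|_\tau(\R^n)+1$. We also require $\varepsilon_j^{1/4}$-type errors to be absorbed into $\varepsilon_j^{1/8}$, and $j\leq 2^{-1}\varepsilon_j^{-1/6}$. Then we pick $p_j$ with $\Dt_j=2^{-p_j}\in(2^{-1}\varepsilon_j^{c_1},\varepsilon_j^{c_1}]$; since $\varepsilon_j$ may be decreased further, $p_j$ can be taken increasing in $j$. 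Additionally we demand $\varepsilon_j^{1/8}j\leq 1$, so that \cref{in1:6.3} keeps $|T^{j,\ell}|_\tau(\R^n)\leq M$ along all $j2^{p_j}$ steps.

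The induction on $\ell$ goes as follows. Given $T^{j,\ell-1}$ with cost at most $M$, choose $f_1\in\adm_j(T^{j,\ell-1})$ almost realizing the infimum in $\Delta_j|T^{j,\ell-1}|_\tau(\R^n)$. Precisely, we take
\begin{equation*}
|(f_1)_\#T^{j,\ell-1}|_\tau(\R^n)-|T^{j,\ell-1}|_\tau(\R^n)\leq(1-j^{-5})\Delta_j|T^{j,\ell-1}|_\tau(\R^n),
\end{equation*}
which is possible since $\Delta_j\leq 0$; if $\Delta_j=0$ we take $f_1=\mathrm{id}$. Set $\widetilde T=(f_1)_\#T^{j,\ell-1}$, $\widehat T=p_\#\widetilde T$ with $p$ the projection onto $\Omega$, and $T^{j,\ell}=(\mathrm{id}+\Dt_jh_{\tau,\varepsilon_j}(\widehat T))_\#\widehat T$. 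By \cref{FedererPushforward} all three currents lie in $\T$. \Cref{ProjCost} gives $|\widehat T|_\tau(\R^n)\leq|\widetilde T|_\tau(\R^n)$. The support claim follows because $\spt\widehat T\subset\Omega$ and $|\Dt_jh_{\tau,\varepsilon_j}|\leq 2\varepsilon_j^{c_1-2}$ by \cref{hEstimates}.

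Next we derive the estimates. Applying \cref{in1:Q2} to $\widehat T$ gives
\begin{equation*}
\frac{|T^{j,\ell}|_\tau(\R^n)-|\widehat T|_\tau(\R^n)}{\Dt_j}+\frac14 I(\widehat T)\leq 3\varepsilon_j^{1/4}.
\end{equation*}
Here $I(\cdot)$ denotes the integral $\int|\Psi_\varepsilon\ast\delta_\tau\cdot|^2/(\Psi_\varepsilon\ast|\cdot|_\tau+\varepsilon)$. We then replace $I(\widehat T)$ by $I(T^{j,\ell})$ using \cref{in1:Q4}, at a cost of $\varepsilon_j^{c_1-3n-15}\leq\varepsilon_j^{5}$. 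Chaining the three cost comparisons yields \cref{in1:6.4}. Dropping the nonpositive $\Delta_j$ term and the integral term gives an increment of at most $\varepsilon_j^{1/8}\Dt_j$, which proves \cref{in1:6.3} inductively and keeps the cost below $M$.

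For \cref{in1:6.5}, we use $|(f_1)_\#T|_\tau(\phi)\leq|T|_\tau(\phi)$ from admissibility. The main obstacle is the projection step: \cref{ProjCost} only controls total cost, not $|p_\#\widetilde T|_\tau(\phi)\leq|\widetilde T|_\tau(\phi)$ for $\phi\in\mathcal F_j^+$. I would try to prove a localized version of \cref{ProjCost}. Its polyhedral argument is pointwise along fibres, giving $|p_\#P|_\tau\leq p_\#|P|_\tau$ as measures. One then compares $\phi(p(x))$ with $\phi(x)$. This comparison is where the difficulty lies, since $\phi$ need not decrease under $p$. One can use that $|p(x)-x|\leq\mathrm{dist}(x,\Omega)\leq 2\varepsilon_j^{c_1-2}+j^{-2}$. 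With \cref{propertiesF} this gives a multiplicative error $e^{j\cdot\mathrm{dist}}$, but that error is not obviously $\Dt_j$-small because of the $j^{-2}$ displacement from $f_1$. If it fails, I would choose $f_1$ among admissible maps composed with $p$, or redefine $\Delta_j$ accordingly, so that $p\circ f_1$ is itself admissible. Once this is settled, \cref{in1:Q1} applied to $\widehat T$ gives a bound in terms of $\delta_\tau(\widehat T,\phi)(h_{\tau,\varepsilon_j}(\widehat T))$. Finally \cref{in1:Q3} converts this to $\delta_\tau(T^{j,\ell},\phi)(h_{\tau,\varepsilon_j}(T^{j,\ell}))$ with error $\varepsilon_j^{c_1-2n-17}\leq\varepsilon_j^{1/8}$.
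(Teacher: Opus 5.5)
Your choice of $\varepsilon_j$ and $p_j$, the three-step construction $f_1$, $f_{1.5}=p$, $f_2$, the support bound, and the derivation of \cref{in1:6.4} all match the paper's proof; the latter uses \cref{in1:Q2}, \cref{in1:Q4}, \cref{ProjCost} and the near-optimality of $f_1$. One small ordering point: obtain \cref{in1:6.3} directly from \cref{in1:Q2} by dropping $\frac14 I(\widehat T)$, as the paper does, because \cref{in1:Q4} already presupposes $|T^{j,\ell}|_\tau(\R^n)\le M$.

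The genuine gap is \cref{in1:6.5}. \Cref{in1:Q1} and \cref{in1:Q3} only control $|T^{j,\ell}|_\tau(\phi)-|\widehat T|_\tau(\phi)$. You therefore need $|\widehat T|_\tau(\phi)\le|T^{j,\ell-1}|_\tau(\phi)+o(\Dt_j)$ uniformly in $\phi\in\mathcal F_j^+$, and your proposal stops at ``once this is settled''. Your diagnosis is correct, and the problem is worse than you suggest. The fibrewise bound $|p_\#\widetilde T|_\tau\le p_\#|\widetilde T|_\tau$ is fine; it follows directly from \cref{FedererPushforward}, monotonicity and subadditivity of $\tau$, and $\apJ p\le 1$. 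But comparing $\phi\circ p$ with $\phi$ via \cref{propertiesF} costs a factor $e^{jd}$, where $d$ is how far $\widetilde T$ sticks out of $\Omega$. Even with $f_1=\mathrm{id}$, the overshoot $d\le 2\varepsilon_j^{c_1-2}$ left by the previous $f_2$ only gives a bound of order $jM\varepsilon_j^{-2}\Dt_j$. That is far above the allowed $\varepsilon_j^{1/8}\Dt_j$, so the obstruction is not only the $j^{-2}$ displacement of $f_1$.

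You should also know that the paper does not close this step either. Its first inequality in the chain for \cref{in1:6.5} is justified only by \cref{in1:6.111} and \cref{in1:6.11}, which are total-cost inequalities. Admissibility gives the $\phi$-version for $f_1$, but nothing gives it for $f_{1.5}$.

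A $\phi$-localized version of \cref{ProjCost} is in fact false. Let $T$ be the boundary of a rectangle whose long side $\sigma$, of length $L$, lies at small height $d$ above the relative interior of a flat face of $\Omega$. Let its other sides lie in $\Omega$ except for two pieces of length $d$; then $p$ translates $\sigma$ onto the face and collapses those two pieces to points. Take $\phi=c\,e^{-a\sqrt{1+|x-x_0|^2}}$ with $a,c\le 1$, which lies in $\mathcal F_j^+$ for $j\ge 2$, and $x_0$ at positive depth below the midpoint of $p(\sigma)$. Then $\phi\circ p>\phi$ on $\sigma$, giving a gain of at least $\kappa d$ with $\kappa>0$ independent of $L$. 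The saved cost carries weight at most $2c\,\tau(1)\,d\,e^{-aL/2}$, so $|p_\#T|_\tau(\phi)>|T|_\tau(\phi)$ for $L$ large.

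This does not show that \cref{in1:6.5} fails for the actual iterates. It does show that it cannot come from a projection lemma, so you need a new ingredient: control of how the iterates leave $\Omega$, or a modified scheme.

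Your fallback does not supply this as stated. If $p$ is built into the admissible class, even $f_1=\mathrm{id}$ may fail the $\phi$-test whenever $T^{j,\ell-1}$ overshoots $\Omega$. Then $\Delta_j\le 0$ and the existence of $f_1$ are no longer guaranteed. Any such change would also have to be re-verified for the competitors $f_1^h$ in \cref{Lemma1} and for the flat-norm chain in \cref{LimitCurrent}.

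As it stands, your proposal proves the support bound, \cref{in1:6.3} and \cref{in1:6.4}, but not \cref{in1:6.5}.
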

\begin{proof}
	Set $M=|T_0|(\R^n)+1$, see \cite[(6.6)]{KT17}. For each $j\in\mathbb N$ take $\varepsilon_j\in(0,1)$ such that $j\leq 2^{-1}\varepsilon_j^{-1/6}$, the conclusions in \cref{hEstimates,Prop5.3,LongProof,Prop5.5,Prop5.7} hold, 
	\begin{equation}
		\label[ineq]{in1:6.7}
		\varepsilon_j^{1/8}j<1,\qquad\textup{and}\qquad 3\varepsilon_j^{1/4}+\varepsilon_j^{c_1-3n-15}<\varepsilon_j^{1/8},
	\end{equation}
	in particular $\varepsilon_j<j^{-6}$, cf.\ \cite[(6.7)]{KT17}.
	Choose $p_j\in \mathbb N$ such that $\Dt_j=2^{-p_j}\in(\varepsilon_j^{c_1}/2,\varepsilon_j^{c_1}]$ \cite[(6.8)]{KT17}.
	If necessary, decrease $\varepsilon_j$ to ensure that $j\mapsto p_j$ is increasing.
	No keep $j$ fixed.
	Assume that $T^{j,k}$ satisfies \cref{in1:6.3,in1:6.4,in1:6.5} for all $k\in\{ 0,1,\ldots,\ell \}$. 
	We show that \cref{in1:6.3,in1:6.4,in1:6.5} are true for all $k\in\{ 0,1,\ldots,\ell+1 \}$. 
	(If $k=0$, then \cref{in1:6.3} is trivially satisfied, and for $k=1$ it will be sufficient to use this inequality to obtain \cref{in1:6.3,in1:6.4,in1:6.5}.) 
	Take $f_1\in\adm_j(T^{j,\ell})$ such that (see \cite[(6.9)]{KT17} or \cite[Sec.\ 4.13]{B78})
	\begin{equation}
		\label[ineq]{in1:6.9}
		|(f_1)_\# T^{j,\ell}|_\tau(\R^n)-|T^{j,\ell}|_\tau(\R^n)\leq(1-j^{-5})\Delta_j|T^{j,\ell}|_\tau(\R^n).
	\end{equation}
	As in \cite[(6.10)]{KT17} define 
	\begin{equation}
		\label{6.10}
		\widetilde{T}^{j,\ell+1}=(f_1)_\# T^{j,\ell}.
	\end{equation}
	This is a transportation network by \cref{FedererPushforward}. The admissibility of $f_1$, induction hypothesis, and the first inequality in \labelcref{in1:6.7} yield (cf.\ \cite[(6.11)]{KT17})
	\begin{equation}
		\label[ineq]{in1:6.11}
		|\widetilde T^{j,\ell+1}|_\tau(\R^n)\leq|T^{j,\ell}|_\tau(\R^n)\leq|T_0|_\tau(\R^n)+\varepsilon_j^{1/8}\ell\Dt_j\leq |T_0|_\tau(\R^n)+\varepsilon_j^{1/8}j2^{p_j}2^{-p_j}<M.
	\end{equation}
	Let $f_{1.5}:\R^n\to\Omega $ be the orthogonal projection onto $\Omega$. Define
	\begin{equation*}
		\widehat{T}^{j,\ell+1}=(f_{1.5})_\#\widetilde{T}^{j,\ell+1},
	\end{equation*}
	which is still a transportation network (by Lipschitz continuity of $f_{1.5}$ and \cref{FedererPushforward}). Note that
	\begin{equation}
		\label[ineq]{in1:6.111}
		|\widehat T^{j,\ell+1}|_\tau(\R^n)\leq |\widetilde T^{j,\ell+1}|_\tau(\R^n)
	\end{equation}
	by \cref{ProjCost}.
	Next, set (see \cite[(6.12)]{KT17} or \cite[Sec.\ 4.13]{B78})
	\begin{equation*}
		f_2=\textup{id}+\Dt_j h_{\tau,\varepsilon_j}(\widehat{T}^{j,\ell+1})\in C_c^\infty(\R^n;\R^n).
	\end{equation*}
	\Cref{hEstimates} implies the analogue of \cite[(6.13)]{KT17},
	\begin{equation}
		\label[ineq]{in1:6.13}
		|\Dt_jh_{\tau,\varepsilon_j}(\widehat T^{j,\ell+1})|\leq 2\varepsilon_j^{c_1-2}\qquad\textup{and}\qquad |\Jm(\Dt_jh_{\tau,\varepsilon_j}(\widehat T^{j,\ell+1}))|\leq 4\varepsilon_j^{c_1-4}.
	\end{equation}
	A simple application of Hadamard's global inverse function theorem yields that $f_2$ is a diffeomorphism.
	As in \cite[(6.14)]{KT17}, we can define another transportation network by
	\begin{equation}
		\label{6.14}
		T^{j,\ell+1}=(f_2)_\# \widehat T^{j,\ell+1}.
	\end{equation}
	By construction and \cref{hEstimates} we obtain $\spt(T^{j,\ell+1})\subset\{ \textup{dist}(\cdot,\Omega)\leq 2\varepsilon_{j}^{c_1-2} \}$ as desired.
	\Cref{in1:Q2,in1:6.111,in1:6.7} (in the first inequality) and the second inequality in \labelcref{in1:6.11} (in the second inequality) yield the analogue of \cite[(6.16)]{KT17},
	\begin{multline*}
		|T^{j,\ell+1}|_\tau(\R^n)=|(f_2)_\#\widehat T^{j,\ell+1}|_\tau(\R^n)\leq |T^{j,\ell}|_\tau(\R^n)+\Dt_j\varepsilon_j^{1/8}\leq |T_0|_\tau(\R^n)+\varepsilon_j^{1/8}\ell\Dt_j+\Dt_j\varepsilon_j^{1/8}\\
		=|T_0|_\tau(\R^n)+\varepsilon_j^{1/8}(\ell+1)\Dt_j,
	\end{multline*}
	which gives \cref{in1:6.3}. In particular, we have $|T^{j,\ell+1}|_\tau(\R^n)\leq M$ using the first inequality in \labelcref{in1:6.7}. Thus, \cref{in1:Q1,in1:Q2,in1:Q3,in1:Q4} are applicable to $\widehat{T}^{j,\ell+1}$. Using \cref{in1:6.9} (in the first inequality), \cref{6.14,6.10,in1:6.111} (in the second inequality), \cref{in1:Q4,in1:Q2} (in the third inequality), and last inequality in \labelcref{in1:6.7} (in the last estimate) we obtain
	\begin{multline*}
		\frac{|T^{j,\ell+1}|_\tau(\R^n)-|T^{j,\ell}|_\tau(\R^n)}{\Dt_j}+\frac{1}{4}\int\frac{| \Psi_{\varepsilon_j}\ast\delta_\tau T^{j,\ell+1} |^2}{\Psi_{\varepsilon_j}\ast|T^{j,\ell+1}|_\tau+\varepsilon_j  }\e\Lebn-\frac{1-j^{-5}}{\Dt_j}\Delta_j|T^{j,\ell}|_\tau(\R^n)\\
		\leq \frac{|T^{j,\ell+1}|_\tau(\R^n)-|T^{j,\ell}|_\tau(\R^n)}{\Dt_j}+\frac{1}{4}\int\frac{| \Psi_{\varepsilon_j}\ast\delta_\tau T^{j,\ell+1} |^2}{\Psi_{\varepsilon_j}\ast|T^{j,\ell+1}|_\tau+\varepsilon_j  }\e\Lebn+\frac{1}{\Dt_j}\left( |T^{j,\ell}|_\tau(\R^n)-|(f_1)_\# T^{j,\ell}|_\tau(\R^n) \right)\\
		\leq\frac{|(f_2)_\# \widehat T^{j,\ell+1}|_\tau(\R^n)-|\widehat T^{j,\ell+1}|_\tau(\R^n)}{\Dt_j}+\frac{1}{4}\int\frac{| \Psi_{\varepsilon_j}\ast\delta_\tau ((f_2)_\# \widehat T^{j,\ell+1}) |^2}{\Psi_{\varepsilon_j}\ast|(f_2)_\# \widehat T^{j,\ell+1}|_\tau+\varepsilon_j  }\e\Lebn\leq 3\varepsilon_j^{1/4}+\frac{1}{4}\varepsilon_j^{c_1-3n-15}<\varepsilon_j^{1/8},
	\end{multline*}
	which yields \cref{in1:6.4}. Finally, application of \cref{6.14,in1:6.111,in1:6.11} (in the first inequality), \cref{in1:Q1} (in the second inequality), \cref{6.14,in1:Q3} (in the third inequality), and last inequality in \labelcref{in1:6.7} (in the last estimate) gives
	\begin{multline*}
		\frac{|T^{j,\ell+1}|_\tau(\phi)-|T^{j,\ell}|_\tau(\phi)}{\Dt_j}\leq \frac{|(f_2)_\#\widehat T^{j,\ell+1}|_\tau(\phi)-|\widehat T^{j,\ell+1}|_\tau(\phi)}{\Dt_j}\leq\varepsilon_j^{c_1-10}+\delta_\tau(\widehat T^{j,\ell+1},\phi)(h_{\tau,\varepsilon_j}(\widehat T^{j,\ell+1}))\\
		\leq \varepsilon_j^{c_1-10}+\varepsilon_j^{c_1-2n-17}+\delta_\tau( T^{j,\ell+1},\phi)(h_{\tau,\varepsilon_j}( T^{j,\ell+1}))\leq 3\varepsilon_j^{1/4}+\varepsilon_j^{c_1-3n-15}+\delta_\tau( T^{j,\ell+1},\phi)(h_{\tau,\varepsilon_j}( T^{j,\ell+1}))\\
		<\varepsilon_j^{1/8}+\delta_\tau( T^{j,\ell+1},\phi)(h_{\tau,\varepsilon_j}( T^{j,\ell+1})).
	\end{multline*}
\end{proof}
The following statement corresponds to \cite[Prop.\ 6.4]{KT17}, see \cite[Secs.\ 4.13 \&\ 4.14]{B78}.
\begin{prop}
	\label{Prop6.4}
	Let the assumptions be as in \cref{Prop6.1}. For $j\in\mathbb N$ and $t\in[0,j]$ define
	\begin{equation*}
		T_t^j=T^{j,\lceil t/\Dt_j\rceil}.
	\end{equation*}
	Then there exists a subsequence $(j_k)$ and a family $(\mu_t)_{t\in\R_+}\subset\M_+(\R^n)$ such that $|T_t^{j_k}|_\tau\ws\mu_t$ for $k\to\infty$ for all $t\in\R_+$,
	\begin{equation}
		\label[ineq]{in1:6.19}
		\limsup_k\int_{[0,R]}\left( \int\frac{|\Psi_{\varepsilon_{j_k}}\ast\delta_\tau T_t^{j_k}|^2}{\Psi_{\varepsilon_{j_k}}\ast|T_t^{j_k}|_\tau+\varepsilon_{j_k}}\e\Lebn-\frac{1}{\Dt_{j_k}}\Delta_{j_k}|T_t^{j_k}|_\tau(\R^n) \right)\e\Leb(t)<\infty
	\end{equation}
	for all $R\in\R_+$, and
	\begin{equation}
		\label[ineq]{in1:6.20}
		\lim_k j_k^{2n}\Delta_{j_k}|T_t^{j_k}|_\tau(\R^n)=0
	\end{equation}
	for a.e.\ $t\in\R_+$.
\end{prop}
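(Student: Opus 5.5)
The proof will follow the compactness argument of \cite[Prop.\ 6.4]{KT17} (cf.\ \cite[Sec.\ 4.14]{B78}), using \cref{Prop6.1} as the discrete energy inequality. The first step is a uniform mass bound. By \cref{in1:6.3} and $\varepsilon_j^{1/8}j<1$ we have $|T_t^j|_\tau(\R^n)\leq |T_0|_\tau(\R^n)+1=M$ for all $t\in[0,j]$. Moreover, all supports lie in the fixed compact set $\{\textup{dist}(\cdot,\Omega)\leq 1\}$.

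Next comes summation. Summing \cref{in1:6.4} over $\ell=1,\ldots,\lceil R/\Dt_j\rceil$ and multiplying by $\Dt_j$ gives
\begin{equation*}
\int_{[0,R]}\Big(\tfrac14\int\frac{|\Psi_{\varepsilon_j}\ast\delta_\tau T_t^j|^2}{\Psi_{\varepsilon_j}\ast|T_t^j|_\tau+\varepsilon_j}\e\Lebn-\frac{1-j^{-5}}{\Dt_j}\Delta_j|T_t^{j}|_\tau(\R^n)\Big)\e\Leb(t)\leq M+\varepsilon_j^{1/8}(R+1).
\end{equation*}
Two small points need care here. First, $\Delta_j$ is evaluated at step $\ell-1$, which only causes an index shift of one time step. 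Second, we use $\Delta_j\le 0$, and the first-step contribution is bounded by $M$. Since $1-j^{-5}\geq 1/2$, this yields \cref{in1:6.19} for any subsequence, uniformly in $j$.

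For the convergence for all $t$, we mimic Brakke's argument. Take $\phi\in\mathcal F_{j_0}^+$ for fixed $j_0$, noting that $\mathcal F_{j_0}^+\subset\mathcal F_j^+$ for $j\geq j_0$. Inequality \cref{in1:6.5} together with \cref{in1:5.23} and Cauchy--Schwarz (as in \cite[(6.21)]{KT17}) bounds $\delta_\tau(T,\phi)(h)$ from above by $c(j_0)$ plus a negative multiple of the integral term. Consequently $t\mapsto |T_t^j|_\tau(\phi)-c(j_0,M)t$ is, up to errors $\varepsilon_j^{1/8}t$, non-increasing on $[0,j]$. We then pick a countable family $\{\phi_m\}\subset\bigcup_{j_0}\mathcal F_{j_0}^+$ whose span is dense in $C_c(\R^n)$; Gaussians-type functions $\exp(-\sqrt{1+|x-a|^2})$ times rationals serve, as in \cite{KT17}. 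By a diagonal argument and Helly's selection theorem applied to these almost-monotone functions, we obtain a subsequence along which $|T_t^{j_k}|_\tau(\phi_m)$ converges for all $t$ and all $m$. Combined with the uniform mass bound and uniform compact support, this gives $|T_t^{j_k}|_\tau\ws\mu_t$ for every $t\in\R_+$.

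Finally, for \cref{in1:6.20}: by the summed bound, $\int_{[0,R]}-\Dt_{j}^{-1}\Delta_{j}|T_t^{j}|_\tau(\R^n)\e t\le c(M,R)$, so
\begin{equation*}
\int_{[0,R]} j^{2n}|\Delta_j|T_t^j|_\tau(\R^n)|\e t\leq c\,j^{2n}\Dt_j\leq c\,j^{2n}\varepsilon_j^{c_1}\le c\,j^{2n-6c_1}.
\end{equation*}
This bound is summable in $j$. Therefore, by Fatou/Borel--Cantelli, $\sum_k j_k^{2n}|\Delta_{j_k}|T_t^{j_k}|_\tau(\R^n)|<\infty$ for a.e.\ $t\in[0,R]$, giving the limit zero. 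Letting $R\to\infty$ over the integers finishes the proof. The main obstacle is the almost-monotonicity step: one must verify that the negative integral term absorbs the $\nabla\phi$ contribution uniformly in $j$, using $|\nabla\phi|\le j_0\phi$ and \cref{in1:5.24}.
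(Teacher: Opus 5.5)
Your proposal is correct and follows the paper's proof in substance. You obtain \cref{in1:6.19} by telescoping \cref{in1:6.4}, convergence at every $t$ from the almost-monotonicity of $t\mapsto|T_t^j|_\tau(\phi)$ via \cref{in1:6.5} together with \cref{in1:5.23,in1:5.24}, and \cref{in1:6.20} from the uniform bound times $j^{2n}\Dt_j\to 0$. Some differences are worth noting:
\begin{itemize}
\item Invoking Helly's theorem is legitimate, because the variation of $t\mapsto|T_t^j|_\tau(\phi)$ on $[0,R]$ is bounded uniformly in $j$. It just packages the paper's argument via dyadic times and continuity points.
\item Your summability/Borel--Cantelli step makes the paper's a.e.\ conclusion in \cref{in1:6.20} fully rigorous.
\item One point needs rewording. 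Nonzero elements of $\mathcal F_{j_0}^+$ are never compactly supported, so their span is not ``dense in $C_c(\R^n)$''. What you actually need is that the span of their restrictions is dense in $C(K)$, where $K$ is the fixed compact set containing all supports. This holds, for example, by Stone--Weierstrass: $\bigcup_j\mathcal F_j^+$ is closed under products, and your translates separate points and vanish nowhere.
\end{itemize}
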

\begin{proof}
	Abbreviate $D=\{ i2^{-j}\:|\: i,j\in\mathbb N_0 \}$. Note that $D$ is dense in $\R_+$. By \cref{in1:6.3} we have
	\begin{equation*}
		\limsup_{j}\sup_{t\in\R_+}|T_t^j|_\tau(\R^n)\leq|T_0|_\tau(\R^n).
	\end{equation*}
	As in \cite[(6.21)]{KT17}, application of the Banach\textendash Alaoglu theorem and a diagonal argument yields the existence of a subsequence $(j_k)$ and a family $(\mu_t)_{t\in D}\subset\M_+(\R^n)$ such that $|T_t^{j_k}|_\tau\ws\mu_t$ for $k\to\infty$ for all $t\in D$. Note that $\mu_t(\R^n)\leq |T_0|_\tau(\R^n)$, which follows from the lower semicontinuity of the total variation with respect to weak-$*$ convergence. As in the proof of \cite[Prop.\ 6.4]{KT17}, we choose a sequence $(\phi_i)\subset C_c^2(\R^n;\R_+)$ such that $\{ \phi_i\:|\: i\in\mathbb N \}$ is dense in $C_c(\R^n;\R_+)$. We can actually assume $(\phi_i)\subset C_c^2(\R^n;\R_+^*)$ by adding countably many small numbers $\delta_m$ ($m\in\mathbb N$) with $\delta_m\to 0$ to each member of the sequence.
	For $i,J\in\mathbb N$ define (see \cite[(6.23)]{KT17})
	\begin{equation*}
		g_{i,J}(t)=\mu_t(\phi_i)-t||\nabla^2\phi_i|_2|_\infty |T_0|_\tau(\R^n)
	\end{equation*}
	for all $t\in[0,J]\cap D$. We claim that $g_{i,J}$ is monotone decreasing. Since $\phi_i$ attains a maximum, we can assume $\phi_i\in(0,1)$ by rescaling $g_{i,J}$. Following the calculation in \cite[(5.66)]{KT17}, we have (see \cite[(6.24)]{KT17})
	\begin{multline}
		\label[ineq]{in1:6.24}
		\delta_\tau(T_t^j,\phi)(h_{\tau,\varepsilon_j}(T_t^j))=\delta_\tau T_t^j(\phi h_{\tau,\varepsilon_j}(T_t^j))+\int_S h_{\tau,\varepsilon_j}(T_t^j)\cdot(\nabla\phi-T\cdot S\nabla\phi )\tau(|\theta|)\e\Ha\\
		\leq \delta_\tau T_t^j(\phi h_{\tau,\varepsilon_j}(T_t^j))+\frac{1}{2}\int\frac{|\nabla\phi|^2}{\phi}\e|T_t^j|_\tau+\frac{1}{2}\int | h_{\tau,\varepsilon_j}(T_t^j) |^2\phi\e|T_t^j|_\tau\\
		\leq 2\varepsilon_j^{1/4}+\left( \frac{3}{2}\varepsilon_j^{1/4}-\frac{1}{2} \right) I_\phi+\frac{1}{2}\int\frac{|\nabla\phi|^2}{\phi}\e|T_t^j|_\tau\leq 2\varepsilon_j^{1/4}+\frac{1}{2}\int\frac{|\nabla\phi|^2}{\phi}\e|T_t^j|_\tau
	\end{multline}
	for all $j\in\mathbb N$ and $\phi\in \mathcal F_j^+\setminus\{ 0 \}$, where $T_t^j=\theta\Ha\mres S$ and $I_\phi=\int\phi|\Psi_{\varepsilon_j}\ast\delta_\tau T_t^j|^2/(\Psi_{\varepsilon_j}\ast|T_t^j|_\tau+\varepsilon_j)\e\Lebn$. Here we used $2ab\leq a^2+b^2$ and $|\nabla\phi-T_\cdot S\nabla\phi|\leq|\nabla\phi|$ as well as \cref{in1:5.23,in1:5.24}.
	Recall that we assumed $\phi_i<1$. Take $i_0,j_0\in\mathbb N$ such that $\phi_i+i_0^{-1}<1$ and $j_0\geq\max\{ J,|\nabla\phi_i|,|\nabla^2\phi_i|_2 \}$. Then
	\begin{equation*}
		\phi_i+i_0^{-1}\in\mathcal F_{j_0}^+\qquad\textup{and}\qquad j_0\geq J.
	\end{equation*}
	Now take any $t_1,t_2\in[0,J]\cap D$ with $t_1<t_2$. Write $t_1=i_12^{-j_1},t_2=i_22^{-j_2}$. We can choose larger $j_0$ such that $j_0\geq\max\{ j_1,j_2 \}$. Then
	\begin{equation*}
		t_1,t_2\in 2^{-j_0}\mathbb N_0.
	\end{equation*}
	Using \cref{in1:6.5,in1:6.24}, we get (writing $t_1=n_12^{-j_0},t_2=n_22^{-j_0}$)
	\begin{multline}
		\label[ineq]{in1:6.25}
		|T_{t_2}^{j_k}|_\tau(\phi_i+i_0^{-1})-|T_{t_1}^{j_k}|_\tau(\phi_i+i_0^{-1})=|T^{j_k,\lceil t_2/\Dt_{j_k}\rceil }|_\tau(\phi_i+i_0^{-1})-|T^{j_k,\lceil t_1/\Dt_{j_k}\rceil }|_\tau(\phi_i+i_0^{-1})\\
		=\sum_{\ell=\lceil t_1/\Dt_{j_k}\rceil}^{\lceil t_2/\Dt_{j_k}\rceil-1}\left( |T^{j_k,\ell+1}|_\tau(\phi_i+i_0^{-1})-|T^{j_k,\ell}|_\tau(\phi_i+i_0^{-1}) \right)\\
		\leq \Dt_{j_k}\sum_{\ell=\lceil t_1/\Dt_{j_k}\rceil}^{\lceil t_2/\Dt_{j_k}\rceil-1}\left( \delta_\tau(T^{j_k,\ell+1},\phi_i+i_0^{-1})(h_{\tau,\varepsilon_{j_k}}(T^{j_k,\ell+1}))+\varepsilon_{j_k}^{1/8} \right)\\
		=\Dt_{j_k}\sum_{\ell=\lceil t_1/\Dt_{j_k}\rceil}^{\lceil t_2/\Dt_{j_k}\rceil-1}\left( \delta_\tau(T_{(\ell+1)\Dt_{j_k}}^{j_k},\phi_i+i_0^{-1})(h_{\tau,\varepsilon_{j_k}}(T_{(\ell+1)\Dt_{j_k}}^{j_k}))+\varepsilon_{j_k}^{1/8} \right)\\
		\leq \Dt_{j_k}\sum_{\ell=\lceil t_1/\Dt_{j_k}\rceil}^{\lceil t_2/\Dt_{j_k}\rceil-1}\left( 2\varepsilon_{j_k}^{1/4}+\frac{1}{2}\int\frac{|\nabla\phi_i|^2}{\phi+i_0^{-1}}\e|T_{(\ell+1)\Dt_{j_k}}^{j_k}|_\tau+\varepsilon_{j_k}^{1/8}  \right)\\
		=(t_2-t_1)(2\varepsilon_{j_k}^{1/4}+\varepsilon_{j_k}^{1/8})+\sum_{\ell=\lceil t_1/\Dt_{j_k}\rceil}^{\lceil t_2/\Dt_{j_k}\rceil-1}\Dt_{j_k}\frac{1}{2}\int\frac{|\nabla\phi_i|^2}{\phi+i_0^{-1}}\e|T_{(\ell+1)\Dt_{j_k}}^{j_k}|_\tau\\
		=(t_2-t_1)(2\varepsilon_{j_k}^{1/4}+\varepsilon_{j_k}^{1/8})+\frac{1}{2}\int_{[t_1,t_2]}\int\frac{|\nabla\phi_i|^2}{\phi+i_0^{-1}}\e|T_{t}^{j_k}|_\tau\e\Leb(t)
	\end{multline}
	for all $j_k\geq j_0$\footnote{The last equality follows from $t_1,t_2\in\Dt_{j_k}\mathbb N_0$ for $j_k\geq j_0$, which is true by $p_{j_k}\geq j_k$ (\cref{Prop6.1}). For example, we get $t_1=\Dt_{j_k}n_12^{p_{j_k}-j_k}2^{j_k-j_0}\in\Dt_{j_k}\mathbb N_0$.}, see \cite[6.25]{KT17}.
	Using $|T_t^{j_k}|_\tau\ws\mu_t$ for $t\in D$, \cref{in1:6.3}, and $\varepsilon_{j_k}<j_k^{-6}$, we get the analogue of \cite[(6.26)]{KT17},
	\begin{equation}
		\label[ineq]{in1:6.26}
		\mu_{t_2}(\phi_i)-\mu_{t_1}(\phi_i)-i_0^{-1}|T_0|_\tau(\R^n)\leq \lim_k |T_{t_2}^{j_k}|_\tau(\phi_i+i_0^{-1})-|T_{t_1}^{j_k}|_\tau(\phi_i+i_0^{-1}).
	\end{equation}
	Now \cite[(6.27)]{KT17} simplifies to
	\begin{equation}
		\label[ineq]{in1:6.27}
		\frac{|\nabla\phi_i|^2}{\phi_i+i_0^{-1}}\leq \frac{|\nabla\phi_i|^2}{\phi_i}\leq 2||\nabla^2\phi_i|_2|_\infty,
	\end{equation}
	where we used the formula $|\nabla f|^2\leq 2f||\nabla^2f|_2|_\infty$ for all $f\in C_c^2(\R^n;\R_+^*)$ (it can be shown via Taylor's theorem).
	By \cref{in1:6.25,in1:6.26,in1:6.27,in1:6.3} we have
	\begin{equation*}
		\mu_{t_2}(\phi_i)-\mu_{t_1}(\phi_i)\leq (t_2-t_1)||\nabla^2\phi_i|_2|_\infty|T_0|_\tau(\R^n)+i_0^{-1}|T_0|(\R^n).
	\end{equation*}
	Letting $i_0\to\infty$ (cf.\ \cite[(6.28)]{KT17}),
	\begin{equation*}
		\mu_{t_2}(\phi_i)-\mu_{t_1}(\phi_i)\leq (t_2-t_1)||\nabla^2\phi_i|_2|_\infty|T_0|_\tau(\R^n).
	\end{equation*}
	It follow that $g_{i,J}$ is monotone decreasing on $[0,J]\cap D$. Now define
	\begin{equation*}
		P=\bigcup_{J\in\mathbb N}\{  t\in(0,J)\:|\: g_{i,J}\textup{ not continuous in }t \}\subset\R_+.
	\end{equation*}
	Set $P$ is countable because a monotone $\R$-valued function defined on an interval has countably many discontinuities.
	First, we claim that $\lim_k|T_t^{j_k}|_\tau(\phi_i)=\mu_t(\phi_i)$ for all $t\in\R_+\setminus(P\cup D)$.
	Fix $t\in\R_+\setminus(P\cup D)$. Let $(t_k)\subset D$ such that $t_k\searrow t$ and $T_{t_k}^{j_k}=T_{t}^{j_k}$.
	Now take $s\in D$ with $t\leq t_k<s$ for $k$ sufficiently large.
	\Cref{in1:6.25} implies (see \cite[(6.31)]{KT17})
	\begin{equation*}
		|T_s^{j_k}|_\tau(\phi_i+i_0^{-1})\leq|T_{t_k}^{j_k}|_\tau(\phi_i+i_0^{-1})+ o(1)
	\end{equation*}
	as $s\searrow t$. Applying $\lim_{i_0}\liminf_k$ yields an inequality corresponding to \cite[(6.32)]{KT17},
	\begin{equation*}
		\mu_s(\phi_i)\leq\liminf_k|T_{t_k}^{j_k}|_\tau(\phi_i)+o(1)
	\end{equation*}
	as $s\searrow t$. Letting $s\searrow t$ gives (recall that $T_{t_k}^{j_k}=T_{t}^{j_k}$ and $t\mapsto \mu_t(\phi_i)$ is continuous because $t\notin P$)
	\begin{equation}
		\label[ineq]{in1:6.33}
		\mu_t(\phi_i)\leq\liminf_k|T_{t}^{j_k}|_\tau(\phi_i),
	\end{equation}
	compare with \cite[(6.33)]{KT17}.
	Using similar arguments one also obtains (see \cite[(6.36)]{KT17})
	\begin{equation}
		\label[ineq]{in1:6.36}
		\limsup_k|T_{t}^{j_k}|_\tau(\phi_i)\leq\mu_t(\phi_i).
	\end{equation}
	\Cref{in1:6.33,in1:6.36} show $\lim_k|T_t^{j_k}|_\tau(\phi_i)=\mu_t(\phi_i)$ for all $t\in\R_+\setminus(P\cup D)$. Since $\{ \phi_i\:|\:i\in\mathbb N \}$ is dense in $C_c(\R^n;\R_+)$, this relation is also true for $\phi_i$ replaced by any $\phi\in C_c(\R^n;\R_+)$. Thus, it also holds for $\phi\in C_c(\R^n)$ by decomposing into positive and negative part. Using the beginning of the proof, we get the validity for $t\in D$. Since $P$ is countable, using a diagonal argument again, there exist measures $\{ \mu_t\:|\: t\in P \}$ such that, up to a subsequence, $\lim_k|T_t^{j_k}|_\tau(\phi)=\mu_t(\phi)$ for all $\phi\in C_c(\R^n),t\in P$. In summary, this shows the first claim.\\
	Next, we show \cref{in1:6.19}.
	Multiplication of \cref{in1:6.4} with $\Dt_{j_k}$ and summation over $\ell$ gives
	\begin{multline*}
		\sum_{\ell=1}^{j_k2^{p_{j_k}}}\left( |T^{j_k,\ell}|_\tau(\R^n)-|T^{j_k,\ell-1}|_\tau(\R^n)+\frac{\Dt_{j_k}}{4}\int\frac{| \Psi_{\varepsilon_{j_k}}\ast\delta_\tau T^{j_k,\ell} |^2}{\Psi_{\varepsilon_{j_k}}\ast|T^{j_k,\ell}|_\tau+\varepsilon_{j_k}  }\e\Lebn-\Dt_{j_k}\frac{1-{j_k}^{-5}}{\Dt_{j_k}}\Delta_{j_k}|T^{j_k,\ell-1}|_\tau(\R^n)\right)\\ \leq\varepsilon_{j_k}^{1/8}\Dt_{j_k}j_k2^{p_{j_k}}=\varepsilon_{j_k}^{1/8}j_k < 1,
	\end{multline*}
	where the inequality follows from the first inequality in \labelcref{in1:6.7}. This becomes
	\begin{multline*}
		|T^{j_k,j_k2^{p_{j_k}}}|_\tau(\R^n)-|T_0|_\tau(\R^n)+\frac{1}{4}\int_{[0,j_k]}\int\frac{| \Psi_{\varepsilon_{j_k}}\ast\delta_\tau T_t^{j_k} |^2}{\Psi_{\varepsilon_{j_k}}\ast|T_t^{j_k}|_\tau+\varepsilon_{j_k}  }\e\Lebn\e\Leb(t)\\
		-\int_{[0,j_k-\Dt_{j_k}]}\frac{1-{j_k}^{-5}}{\Dt_{j_k}}\Delta_{j_k}|T_t^{j_k}|_\tau(\R^n)\e\Leb(t) -(1-{j_k}^{-5})|T_0|_\tau(\R^n) < 1.
	\end{multline*}
	In particular, we have (using $1/4<1-j_k^{-5}$ for $j_k>1$ and $\Delta_{j_k}|T_t^{j_k}|_\tau(\R^n)\in\R_-$)
	\begin{multline*}
		\int_{[0,j_k-\Dt_{j_k}]}\left(\int\frac{| \Psi_{\varepsilon_{j_k}}\ast\delta_\tau T_t^{j_k} |^2}{\Psi_{\varepsilon_{j_k}}\ast|T_t^{j_k}|_\tau+\varepsilon_{j_k}  }\e\Lebn-\frac{1}{\Dt_{j_k}}\Delta_{j_k}|T_t^{j_k}|_\tau(\R^n)\right)\e\Leb(t)\\
		\leq 4\left( \int_{[0,j_k-\Dt_{j_k}]}\left(\frac{1}{4}\int\frac{| \Psi_{\varepsilon_{j_k}}\ast\delta_\tau T_t^{j_k} |^2}{\Psi_{\varepsilon_{j_k}}\ast|T_t^{j_k}|_\tau+\varepsilon_{j_k}  }\e\Lebn-\frac{1-{j_k}^{-5}}{\Dt_{j_k}}\Delta_{j_k}|T_t^{j_k}|_\tau(\R^n)\right)\e\Leb(t) \right)\\
		\leq 4\left( 1+|T_0|_\tau(\R^n)+(1-{j_k}^{-5})|T_0|_\tau(\R^n) \right).
	\end{multline*}
	Hence, for every $R\in\R_+$ we get (eventually $R<j_k-\Dt_{j_k}$)
	\begin{multline*}
		\limsup_k\int_{[0,R]}\left(\int\frac{| \Psi_{\varepsilon_{j_k}}\ast\delta_\tau T_t^{j_k} |^2}{\Psi_{\varepsilon_{j_k}}\ast|T_t^{j_k}|_\tau+\varepsilon_{j_k}  }\e\Lebn-\frac{1}{\Dt_{j_k}}\Delta_{j_k}|T_t^{j_k}|_\tau(\R^n)\right)\e\Leb(t)\\
		\leq\limsup_k 4\left( 1+|T_0|_\tau(\R^n)+(1-{j_k}^{-5})|T_0|_\tau(\R^n) \right)=4(1+2|T_0|_\tau(\R^n)).
	\end{multline*}
	Recall that $\Dt_{j_k}\leq\varepsilon_{j_k}^{c_1}<(j_k^{-6})^{c_1}\ll j_k^{-2n}$.
	Since, by the above, we have that $\lim_k \int_{[0,R]}\frac{1}{\Dt_{j_k}}\Delta_{j_k}|T_t^{j_k}|_\tau(\R^n)\e\Leb(t)$ exists, we obtain
	\begin{equation*}
		\lim_k \int_{[0,R]}\frac{-1}{j_k^{-2n}}\Delta_{j_k}|T_t^{j_k}|_\tau(\R^n)\e\Leb(t)=0
	\end{equation*}
	for all $R\in\R_+$. Therefore, the integrand (whose values lie in $\R_+$) converges to $0$ for a.e.\ $t\in\R_+$, which yields \cref{in1:6.20}.
\end{proof}
\section{Proof of \cref{LimitCurrent}: transportation network geometric flow $(T_t)$}
\label{ExTNFlow}
We are now ready to give the proof of \cref{LimitCurrent}.
\begin{proof}
	\underline{STEP 1 (Application of Banach\textendash Alaoglu theorem):} Note that \cref{GrowthCondition} implies $\tau'(0)\geq C$.
	Hence, we have $|T|_\tau(\R^n)\geq C|T|(\R^n)$ for all $T\in\mathcal R_1(\R^n)$.
	In particular, we have $|T_t^j|(\R^n)\leq C^{-1}|T_t^j|_\tau(\R^n)\leq C^{-1}(|T_0|_\tau(\R^n)+1)$ by \cref{in1:6.3,in1:6.7}. Thus, the Banach\textendash Alaoglu theorem and a diagonal argument imply the existence of a subsequence $(j_k)$ and vector-valued measures $(T_t)_{t\in D}\subset \M(\R^n;\R^n)$ such that $T_t^{j_k}\ws T_t$ for $k\to\infty$ for all $t\in D$ ($D$ defined as in the beginning of the proof of \cref{Prop6.4}). Note that this directly yields $\partial T_t=0$ for all $t\in D$ using also \cref{ZeroBoundary}.
	We can assume that the subsequence $(j_k)$ also satisfies \cref{in1:6.19}.
	To simplify the notation, we restrict $j$ to the indices $j_k$.
	Using $\spt(T_t^j)\subset\{ \textup{dist}(\cdot,\Omega)<2\varepsilon_j^{c_1-2} \}$ (\cref{Prop6.1}), it is not difficult to see that $\spt(T_t)\subset\Omega$ for all $t\in D$.
	
	\underline{STEP 2 (Continuity property with respect to flat norm):}
	Let $t_1,t_2\in D$ with $t_1<t_2$. As in the proof of \cref{Prop6.4}, we can always take $j$ sufficiently large such that $t_1,t_2\in\Dt_j\mathbb N_0$, in particular $t_2=t_1+N_j\Dt_j$ for $N_j\in\mathbb N$. 
	Now fix $j$ sufficiently large with this property.
	For $\ell=0,\ldots,N_j-1$ we have
	\begin{equation*}
		T_{t_1+(\ell+1)\Dt_j}^j=(f_2^\ell)_\#(f_{1.5}^\ell)_\#(f_1^\ell)_\# T_{t_1+\ell\Dt_j}^j,\quad\widetilde{T}_{t_1+\ell\Dt_j}^j=(f_1^\ell)_\# T_{t_1+\ell\Dt_j}^j,\quad\text{and}\quad\widehat{T}_{t_1+\ell\Dt_j}^j=(f_{1.5}^\ell)\widetilde{T}_{t_1+\ell\Dt_j}^j.
	\end{equation*}
	Application of the triangle inequality gives
	\begin{equation}
		\label[ineq]{in1:smartIneq}
		|T_{t_1}^j-T_{t_2}^j|_K^\flat\leq |T_{t_1+0\cdot\Dt_j}^j-\widehat{T}_{t_1-1\cdot\Dt_j}^j|_K^\flat+\sum_{\ell=0}^{N_j-1}|\widehat{T}_{t_1+(\ell-1)\Dt_j}^j-\widehat{T}_{t_1+\ell\Dt_j}^j|_K^\flat+|\widehat{T}_{t_1+(N_j-1)\Dt_j}^j-T_{t_1+N_j\Dt_j}^j|_K^\flat.
	\end{equation}
	(If $t_1=0$, then replace first two summands by $|T_0^j-\widehat{T}_0^j|_K^\flat$.)
	Here, we have chosen compact and convex $K\subset\R^n$ such that the supports of all the involved currents are contained in $K$. This is possible because (formally) deformation of type $f_{1.5}$ always follows deformations of type $f_2$ and $f_1$ (in the first step, only $f_1$) and $|\Dt_jh_{\tau,\varepsilon_j}(\widehat{T})|\leq 2\varepsilon_j^{c_1-2}$ respectively $|f_1-\textup{id}|_\infty\leq j^{-2}$ (see \cref{AdmDeform}, also recall the definition of the homotopy $h_{f_1}$). Further, we can assume that always $\spt((h_{f_1})_\#(\llbracket 0,1\rrbracket\times T))\subset K$ (which we will use below) because
	\begin{equation*}
		\spt((h_{f_1})_\#(\llbracket 0,1\rrbracket\times T))\subset h_{f_1}(\spt(\llbracket 0,1\rrbracket\times T))=h_{f_1}(\spt(\llbracket 0,1\rrbracket)\times \spt(T)),
	\end{equation*}
	see \cite[pp.\ 359--360]{F}, thus (by the above) $\spt((h_{f_1})_\#(\llbracket 0,1\rrbracket\times T))\subset\{\textup{dist}(\cdot,\Omega) \leq  2\varepsilon_j^{c_1-2}+j^{-2} \}$.
	First, we claim that 
	\begin{equation}
		\label[ineq]{in1:Claim}
		|\widehat{T}_{t_1+(\ell-1)\Dt_j}^j-\widehat{T}_{t_1+\ell\Dt_j}^j|_K^\flat\leq |\widehat{T}_{t_1+(\ell-1)\Dt_j}^j-T_{t_1+\ell\Dt_j}^j|_K^\flat+|T_{t_1+\ell\Dt_j}^j-(f_1^\ell)_\# T_{t_1+\ell\Dt_j}^j|_K^\flat
	\end{equation}
	for all $\ell=0,\ldots,N_j-1$. For simplicity, write \cref{in1:Claim} formally as ($p$ for ``previous iteration'')
	\begin{equation}
		\label[ineq]{in1:PrevCur}
		|\widehat T_p-\widehat T|_K^\flat\leq 	|\widehat T_p- T|_K^\flat+	| T-(f_1)_\# T|_K^\flat,
	\end{equation}
	see \cref{fig4}.
	\begin{figure}
		\centering
		\begin{tikzpicture}[scale=2, transform shape=false]
			\filldraw[draw=lightgray, fill=lightgray] (-2,0) -- (2,0) -- (2,-1) -- (-2,-1);
			\node at (0,-0.5) {$\textcolor{gray}{\Omega}$};
			\draw (-2,0) -- (2,0);
			
			\node[fill,circle,minimum size=2mm,inner sep=0pt] at (-1,0) {};
			\node[fill,circle,minimum size=2mm,inner sep=0pt] at (-0.5,1) {};
			\node[fill,circle,minimum size=2mm,inner sep=0pt] at (1,2) {};
			\node[fill,circle,minimum size=2mm,inner sep=0pt] at (1,0) {};
			
			\draw (-1,0) -- (-0.5,1);
			\draw (-0.5,1) -- (1,2) ;
			 
			\node at (-1.2,0.2) {$\widehat T_p$};
			\node at (-1.2,1) {$T=(f_2^p)_\#\widehat T_p$};
			\node at (1.6,2) {$\widetilde T=(f_1)_\# T$};
			\node at (1.6,0.2) {$\widehat T=(f_{1.5})_\#\widetilde T$};
			
			\node at (-0.5,0.5) {$Q_1$};
			\node at (0.63,1.5) {$Q_2$};
		\end{tikzpicture}
		\caption{Formal sketch of the currents from \cref{in1:PrevCur}. Each $k$-current is indicated as a $(k-1)$-dimensional set.}
		\label{fig4}
	\end{figure}
	Now recall that \cite[pp.\ 367]{F} (a minimizer always exists by \cite[pp.\ 368]{F})
	\begin{equation*}
		|\overline{T}|_K^\flat=\min\{ \mathbb M(\overline{T}-\partial Q)+\mathbb M(Q)\:|\: Q\in\D_2(\R^n),\textup{supp}(Q)\subset K \}\in[0,\infty]
	\end{equation*} 
	for all $\overline{T}\in\D_1(\R^n)$ with $\textup{supp}(\overline{T})\subset K$.
	Let $Q_1\in\mathcal N_2(\R^n)$ be a minimizer of $|\widehat T_p- T|_K^\flat$ and $Q_2\in\mathcal N_2(\R^n)$ be a minimizer of $| T-(f_1)_\# T|_K^\flat$. 
	We obtain (using also that the support of $\widehat T_p$ is contained in $\Omega$)
	\begin{multline*}
		|\widehat T_p-\widehat T|_K^\flat\leq \mathbb M((f_{1.5})_\# (Q_1+Q_2))+\mathbb M(\partial ((f_{1.5})_\# (Q_1+Q_2))-(\widehat T_p-\widehat T))\\
		=\mathbb M((f_{1.5})_\# (Q_1+Q_2))+\mathbb M((f_{1.5})_\#(\partial (Q_1+Q_2)-(\widehat T_p-\widetilde T)))\\
		\leq \mathbb M(Q_1+Q_2)+\mathbb M(\partial (Q_1+Q_2)-(\widehat T_p-\widetilde T))=\mathbb M(Q_1+Q_2)+\mathbb M(\partial (Q_1+Q_2)-(\widehat T_p-T+T-\widetilde T))\\
		\leq \mathbb M(Q_1)+\mathbb M(\partial Q_1-(\widehat T_p-T))+\mathbb M(Q_2)+\mathbb M(\partial Q_2-(T-\widetilde T))=|\widehat T_p- T|_K^\flat+	| T-(f_1)_\# T|_K^\flat.
	\end{multline*}
	This shows \cref{in1:Claim}. 
	(If $t_1=0$, then $|T_0^j-\widehat T_0^j|_K^\flat\leq |T_0^j-(f_1^0)_\# T_0^j|_K^\flat$ using a similar argument.)
	Thus, \cref{in1:smartIneq} can be written (if $t_1=0$, then replace first three summands by $|T_0^j-(f_1^0)_\# T_0^j|_K^\flat$)
	\begin{multline}
		\label[ineq]{in1:smartIneq2}
		|T_{t_1}^j-T_{t_2}^j|_K^\flat\leq |T_{t_1+0\cdot\Dt_j}^j-\widehat{T}_{t_1-1\cdot\Dt_j}^j|_K^\flat+\sum_{\ell=0}^{N_j-1}\left(|\widehat{T}_{t_1+(\ell-1)\Dt_j}^j-T_{t_1+\ell\Dt_j}^j|_K^\flat+|T_{t_1+\ell\Dt_j}^j-(f_1^\ell)_\# T_{t_1+\ell\Dt_j}^j|_K^\flat\right)\\
		+|\widehat{T}_{t_1+(N_j-1)\Dt_j}^j-T_{t_1+N_j\Dt_j}^j|_K^\flat.
	\end{multline}
	Next, we estimate ($f_2^{-1}$ corresponds to the deformation prior to $f_1^0$)
	\begin{itemize}
		\item $|T_{t_1+\ell\Dt_j}^j-(f_1^\ell)_\# T_{t_1+\ell\Dt_j}^j|_K^\flat$ for $\ell=0,1,\ldots,N_j-1$ and
		\item $|\widehat{T}_{t_1+(\ell-1)\Dt_j}^j-T_{t_1+\ell\Dt_j}^j|_K^\flat=|\widehat{T}_{t_1+(\ell-1)\Dt_j}^j-(f_2^{\ell-1})_\#\widehat{T}_{t_1+(\ell-1)\Dt_j}^j|_K^\flat$ for $\ell=0,1,\ldots,N_j$.
	\end{itemize}
	(If $t_1=0$, then $\ell$ starts from $1$ in second bullet point.) Let us start by considering the terms $|T_{t_1+\ell\Dt_j}^j-(f_1^\ell)_\# T_{t_1+\ell\Dt_j}^j|_K^\flat$ for $\ell=0,1,\ldots,N_j-1$. To simplify the notation, we write them (formally) as $|T-(f_1)_\# T|_K^\flat$. The homotopy formula for currents is applicable to $T-(f_1)_\# T$ by \cite[p.\ 371]{F} (essentially because $T\in \mathbb F_1(\R^n)$). It is stated on \cite[p.\ 363]{F},
	\begin{equation}
		\label{homotopy2}
		(f_1)_\# T-T=\partial h_\#(\llbracket 0,1\rrbracket\times T)+ h_\#(\llbracket 0,1\rrbracket\times \partial T),
	\end{equation}
	where $h=h_{f_1}$. Now we use that $\partial T=0$ by \cref{ZeroBoundary}. The current $\llbracket 0,1\rrbracket\times 0\in\D_1(\R\times\R^n)$ is uniquely characterized by the conditions on \cite[p.\ 360]{F}. Using these, it is not difficult to show that $\llbracket 0,1\rrbracket\times \partial T=0$, thus the last term in \cref{homotopy2} vanishes. Now recall that always $|\cdot|_K^\flat\circ\partial\leq|\cdot|_K^\flat$ \cite[p.\ 367]{F}. 
	We obtain (using $Q=0$ in the formula from above)
	\begin{multline*}
		|(f_1)_\# T-T|_{K}^\flat=|\partial h_\#(\llbracket 0,1\rrbracket\times T)|_{K}^\flat \leq | h_\#(\llbracket 0,1\rrbracket\times T)|_{K}^\flat \leq\mathbb M(h_\#(\llbracket 0,1\rrbracket\times T))\\
		\leq j^{-1}(|T|_\tau(\R^n)-|(f_1)_\# T|_\tau(\R^n)),
	\end{multline*}
	where the last inequality follows from \cref{AdmDeform}.
	Using this and \cref{in1:6.4,in1:6.3}, we get
	\begin{multline}
		\sum_{\ell=0}^{N_j-1}|T_{t_1+\ell\Dt_j}^j-(f_1^\ell)_\# T_{t_1+\ell\Dt_j}^j|_K^\flat\leq j^{-1}\sum_{\ell=0}^{N_j-1}(|T_{t_1+\ell\Dt_j}^j|_\tau(\R^n)-|(f_1^\ell)_\# T_{t_1+\ell\Dt_j}^j|_\tau(\R^n))\\
		\leq j^{-1}\sum_{\ell=0}^{N_j-1}|\Delta_j |T^{j,t_1/\Dt_j+\ell}|_\tau(\R^n)|\\
		\leq (1-j^{-5})^{-1} j^{-1}\sum_{\ell=0}^{N_j-1}(|T^{j,t_1/\Dt_j+\ell}|_\tau(\R^n)-|T^{j,t_1/\Dt_j+\ell+1}|_\tau(\R^n))+(1-j^{-5})^{-1}j^{-1}\sum_{\ell=0}^{N_j-1}\Dt_j\varepsilon_j^{1/8}\\
		\leq 2j^{-1}|T^{j,t_1/\Dt_j}|_\tau(\R^n)+2j^{-1}\varepsilon_j^{1/8}(t_2-t_1)\leq 2j^{-1}(|T_0|_\tau(\R^n)+\varepsilon_j^{1/8}t_1)+2j^{-1}\varepsilon_j^{1/8}(t_2-t_1).\label[ineq]{in1:FirstSum2}
	\end{multline}
	Next, we estimate the terms $|\widehat{T}_{t_1+(\ell-1)\Dt_j}^j-(f_2^{\ell-1})_\#\widehat{T}_{t_1+(\ell-1)\Dt_j}^j|_K^\flat$ for $\ell=0,1,\ldots,N_j$.
	As before, we (formally) write $|\widehat{T}_{t_1+(\ell-1)\Dt_j}^j-(f_2^{\ell-1})_\#\widehat{T}_{t_1+(\ell-1)\Dt_j}^j|_K^\flat$ as $|\widehat{T}-(f_2)_\#\widehat{T}|_K^\flat$.
	We apply \cite[Rmk.\ 4.1.13]{F} with $g=f_2,f=\textup{id},$ and
	\begin{equation*}
		A=\{ (1-t)x+tf_2(x)\:|\:x\in\spt(\widehat{T}),t\in[0,1] \}\subset K.
	\end{equation*}
	Using \cite[p.\ 359]{F}, we obtain
	\begin{equation*}
		\spt((f_2)_\#\widehat{T}-\widehat{T})\subset\spt((f_2)_\#\widehat{T})\cup\spt(\widehat{T})\subset f_2(\spt(\widehat{T}))\cup\spt(\widehat{T})\subset A.
	\end{equation*}
	Note that (using \cite[p.\ 367]{F}) $|(f_2)_\#\widehat{T}-\widehat{T}|_{K}^\flat\leq |(f_2)_\#\widehat{T}-\widehat{T}|_A^\flat$. Application of \cite[Rmk.\ 4.1.13]{F} gives
	\begin{multline*}
		|(f_2)_\#\widehat{T}-\widehat{T}|_{K}^\flat\leq |(f_2)_\#\widehat{T}-\widehat{T}|_A^\flat\leq \Dt_j\int |h_{\tau,\varepsilon_j}(\widehat{T})|\max\{ 1,|\mathbb 1+\Dt_j\Jm h_{\tau,\varepsilon_j}(\widehat{T})|_2 \}\e|\widehat{T} |\\
		\leq \Dt_j(1+\Dt_j4\varepsilon_j^{-4})\int |h_{\tau,\varepsilon_j}(\widehat{T})|\e|\widehat{T} |,
	\end{multline*}
	where we also used \cref{hEstimates} (which is applicable by the beginning of the proof of \cref{Prop6.1}). 
	This gives (using $\Dt_j\varepsilon_j^{-4}\leq\varepsilon_j^{c_1-4}\leq 1$)
	\begin{multline*}
		|T_{t_1+0\cdot\Dt_j}^j-\widehat{T}_{t_1-1\cdot\Dt_j}^j|_K^\flat+\sum_{\ell=0}^{N_j}|\widehat{T}_{t_1+(\ell-1)\Dt_j}^j-T_{t_1+\ell\Dt_j}^j|_K^\flat\\
		=|(f_2^{-1})_\#\widehat{T}_{t_1-1\cdot\Dt_j}^j-\widehat{T}_{t_1-1\cdot\Dt_j}^j|_K^\flat+\sum_{\ell=0}^{N_j}|\widehat{T}_{t_1+(\ell-1)\Dt_j}^j-(f_2^{\ell-1})_\#\widehat{T}_{t_1+(\ell-1)\Dt_j}^j|_K^\flat\\
		\leq 10\sum_{\ell=0}^{N_j}\Dt_j\int |h_{\tau,\varepsilon_j}(\widehat{T}_{t_1+(\ell-1)\Dt_j}^j)|\e|\widehat{T}_{t_1+(\ell-1)\Dt_j}^j|=10\int_{[t_1-2\Dt_j,t_2-\Dt_j]}\int |h_{\tau,\varepsilon_j}(\widehat{T}_t^j)|\e|\widehat{T}_t^j |\e\Leb(t).
	\end{multline*}
	By \cref{GrowthCondition}, $f_1^\ell\in\adm_j(T_{t_1+\ell\Dt_j}^j)$, and \cref{in1:6.3}, we have
	\begin{multline*}
		|\widehat{T}_t^j|(\R^n)=|(f_{1.5})_\# \widetilde{T}_t^j|(\R^n)\leq | \widetilde{T}_t^j|(\R^n)=| (f_1^\ell)_\# T_{t_1+\ell\Dt_j}^j|(\R^n)\leq C^{-1} |(f_1^\ell)_\# T_{t_1+\ell\Dt_j}^j|_\tau(\R^n)\leq\\
		C^{-1}|T_{t_1+\ell\Dt_j}^j|_\tau(\R^n)\leq C^{-1}(|T_0|_\tau(\R^n)+\varepsilon_j^{1/8}(t_1+\ell\Dt_j))
	\end{multline*}
	if $t\in( t_1+(\ell-1)\Dt_j,t_1+\ell\Dt_j]$. 
	In particular, we obtain (using the first inequality in \labelcref{in1:6.7})
	\begin{equation*}
		|\widehat{T}_t^j|(\R^n)\leq  C^{-1}(|T_0|_\tau(\R^n)+\varepsilon_j^{1/8}j)<  C^{-1}M 
	\end{equation*}
	for all $t\in [t_1-2\Dt_j,t_2-\Dt_j]$ (see integral above), where $M=|T_0|_\tau(\R^n)+1$.
	Using the above estimate and the Cauchy\textendash Schwarz inequality, we have 
	\begin{multline*}
		10\int_{[t_1-2\Dt_j,t_2-\Dt_j]}\int |h_{\tau,\varepsilon_j}(\widehat{T}_t^j)|\e|\widehat{T}_t^j |\e\Leb(t)\\
		\leq 10C^{-1/2}M^{1/2} \int_{[t_1-2\Dt_j,t_2-\Dt_j]}  \left(\int |h_{\tau,\varepsilon_j}(\widehat{T}_t^j)|^2\e|\widehat{T}_t^j |\right)^{1/2}\e\Leb(t).
	\end{multline*}
	Note that also $|\widehat T_t^j |(\phi)\leq C^{-1}|\widehat T_t^j |_\tau(\phi)$ for all $\phi\in C_c(\R^n)$ by \cref{GrowthCondition} (used in first inequality below).
	Further, we can use \cref{in1:5.24} (in second inequality below) since 
	\begin{equation*}
		|\widehat{T}_t^j|_\tau(\R^n)\leq|(f_1^\ell)_\# T_{t_1+\ell\Dt_j}^j|_\tau(\R^n)\leq |T_{t_1+\ell\Dt_j}^j|_\tau(\R^n)\leq M
	\end{equation*}
	if $t\in( t_1+(\ell-1)\Dt_j,t_1+\ell\Dt_j]$ (see \cref{in1:6.11,in1:6.111}). This yields 
	\begin{multline*}
		10C^{-1/2}M^{1/2} \int_{[t_1-2\Dt_j,t_2-\Dt_j]}  \left(\int |h_{\tau,\varepsilon_j}(\widehat{T}_t^j)|^2\e|\widehat{T}_t^j |\right)^{1/2}\e\Leb(t)\\
		\leq 10C^{-1}M^{1/2} \int_{[t_1-2\Dt_j,t_2-\Dt_j]}  \left(\int |h_{\tau,\varepsilon_j}(\widehat{T}_t^j)|^2\e|\widehat{T}_t^j |_\tau\right)^{1/2}\e\Leb(t)\\
		\leq 10C^{-1}M^{1/2}\int_{[t_1-2\Dt_j,t_2-\Dt_j]} \left(    (1+\varepsilon_j^{1/4})\int\frac{|\Psi_{\varepsilon_j}\ast\delta_\tau \widehat{T}_t^j|^2}{\Psi_{\varepsilon_j}\ast|\widehat{T}_t^j|_\tau+\varepsilon_j}\e\Lebn+\varepsilon_j^{1/4}    \right)^{1/2}\e\Leb(t)\\
		\leq 10C^{-1}M^{1/2}
		\int_{[t_1-2\Dt_j,t_2-\Dt_j]}  \left(    (1+\varepsilon_j^{1/4})\left( \int\frac{|\Psi_{\varepsilon_j}\ast\delta_\tau (f_2^{\ell(t)})_\#\widehat{T}_t^j|^2}{\Psi_{\varepsilon_j}\ast|(f_2^{\ell(t)})_\#\widehat{T}_t^j|_\tau+\varepsilon_j}\e\Lebn +\varepsilon_j^{c_1-3n-15} \right)+\varepsilon_j^{1/4}    \right)^{1/2}\e\Leb(t)\\
		\leq 10C^{-1}M^{1/2}
		\int_{[t_1-2\Dt_j,t_2-\Dt_j]}  (1+\varepsilon_j^{1/8})\left(\left( \int\frac{|\Psi_{\varepsilon_j}\ast\delta_\tau (f_2^{\ell})_\#\widehat{T}_t^j|^2}{\Psi_{\varepsilon_j}\ast|(f_2^{\ell})_\#\widehat{T}_t^j|_\tau+\varepsilon_j}\e\Lebn\right)^{1/2} +\varepsilon_j^{(c_1-3n-15)/2} \right)+\varepsilon_j^{1/8}\e\Leb,
	\end{multline*}
	where we also applied \cref{in1:Q4} (in the third inequality) and the subadditivity of the square root (last inequality). 
	Here $\ell=\ell(t)$ is such that $t\in( t_1+(\ell(t)-1)\Dt_j,t_1+\ell(t)\Dt_j]$.
	Application of the Cauchy\textendash Schwarz inequality gives
	\begin{multline*}
		\int_{[t_1-2\Dt_j,t_2-\Dt_j]}   \left( \int\frac{|\Psi_{\varepsilon_j}\ast\delta_\tau (f_2^{\ell(t)})_\#\widehat{T}_t^j|^2}{\Psi_{\varepsilon_j}\ast|(f_2^{\ell(t)})_\#\widehat{T}_t^j|_\tau+\varepsilon_j}\e\Lebn\right)^{1/2} \e\Leb(t)\\
		\leq (t_2-t_1+\Dt_j)^{1/2}\left(\int_{[t_1-2\Dt_j,t_2-\Dt_j]}  \int\frac{|\Psi_{\varepsilon_j}\ast\delta_\tau (f_2^{\ell(t)})_\#\widehat{T}_t^j|^2}{\Psi_{\varepsilon_j}\ast|(f_2^{\ell(t)})_\#\widehat{T}_t^j|_\tau+\varepsilon_j}\e\Lebn \e\Leb(t)\right)^{1/2}.
	\end{multline*}
	The latter integral is bounded by some $C_*\in\R_+$ (\cref{in1:6.19}). In summary, using \cref{in1:smartIneq2,in1:FirstSum2} plus the above, we have
	\begin{multline}
		|T_{t_1}^j-T_{t_2}^j|_K^\flat\leq 
		\sum_{\ell=0}^{N_j-1}|T_{t_1+\ell\Dt_j}^j-(f_1^\ell)_\# T_{t_1+\ell\Dt_j}^j|_K^\flat
		+|T_{t_1+0\cdot\Dt_j}^j-\widehat{T}_{t_1-1\cdot\Dt_j}^j|_K^\flat+\sum_{\ell=0}^{N_j}|\widehat{T}_{t_1+(\ell-1)\Dt_j}^j-T_{t_1+\ell\Dt_j}^j|_K^\flat\\
		\leq 2j^{-1}(|T_0|_\tau(\R^n)+\varepsilon_j^{1/8}t_1)+2j^{-1}\varepsilon_j^{1/8}(t_2-t_1)+|T_{t_1+0\cdot\Dt_j}^j-\widehat{T}_{t_1-1\cdot\Dt_j}^j|_K^\flat+\sum_{\ell=0}^{N_j}|\widehat{T}_{t_1+(\ell-1)\Dt_j}^j-T_{t_1+\ell\Dt_j}^j|_K^\flat\\
		\leq 2j^{-1}(|T_0|_\tau(\R^n)+\varepsilon_j^{1/8}t_1)+2j^{-1}\varepsilon_j^{1/8}(t_2-t_1)+10C^{-1}M^{1/2}\cdot\\
		\left( (1+\varepsilon_j^{1/8})(t_2-t_1+\Dt_j)^{1/2}C_*+(1+\varepsilon_j^{1/8})\varepsilon_j^{(c_1-3n-15)/2}(t_2-t_1+\Dt_j)+\varepsilon_j^{1/8}(t_2-t_1+\Dt_j)  \right)\\
		\leq 2j^{-1}M+2\varepsilon_j^{1/8}+10C^{-1}M^{1/2}
		\left( 2(t_2-t_1)^{1/2}C_*+2\varepsilon_j^{5/2}(t_2-t_1)+\varepsilon_j^{1/8}(t_2-t_1)  \right).\label[ineq]{in1:lastIn}
	\end{multline}
	This gives the following continuity property: If $\varepsilon>0$ and $t_1\in D$, then there exists $\delta>0$ such that for each $t_2\in D$ with $|t_1-t_2|<\delta$ there is some $j_0\in\mathbb N$ such that $|T_{t_1}^j-T_{t_2}^j|_K^\flat<\varepsilon$ for all $j\geq j_0$.
	
	\underline{STEP 3 (Continuation to $t\in\R_+$):} Let $s\in\R_+\setminus D$. Pick arbitrary sequences $(s_i),(\bar s_j)\subset D$ with $s_i\to s$ and $\bar s_j\to s$. Recall that (see STEP 1 and beginning of STEP 2) $\mathbb M(T)+\mathbb M(\partial T)\leq C^{-1}(|T_0|_\tau(\R^n)+1)$ and $\spt(T)\subset K$ for $T=T_{s_i}$ or $T=T_{\bar s_j}$. Using Federer's compactness theorem \cite[Thm.\ 4.2.17]{F}, we obtain
	\begin{equation*}
		|T_{s_{i_\ell}}-T|_K^\flat+|T_{\bar s_{j_\ell}}-\overline T|_K^\flat\to 0
	\end{equation*}
	for $\ell\to\infty$ for certain $T,\overline{T}\in\mathcal N_1(\R^n)$ with $\partial T=\partial \overline{T}=0$ and subsequences $(s_{i_\ell})\subset (s_{i})$ and $(\tilde s_{j_\ell})\subset (\tilde s_{j})$.
	We have
	\begin{equation*}
		|T-\overline{T}|_K^\flat\leq |T-T_{s_{i_\ell}}|_K^\flat+|T_{s_{i_\ell}}-T_{s_{i_\ell}}^j|_K^\flat+|T_{s_{i_\ell}}^j-T_{\bar s_{j_\ell}}^j|_K^\flat+|T_{\bar s_{j_\ell}}^j-T_{\bar s_{j_\ell}}|_K^\flat+|T_{\bar s_{j_\ell}}-\overline{T}|_K^\flat.
	\end{equation*}
	Now the right-hand side can be made arbitrarily small by choosing $\ell, j$ sufficiently large (invoking also STEP 2). Recall that the weak-$*$ convergence of the sequences $(T_t^j)$ for $j\to\infty$ ($t\in D$) implies the convergence with respect to $|\cdot|_K^\flat$ because (as above) $\mathbb M(T_t^j)+\mathbb M(\partial T_t^j)\leq C^{-1}(|T_0|_\tau(\R^n)+1), \spt(T_t^j)\subset K$, and $T_t^j\in\mathcal N_1(\R^n)$ (see, for example, the summary in \cite[Rmk.\ 3]{MW19}). We set $T_s=T=\overline{T}$. Finally, it is not difficult to check that $\spt(T_s)\subset\Omega$ (because this is true for the $T_{s_i}$ by STEP 1).
	
	\underline{STEP 4 (Hölder continuity of $t\mapsto T_t$):} Let $s,t\in\R_+$ and $(s_i),(t_i)\subset D$ with $s_i\to s$ and $t_i\to t$. Fix $\varepsilon>0$. Using the triangle inequality and \cref{in1:lastIn}, we get
	\begin{equation*}
		|T_s-T_t|_K^\flat\leq |T_{s_i}-T_{t_i}|_K^\flat+\varepsilon\leq C_{i,j}+C_0|s_i-t_i|^{1/2}+\varepsilon
	\end{equation*}
	for $i,j$ sufficiently large, where $C_0\in\R_+$ is a constant and $(C_{i,j})\subset\R_+^*$ with $C_{i,j}\to 0$ for $j\to \infty$. Letting $j\to \infty$ and then $i\to \infty$ yields (since $\varepsilon>0$ was arbitrary)
	\begin{equation*}
		|T_s-T_t|_K^\flat\leq C_0|s-t|^{1/2}.
	\end{equation*}
	\underline{STEP 5 ($T_s^j\ws T_s$ for all $s\in\R_+\setminus D$):} Fix $s\in\R_+\setminus D$. Note that $s\in\R_+^*$. For every $j\in\mathbb N$ with $s\in[0,j]$ we define $s_j=(\ell_j+1)\Dt_j$, where $\ell_j\in\mathbb N_0$ is chosen such that $s\in(\ell_j\Dt_j,(\ell_j+1)\Dt_j]$. By construction we get $s_j\to s$ and $T_{s_j}^j=T_s^j$. 
	We have (triangle inequality) 
	\begin{equation*}
		|T_s-T_s^j|_K^\flat\leq |T_s-T_{s_j}|_K^\flat+|T_{s_j}-T_{\tilde s}|_K^\flat+|T_{\tilde s}-T_{\tilde s}^j|_K^\flat+|T_{\tilde s}^j-T_{s_j}^j|_K^\flat+|T_{s_j}^j-T_s^j|_K^\flat
	\end{equation*}
	for all $\tilde s\in\R_+$.
	The first summand on the right-hand side goes to zero by $s_j\to s$. The second term can be made arbitrarily small for $\tilde s\in D$ sufficiently close to $s_j$ and $j$ sufficiently large (STEP 4). The third term converges to zero by $\tilde s\in D$. For the fourth term we can use \cref{in1:lastIn} again\footnote{If $\tilde s=i2^{-k}$, then $\tilde s\in2^{-j}\mathbb N_0\subset 2^{-p_j}\mathbb N_0=\Dt_j\mathbb N_0$ for $j\geq k$ due to $p_j\geq j$ (\cref{Prop6.1}).}. The last term is equal to zero by $T_{s_j}^j=T_s^j$. This shows $|T_s-T_s^j|_K^\flat\to 0$ implying the weak-$*$ convergence (since all the involved currents are normal $1$-currents with uniformly bounded mass and zero boundary and supports lying in $K$).
	\underline{$T_0\in\mathcal I_1(\R^n)\implies T_t\in\mathcal I_1(\R^n)$ for all $t\in\R_+$:} The property of being a transportation network is stable with respect to deformations of type $f_1,f_{1.5}$, and $f_2$ (recall the proof of \cref{Prop6.1}). Hence, all the currents $T_t^j=T^{j,\lceil t/\Dt_j\rceil}$ ($t\in[0,j]$) from \cref{Prop6.4} are transportation networks. By the representation of the multiplicity in \cref{FedererPushforward}, we also have $T_t^j\in \mathcal{I}_1(\R^n)$ for all $t\in[0,j]$ if $T_0\in\mathcal I_1(\R^n)$. Finally, STEPS 1 and 3 together with the closure theorem for integral currents \cite[Thm.\ 8.12]{FF60} imply the claim.
\end{proof}
\section{Rectifiability of $(V_t)$ with $\| V_t\|=\mu_t$}
\label{RectifiablitySection}
In this section, we will give the proof of \cref{rectifiability}. The idea is to combine estimates similar to \cite[Lem.\ 5.2]{KT20} and \cite[Lem.\ 5.4]{KT20} with Allard's rectifiability theorem \cite[Sec.\ 5.5, Thm.\ 1]{A72}. However, since our transportation networks are not necessarily concentrated on closed sets, which is used in the proof of \cite[Lem.\ 5.2]{KT20}, we will need a different construction. To this end, we will use the following statement.
\begin{lem}
	\label{GrowthSBall}
	Let $S\subset\R^n$ be countably $1$-rectifiable\footnote{There exist countably many Lipschitz functions $f_i:\R\to\R^n$ such that $\mathcal H^1(S\setminus\bigcup_if_i(\R))=0$.} and $x\in\R^n$. Define
	\begin{equation*}
		f(r)=\mathcal H^1(\overline{B}_r(x)\cap S)\in[0,\infty]
	\end{equation*}
	for $r\in\R_+$. Then we have $f'(r)=0$ or $f'(r)\geq 1$ for a.e.\ $r\in\dom(f)$. In particular, if $f(R)<\lambda R$ for some $\lambda\in(0,1]$ and $R\in\R_+$, then $\Leb([0,R]\cap \{ f'=0 \})> (1-\lambda)R$.
	Further, if $T=\theta\Ha\mres S\in\mathcal I_1(\R^n)$ and $\kappa:\R_+\to\R_+$ a transportation cost, then
	\begin{equation*}
		g(r)=\int_{\overline{B}_r(x)\cap S}\kappa(|\theta|)\e\Ha
	\end{equation*}
	satisfies $g'(r)=0$ or $g'(r)\geq \kappa(1)$ for a.e.\ $r\in\dom(g)$. In particular, if $g(R)<\lambda R$ for some $\lambda\in(0,\tau(1)]$ and $R\in\R_+$, then $\Leb([0,R]\cap \{ g'=0 \})> (1-\lambda/\kappa(1))R$.
\end{lem}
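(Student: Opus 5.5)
The plan is to use the coarea formula for the $1$-Lipschitz function $u(y)=|y-x|$ restricted to the rectifiable set $S$, together with a comparison of the derivative of a monotone function with its distributional derivative. First I treat $f$. Since $f$ is non-decreasing on $\dom(f)$ (an interval, closed or half-open), it is differentiable a.e.\ there. For $r<s$ in $\dom(f)$, the coarea formula \cite[Thm.\ 3.2.22]{F} applied to $u|_S$ gives
\begin{equation*}
\int_{S\cap \{r<|y-x|\leq s\}}\apJ(u|_S)\e\Ha=\int_{(r,s]}\mathcal H^0(S\cap\partial B_\rho(x))\e\Leb(\rho),
\end{equation*}
where $\apJ(u|_S)\leq 1$. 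This yields $f(s)-f(r)\geq\int_{(r,s]}N(\rho)\e\Leb(\rho)$ with $N(\rho)=\mathcal H^0(S\cap\partial B_\rho(x))\in\mathbb N_0\cup\{\infty\}$. By Lebesgue differentiation, $f'(r)\geq N(r)$ for a.e.\ $r$. Hence, at a.e.\ point where $N(r)\geq 1$, we get $f'(r)\geq 1$.

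It remains to show that $f'(r)=0$ for a.e.\ $r$ with $N(r)=0$; I expect this to be the main obstacle. Consider the set $Z=\{r: N(r)=0\}$ and the pushforward measure $\nu=u_\#(\Ha\mres S)$ on $\R_+$, so that $f(r)=\nu([0,r])$. By coarea, the absolutely continuous part of $\nu$ has density $\int_{u^{-1}(\rho)\cap S}\apJ^{-1}\ldots$; more precisely, I will write $\nu=\nu_{\mathrm{ac}}+\nu_s$. The absolutely continuous part is carried by $u(S\cap\{\apJ(u|_S)>0\})$, and on this set coarea gives density zero wherever $N(\rho)=0$. The part of $S$ where $\apJ(u|_S)=0$ has image of $\Leb$-measure zero (again by coarea, $\int_{\{\apJ=0\}}\apJ\e\Ha=0$ gives $\int N_{\{\apJ=0\}}\e\Leb=0$), so its contribution to $\nu$ is singular. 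Since $f'=\frac{\e\nu_{\mathrm{ac}}}{\e\Leb}$ a.e.\ (singular parts have zero derivative $\Leb$-a.e.), we obtain $f'(r)=0$ for a.e.\ $r\in Z$. The consequence for $R$ then follows: $f(R)\geq\int_{[0,R]}f'\geq\Leb([0,R]\cap\{f'\geq 1\})$, so if $f(R)<\lambda R$ then $\Leb(\{f'\geq1\}\cap[0,R])<\lambda R$, and the complement, which up to a null set is $\{f'=0\}$, has measure $>(1-\lambda)R$.

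For $g$, I repeat the argument with the weighted measure $\kappa(|\theta|)\Ha\mres S$. Integrality means $|\theta|\in\mathbb N$ $\Ha$-a.e., and since $\kappa$ is non-decreasing, $\kappa(|\theta|)\geq\kappa(1)$. Then the weighted coarea formula gives $g(s)-g(r)\geq\kappa(1)\int_{(r,s]}N(\rho)\e\Leb(\rho)$. The singular/zero-density argument above applies verbatim, since absolute continuity of $\kappa(|\theta|)\Ha\mres S$ with respect to $\Ha\mres S$ is all that is used. Thus $g'\in\{0\}\cup[\kappa(1),\infty)$ a.e., and the measure estimate follows as before, dividing by $\kappa(1)$. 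Here I read the hypothesis $\lambda\leq\tau(1)$ as $\lambda\leq\kappa(1)$. I will also check the degenerate case $\kappa(1)=0$: then $\kappa\equiv0$ and the claim is trivial.
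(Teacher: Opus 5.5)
Your proof is correct, but it gets the vanishing half of the dichotomy by a different mechanism than the paper.

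The lower bound $f'(r)\geq\mathcal H^0(S\cap\partial B_r(x))$ a.e.\ is common to both arguments: the paper uses an Eilenberg-type inequality, you use the coarea formula with $\apJ(u|_S)\leq 1$.

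The difference is in showing $f'(r)=0$ when the sphere misses $S$. The paper invokes \cite[Lem.\ 3.2.18]{F} to write $S$, up to a null set, as a disjoint union of compact pieces $S_j$, and differentiates $f=\sum_j f_j$ termwise. It then notes that a sphere missing the compact set $S_j$ misses a neighbourhood of it, so $f_j$ is locally constant near $r$. Compactness is exactly what fails for $S$ itself, which is why the decomposition is needed.

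You work with the non-closed $S$ directly, in two pieces:
the image of $\{\apJ(u|_S)=0\}$ is Lebesgue-null by the Sard-type part of the coarea formula, so its pushforward is singular;
coarea applied to $\apJ(u|_S)^{-1}1_{\{\apJ(u|_S)>0\}}$ shows that the pushforward of the rest is absolutely continuous. Its density is $\rho\mapsto\sum\apJ(u|_S)(y)^{-1}$, summed over $y\in S\cap\partial B_\rho(x)$ with $\apJ(u|_S)(y)>0$, and $f'$ equals this density a.e.

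The paper's route needs only a one-sided inequality plus an elementary topological observation. Yours needs the full coarea equality and the identification of the derivative of a monotone function with the density of the absolutely continuous part of its Stieltjes measure. In return, it avoids the structure lemma and the termwise differentiation. It also yields an explicit a.e.\ formula for $f'$ in which every term is at least $1$, so the dichotomy, and even your separate first step, follow immediately. The same argument transfers verbatim to the weight $\kappa(|\theta|)\geq\kappa(1)$.

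Your reading of $\lambda\leq\tau(1)$ as $\lambda\leq\kappa(1)$ is the intended one. One technical point: apply the coarea formula to $S\cap\overline B_s(x)$ with $s\in\dom(f)$, so that the finite-measure hypothesis of \cite[Thm.\ 3.2.22]{F} holds.
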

\begin{proof}
	Let $\lambda\in(0,1]$ and $R\in\R_+$ such that $f(R)<\lambda R$. 
	We show $\Leb([0,R]\cap \{ f'=0 \})> (1-\lambda)R$ assuming that we already know that $f'(r)=0$ or $f'(r)\geq 1$ for a.e.\ $r\in\dom(f)$. 
	Write $B=[0,R]\cap \{ f'=0 \}$. Note that $f$ is non-decreasing. Thus 
	\begin{equation*}
		\lambda R>f(R)\geq\int_{[0,R]}f'(r)\e\Leb(r)=\int_{B}f'(r)\e\Leb(r)+\int_{[0,R]\setminus B}f'(r)\e\Leb(r)\geq \int_{[0,R]\setminus B}1\e\Leb(r)=R-\Leb(B) ,
	\end{equation*}
	This gives $\Leb([0,R]\cap \{ f'=0 \})> (1-\lambda)R$.\\ 
	To show $f'(r)=0$ or $f'(r)\geq 1$ for a.e.\ $r\in\dom(f)$, we apply \cite[Lem.\ 3.2.18]{F} (note that $S$ is countably $(\mathcal H^1,1)$ rectifiable in the sense of \cite[Def.\ 3.2.14]{F}). Up to some $\mathcal H^1$-zero set (which we can neglect by definition of $f$), we obtain $S=\bigcup_j\psi_j(K_j)$, where $K_j\subset\R$ compact, $\psi_j:\R\to\R^n$ Lipschitz, and $\psi_j(K_j)$ mutually disjoint ($j\in\mathbb N$). Now define $S_j=\psi_j(K_j)$ and $f_j(r)=\mathcal H^1(\overline{B}_r(x)\cap S_j)$. This gives (because $f=\sum_{j }f_j$ with non-decreasing $f_j$, cf. \cite[Ex.\ 222Y (a)]{Frem03})
	\begin{equation*}
		f'(r)=\sum_{j=1}^{\infty}f_j'(r)
	\end{equation*}
	for a.e.\ $r\in\dom(f)$. Hence, for each $j\in\mathbb N$, we only need to show that $f_j'(r)=0$ or $f_j'(r)\geq 1$ for a.e.\ $r\in\dom(f)$. By \cite[Thm.\ 1.8, Ch.\ 2, \S 1]{LS} (with $A=(\overline{B}_{r+h}(x)\setminus\overline{B}_r(x))\cap S_j$ and $|\cdot|:A\to(r,r+h]$) we get (for $h\in\R_+^*$)
	\begin{equation*}
		\frac{f_j(r+h)-f_j(r)}{h}=\frac{1}{h}\mathcal H^1((\overline{B}_{r+h}(x)\setminus\overline{B}_r(x))\cap S_j)\geq\frac{1}{h}\int_{(r,r+h]}\mathcal H^0(\partial B_s(x)\cap S_j)\e\Ha(s).
	\end{equation*}
	By Lebesgue's differentiation theorem the right-hand side converges to $\mathcal H^0(\partial B_r(x)\cap S_j)$ for a.e.\ $r\in\dom(f)$. Hence, we only need to show that $f_j'(r)>0$ implies $\mathcal H^0(\partial B_r(x)\cap S_j)>0$ for a.e.\ $r\in\dom(f_j)$ (namely, those $r$ for which the conclusion of Lebesgue's differentiation theorem holds and $f_j'(r)$ exists). If we had $f_j'(r)>0$ and $\mathcal H^0(\partial B_r(x)\cap S_j)=0$ for some $r\in \dom(f)$, then $\textup{dist}(\partial B_r(x),S_j)>0$ (using the closedness of $S_j$). This would yield a contradiction because then $f_j'(r)=0$.\\
	To show the last part, we define $g_j(r)=\int_{\overline{B}_r(x)\cap S_j}\kappa(|\theta|)\e\Ha$. Now we can use exactly the same arguments as above together with the estimate (recall that $|\theta|\in\mathbb N$ $\Ha\mres S$-a.e.)
	\begin{equation*}
		\frac{g_j(r+h)-g_j(r)}{h}=\frac{1}{h}\int_{(\overline{B}_{r+h}(x)\setminus\overline{B}_r(x))\cap S_j}\kappa(|\theta|)\e\Ha \geq\frac{\kappa(1)}{h}\int_{(r,r+h]}\mathcal H^0(\partial B_s(x)\cap S_j)\e\Ha(s).\qedhere
	\end{equation*}
\end{proof}
\begin{examp}[Case $k>1$]
	\label{Casek2}
	In general, the statement is not true for $k>1$, countably $k$-rectifiable $S\subset\R^n$, and $\Ha$ replaced by $\mathcal H^k$, i.e.\ $f(r)=\mathcal H^k(\overline{B}_r(x)\cap S)$ for $r\in\R_+$.
	For example, take $k=2,\ x=0$, and $S=\R\times[-\ell,\ell]\times\{ 0 \}$ for some $\ell\in(0,1/5)$, see \cref{fig5}.
	Then $4\ell<f'(r)<5\ell \in(0,1)$ for $r\in(\ell,\infty)$ sufficiently large because the mean curvature of $\partial B_r(0)$ is decreasing.
	The assumption $k=1$ was used in the step before Lebesgue's differentiation theorem was applied (for $\mathcal H^k$ in place of $\Ha=\Leb$ on $(r,r+h]$, the right-hand side of the inequality would be equal to zero).
\end{examp}
\begin{figure}
	\centering
	\begin{tikzpicture}[scale=2, transform shape=false]
		
		\filldraw[draw=lightgray, fill=lightgray] (-2,-0.5) -- (-2,0.5) -- (2,0.5) -- (2,-0.5);
		\draw[gray] (-2,0) -- (2,0);
		\draw[line width=1] (0,0) -- (1.5,0);
		\draw[line width=1] (0,0) -- (0,0.5);
		\draw (0,0) circle (1.5);
		
		\draw[gray,dotted,line width=1] (-2.5,0) -- (-2,0);
		\draw[gray,dotted,line width=1] (2,0) -- (2.5,0);
		\draw[gray,dotted,line width=1] (-2.5,0.5) -- (-2,0.5);
		\draw[gray,dotted,line width=1] (2,0.5) -- (2.5,0.5);
		\draw[gray,dotted,line width=1] (-2.5,-0.5) -- (-2,-0.5);
		\draw[gray,dotted,line width=1] (2,-0.5) -- (2.5,-0.5);
		
		\node at (0,-0.25) {$\textcolor{gray}{S}$};
		\node at (0.75,-0.2) {$r$};
		\node at (0.2,0.25) {$\ell$};
		\node at (-2,1) {$\R^3$};
	\end{tikzpicture}
	\caption{Bird's-eye view on $S$ from \cref{Casek2}.}
	\label{fig5}
\end{figure}
We recall the following straightforward implication of the Besicovitch covering theorem.
\begin{coro}
	\label{Besicovitch}
	Let $A\in\mathcal B(\R^n)$ and $\mu\in\M_+(\R^n)$. Assume that $\mathcal F\subset \mathcal B(\R^n)$ is a Besicovitch cover of $A$. Then there exists a sequence $(B_i)\subset\mathcal F$ of mutually disjoint balls $B_i$ such that 
	\begin{equation*}
		\mu(A)\leq\mathbb B_n\sum_{i=1}^{\infty}\mu(B_i),
	\end{equation*}
	where $\mathbb B_n$ denotes the Besicovitch constant.
\end{coro}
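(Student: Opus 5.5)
The corollary follows from the Besicovitch covering theorem in the form of Federer 2.8.14 / Mattila Thm.\ 2.8. That theorem applies because $\mathcal F$ is a Besicovitch cover of $A$: every $x\in A$ is the centre of arbitrarily small closed balls in $\mathcal F$. It yields $\mathbb B_n$ countable subfamilies $\mathcal G_1,\ldots,\mathcal G_{\mathbb B_n}\subset\mathcal F$ with three properties. Each $\mathcal G_m$ consists of mutually disjoint balls. Their union covers $\mu$-almost all of $A$. This holds for any locally finite (Radon) measure $\mu$ on $\R^n$, which is exactly the Vitali-type form of Besicovitch's theorem.

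The plan is then a pigeonhole argument. First, assume $\mu(A)<\infty$, or restrict to $A\cap B_R(0)$ and let $R\to\infty$ at the end; the reduction is harmless since $\mu$ is Radon. By countable subadditivity,
\begin{equation*}
\mu(A)\leq\sum_{m=1}^{\mathbb B_n}\mu\Big(\bigcup_{B\in\mathcal G_m}B\Big)=\sum_{m=1}^{\mathbb B_n}\sum_{B\in\mathcal G_m}\mu(B),
\end{equation*}
where the equality uses the disjointness within each $\mathcal G_m$. Hence at least one index $m_0$ satisfies $\mu(A)\leq\mathbb B_n\sum_{B\in\mathcal G_{m_0}}\mu(B)$: otherwise every summand would be strictly smaller than $\mu(A)/\mathbb B_n$, and summing the $\mathbb B_n$ of them would contradict the display. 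Enumerating $\mathcal G_{m_0}=\{B_i\}$, padded with empty sets if it is finite (or repeating nothing), gives the required disjoint sequence.

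The only point needing care is the case $\mu(A)=\infty$ together with the localization. If $\mu(A)=\infty$, the display shows that some $\sum_{B\in\mathcal G_m}\mu(B)=\infty$, so the inequality holds trivially. Hence the localization is only a convenience and no limit argument is actually needed. I expect no real obstacle; the statement is essentially a restatement of the covering theorem.
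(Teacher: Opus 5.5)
Your pigeonhole step is correct, and it is exactly the ``straightforward implication'' the paper has in mind; the paper itself gives no proof. You use subadditivity across the $\mathbb B_n$ families and additivity within each countable disjoint family, so one family must carry at least a $1/\mathbb B_n$ share of $\mu(A)$. Your handling of $\mu(A)=\infty$ is also fine.

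The problem is the covering theorem you feed into this step. As the paper uses the term, a Besicovitch cover is a family with one closed ball centred at each point of $A$ and uniformly bounded radii. In the proof of \cref{Lemma1}, the family $\{\overline{B}_{\tilde r_x}(x)\,|\,x\in R_t^k\}$ has a single radius per centre. It is called a Besicovitch cover precisely because $\sup_x\tilde r_x\leq r_k<\infty$. Such a family is not fine, so your justification (every $x\in A$ is the centre of arbitrarily small balls of $\mathcal F$) is unavailable. Consequently the Vitali-type theorem (Mattila Thm.~2.8) cannot be applied.

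What you need is the basic form of Besicovitch's theorem. It says that a family of closed balls with uniformly bounded radii, containing a ball centred at every point of $A$, has $\mathbb B_n$ countable subfamilies of mutually disjoint balls whose union contains all of $A$. No measure is involved. With that input, your pigeonhole argument goes through verbatim.

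You have also merged two different statements. The Vitali-type theorem yields a single disjoint family $(B_i)$ with $\mu(A\setminus\bigcup_i B_i)=0$. Under your fine-cover reading, the inequality would therefore hold with constant $1$ and no pigeonhole would be needed. The presence of $\mathbb B_n$ in the corollary signals that the non-fine version is meant.

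A minor point: do not pad a finite family with empty sets, which are neither balls nor members of $\mathcal F$. Simply allow the sequence to be finite.
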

In the remainder of this section, we fix an arbitrary $\lambda\in (0,\tau(1)]$ (the strongest statements are always for $\lambda=\tau(1)$).
The following statement is a variant of \cite[Lem.\ 5.2]{KT20}.
\begin{prop}
	\label{Lemma1}
	Let the setup be as in \cref{Prop6.1,Prop6.4}. Fix $t\in\R_+$ and take $k$ sufficiently large such that $t\in[0,j_k]$. Define $r_k=j_k^{-5/2}$ and (write $T_t^{j_k}=\theta_t^{j_k}\Ha\mres S_t^{j_k}$)
	\begin{equation*}
		X_t^{k}=\left\{ x\in S_t^{k}\:\middle|\: |T_t^{j_k}|_\tau(\overline{B}_r(x))\geq \lambda r\textup{ for all }r\in(0,r_k) \right\}.
	\end{equation*} 
	Then $X_t^{k}$ is closed. Now let \cref{IntegralityT0,GrowthCondition} be satisfied. Increase $k$ such that $j_k^{-3/2}\leq C/2$. Then we have
	\begin{equation*}
		|T_t^{j_k}|_\tau(S_t^k\setminus X_t^k)\leq-\frac{\hat c}{r_k^n}\Delta_{j_k}|T_t^{j_k}|_\tau(\R^n),
	\end{equation*}
	where $\hat c>0$ only depends on $\Omega$ (implicitly on the dimension $n$). 
\end{prop}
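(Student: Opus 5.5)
The plan has three parts: show closedness of $X_t^k$ directly, build for each point of $S_t^k\setminus X_t^k$ a single admissible deformation that deletes all cost in a small ball, and then sum these local gains by a covering argument. The sum costs a factor $r_k^{-n}$, because $\Delta_{j_k}$ only records the best single deformation.

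\emph{Closedness.} Set $\mu=|T_t^{j_k}|_\tau$. For fixed $r$, the map $x\mapsto\mu(\overline B_r(x))$ is upper semicontinuous, so $\{x\mid \mu(\overline B_r(x))\ge\lambda r\}$ is closed. Hence $X_t^k$ is the intersection of $S_t^k$ with an intersection of closed sets. Any limit point of $X_t^k$ satisfies $\mu(\overline B_r(x))\ge\lambda r$ for all $r\in(0,r_k)$, so it has positive lower density for $\mu$. I will use the freedom to choose the $\Ha$-null representative of $S_t^k$ (for instance, the points of positive lower $\mu$-density). With that choice, such a limit point lies in $S_t^k$, and $X_t^k$ is closed.

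\emph{Local deformation.} Fix $x\in S_t^k\setminus X_t^k$ and take $r\in(0,r_k)$ with $g(r):=\mu(\overline B_r(x))<\lambda r$. By \cref{IntegralityT0} and the integrality part of \cref{LimitCurrent}, $T_t^{j_k}$ is integral. I then apply \cref{GrowthSBall} with $\kappa=\tau$; since $\lambda\le\tau(1)$, the set of $s\in(0,r)$ with $g'(s)=0$ has positive measure. From the proof of \cref{GrowthSBall}, $g'(s)=0$ forces $\mathcal H^0(\partial B_s(x)\cap S_j)=0$ for every piece $S_j$. I will use this (after a small perturbation, for a.e.\ such $s$) to find $s'<s$ with $\mu(\overline B_s\setminus B_{s'})$ negligible.

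I then define a locally Lipschitz $f$ that maps $\overline B_{s'}(x)$ to the point $x$, interpolates radially in the annulus, and equals the identity outside $\overline B_s(x)$. Since $\partial T_t^{j_k}=0$ by \cref{ZeroBoundary}, and essentially no network crosses the annulus, $f_\#T$ removes the cost $\mu(\overline B_{s'})$ up to an arbitrarily small error. The admissibility conditions of \cref{AdmDeform} check as follows.
\begin{itemize}
\item $|f-\mathrm{id}|\le 2r_k\le j_k^{-2}$.
\item The homotopy current $(h_f)_\#(\llbracket 0,1\rrbracket\times T)$ has mass at most $s\,|T|(\overline B_s)$. By \cref{GrowthCondition} this is at most $r_kC^{-1}\mu(\overline B_s)$, and $r_kC^{-1}=j_k^{-5/2}C^{-1}\le j_k^{-1}/2$ because $j_k^{-3/2}\le C/2$. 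This gives the mass inequality against the cost decrease.
\item $|f_\#T|_\tau(\phi)\le|T|_\tau(\phi)$ holds because $f$ only deletes mass and leaves $T$ unchanged outside the ball.
\end{itemize}
The support condition puts $f$ in $\adm_{j_k}(\overline B_s(x),T)$, so $\mu(\overline B_{s'}(x))\le -\Delta_{j_k}\mu(\R^n)$ up to the small error, which can then be sent to zero.

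\emph{Covering.} The balls $\overline B_{s'_x}(x)$, $x\in S_t^k\setminus X_t^k$, form a Besicovitch cover with radii below $r_k$. The supports lie in a fixed neighbourhood of $\Omega$ (\cref{Prop6.1}). I tile that neighbourhood by $N\le c(\Omega,n)r_k^{-n}$ cubes of side $r_k$. On each cube I apply \cref{Besicovitch} to the points of $S_t^k\setminus X_t^k$ in that cube. This bounds their $\mu$-measure by $\mathbb B_n$ times the sum over a disjoint family of balls, all contained in a bounded multiple of the cube. Bounding that sum by $\mu$ of the dilated cube, and then by one good ball, gives at most $-\Delta_{j_k}\mu(\R^n)$ per cube. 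Summing over the $N$ cubes gives the claim with $\hat c=c(\Omega,n)\mathbb B_n$.

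\emph{Main obstacle.} I expect the hardest step to be the passage from the disjoint family in a cube to a single good ball. That family may contain many balls, while $\Delta_{j_k}$ sees only one deformation. My first attempt is to construct one deformation that collapses all disjoint balls of the family simultaneously, shrinking radii slightly so that the supports stay separated. The admissibility inequalities are additive over disjoint balls, so the combined map should still lie in $\adm_{j_k}$. This would even make the per-cube reduction unnecessary.

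A second difficult point is the clean-annulus selection from $g'(s)=0$. Since $S$ is not closed, a sphere $\partial B_s(x)$ missing $S$ does not by itself give an annulus free of mass. I would handle this by working with the decomposition $S=\bigcup_j\psi_j(K_j)$ into compact pieces used in the proof of \cref{GrowthSBall}, truncating to finitely many pieces carrying all but $\eta$ of the mass, and letting $\eta\to0$.
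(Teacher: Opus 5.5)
Once you adopt your ``first attempt'', your argument is the paper's. There, too, a single $f_1^h\in\adm_{j_k}(T_t^{j_k})$ collapses finitely many disjoint Besicovitch balls at once. The disjoint family is truncated at some $\ell_0$, losing a factor $2$, and one reads off $\Delta_{j_k}|T_t^{j_k}|_\tau(\R^n)\le(\delta-1)\sum_{\ell\le\ell_0}|T_t^{j_k}|_\tau(\overline B_{\tilde r_{x_\ell}}(x_\ell))$.

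The per-cube step as you first wrote it (bound the disjoint family by $\mu$ of a dilated cube ``and then by one good ball'') does fail, since one collapse removes the cost of one ball only. You are right that the localization then becomes superfluous. The paper covers $\{\mathrm{dist}(\cdot,\Omega)\le 2\}$ by $c(\Omega)r_k^{-n}$ balls of radius $r_k$ and collapses inside each separately; this is the only source of the factor $r_k^{-n}$. Every condition in \cref{AdmDeform} is additive over disjoint balls, so one global Besicovitch cover of $S_t^k\setminus X_t^k$ already gives the bound with $\hat c=2\mathbb B_n$. That implies the stated bound because $r_k<1$.

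Your closedness argument is correct. It also covers a point the paper's contradiction argument passes over, namely that a limit point of $X_t^k$ lies in $S_t^k$. Replacing $S_t^k$ by its points of positive lower $\mu$-density is an $\Ha$-null change and settles this.

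The genuine gap is in the annulus. Radial interpolation across an annulus of width $h$ at radius $s$ has Lipschitz constant about $s/h$, and this factor enters every estimate you need:
\begin{itemize}
\item The stretched annulus has cost up to $(s/h)\,\mu(\text{annulus})$ and is spread over the whole ball. So ``$f$ only deletes mass'' is not a valid reason for $|f_\#T|_\tau(\phi)\le|T|_\tau(\phi)$.
\item Federer's homotopy bound $\Mb((h_f)_\#(\llbracket 0,1\rrbracket\times T))\le|T|(|f-\mathrm{id}|\sigma)$ has $\sigma\approx s/h$ on the annulus. Your bound $s\,|T|(\overline B_s(x))$ therefore omits a term of order $(s^2/h)\,|T|(\text{annulus})$.
\end{itemize}

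All of this is controlled exactly when $\mu(\text{annulus})=o(h)$, that is, by using $g'(s)=0$ quantitatively. The paper takes $A_\ell=B_{\tilde r_{x_\ell}+h}(x_\ell)\setminus\overline B_{\tilde r_{x_\ell}}(x_\ell)$ and uses $|T|_\tau(A_\ell)/h\to0$; an inner annulus works equally well.

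Your truncation plan does not give this. It bounds the annulus mass by $\eta$, but the width $h(\eta)$ of an annulus avoiding the retained compact pieces is uncontrolled, so $\eta/h$ need not tend to $0$. Drop it. Instead, run Besicovitch on balls $\overline B_{s_x}(x)$ with $g_x'(s_x)=0$, truncate to finitely many, and only then choose $h$. Take it small enough that the modified regions stay disjoint and $|T|_\tau(\text{annulus})/h$ is tiny.

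For the $\phi$-condition use \cref{propertiesF}. Since $\max\phi\le e^{2j_k(\tilde r_{x_\ell}+h)}\min\phi$ on each $\overline B_\ell\cup A_\ell$, it suffices that $|f_\#T|_\tau(\overline B_\ell\cup A_\ell)\le e^{-2j_k(\tilde r_{x_\ell}+h)}|T|_\tau(\overline B_\ell\cup A_\ell)$. This holds for small $h$ because the left side tends to $0$ and $|T|_\tau(\overline B_\ell)>0$.

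Finally, \cref{ZeroBoundary} is not among the hypotheses and is not needed: the constant map annihilates $T\mres\overline B_{s'}(x)$ regardless of $\partial T$.
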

\begin{proof}
	Assume that $X_t^{k}$ is not closed. 
	Then there exists $x\in\R^n\setminus X_t^k$ with $|T_t^{j_k}|_\tau(\overline{B}_r(x))< \lambda r$ for some $r\in(0,r_k)$ and a sequence $(x_i)\subset X_t^{k}$ with $x_i\to x$.
	Define $r_i =\textup{dist}(x_i,\partial B_r(x))<r$ for $i$ sufficiently large. 
	Since $x_i\in X_t^k$, we obtain $|T_t^{j_k}|_\tau(\overline{B}_{r_i}(x_i))\geq \lambda r_i$.
	Note that $r_i\to r$ by $x_i\to x$.
	Hence, we get (using also $\overline{B}_{r_i}(x_i)\subset \overline{B}_{r}(x)$)
	\begin{equation*}
		\lambda r>|T_t^{j_k}|_\tau(\overline{B}_r(x))\geq\liminf_i|T_t^{j_k}|_\tau(\overline{B}_{r_i}(x_i))\geq \liminf_i\lambda r_i=\lambda r,
	\end{equation*}
	which yields a contradiction. Thus, set $X_t^{k}$ is closed.\\
	Next, we prove the second claim. To this end, we cover $\{ \textup{dist}(\cdot,\Omega)\leq 2 \}$ with closed balls whose radii are equal to $r_k$ and whose number does not exceed $c(\Omega)r_k^{-n}$.
	Let $\overline{B}_{r_k}(\hat x)$ be any of these balls.
	Define $R_t^k=\overline{B}_{r_k}(\hat x)\cap(S_t^k\setminus X_t^k)$.
	
	\underline{STEP 1 (covering of $R_t^k$):} For each $x\in R_t^k$ there exists some $r_x\in(0,r_k)$ such that $|T_t^{j_k}|_\tau(\overline{B}_{r_x}(x))< \lambda r_x$.
	Let $f_x(r)=|T_t^{j_k}|_\tau(\overline{B}_r(x))$ for $x\in R_t^k$ and $r\in\R_+$.
	By \cref{GrowthSBall} we have $\Leb([0,r_x]\cap \{ f_x'=0 \})>(1-\lambda/\tau(1))r_x$.
	Further, for each $x\in R_t^k$ there exists $\tilde r_{x}\in((1-\lambda/\tau(1))r_{x},r_{x})$ such that $f_{x}'(\tilde r_{x})=0$ (otherwise $\Leb([0,r_x]\cap \{ f_x'=0 \})\leq (1-\lambda/\tau(1))r_x$).
	Note that $\{ \overline{B}_{\tilde r_x}(x)\:|\: x\in R_t^k \}$ is a Besicovitch cover of $R_t^k$ because $\sup_{x\in R_t^k}\tilde r_x\leq r_k<\infty$.
	By \cref{Besicovitch} there exists a sequence $(x_\ell)\subset R_t^k$ such that the balls $\overline{B}_{\tilde r_{x_\ell}}(x_\ell)$ are mutually disjoint and 
	\begin{equation*}
		|T_t^{j_k}|_\tau(R_t^k)\leq\mathbb B_n\sum_{\ell=1}^{\infty}|T_t^{j_k}|_\tau(\overline{B}_{\tilde r_{x_\ell}}(x_\ell))\leq  2\mathbb B_n\sum_{\ell=1}^{\ell_0}|T_t^{j_k}|_\tau(\overline{B}_{\tilde r_{x_\ell}}(x_\ell)),
	\end{equation*}
	where $\ell_0\in\mathbb N$ is chosen sufficiently large. We may assume that $|T_t^{j_k}|_\tau(\overline{B}_{\tilde r_{x_\ell}}(x_\ell))>0$ for all $\ell=1,\ldots,\ell_0$.
	
	\underline{STEP 2 (admissible deformation $f_1^h\in\adm_{j_k}(T_t^{j_k})$):}
\begin{figure}
	\centering
	\begin{tikzpicture}[scale=0.86, transform shape=false]
		\draw[line width=1,->] (0,0) -- (6.9,0);
		\draw[line width=1,->] (0,0) -- (0,6.9);
		
		\draw[color=lightgray,line width=2] (3.5,0) -- (5.5,0);
		
		\draw[line width=2] (0,0) -- (3.5,0) -- (5.5,5.5) -- (6.9,6.9);
		
		\draw[color=lightgray,line width=2] (0,0) -- (0,5.5);
		
		\node[lightgray,fill,circle,minimum size=2mm,inner sep=0pt] at (0,0) {};
		
		\node at (0,-0.4) {$x_\ell$};
		\node at (3.5,-0.4) {$\partial B_{\tilde r_{x_\ell}}(x_\ell)$};
		\node at (5.6,-0.4) {$\partial B_{\tilde r_{x_\ell}+h}(x_\ell)$};
		
		\node at (4.5,0.4) {$\textcolor{lightgray}{A}$};
		\node at (-0.6,2.75) {$\textcolor{lightgray}{f(A)}$};
		
		\node at (7.8,0) {$\partial B_r(x_\ell)$};
		\node at (0,7.3) {$f(\partial B_r(x_\ell))$};
		
		\draw[lightgray,line width=1]  (3.5,-0.15) -- (3.5,0.15);
		\draw[line width=1]  (5.5,-0.15) -- (5.5,0.15);
		\draw[lightgray,line width=1]  (-0.15,3.5) -- (0.15,3.5);
		\draw [line width=1] (-0.15,5.5) -- (0.15,5.5);
		
		\node at (2.5,4) {$\textcolor{gray}{\textup{Lip}(f|_{B_{\tilde r_{x_\ell}+h}(x_\ell)})=\frac{\tilde r_{x_\ell}+h}{h}}$};
	\end{tikzpicture}
	\caption{Admissible deformation $f_1^h$ near $x_\ell$.}
	\label{fig3}
\end{figure}
	Take $h\in\R_+^*$ such that the balls $\overline{B}_{\tilde r_{x_\ell}+h}(x_\ell)$ are mutually disjoint ($\ell=1,\ldots,\ell_0$).
	Define $f_1^h:\R^n\to\R^n$ by (see \cref{fig3})
	\begin{equation*}
		f_1^h(x)=\begin{cases*}
			x&if $x\in\R^n\setminus \bigcup_{\ell=1}^{\ell_0}B_{\tilde r_{x_\ell}+h}(x_\ell )$,\\
			x_\ell&if $x\in \overline{B}_{\tilde r_{x_\ell}}(x_\ell)$ for some $\ell=1,\ldots,\ell_0$,\\
			x_\ell+ \frac{\tilde{r}_{x_\ell}+h}{h}(|x-x_\ell|-\tilde{r}_{x_\ell}) \frac{x-x_\ell}{|x-x_\ell|}&if $x\in B_{\tilde r_{x_\ell}+h}(x_\ell)\setminus \overline{B}_{\tilde r_{x_\ell}}(x_\ell)$ for some $\ell=1,\ldots,\ell_0$.
		\end{cases*}
	\end{equation*}
	Note that $f_1^h$ is Lipschitz with $\textup{Lip}(f_1^h)=(\max_{\ell=1,\ldots,\ell_0}\tilde r_{x_\ell}+h)/h$. 
	We get
	\begin{equation*}
		|f_1^h-\textup{id}|_\infty\leq \tilde{r}_{x_\ell}<r_{x_\ell}<r_k\leq j_k^{-2} ,
	\end{equation*}
	which shows that the property in the first bullet point in \cref{AdmDeform} is satisfied.
	Next, we use the proposition on \cite[pp.\ 371--372]{F} with $U=\R^n,T=T_t^{j_k},f=\textup{id}$, and $g=f_1^h$. Let $\sigma\in C(\R^n)$ with 
	\begin{equation*}
		\textup{max}(|\Jm f(x)|_2,|\Jm g(x)|_2)\leq\sigma(x)
	\end{equation*}
	for $\Leb^n$-a.e.\ $x\in\R^n$. One can take any continuous $\sigma$ with $\sigma\geq 1$ and $\sigma =(\tilde r_{x_\ell}+h)/h$ on $B_{\tilde r_{x_\ell}+h}(x_\ell)\setminus \overline{B}_{\tilde r_{x_\ell}}(x_\ell)$ for all $\ell=1,\ldots,\ell_0$. Then the proposition implies (recall \cref{AdmDeform})
	\begin{equation}
		\label[ineq]{in1:SigmaEst}
		\mathbb \Mb\left((h_{f_1^h})_\#(\llbracket 0,1\rrbracket\times T_t^{j_k})\right)\leq |T_t^{j_k}|(|f_1^h-\textup{id}|\sigma).
	\end{equation}
	To simplify the notation, we abbreviate $\overline{B}_\ell=\overline{B}_{\tilde r_{x_\ell}}(x_\ell),A_\ell=B_{\tilde r_{x_\ell}+h}(x_\ell)\setminus \overline{B}_{\tilde r_{x_\ell}}(x_\ell), T=\theta\Ha\mres S=T_t^{j_k}$, and $(f_1^h)_\# T=\eta\Ha\mres f_1^h(S)$, cf.\ \cref{FedererPushforward}.
	We get (\cref{FedererPushforward} and area formula \cite[Cor.\ 3.2.20]{F})
	\begin{multline*}
		|(f_1^h)_\# T|_\tau(\overline{B}_\ell\cup A_\ell)=\int_{f_1^h(A_\ell\cap S)}\tau(|\eta|)\e\Ha=\int\sum_{x\in (f_1^h)^{-1}(y)}1_{A_\ell\cap S}(x)\tau(|\theta(x)|)\e\Ha(y)\\
		=\int_{A_\ell\cap S}\tau(|\theta|)\apJ(f|_{A_\ell\cap S})\e\Ha\leq\frac{\tilde{r}_{x_\ell}+h}{h}|T|_\tau(A_\ell)\to (\tilde{r}_{x_\ell}+h)\cdot 0=0
	\end{multline*}
	for $h\to 0$ for all $\ell=1,\ldots,\ell_0$.
	Thus, we can choose $h$ sufficiently small such that 
	\begin{equation}
		\label[ineq]{in1:auxh}
		-\frac{1}{2}|T|_\tau(\overline{B}_\ell)+\frac{1}{2h}|T|_\tau(A_\ell)\leq -|(f_1^h)_\# T|_\tau(\overline{B}_\ell\cup A_\ell)
	\end{equation}
	for all $\ell=1,\ldots,\ell_0$.
	Further, in \cref{in1:SigmaEst} we can just take $\sigma$ with $\sigma = 1$ on $\overline{B}_\ell$, $\sigma= (\tilde r_{x_\ell}+h)/h$ on $A_\ell$, and $\sigma=\max_{\ell=1,\ldots,\ell_0}(\tilde r_{x_\ell}+h)/h$ else. 
	We get (using $j_k^{-3/2}\leq C/2$, \cref{GrowthCondition}, and \cref{in1:auxh})
	\begin{multline*}
		\mathbb \Mb\left((h_{f_1^h})_\#(\llbracket 0,1\rrbracket\times T)\right)\leq |T|(|f_1^h-\textup{id}|\sigma)=\sum_{\ell=1}^{\ell_0}\left(|T|(|f_1^h-\textup{id}||_{\overline{B}_\ell})+|T|(|f_1^h-\textup{id}||_{A_\ell}\sigma)\right)\\
		\leq \sum_{\ell=1}^{\ell_0}\left(\tilde r_{x_\ell}|T|(\overline{B}_\ell)+\tilde{r}_{x_\ell}\frac{\tilde{r}_{x_\ell}+h}{h}|T|(A_\ell)\right)\leq r_k\sum_{\ell=1}^{\ell_0}\left(|T|(\overline{B}_\ell)+\frac{1}{h}|T|(A_\ell)\right)\leq j_k^{-1}\frac{C}{2}\sum_{\ell=1}^{\ell_0}\left(|T|(\overline{B}_\ell)+\frac{1}{h}|T|(A_\ell)\right)\\
		\leq j_k^{-1}\sum_{\ell=1}^{\ell_0}\left( |T|_\tau(\overline{B}_\ell) -|(f_1^h)_\# T|_\tau(\overline{B}_\ell\cup A_\ell)\right)
		\leq j_k^{-1}(|T|_\tau(\R^n)-|(f_1^h)_\# T|_\tau(\R^n)).
	\end{multline*}
	This shows that the first condition in the second bullet point in \cref{AdmDeform} is satisfied. The second condition in the second bullet point in \cref{AdmDeform} is also satisfied:
	By the proof of the lemma in \cite[Sec.\ 4.10]{B78} (cf.\ \cite[Lem.\ 4.12]{KT17}) it is enough to verify that $|(f_1^h)_\# T|_\tau(\overline{B}_\ell\cup A_\ell)\leq e^{-j_k(\tilde r_{x_\ell} +h)}|T|_\tau(\overline{B}_\ell\cup A_\ell)$ for all $\ell=1,\ldots,\ell_0$ and $h$ sufficiently small because then (using \cref{propertiesF}) 
	\begin{multline*}
		|(f_1^h)_\# T|(\phi)-|T|_\tau(\phi)=\sum_{\ell=1}^{\ell_0}\left( |(f_1^h)_\# T|(\phi1_{\overline{B}_\ell\cup A_\ell})-|T|_\tau(\phi 1_{\overline{B}_\ell\cup A_\ell}) \right)\\
		\leq\sum_{\ell=1}^{\ell_0}\left( \max_{\overline{B}_\ell\cup\overline{A}_\ell}\phi|(f_1^h)_\# T|(\overline{B}_\ell\cup A_\ell)-\min_{\overline{B}_\ell\cup\overline{A}_\ell}\phi|T|_\tau(\overline{B}_\ell\cup A_\ell) \right)\\
		\leq \sum_{\ell=1}^{\ell_0}\left( \min_{\overline{B}_\ell\cup\overline{A}_\ell}\phi e^{j_k(\tilde r_{x_\ell}+h)}|(f_1^h)_\# T|(\overline{B}_\ell\cup A_\ell)-\min_{\overline{B}_\ell\cup\overline{A}_\ell}\phi |T|_\tau(\overline{B}_\ell\cup A_\ell) \right)\leq 0
	\end{multline*}
	for all $\phi\in\mathcal{F}_{j_k}^+$. But we already argued that $|(f_1^h)_\# T|_\tau(\overline{B}_\ell\cup A_\ell)\to 0$ for $h\to 0$ and $|T|_\tau(\overline{B}_\ell)>0$ for all $\ell=1,\ldots,\ell_0$. Thus, the desired inequality is satisfied for all $\ell=1,\ldots,\ell_0$. This finally shows $f_1^h\in\mathcal A_{j_k}(T_j^{j_k})$.
	
	\underline{STEP 3 (final estimates):} Let $\delta\in(0,1)$. We can decrease $h$ such that (the left-hand side goes to zero, cf.\ STEPS 1 and 2, and the right-hand side is assumed to be positive)
	\begin{equation*}
		|(f_1^h)_\# T|_\tau(\overline{B}_\ell\cup A_\ell)-|T|_\tau(A_\ell)\leq\delta|T|_\tau(\overline{B}_\ell)
	\end{equation*}
	for all $\ell=1,\ldots,\ell_0$. Hence, we obtain (recall STEP 1)
	\begin{multline*}
		\Delta_{j_k}|T|_\tau(\R^n)\leq|(f_1^h)_\# T|_\tau(\R^n)-|T|_\tau(\R^n)=\sum_{\ell=1}^{\ell_0}\left(|(f_1^h)_\# T|_\tau(\overline{B}_\ell\cup A_\ell)-|T|_\tau(\overline{B}_\ell\cup A_\ell)\right)\\
		\leq (\delta-1)\sum_{\ell=1}^{\ell_0}|T|_\tau(\overline{B}_\ell)
		\leq \frac{\delta-1}{2\mathbb B_n}|T|_\tau(R_t^k).
	\end{multline*}
	By the beginning of the proof this gives
	\begin{equation*}
		|T|_\tau(S\setminus X_t^k)\leq\frac{2\mathbb B_nc(\Omega)}{(\delta-1)r_k^n}\Delta_{j_k}|T|_\tau(\R^n).
	\end{equation*}
	Note that this inequality does not depend on $h$ anymore. Hence, we can let $\delta\to 0$ and take $\hat c=2\mathbb B_nc(\Omega)$.
\end{proof}
Next, we apply Allard's monotonicity formula \cite[Sec.\ 5.1, Thm.\ 1]{A72}.
For the reader's convenience, we briefly recall the statement. Let $U\subset\R^n$ open and $V\in\mathcal V_k(U)$. Define $\|\delta V\|$ as the largest Borel regular (outer) measure on $U$ with
\begin{equation*}
	\|\delta V\|(\widetilde{U})=\sup\{ \delta V(g)\:|\: g\in C_c^\infty(U;\R^n),\spt(g)\subset\widetilde U,|g|_\infty\leq 1 \}
\end{equation*}
for all open $\widetilde U\subset U$, see \cite[Sec\ 4.2]{A72}. If $\|\delta V \|$ is a Radon measure, i.e.\ $\|\delta V \|(K)<\infty$ for all compact $K\subset U$, then recall that it can be interpreted as a linear functional $\|\delta V \|:C_c(U)\to\R$ with $\|\delta V \|(f)\geq 0$ for all $f\in C_c(U;\R_+)$ (Riesz representation theorem).
\begin{thm}[{Allard's monotonicity formula \cite[Sec.\ 5.1, Thm.\ 1]{A72}}]
	\label{Allard}
	Let $U\subset\R^n$ open and $V\in\mathcal V_k(U)$. Assume that $\|\delta V \|$ is a Radon measure. Then 
	\begin{multline*}
		s^{-k}\| V\|(\overline{B}_s(x))e^{ \int_{[r,s]} \delta V(f(V,x;u))\e\Leb(u)}-r^{-k}\| V\|(\overline{B}_r(x))\\
		=\int_{\{ (y,S)\:|\: r<|x-y|\leq s \}}|x-y|^{-k-2}|P^\perp(x-y)|^2e^{\int_{[r,|x-y|]}\delta V(f(V,x;u))\e\Leb(u)}\e V(y,P)
	\end{multline*}
	for all $\textup{dist}(x,\spt(\| V\|))<r<s<\textup{dist}(x,\R^n\setminus U)$, where
	\begin{equation*}
		f(V,x;u)(y)=\begin{cases*}
			(u\| V\|(\overline{B}_u(x)))^{-1}(y-x)&if $y\in\overline{B}_u(x)$,\\
			0&else.
		\end{cases*}
	\end{equation*}
\end{thm}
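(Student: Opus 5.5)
The plan is to derive the formula from the first variation of $V$ along radial vector fields, which is the classical route in \cite[Sec.\ 5.1]{A72}. Fix $x$ and write $I(u)=\| V\|(\overline{B}_u(x))$ and
\begin{equation*}
J(u)=\int_{\{|y-x|\leq u\}}|y-x|^{-2}|P^\perp(y-x)|^2\e V(y,P).
\end{equation*}
Both are non-decreasing and right-continuous functions on $(r_0,R_0)$, where $r_0=\textup{dist}(x,\spt(\| V\|))$ and $R_0=\textup{dist}(x,\R^n\setminus U)$. Since $u>r_0$, we have $I(u)>0$, so $f(V,x;u)$ is well defined. Because $\|\delta V\|$ is Radon, we get
\begin{equation*}
|\delta V(f(V,x;u))|\leq \|\delta V\|(\overline{B}_u(x))/I(u),
\end{equation*}
which is bounded on $[r,s]$. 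Hence the integrating factor $E(u)=\exp\int_{[r,u]}\delta V(f(V,x;u'))\e\Leb(u')$ is Lipschitz on $[r,s]$.

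First, I test $\delta V$ with $g(y)=\gamma(|y-x|)(y-x)$, where $\gamma\in C_c^\infty([0,R_0))$ is non-increasing and constant near $0$. The standard computation gives
\begin{equation*}
\textup{div}_P g(y)=k\gamma(|y-x|)+\gamma'(|y-x|)|y-x|^{-1}|P(y-x)|^2 .
\end{equation*}
I then use $|P(y-x)|^2=|y-x|^2-|P^\perp(y-x)|^2$ to split off the $P^\perp$ term. Next I let $\gamma$ increase to $1_{[0,u]}$, say by taking $\gamma$ to be a smoothed version of $1_{[0,u]}$ that ramps down over $(u,u+\eta)$, and let $\eta\to 0$. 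Read in the sense of distributions in $u$ (equivalently, pairing with $\Leb$-integrable test functions in $u$ before passing to the limit), this should yield the identity of measures on $(r_0,R_0)$
\begin{equation*}
\frac{\e}{\e u}\big(u^{-k}I(u)\big)+u^{-k}I(u)\,\delta V(f(V,x;u))=u^{-k}\frac{\e J}{\e u}.
\end{equation*}
Here I use $\delta V(1_{\overline{B}_u(x)}(\cdot-x))=uI(u)\delta V(f(V,x;u))$, and the derivative of $u^{-k}I(u)$ includes its singular part.

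Second, I multiply by the Lipschitz factor $E$. The product rule for a BV function times a Lipschitz function turns the left-hand side into $\frac{\e}{\e u}\big(u^{-k}I(u)E(u)\big)$. Integrating over $(r,s]$ then gives
\begin{equation*}
s^{-k}I(s)E(s)-r^{-k}I(r)=\int_{(r,s]}u^{-k}E(u)\e J(u).
\end{equation*}
Finally, I rewrite the Stieltjes integral against $\e J$ as an integral against $V$ over the region $\{r<|x-y|\leq s\}$, evaluating the integrand at $u=|y-x|$. This step is Fubini, or simply the definition of $J$ as a pushforward. It produces exactly $|x-y|^{-k-2}|P^\perp(x-y)|^2e^{\int_{[r,|x-y|]}\ldots}$, which is the right-hand side of \cref{Allard}.

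The main obstacle is the limiting step from smooth $\gamma$ to $1_{[0,u]}$. The difficulty is that $\| V\|$ may charge spheres $\partial B_u(x)$, so $I$ and $J$ have jumps, and the pointwise identity holds only for a.e.\ $u$. I would therefore keep everything at the level of measures. For smooth $\gamma$, I integrate the identity against a smooth $\chi(u)$, use Fubini to move the $u$-integration inside the $V$-integral, and only then send $\gamma$ to the indicator. The closed-ball convention together with right-continuity should then make the endpoint values $I(r)$ and $I(s)$ come out correctly.
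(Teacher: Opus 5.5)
Your proposal is correct and follows essentially the same route as Allard's proof in \cite[Sec.\ 5.1]{A72}, which the paper only cites without proof: testing $\delta V$ with radial fields $\gamma(|y-x|)(y-x)$ gives the distributional identity $\mathrm{d}(u^{-k}I)+u^{-k}I(u)\,\delta V(f(V,x;u))\,\mathrm{d}u=u^{-k}\,\mathrm{d}J$, and multiplying by the Lipschitz integrating factor and evaluating the right-continuous product on $(r,s]$ yields the formula. The limiting step you flag as the main obstacle is immediate if you take $\gamma(\rho)=\int_\rho^\infty\chi(u)\,\mathrm{d}u$ with arbitrary $\chi\in C_c^\infty((\textup{dist}(x,\spt\|V\|),\textup{dist}(x,\R^n\setminus U)))$, since Fubini then gives $\delta V(g)=\int\chi(u)\,u\,I(u)\,\delta V(f(V,x;u))\,\mathrm{d}u$ directly, with no approximation of indicators needed.
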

We also need the following simple fact.
\begin{lem}[$\delta (\varphi \ast V)$ bounded]
	\label{auxAllard}
	Let $U\subset\R^n$ open, $V\in\mathcal V_k(U)$, and $\varphi\in C_c^1(U)$. Assume that $\|V\|(U)<\infty$. Then 
	\begin{equation*}
		|\delta (\varphi \ast V)(g)|\leq c(n,\varphi,V)|g|_\infty
	\end{equation*}
	for all $g\in C_c^1(U;\R^n)$, in particular $\delta (\varphi \ast V)\in \mathcal M(U;\R^n)$ respectively $\|\delta (\varphi \ast V)\| \in\M_+(U)$.
\end{lem}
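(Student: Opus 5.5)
The plan is to compute $\delta(\varphi\ast V)(g)$ directly from the definitions and move the derivative off $g$ and onto the smooth kernel $\varphi$ by integrating by parts in the convolution variable. I read $\varphi\ast V$ in the sense of \cite[Sec.\ 4.3]{B78} and \cite{KT17}: a spatial convolution acting only on the base point. Concretely,
\begin{equation*}
\langle\psi,\varphi\ast V\rangle=\int\int\varphi(z)\,\psi(x+z,P)\e\Lebn(z)\e V(x,P)
\end{equation*}
for $\psi\in C_c(\mathcal G_k(U))$. Here I assume either $U=\R^n$, or that $\spt(\varphi)$ and $\spt(\|V\|)$ are positioned so that all translates stay inside $U$. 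The first step is to check that $\varphi\ast V$ is again a $k$-varifold. This is immediate, since $\|\varphi\ast V\|(U)\leq|\varphi|_{L^1}\|V\|(U)<\infty$ when $\varphi\geq 0$; for signed $\varphi$ the same estimate bounds the total variation.

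Next I apply the definition of the first variation \cite[Def.\ 4.2]{A72} to $\varphi\ast V$ and insert the formula above with $\psi(y,P)=P:\Jm g(y)$:
\begin{equation*}
\delta(\varphi\ast V)(g)=\int\int\varphi(z)\,P:\Jm g(x+z)\e\Lebn(z)\e V(x,P).
\end{equation*}
For fixed $(x,P)$ the inner integral is $\sum_{i,l}P_{il}\int\varphi(z)\,\partial_l g_i(x+z)\e\Lebn(z)$. Since $\varphi\in C_c^1$ and $g\in C_c^1$, integrating by parts in $z$ gives
\begin{equation*}
\int\varphi(z)\,\partial_l g_i(x+z)\e\Lebn(z)=-\int\partial_l\varphi(z)\,g_i(x+z)\e\Lebn(z),
\end{equation*}
with no boundary terms. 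Therefore
\begin{equation*}
\delta(\varphi\ast V)(g)=-\int\int g(x+z)\cdot P\nabla\varphi(z)\e\Lebn(z)\e V(x,P).
\end{equation*}
Using $|P|_2\leq 1$ for an orthogonal projection, the integrand is bounded by $|g|_\infty|\nabla\varphi(z)|$. Hence
\begin{equation*}
|\delta(\varphi\ast V)(g)|\leq|\nabla\varphi|_{L^1}\,\|V\|(U)\,|g|_\infty,
\end{equation*}
so one may take $c(n,\varphi,V)=|\nabla\varphi|_{L^1}\|V\|(U)$.

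The final step uses this bound: $\delta(\varphi\ast V)$ extends to a bounded linear functional on $C_c(U;\R^n)$ with the uniform norm. By the Riesz representation theorem it is therefore an $\R^n$-valued Radon measure, and its total variation $\|\delta(\varphi\ast V)\|$ is a finite Radon measure with total mass at most $c$.

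The only delicate point is bookkeeping rather than analysis. One has to fix precisely which convolution of a varifold is meant, and ensure that the translates $x+z$ remain in $U$ so that the integration by parts and Fubini's theorem are legitimate. I would settle this by working in $U=\R^n$, which is the case used later, or by shrinking to supports. Fubini applies because the integrand is bounded and compactly supported in $z$, and $\|V\|(U)<\infty$.
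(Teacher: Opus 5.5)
Your proof is correct and follows the paper's argument. You write $\delta(\varphi\ast V)(g)$ through the convolution, integrate by parts so the derivative moves from $g$ onto $\varphi$, and bound the result by $|g|_\infty$ times a constant. The only differences are cosmetic: the paper bounds $|P:(g\otimes\nabla\varphi)|$ using the Frobenius norm and gets $c=\sqrt{k}\,|\nabla\varphi|_\infty\Leb^n(\spt(\varphi))V(\mathcal G_k(U))$, whereas your operator-norm estimate gives the slightly sharper $|\nabla\varphi|_{L^1}\|V\|(U)$, and you also make explicit the requirement that the translates stay in $U$, which the paper leaves implicit.
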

\begin{proof}
	We have
	\begin{multline*}
		|\delta (\varphi \ast V)(g)|=\left|\int_{\mathcal G_k(U)}\int_{U}P:\Jm g(x-y)\varphi(y)\e\Leb^n(y)\e V(x,P)\right|\\
		=\left|\int_{\mathcal G_k(U)}\int_{U}P:(g(x-y)\otimes\nabla\varphi(y))\e\Leb^n(y)\e V(x,P)\right|\leq \sqrt k|g|_\infty|\nabla\varphi|_\infty\Leb^n(\spt(\varphi))V(\mathcal G_k(U)).
	\end{multline*}
	Thus, we can take $c(n,\varphi,V)=\sqrt k|\nabla\varphi|_\infty\Leb^n(\spt(\varphi))V(\mathcal G_k(U))$. The last conclusion follows directly,
	\begin{equation*}
		\|\delta (\varphi \ast V)\|(U)=	\sup\{ \delta (\varphi \ast V)(g)\:|\: g\in C_c^\infty(U;\R^n),|g|_\infty\leq 1 \}\leq c(n,\varphi,V).\qedhere
	\end{equation*}
\end{proof}
We will also use the following auxiliary statement.
\begin{lem}[Properties of $\Psi_\varepsilon\ast 1_{\overline{B}_r(x)}$]
	\label{aux2}
	Let $\mu\in\M_+(\R^n), \gamma,\delta\in(0,1)$, and $\overline{B}_r(x)\subset\R^n$ some closed ball. Then
	\begin{equation*}
		(\Psi_\varepsilon\ast\mu)(\overline{B}_r(x))=\mu(\Psi_\varepsilon\ast 1_{\overline{B}_r(x)}).
	\end{equation*}
	Moreover, we have
	\begin{equation*}
		\Psi_\varepsilon\ast 1_{\overline{B}_r(x)}\geq (1-\gamma)1_{\overline{B}_{(1-\delta)r}(x)}
	\end{equation*}
	in $\R^n$ for $\varepsilon$ sufficiently small.
\end{lem}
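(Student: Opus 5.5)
The identity is a direct application of Fubini's theorem. The inequality comes from the fact that $\Psi_\varepsilon$ concentrates its unit mass in a ball of radius comparable to $\varepsilon$.

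\textbf{The identity.} By definition of $\Psi_\varepsilon\ast\mu$ as the absolutely continuous measure with density $y\mapsto\int\Psi_\varepsilon(z-y)\e\mu(z)$, we have
\begin{equation*}
(\Psi_\varepsilon\ast\mu)(\overline{B}_r(x))=\int 1_{\overline{B}_r(x)}(y)\int\Psi_\varepsilon(z-y)\e\mu(z)\e\Lebn(y).
\end{equation*}
Since $\Psi_\varepsilon\geq 0$ and $\mu$ is a Radon measure, Tonelli's theorem allows us to exchange the integrals. This yields $\int\big(\int\Psi_\varepsilon(z-y)1_{\overline{B}_r(x)}(y)\e\Lebn(y)\big)\e\mu(z)=\mu(\Psi_\varepsilon\ast 1_{\overline{B}_r(x)})$, where the convolution is understood with the same sign convention as in \cref{Smoothed}. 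This is also how the paper defines $(\Psi\ast|T|_\tau)(\phi)$, extended from $C_c$ functions to the bounded Borel function $1_{\overline{B}_r(x)}$. Note that $\Psi_\varepsilon$ is radially symmetric, so the sign convention is immaterial.

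\textbf{The lower bound.} Fix $z\in\overline{B}_{(1-\delta)r}(z)$, i.e.\ $|z-x|\leq(1-\delta)r$. Then $B_{\delta r}(z)\subset\overline{B}_r(x)$, and therefore
\begin{equation*}
(\Psi_\varepsilon\ast 1_{\overline{B}_r(x)})(z)\geq\int_{B_{\delta r}(0)}\Psi_\varepsilon\e\Lebn=1-\int_{\R^n\setminus B_{\delta r}(0)}\Psi_\varepsilon\e\Lebn ,
\end{equation*}
using $\int\Psi_\varepsilon=1$. It remains to make the tail $\int_{|w|\geq\delta r}\Psi_\varepsilon$ at most $\gamma$, uniformly in $z$. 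Write $\Psi_\varepsilon=c_\varepsilon\psi\widehat\Psi_\varepsilon$ with $0\leq\psi\leq1$. Then the tail is bounded by $c_\varepsilon\int_{|w|\geq\delta r}\widehat\Psi_\varepsilon$, which equals $c_\varepsilon$ times the Gaussian tail probability $P(|N(0,\varepsilon^2\mathbb 1)|\geq\delta r)$. By scaling, this probability equals $P(|N(0,\mathbb 1)|\geq\delta r/\varepsilon)\to0$ as $\varepsilon\to0$.

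The normalizing constant is harmless. Since $\psi\equiv1$ on $\overline{B}_{1/2}$, we have $c_\varepsilon^{-1}\geq\int_{B_{1/2}}\widehat\Psi_\varepsilon\to1$, so $c_\varepsilon\leq2$ for small $\varepsilon$ (compare $c_\varepsilon\leq c(n)$ as used before). Hence for $\varepsilon$ small, depending on $n,\gamma,\delta,r$, the tail is at most $\gamma$. This gives the bound on $\overline{B}_{(1-\delta)r}(x)$. Off this ball the right-hand side vanishes and the convolution is nonnegative, so the inequality holds everywhere.

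The only point requiring care is that $\varepsilon$ is allowed to depend on $r$. The statement is for a fixed ball, and the dependence enters only through the ratio $\delta r/\varepsilon$.
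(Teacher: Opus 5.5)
Your proof is correct. For the identity you take a more direct route than the paper. You use that the measure $\Psi_\varepsilon\ast\mu$ has the continuous density $y\mapsto\int\Psi_\varepsilon(z-y)\,\mathrm{d}\mu(z)$. The paper records this right after \cref{Smoothed}, and by uniqueness in the Riesz representation it determines the measure on all Borel sets. You then apply Tonelli. The paper instead stays at the level of the functional $\phi\mapsto\mu(\Psi_\varepsilon\ast\phi)$ on $C_c(\R^n)$. It uses outer regularity to choose open sets $U_i\supset\overline{B}_r(x)$ and approximates $1_{\overline{B}_r(x)}$ pointwise by $\phi_i\in C_c(\R^n)$ with $\phi_i\equiv 1$ on the ball and $\spt(\phi_i)\subset U_i$. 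It then passes to the limit by dominated convergence. Your version is shorter and needs no integrability bookkeeping, since nonnegativity suffices for Tonelli. It also gives $(\Psi_\varepsilon\ast\mu)(A)=\mu(\Psi_\varepsilon\ast 1_A)$ for every Borel set $A$, whereas the paper's approximation argument is tailored to the closed ball.

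For the lower bound you follow the same idea as the paper, which only asserts it holds ``by definition of $\Psi_\varepsilon$''. You supply the missing details: the inclusion $B_{\delta r}(z)\subset\overline{B}_r(x)$, the bound $c_\varepsilon\leq 2$ from $\psi\equiv 1$ on $\overline{B}_{1/2}(0)$, and the scaled Gaussian tail. Your closing remark is worth keeping: the threshold for $\varepsilon$ depends on $r$ only through $\delta r/\varepsilon$, plus a threshold depending only on $n$. In the proof of \cref{LowerBound} the lemma is applied with radii $u\geq \tfrac12 j_{k_\ell}^{-5/2}$ that shrink along the sequence, while $\varepsilon_{j_{k_\ell}}<j_{k_\ell}^{-6}$. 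The application is legitimate precisely because $u/\varepsilon_{j_{k_\ell}}>\tfrac12 j_{k_\ell}^{7/2}\to\infty$, which the paper's formulation ``for $\varepsilon$ sufficiently small'' leaves implicit. There is a minor typo: ``$z\in\overline{B}_{(1-\delta)r}(z)$'' should read ``$z\in\overline{B}_{(1-\delta)r}(x)$''.
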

\begin{proof}
	By the outer regularity of $\Psi_\varepsilon\ast\mu$ there exist bounded and open $U_i\subset\R^n$ with $\overline{B}_r(x)\subset U_i$ and a sequence $(\delta_i)\subset(0,1)$ with $\delta_i\to 0$ such that $(\Psi_\varepsilon\ast\mu)(U_i)\leq(\Psi_\varepsilon\ast\mu)(\overline{B}_r(x))+\delta_i$. Now pick a sequence $(\phi_i)\subset C_c(\R^n)$ with $\spt(\phi_i)\subset U_i,\phi_i\equiv 1$ on $\overline{B}_r(x),\phi_i\in[0,1]$, and $\phi_i\to 1_{\overline{B}_r(x)}$ pointwise. Then $(\Psi_\varepsilon\ast\mu)(\overline{B}_r(x))\leq(\Psi_\varepsilon\ast\mu)(\phi_i)\leq(\Psi_\varepsilon\ast\mu)(U_i)\leq(\Psi_\varepsilon\ast\mu)(\overline{B}_r(x))+\delta_i$ implying (Lebesgue's dominated convergence theorem)
	\begin{equation*}
		(\Psi_\varepsilon\ast\mu)(\overline{B}_r(x))=\lim_i(\Psi_\varepsilon\ast\mu)(\phi_i)=\lim_i\mu(\Psi_\varepsilon\ast\phi_i)=\lim_i\int\int \Psi_\varepsilon(z-y)\phi_i(y)\e\Leb^n(y)\e\mu(z).
	\end{equation*}
	This gives (again by Lebesgue's dominated convergence theorem)
	\begin{equation*}
		|(\Psi_\varepsilon\ast\mu)(\overline{B}_r(x))-\mu(\Psi_\varepsilon\ast 1_{\overline{B}_r(x)})|\leq\max\Psi_\varepsilon\lim_i\int\int|\phi_i(y)-1_{\overline{B}_r(x)}(y)|\e\Leb^n(y)\e\mu=0,
	\end{equation*}
	which proves the first claim. By definition of $\Psi_\varepsilon$ the function $\Psi_\varepsilon\ast 1_{\overline{B}_r(x)}$ is bounded from below by $1-\gamma$ on $\overline{B}_{(1-\delta)r}(x)$ for $\varepsilon$ sufficiently small, which directly yields the second statement.
\end{proof}
The following proposition corresponds to \cite[Lem.\ 5.4]{KT20}.
\begin{prop}
	\label{LowerBound}
	Let the setup be as in \cref{Prop6.1,Prop6.4}. Recall the definition of $X_t^{j_k}$ from \cref{Lemma1}. For a.e.\ $t\in\R_+$ there exists $c_2=c_2(t)\in(0,1)$ and a subsequence $\ell \mapsto j_{k_\ell}$ (depending on $t$) such that 
	\begin{equation*}
		|T_t^{j_{k_\ell}}|_\tau(\overline{B}_r(x))\geq\frac{\lambda r}{11}
	\end{equation*}
	for all $x\in X_t^{j_{k_\ell}},r\in (0,c_2]$.
\end{prop}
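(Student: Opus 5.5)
We propose to follow the strategy of \cite[Lem.\ 5.4]{KT20}: combine the density lower bound at small scales (built into the definition of $X_t^{k}$) with a monotonicity-type control on larger scales, using the smoothed varifold and \cref{Allard}. Fix $t$ outside a null set such that, by \cref{in1:6.19} and Fatou's lemma, along a subsequence $j_{k_\ell}$ we have
\begin{equation*}
\sup_\ell\int\frac{|\Psi_{\varepsilon_{j_{k_\ell}}}\ast\delta_\tau T_t^{j_{k_\ell}}|^2}{\Psi_{\varepsilon_{j_{k_\ell}}}\ast|T_t^{j_{k_\ell}}|_\tau+\varepsilon_{j_{k_\ell}}}\e\Lebn\leq C_t<\infty ,
\end{equation*}
and such that \cref{in1:6.20} holds. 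For $r<r_{k_\ell}=j_{k_\ell}^{-5/2}$ the claimed bound is immediate from the definition of $X_t$ (even with $\lambda r$ instead of $\lambda r/11$). So the remaining range is $r\in[r_{k_\ell},c_2]$.

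For this range we pass to the smoothed varifold $\widetilde V_\ell=\Psi_{\varepsilon}\ast V_t^{j_{k_\ell}}$ (with $\varepsilon=\varepsilon_{j_{k_\ell}}$). By \cref{auxAllard} its first variation is a Radon measure, and $\delta\widetilde V_\ell$ is represented by the vector field $\Psi_\varepsilon\ast\delta_\tau T$. Cauchy--Schwarz then gives $\|\delta\widetilde V_\ell\|(\overline B_u(x))\leq C_t^{1/2}\bigl((\Psi_\varepsilon\ast|T|_\tau)(\overline B_u(x))+\varepsilon\Lebn(\overline B_u(x))\bigr)^{1/2}$. With this bound we control the exponent in \cref{Allard}:
\begin{equation*}
\int_{[r,s]}\delta\widetilde V_\ell(f(\widetilde V_\ell,x;u))\e\Leb(u)\leq\int_{[r,s]}\frac{\|\delta\widetilde V_\ell\|(\overline B_u(x))}{\|\widetilde V_\ell\|(\overline B_u(x))}\e\Leb(u).
\end{equation*}
As long as $\|\widetilde V_\ell\|(\overline B_u(x))\gtrsim\lambda u$, this is at most $\int_r^s C_t^{1/2}(\lambda u)^{-1/2}\e u\lesssim C_t^{1/2}\lambda^{-1/2}s^{1/2}$, which is small for $s\leq c_2(t)$ chosen small.

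The core is a continuity (bootstrap) argument. Let $u^*$ be the first radius in $[r_{k_\ell},c_2]$ where $|T|_\tau(\overline B_{u}(x))<\lambda u/11$. Below $u^*$ the lower density holds, so the exponent above is at most $\log 2$. \cref{Allard} with $k=1$ then yields
\begin{equation*}
s^{-1}\|\widetilde V_\ell\|(\overline B_s(x))\geq \tfrac12 r^{-1}\|\widetilde V_\ell\|(\overline B_r(x))
\end{equation*}
for $r_{k_\ell}/2\leq r<s\leq u^*$, since the right-hand side of the monotonicity identity is nonnegative. We transfer between $\|\widetilde V_\ell\|$ and $|T|_\tau$ via \cref{aux2}: $\varepsilon\ll r_{k_\ell}$ because $\varepsilon_j<j^{-6}$, and we use $\gamma,\delta$ small with radii $(1\pm\delta)r$. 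Starting from $|T|_\tau(\overline B_{r_{k_\ell}/2}(x))\geq\lambda r_{k_\ell}/2$, this gives $|T|_\tau(\overline B_s(x))\geq c\lambda s$ with a numerical constant, better than $1/11$ (factors like $\tfrac12\cdot\tfrac12\cdot(1-\gamma)(1-\delta)$). By continuity in $s$ this contradicts the definition of $u^*$, so $u^*>c_2$.

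The main obstacle is making the Allard exponent estimate uniform and honest. First, the bound on $\|\delta\widetilde V_\ell\|$ needs the $\varepsilon\Lebn$ term to be negligible, which requires $\varepsilon u^{n}\ll\lambda u$, i.e.\ essentially $\varepsilon\ll r_{k_\ell}$ (fine). Second, \cref{Allard} requires $r>\operatorname{dist}(x,\spt\|\widetilde V_\ell\|)$, which is automatic here. Third, the constant $11$ must absorb the losses from the two smoothing comparisons and the factor $e^{\log 2}$, so the choices of $\gamma,\delta$ in \cref{aux2} and of $c_2(t)\sim\lambda C_t^{-1}$ must be tuned accordingly. If the pointwise-in-$x$ Cauchy--Schwarz bound is too crude, the fallback is to use \cref{in1:6.20} with \cref{Lemma1} to discard $x$ near mass lost off $X_t$. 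In either case the subsequence depends on $t$ only through the Fatou selection.
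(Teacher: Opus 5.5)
Your argument is correct and shares the paper's skeleton. Both proofs use a Fatou selection of a $t$-dependent subsequence with $\check c=\sup_\ell I_{k_\ell}(t)<\infty$ (your $C_t$). Both apply \cref{Allard} to $\Psi_\varepsilon\ast V$ and use the same Cauchy--Schwarz bound on $\|\delta(\Psi_\varepsilon\ast V)\|(\overline B_u(x))$. Both transfer between $\xi(u)=(\Psi_\varepsilon\ast|T|_\tau)(\overline B_u(x))$ and $|T|_\tau$. Where you differ is in how the exponential factor in the monotonicity identity is controlled.

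The paper never assumes a lower density bound on the intermediate range. It only uses the crude bound $\xi(u)\geq\lambda a r_{k_\ell}/4$ for $u\geq a r_{k_\ell}$, which comes from $x\in X$ and monotonicity of $\xi$, to absorb the $\varepsilon\Lebn(\overline B_u(x))$ term. This gives the unconditional estimate $|\delta V_\varepsilon(f(V_\varepsilon,x;u))|\leq\tilde c\,\xi(u)^{-1/2}$ with $\tilde c=(2\check c)^{1/2}$. The paper then turns the monotonicity identity into the differential inequality $\xi'\geq\xi/r-\tilde c\,\xi^{1/2}$, which is equivalent to monotonicity of $(\xi(r)/r)^{1/2}+\tilde c\,r^{1/2}$. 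That yields the explicit bound $\xi(r)/r\geq(\lambda^{1/2}/2-\tilde c\,r^{1/2})^2$ without any stopping radius.

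Your continuity argument instead gets a bounded exponent ($\leq\log 2$) from the bootstrap hypothesis. This avoids the a.e.\ differentiation of $\xi$ and the absolute-continuity considerations the ODE route needs. The cost is the stopping-time bookkeeping. Both routes give $c_2(t)\sim\lambda/\check c$.

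Two details to tidy up:
\begin{itemize}
\item A ``first bad radius'' need not be attained. Define $u^*$ as an infimum, and reach the contradiction using right-continuity of $u\mapsto|T|_\tau(\overline B_u(x))$ together with the gain of a factor $(1+\delta)$ in radius.
\item \cref{aux2} only gives the lower comparison $\xi(u)\geq(1-\gamma)|T|_\tau(\overline B_{(1-\delta)u}(x))$. The reverse comparison, $|T|_\tau(\overline B_{(1+\delta)s}(x))\geq\xi(s)$ up to an exponentially small error, needs a separate Gaussian tail estimate. The paper does this using $\varepsilon<j^{-6}\ll r_{k_\ell}$ and the uniform mass bound \cref{in1:6.3}.
\end{itemize}
The fallback via \cref{in1:6.20} is not needed.
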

\begin{proof}
	We follow the proof of \cite[Lem.\ 5.4]{KT20}. By definition of $X_t^{j_k}$ it is enough to find (for a.e.\ $t\in\R_+$) some $c_2\in(0,1)$ and validate $|T_t^{j_{k_\ell}}|_\tau(\overline{B}_r(x))\geq\frac{\lambda r}{11}$ for $x\in X_t^{j_{k_\ell}},r\in [r_{k_\ell},c_2]$ for a suitable subsequence $\ell\mapsto j_{k_\ell}$. 
	\Cref{in1:6.19} implies
	\begin{equation*}
		\limsup_k\int_{[0,R]}   I_k(t) \e\Leb(t)<\infty
	\end{equation*}
	for all $R\in\R_+$, where we use the abbreviation $I_k(t)=\int|\Psi_{\varepsilon_{j_k}}\ast\delta_\tau T_t^{j_k}|^2/(\Psi_{\varepsilon_{j_k}}\ast|T_t^{j_k}|_\tau+\varepsilon_{j_k})\e\Lebn$.
	In particular, Fatou's lemma yields
	\begin{equation*}
		\int_{[0,R]}\liminf_kI_k(t)\e\Leb(t)<\infty
	\end{equation*}
	for all $R\in\R_+$. Thus, for a.e.\ $t\in\R_+$ there exists a subsequence $\ell\mapsto j_{k_\ell}$ with
	\begin{equation*}
		\liminf_kI_k(t)=\lim_{\ell} I_{k_{\ell}}(t)\in\R_+.
	\end{equation*}
	Now fix any such $t$ and a corresponding subsequence $\ell\mapsto j_{k_\ell}$.
	Define
	\begin{equation*}
		\check c=\check c(t)=\sup_{\ell} I_{k_{\ell}}(t).
	\end{equation*}
	In the remainder of the proof, we abbreviate $T=\theta\Ha\mres S=T_t^{j_{k_\ell}},X=X_t^{j_{k_\ell}}$, and $\varepsilon=\varepsilon_{j_{k_\ell}}$. Let $V=v(\tau(|\theta|),S)\in\mathcal V_1(\R^n)$ (recall \cref{inducedV}). By \cref{auxAllard} we can apply \cref{Allard} to $V_\varepsilon=\Psi_{\varepsilon}\ast V$. In particular, we obtain
	\begin{equation}
		\label[ineq]{in1:AllardIn}
		s^{-1}\| V_\varepsilon \|(\overline{B}_s(x))e^{ \int_{[r,s]} \delta V_{\varepsilon}(f(V_{\varepsilon},x;u))\e\Leb(u)}\geq r^{-1}\| V_{\varepsilon }\|(\overline{B}_r(x))
	\end{equation}
	for all $\textup{dist}(x,\spt(\| V_{\varepsilon}\|))<r<s<\infty$.
	Note that $X\subset \spt(\| V_{\varepsilon}\|)$. 
	Take $x\in X$, $a\in(0,1)$, and $u\in [ar_{k_\ell},1]$.
	Depending on the factor $a\in(0,1)$, we can choose $\ell$ sufficiently large such that the forthcoming estimates are valid. 
	Later, we take $a=1/2$.
	By the same conversion as in \cite[(5.13)]{KT20} we have (recall \cref{Smoothed})
	\begin{equation}
		\label[ineq]{in1:Y}
		|\delta V_{\varepsilon}(f(V_{\varepsilon},x;u))|\leq \| V_{\varepsilon}\|(\overline{B}_u(x))^{-1}\int_{\overline{B}_u(x)}|\Psi_{\varepsilon}\ast\delta V|\e\Leb^n.
	\end{equation}
	Further, using the Cauchy\textendash Schwarz inequality, we get
	\begin{equation*}
		\int_{\overline{B}_u(x)}|\Psi_{\varepsilon}\ast\delta V|\e\Leb^n\leq I_{k_\ell}(t)^{1/2}\left( \int_{\overline{B}_u(x)}(\Psi_{\varepsilon}\ast\| V\|+\varepsilon)\e\Leb^n \right)^{1/2},
	\end{equation*}
	where we used (cf.\ text passage below \cref{hEstimates})
	\begin{equation*}
		I_{k_\ell}(t)=\int\frac{|\Psi_{\varepsilon}\ast\delta_\tau T|^2}{\Psi_{\varepsilon}\ast|T|_\tau+\varepsilon } \e\Leb^n=\int\frac{ |\Psi_{\varepsilon}\ast\delta V|^2}{\Psi_{\varepsilon}\ast\| V\|+\varepsilon}\e\Leb^n.
	\end{equation*}
	This gives
	\begin{equation}
		\label[ineq]{in1:Z}
		\int_{\overline{B}_u(x)}|\Psi_{\varepsilon}\ast\delta V|\e\Leb^n\leq \check c^{1/2}\left( \int_{\overline{B}_u(x)}\Psi_{\varepsilon}\ast |T|_\tau\e\Leb^n+\varepsilon\Leb^n(\overline{B}_u(x)) \right)^{1/2}.
	\end{equation}
	Now it is not difficult to see that $\int_{\overline{B}_u(x)}\Psi_{\varepsilon}\ast |T|_\tau\e\Leb^n\leq (\Psi_{\varepsilon}\ast|T|_\tau)(\overline{B}_u(x))$\footnote{Formally, abbreviate $\int_{\overline{B}_u(x)}\Psi_{\varepsilon}\ast |T|_\tau\e\Leb^n\leq (\Psi_{\varepsilon}\ast|T|_\tau)(\overline{B}_u(x))$ as $I\leq \mu(\overline{B})$. By the outer regularity of $\mu$ we have $\mu(\overline{B})=\inf\{ \mu(U)\:|\: U\supset\overline{B}\textup{ open} \}$. In particular, for every $\delta>0$ there is some open and bounded set $U\supset \overline{B}$ with $\mu(U)\leq\mu(\overline{B})+\delta$. Now take $\phi\in C_c(\R^n)$ with $\spt(\phi)\subset U,\phi\equiv 1$ on $\overline{B}$, and $\phi\in[0,1]$. Then $I\leq \mu(\phi)\leq\mu(U)\leq \mu(\overline{B})+\delta$.}.
	Define $\xi(u)=(\Psi_{\varepsilon}\ast|T|_\tau)(\overline{B}_u(x))$.
	Applying \cref{aux2} we have
	\begin{equation*}
		\xi(u)=|T|_\tau(\Psi_{\varepsilon}\ast1_{\overline{B}_u(x)})\geq\frac{1}{2}|T|_\tau(1_{ \overline{B}_{u/2}(x) })
	\end{equation*}
	(In the remainder, we always assume that $\ell$ is taken such that $\varepsilon_{j_{k_{\ell}}}$ is sufficiently small.)
	By $ar_{k_\ell}\leq u$ and $x\in X$ we have
	\begin{equation*}
		\frac{1}{2}|T|_\tau(1_{ \overline{B}_{u/2}(x) })\geq \frac{1}{2}|T|_\tau(1_{ \overline{B}_{ar_{k_\ell}/2}(x) })\geq \lambda\frac{ar_{k_\ell}}{4}.
	\end{equation*}
	Further, recalling $\varepsilon<j_{k_\ell}^{-6}<aj_{k_\ell}^{-5/2}=ar_{k_\ell}\leq u\leq 1$ (see \cref{Prop6.1}), we get
	\begin{equation*}
		\varepsilon\Lebn(\overline{B}_u(x))\leq \varepsilon\Lebn(\overline{B}_1(x))\leq\lambda\frac{ar_{k_\ell}}{4}\leq \xi(u).
	\end{equation*}
	Using the above together with \cref{in1:Y,in1:Z} gives
	\begin{multline*}
		|\delta V_{\varepsilon}(f(V_{\varepsilon},x;u))|\leq \| V_{\varepsilon}\|(\overline{B}_u(x))^{-1}\check c^{1/2}\left( \int_{\overline{B}_u(x)}\Psi_{\varepsilon}\ast |T|_\tau\e\Leb^n+\varepsilon\Leb^n(\overline{B}_u(x)) \right)^{1/2}\\
		\leq \xi(u)^{-1}\check c^{1/2}\left( \xi(u)+\xi(u) \right)^{1/2}=(2\check c)^{1/2}\xi(u)^{-1/2}.
	\end{multline*}
	Using this in \cref{in1:AllardIn} yields
	\begin{equation}
		\label[ineq]{in1:inAllard2}
		s^{-1}\xi(s)e^{ \tilde c\int_{[r,s]} \xi(u)^{-1/2}\e\Leb(u)}\geq r^{-1}\xi(r)
	\end{equation}
	for all $ar_{k_\ell}\leq r<s\leq 1$, where $\tilde c=(2\check c)^{1/2}$. Note that $\xi$ is non-decreasing on $[ar_{k_\ell},1]$, thus it is differentiable a.e.\ in $[ar_{k_\ell},1]$. It is also continuous: for $\phi\in C_c(\R^n)$ one may identify $(\Psi_\varepsilon\ast |T|_\tau) (\phi)$ as the $L^2$-pairing of $\phi$ with the $C_c^\infty$-function $y\mapsto \int\Psi_\varepsilon(z-y)\e|T|_\tau(z)$. Hence, the fundamental theorem of calculus implies that $\xi$ admits an antiderivative on $[ar_{k_\ell},1]$, in particular this is true for $\xi^{-1/2}$ (this function is continuous and non-increasing on $[ar_{k_\ell},1]$). Let $\Xi$ be the antiderivative of $\xi^{-1/2}$ given by $\Xi(r)=\int_{[ar_{k_\ell},r]}\xi(u)^{-1/2}\e\Leb(u)$ for $r\in [ar_{k_\ell},1]$. \Cref{in1:inAllard2} directly implies that 
	\begin{equation*}
		r\mapsto r^{-1}\xi(r)e^{\tilde c\Xi(r)}
	\end{equation*}
	is non-decreasing on $[ar_{k_\ell},1]$. Thus, we have $\partial_r(r^{-1}\xi(r)e^{\tilde c\Xi(r)})\geq 0$ for a.e.\ $r\in [ar_{k_\ell},1]$. A straightforward calculation shows that this is equivalent to 
	\begin{equation*}
		\xi'(r)\geq-\tilde c\xi(r)^{1/2}+\frac{\xi(r)}{r}
	\end{equation*}
	for a.e.\ $r\in [ar_{k_\ell},1]$. Using this, it is easy to obtain
	\begin{equation*}
		r\frac{\e}{\e r}\left( \frac{\xi(r)^{1/2}}{r^{1/2}}+\tilde cr^{1/2} \right)\geq 0
	\end{equation*}
	for a.e.\ $r\in [ar_{k_\ell},1]$, which shows that $r\mapsto \xi(r)^{1/2}/r^{1/2}+\tilde cr^{1/2} $ is non-decreasing on $ [ar_{k_\ell},1]$. Thus (using also the previous estimate $\xi(u)\geq \lambda ar_{k_\ell}/4$ for $u\in[ar_{k_\ell},1]$)
	\begin{equation*}
		\frac{\xi(r)^{1/2}}{r^{1/2}}+\tilde cr^{1/2}\geq \frac{\xi(ar_{k_\ell})^{1/2}}{(ar_{k_\ell})^{1/2}}+\tilde cr_{k_\ell}^{1/2}\geq  \frac{\lambda^{1/2}}{2}+\tilde cr_{k_\ell}^{1/2}.
	\end{equation*}
	Hence, we can choose $\bar c\in(0,1/2)$ such that 
	\begin{equation*}
		\frac{\xi(r)}{r}\geq \left(  \frac{\lambda^{1/2}}{2}-\tilde cr^{1/2}  \right)^2\geq \frac{\lambda}{5}
	\end{equation*}
	for all $r\in [ar_{k_\ell},1]\cap [0,\bar c]$.
	Next, we estimate $|T|_\tau(\overline{B}_{2r}(x))/(2r)$. If $y\in\R^n\setminus \overline{B}_{2r}(x)$, then
	\begin{multline*}
		\Psi_\varepsilon\ast 1_{\overline{B}_r(x)}(y)=\int_{\overline{B}_r(x)}\Psi_\varepsilon(y-z)\e\Lebn(z)=\frac{c_\varepsilon}{(2\pi\varepsilon^2)^{n/2}}\int_{\overline{B}_r(x)}\psi(y-z)e^{-|y-z|^2/(2\varepsilon^2)}\e\Lebn(z)\\
		\leq \frac{c_\varepsilon}{(2\pi\varepsilon^2)^{n/2}}e^{-(ar_{k_\ell})^2/(2\varepsilon^2)}\Lebn(\overline{B}_r(x))\leq c_\varepsilon\varepsilon^{-n}e^{-1/(2\varepsilon)}\Lebn(\overline{B}_r(x)),
	\end{multline*}
	where we also used $\varepsilon<(ar_{k_\ell})^2$ in the last inequality. Note that $1_{\overline{B}_{2r}(x)}+(\Psi_\varepsilon\ast 1_{\overline{B}_r(x)})|_{\R^n\setminus \overline{B}_{2r}(x)}\geq \Psi_\varepsilon\ast 1_{\overline{B}_r(x)}$. This gives (\cref{aux2})
	\begin{multline*}
		|T|_\tau(\overline{B}_{2r}(x))\geq |T|_\tau( \Psi_\varepsilon\ast 1_{\overline{B}_r(x)})- |T|_\tau((\Psi_\varepsilon\ast 1_{\overline{B}_r(x)})|_{\R^n\setminus \overline{B}_{2r}(x)})\\
		\geq  ( \Psi_\varepsilon\ast |T|_\tau)(\overline{B}_r(x))-  c_\varepsilon\varepsilon^{-n}e^{-1/(2\varepsilon)}\Lebn(\overline{B}_r(x))|T|_\tau(\R^n)
	\end{multline*}
	implying (using also \cref{in1:6.3})
	\begin{equation*}
		\frac{|T|_\tau(\overline{B}_{2r}(x))}{2r}\geq\frac{\xi(r)}{2r}-c_\varepsilon\varepsilon^{-n}e^{-1/(2\varepsilon)}\Lebn(\overline{B}_r(x))(|T_0|_\tau(\R^n)+1)/(ar_{k_\ell})
	\end{equation*}
	for all $r\in [ar_{k_\ell},1]\cap [0,\bar c]$. Hence, taking $a=1/2$ and $c_2=c_2(t)=2\bar c\in(0,1)$ gives 
	\begin{equation*}
		\frac{|T|_\tau(\overline{B}_{r}(x))}{r}\geq \frac{\lambda}{11}
	\end{equation*}
	for all $r\in [r_{k_\ell},1]\cap [0, c_2]=2([ar_{k_\ell},1]\cap [0,\bar c])$ as desired.
\end{proof}
We recall Allard's rectifiability theorem (for the case $k=1$ and positive one-dimensional density a.e.).
\begin{thm}[{Allard's rectifiability theorem \cite[Sec.\ 5.5, Thm.\ 1]{A72}}]
	\label{AllardRect}
	Let $U\subset\R^n$ open and $V\in\mathcal V_1(U)$. Assume that $\|\delta V \|$ is a Radon measure and $\Theta^{*1}(\|  V\|,x)>0$ for $\|  V\|$-a.e.\ $x\in U$. Then $\Theta^{1}(\|  V\|,x)\in\R_+^*$ for $\|  V\|$-a.e.\ $x\in U$ and $V$ is $1$-rectifiable in the following sense \cite[Sec.\ 3.5]{A72}: there exist $\Ha$-measurable $A_i\subset U$ and numbers $w_i\in\R$ such that 
	\begin{itemize}
		\item for all $i$ and compact $K\subset U$ there exists a $(\Ha,1)$-rectifiable\footnote{By \cite[Sec.\ 2.8, Def.\ 4]{A72} we have $\Ha(S)<\infty$ and there exists a sequence $(P_i)\subset G(n,1)$ and $C^1$ maps $f_i:P_i\to\R^n$ with $P_if_i=\textup{id}$ and $\Ha\left( S\setminus\bigcup_if_i(P_i) \right)=0$.} set $S$ with $A_i\cap K\subset S$
		\item and $V=\sum_{i=1}^{\infty}w_iV(E_i,1)$.
	\end{itemize}
\end{thm}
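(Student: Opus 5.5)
Since this is Allard's theorem, we would not reprove it in full. We sketch the standard route through monotonicity and blow-ups, specialised to $k=1$.

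\textbf{Step 1: bounded first-variation ratio.} Fix $x$ with $\Theta^{*1}(\|V\|,x)>0$. By the Besicovitch differentiation theorem applied to the Radon measures $\|\delta V\|$ and $\|V\|$, for $\|V\|$-a.e.\ $x$ there are $M<\infty$ and $r_0>0$ with
\begin{equation*}
\|\delta V\|(\overline{B}_r(x))\leq M\|V\|(\overline{B}_r(x))\qquad\textup{for all }r<r_0.
\end{equation*}

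\textbf{Step 2: existence of the density.} With the ratio bound of Step 1, we have $|\delta V(f(V,x;u))|\leq u^{-1}M\,u=M$ in \cref{Allard}. The monotonicity identity then shows that $r\mapsto r^{-1}\|V\|(\overline{B}_r(x))e^{Mr}$ is non-decreasing on $(0,r_0)$. Hence $\Theta^1(\|V\|,x)$ exists and is finite. It is positive by the hypothesis $\Theta^{*1}(\|V\|,x)>0$.

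\textbf{Step 3: blow-ups are stationary cones.} Consider the rescaled varifolds $V_{x,\rho}=(\eta_{x,\rho})_\# V$, where $\eta_{x,\rho}(y)=(y-x)/\rho$. Their masses on $\overline{B}_R(0)$ are bounded by $2\Theta^1 R$, and $\|\delta V_{x,\rho}\|(\overline{B}_R(0))\leq M\rho\cdot cR\to 0$. By weak-$*$ compactness, every sequence $\rho_i\to 0$ has a subsequence with $V_{x,\rho_i}\ws C$, where $C$ is stationary. The monotonicity identity passes to the limit with a vanishing exponential factor. Because $r^{-1}\|C\|(\overline{B}_r(0))=\Theta^1(\|V\|,x)$ is constant in $r$, the right-hand side of the identity must vanish, so $P^\perp y=0$ for $C$-a.e.\ $(y,P)$. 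Thus $C$ is a cone made of radial tangent lines.

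\textbf{Step 4: the cone is a single line.} This is the step I expect to be the main obstacle: showing that for $\|V\|$-a.e.\ $x$ every such cone is $\Theta^1(\|V\|,x)\,v(L)$ for a single line $L$. For $k=1$, a stationary cone with radial tangents is a finite union of rays $\sum_i m_i\,v(\R_+ e_i)$ satisfying the balancing condition $\sum_i m_ie_i=0$. I would first disintegrate $V=\|V\|\otimes\nu_y$ with $\nu_y$ a probability on $G(n,1)$. I would then choose $x$ to be a $\|V\|$-Lebesgue point both of the density $y\mapsto\Theta^1(\|V\|,y)$ and of $y\mapsto\nu_y$, the latter in the weak sense against a countable dense family of test functions. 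At such $x$, the Grassmann part of $C$ is $\nu_x$ at $\|C\|$-a.e.\ point. Each ray of $C$ carries only its own direction, so $\nu_x$ must be supported on the set of lines $\{\R e_i\}$, with weights proportional to the $m_i$. If the cone had two non-collinear rays, or unequal collinear pieces, then for a suitable direction the $\nu_x$-distribution of directions would differ between regions near different rays, contradicting the Lebesgue-point property. A cleaner alternative I would try first: use the density Lebesgue point together with the upper semicontinuity of density under varifold convergence. This gives $\Theta^1(\|C\|,y)\geq\Theta^1(\|V\|,x)$ at $\|C\|$-a.e.\ $y\neq 0$. Combined with $\|C\|(\overline{B}_r(0))=2\Theta^1(\|V\|,x)\,r$, this forces $\|C\|$ to be $\Theta^1(\|V\|,x)$ times $\Ha$ restricted to exactly two opposite rays, i.e.\ one line.

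\textbf{Step 5: rectifiability.} Step 4 yields, for $\|V\|$-a.e.\ $x$, a unique approximate tangent line with positive finite density. Federer's rectifiability criterion \cite[Thm.\ 3.3.13]{F} (or Marstrand--Mattila) then shows that $\|V\|$ is carried by a countably $\Ha$-rectifiable set with $\|V\|=\Theta^1(\|V\|,\cdot)\,\Ha\mres S$. The tangent computation identifies $\nu_y$ as the Dirac mass at $T_yS$. Finally, approximating $\Theta^1$ by countably valued functions gives the decomposition $V=\sum_i w_iV(E_i,1)$ with $(\Ha,1)$-rectifiable pieces, as in \cite[Sec.\ 3.5]{A72}.
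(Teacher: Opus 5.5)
The paper gives no proof of this statement. It is quoted from \cite[Sec.\ 5.5, Thm.\ 1]{A72}, and only its hypotheses are verified later, in the proof of \cref{rectifiability}. Your outline follows the standard Allard route: monotonicity, positive finite density, stationary radial tangent cones, a unique tangent line, then a tangent-plane criterion. Steps 1--3 and 5 are sound up to constants. The weak point is Step 4.

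\textbf{The opening claim of Step 4 is false.} A stationary $1$-cone with radial tangents has the form $\|C\|(\phi)=\int_{\mathcal S^{n-1}}\int_0^\infty\phi(t\omega)\e t\e\sigma(\omega)$ with $C=\|C\|\otimes\delta_{\R y}$. Here $\sigma$ is an arbitrary finite measure on $\mathcal S^{n-1}$ with $\int\omega\e\sigma(\omega)=0$, which is exactly what stationarity says. Nothing makes $\sigma$ atomic: the uniform $\sigma$ gives the diffuse stationary cone $c|y|^{1-n}\Lebn$.

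\textbf{Your Lebesgue-point argument is also inconsistent.} If $C=\|C\|\otimes\nu_x$ (which does hold at $\|V\|$-a.e.\ $x$), then $\nu_x$ cannot be a mixture ``with weights proportional to the $m_i$''. Radiality directly gives $\nu_x=\delta_{\R y}$ for $\|C\|$-a.e.\ $y$. Hence $\nu_x=\delta_L$ for a single line $L$ and $\|C\|(\R^n\setminus L)=0$, with no ray structure needed. Unequal opposite multiplicities are not excluded by any Lebesgue-point property, since both half-lines carry the same $L$. They are excluded by stationarity: testing $\delta C=0$ with $g=\varphi e$, $e\in L$, shows that $\|C\|$, as a measure on $L$, has vanishing distributional derivative. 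So $\|C\|=\Theta^1(\|V\|,x)\Ha\mres L$.

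\textbf{Your ``cleaner alternative'' works once two missing ingredients are supplied.}

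(a) Upper semicontinuity of the density along the blow-up needs first-variation control at the nearby points $y_i$ supplied by approximate continuity. For a $1$-varifold $W$ with $\|\delta W\|$ Radon, integrating the monotonicity identity gives $r^{-1}\|W\|(\overline B_r(y))\leq s^{-1}\|W\|(\overline B_s(y))+\|\delta W\|(\overline B_s(y))$ for all $0<r<s$. The error term in $k=1$ is controlled by the total first variation in the ball, with no ratio bound needed. Hence $\|\delta V_{x,\rho}\|(\overline B_R(0))\to0$ suffices, and this also makes Step 1 unnecessary.

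(b) The passage to one line uses $\Theta^1(\|C\|,t\omega)=\sigma(\{\omega\})$ for $t>0$. The density bound then makes $\sigma$ purely atomic with atoms $\geq\Theta^1(\|V\|,x)$. The total mass $2\Theta^1(\|V\|,x)$ allows at most two atoms. Only the balancing condition $\int\omega\e\sigma=0$ forces them to be antipodal and of equal mass. Without it, a single ray of multiplicity $2\Theta^1(\|V\|,x)$ would pass your test.
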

We will also need the following statement,which corresponds to \cite[Prop.\ 5.6]{KT17}, see \cite[Sec.\ 4.18]{B78}.
\begin{prop}
	\label{Prop5.6}
	For each $i\in\mathbb N$ let $T_i=\theta_i\Ha\mres S_i\in\rectn$ be compactly supported. Assume that $\sup_i|T_i|_\tau(\R^n)<\infty$. Define $V_i=v(\tau(|\theta_i|),S_i)\in\M_+(\G_1(\R^n))\subset\V_1(\R^n)$ for all $i\in\mathbb N$. Further, assume that
	\begin{equation*}
		\liminf_i\int\frac{|\Psi_{\varepsilon_i}\ast\delta_\tau T_i|^2}{\Psi_{\varepsilon_i}\ast|T_i|_\tau+\varepsilon_i}\e\Lebn<\infty
	\end{equation*} 
	for some sequence $(\varepsilon_i)\subset(0,1)$ with $\varepsilon_i\to 0$. Then there exists a subsequence $(V_{i_m})$ with $V_{i_m}\ws V\in\V_1(\R^n)$ for $m\to\infty$ and $V$ admits a generalized curvature vector field $h_V$\footnote{$h_V$ is $\| V\|$-measurable with $\delta V(g)=-\int g\cdot h_V\e\| V\|$ for all $g\in C_c^1(\R^n;\R^n)$.} satisfying
	\begin{equation}
		\label[ineq]{in1:GenMean}
		\int|h_V|^2\phi\e\| V\|\leq\liminf_{m}\int\frac{|\Psi_{\varepsilon_{i_m}}\ast\delta_\tau T_{i_m}|^2\phi}{\Psi_{\varepsilon_{i_m}}\ast|  T_{i_m}|_\tau+\varepsilon_{i_m}}\e\Lebn
	\end{equation}
	for all $\phi\in\bigcup_j\mathcal F_j^+$. In particular, vector field $h_V$ is square-integrable with respect to $\| V\|$.
\end{prop}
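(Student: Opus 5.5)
Following \cite[Prop.\ 5.6]{KT17} and \cite[Sec.\ 4.18]{B78}, I would work with the smoothed varifolds $\Psi_{\varepsilon_i}\ast V_i$ and pass to the limit in the first variation. Since $\|V_i\|=|T_i|_\tau$ has uniformly bounded total mass, and the supports sit in $\R^n\times G(n,1)$ with compact Grassmannian, Banach--Alaoglu gives a subsequence, not relabelled, with $V_i\ws V$. First I would pick the subsequence so that the quantities $I_i=\int|\Psi_{\varepsilon_i}\ast\delta_\tau T_i|^2/(\Psi_{\varepsilon_i}\ast|T_i|_\tau+\varepsilon_i)\e\Lebn$ converge to the $\liminf$. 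For \cref{in1:GenMean} with a fixed $\phi$, I would pass to a further diagonal subsequence over a countable dense subfamily of $\bigcup_j\mathcal F_j^+$ and extend to the whole family by approximation.

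The key identity is $\delta V_i(\Psi_{\varepsilon_i}\ast g)=\delta_\tau T_i(\Psi_{\varepsilon_i}\ast g)=\int g\cdot(\Psi_{\varepsilon_i}\ast\delta_\tau T_i)\e\Lebn$, recorded after \cref{Smoothed}. For $g\in C_c^1$, the term $\delta V_i(\Psi_{\varepsilon_i}\ast g)-\delta V_i(g)$ is bounded by $\|\Jm(\Psi_{\varepsilon_i}\ast g)-\Jm g\|_\infty\,|T_i|_\tau(\R^n)$, which tends to $0$. Weak-$*$ convergence gives $\delta V_i(g)\to\delta V(g)$ because $P:\Jm g(x)$ is continuous on $\G_1$. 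Hence $\delta V(g)=\lim_i\int g\cdot(\Psi_{\varepsilon_i}\ast\delta_\tau T_i)\e\Lebn$. By Cauchy--Schwarz with weight $\phi$,
\begin{equation*}
\left|\int g\cdot(\Psi_{\varepsilon_i}\ast\delta_\tau T_i)\e\Lebn\right|\leq\left(\int\frac{|\Psi_{\varepsilon_i}\ast\delta_\tau T_i|^2\phi}{\Psi_{\varepsilon_i}\ast|T_i|_\tau+\varepsilon_i}\e\Lebn\right)^{1/2}\left(\int\frac{|g|^2}{\phi}(\Psi_{\varepsilon_i}\ast|T_i|_\tau+\varepsilon_i)\e\Lebn\right)^{1/2}.
\end{equation*}
Here $\phi>0$ everywhere, since $\phi\in\mathcal F_j^+$ is either positive or $\equiv 0$, and the case $\phi\equiv0$ is trivial. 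The last factor tends to $(\int|g|^2\phi^{-1}\e\|V\|)^{1/2}$, because $\Psi_{\varepsilon_i}\ast|T_i|_\tau\ws\|V\|$ and the $\varepsilon_i$ term vanishes on the compact support of $g$.

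This gives $|\delta V(g)|\leq L_\phi^{1/2}\,\|g\phi^{-1/2}\|_{L^2(\|V\|)}$, where $L_\phi$ is the right side of \cref{in1:GenMean}. Replacing $g$ by $\phi g$, the functional $\phi g\mapsto\delta V(\phi g)$ extends boundedly on $L^2(\phi\|V\|)$. I expect the main obstacle to be the bookkeeping with the weight, because $\phi\notin C_c$. I would first handle $\phi\equiv1$, which lies in every $\mathcal F_j^+$. Then $\delta V$ extends to a bounded functional on $L^2(\|V\|;\R^n)$, and Riesz representation yields $h_V\in L^2(\|V\|)$ with $\delta V(g)=-\int g\cdot h_V\e\|V\|$ and $\int|h_V|^2\e\|V\|\leq\liminf I_i<\infty$. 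For general $\phi$, I would take the supremum over $g\in C_c^1$ of $-2\int g\cdot h_V\phi\e\|V\|-\int|g|^2\phi\e\|V\|$; this supremum equals $\int|h_V|^2\phi\e\|V\|$ by density of $C_c^1$ in $L^2(\phi\|V\|)$. Using the limit identity with test field $\phi g$, this expression is at most $L_\phi$, via $2ab\leq a^2+b^2$ and the same limiting argument. Densities and the passage $\phi g\in C_c^1$ are routine.
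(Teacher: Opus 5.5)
Your proof is correct, but the key step differs from the paper's. The paper follows \cite[Prop.\ 5.6]{KT17}. For $g\in C_c^\infty$ (so $g\in\mathcal F_{j_0}^n$) it uses \cref{Prop5.5} to write $\delta V(g)=-\lim_m\int h_{\tau,\varepsilon_{i_m}}(T_{i_m})\cdot g\,\e|T_{i_m}|_\tau$. It then applies Cauchy--Schwarz in $L^2(|T_{i_m}|_\tau)$ with weights $\phi$ and $\phi^{-1}$, and bounds $\int|h_{\tau,\varepsilon}|^2\phi\,\e|T|_\tau$ by \cref{in1:5.24}. Finally, for each fixed $\phi>0$, it invokes Hahn--Banach and Riesz in the weighted space $H_\phi$ with inner product $\int g_1\cdot g_2\,\phi^{-1}\e\|V\|$, setting $h_V=f\phi^{-1}$.

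You never use the approximate curvature fields $h_{\tau,\varepsilon}$. Instead you combine three facts: the exact identity $\delta_\tau T_i(\Psi_{\varepsilon_i}\ast G)=\int G\cdot(\Psi_{\varepsilon_i}\ast\delta_\tau T_i)\e\Lebn$, the uniform convergence $\Jm(\Psi_{\varepsilon_i}\ast G)\to\Jm G$, and the uniform mass bound. These give $\delta V(G)=\lim_i\int G\cdot(\Psi_{\varepsilon_i}\ast\delta_\tau T_i)\e\Lebn$ for all $G\in C_c^1$. After that, Cauchy--Schwarz (resp.\ $2ab\le a^2+b^2$) with the weight $\Psi_{\varepsilon_i}\ast|T_i|_\tau+\varepsilon_i$ does the rest.

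What your route buys:
\begin{itemize}
\item It is more elementary: it needs none of \cref{Prop5.3,Prop5.5,LongProof}, only bounded mass and $\varepsilon_i\to0$.
\item It builds $h_V$ once, by Riesz in $L^2(\|V\|)$ with $\phi\equiv1$.
\item The dual characterization $\int|h_V|^2\phi\,\e\|V\|=\sup_{g}\bigl(2\delta V(\phi g)-\int|g|^2\phi\,\e\|V\|\bigr)$ yields \cref{in1:GenMean} for every bounded nonnegative $C^1$ weight. It needs neither $\phi>0$ nor $\phi\in\mathcal F_j^+$.
\end{itemize}
The paper's route, in turn, reuses estimates it needs later anyway, for example for $E_2$ in the proof of \cref{MotionLaw}.

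Two side remarks. First, choosing the subsequence so that the quotients $I_i$ converge to their finite $\liminf$ \emph{before} applying Banach--Alaoglu is genuinely necessary for the right-hand side to be finite. You handle this point; the paper's write-up leaves it implicit. Second, your diagonal extraction over a countable dense family of weights is superfluous. For each fixed $\phi$, the quantity $2\int\phi g\cdot(\Psi_{\varepsilon_i}\ast\delta_\tau T_i)\e\Lebn-\int\phi|g|^2(\Psi_{\varepsilon_i}\ast|T_i|_\tau+\varepsilon_i)\e\Lebn$ converges and is bounded termwise by the weighted quotient. So the estimate already holds with $\liminf$ along the single chosen subsequence.
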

\begin{proof}
	We can restrict to a subsequence $(i_m)$ with $V_{i_m}\ws V\in\M_+(\G_1(\R^n))$ for $m\to\infty$ (Banach\textendash Alaoglu theorem).
	Take $\phi\in\bigcup_j\mathcal F_j^+$ with $\phi>0$ in $\R^n$.
	As in the proof of \cite[Prop.\:5.6]{KT17}, we define a Hilbert space by
	\begin{equation*}
		H_\phi=\left\{ g\in L_{loc}^2(\| V\|;\R^n)\:\middle|\:|g|_{H_\phi}<\infty \right\},\qquad\textup{where $|\cdot|_{H_\phi}$ induced by}\qquad (g_1,g_2)_{H_\phi}=\int g_1\cdot g_2\phi^{-1}\e\|V\|.
	\end{equation*}
	Note that $C_c^\infty(\R^n;\R^n)$ is dense in $H_\phi$. Take $g\in C_c^\infty(\R^n;\R^n)$. Then $g\in\mathcal F_{j_0}^n$ for some $j_0\in\mathbb N$. Using $V_{i_m}\ws V$ and applying \cref{Prop5.5} (eventually $j_0\leq 2^{-1}\varepsilon_{i_m}^{-1/6}$ and $\varepsilon_{i_m}$ sufficiently small) yields the analogue of \cite[(5.51)]{KT17},
	\begin{equation*}
		\delta V(g)=\lim_m\delta_\tau T_{i_m}(g)=-\lim_m\int h_{\tau,\varepsilon_{i_m}}(T_{i_m})\cdot g\e|T_{i_m}|_\tau.
	\end{equation*}
	Using the Cauchy\textendash Schwarz inequality and \cref{in1:5.24} we get (as in \cite[(5.52)]{KT17})
	\begin{equation*}
		\delta V(g)=-\lim_m\int h_{\tau,\varepsilon_{i_m}}(T_{i_m})\cdot g\e|T_{i_m}|_\tau\leq\left( \liminf_m\int\frac{|\Psi_{\varepsilon_{i_m}}\ast\delta_\tau T_{i_m}|^2\phi }{\Psi_{\varepsilon_{i_m}}\ast|T_{i_m}|_\tau+\varepsilon_{i_m}}\e\Lebn \right)^{1/2}|g|_{H_\phi},
	\end{equation*}
	which is bounded by assumption and $\phi\leq 1$.
	As in the end of the proof of \cite[Prop.\ 5.6]{KT17}, we can apply the Hahn\textendash Banach theorem yielding a continuous and linear functional $\delta V:H_\phi\to\R$ (extended from $C_c^\infty(\R^n;\R^n)$). The Riesz representation theorem then implies the existence of a unique vector field $f\in H_\phi$ with $\delta V=(f,\cdot)_{H_\phi}$. Then 
	\begin{equation*}
		h_V=f\phi^{-1}\qquad\textup{and}\qquad |f|_{H_\phi}^2\leq \liminf_m\int\frac{|\Psi_{\varepsilon_{i_m}}\ast\delta_\tau T_{i_m}|^2\phi }{\Psi_{\varepsilon_{i_m}}\ast|T_{i_m}|_\tau+\varepsilon_{i_m}}\e\Lebn.
	\end{equation*}
	The last inequality is equivalent to \cref{in1:GenMean}. For $\phi\equiv 0$ \cref{in1:GenMean} is trivially satisfied. For the last statement we can take $\phi\equiv 1$.
\end{proof}
\begin{proof}[Proof of \cref{rectifiability}]
	For a.e.\ $t\in\R_+$ we have $c_2=c_2(t)$ and a subsequence $\ell\mapsto j_{k_\ell}$ (depending on $t$) as in \cref{LowerBound}.	
	Now fix any such $t\in\R_+$.
	\Cref{Prop5.6} yields 
	\begin{equation*}
		V_t^{j_{k_\ell}}\ws V_t\in \M_+(\mathcal G_1(\R^n))
	\end{equation*}
	for $\ell\to\infty$ up to a subsequence (for which we use the same label) and the existence of $h_{V_t}\in L^2(\| V_t\|;\R^n)$ (the $L^2$ regularity follows from \cref{in1:GenMean} with $\phi\equiv 1$) such that $\delta V_t(g)=-\int g\cdot h_{V_t}\e\| V_t \|$ for all $g\in C_c^1(\R^n;\R^n)$. Since $\| V_t \|$ is compactly supported, we actually have $h_{V_t}\in L^1(\| V_t\|;\R^n)$, hence $\delta V_t$ is a vector-valued Radon measure (in particular, its total variation is a Radon measure). Therefore, it remains to show that
	\begin{equation*}
		\Theta^{*1}(\| V_t \|=\mu_t,x)>0
	\end{equation*}
	for $\| V_t \|$-a.e.\ $x\in\R^n$. Let $x\in\spt(\| V_t\|)$. First, we claim that for each $\ell\in\mathbb{N}$ there is $x_\ell\in X_t^{j_{k_{\ell}}}$ such that $x_\ell\to x$ (maybe after restricting to another subsequence). Assume that the converse is true, there are $\ell_0\in\mathbb N$ and $\delta\in(0,1)$ with 
	\begin{equation*}
		X_t^{j_{k_{\ell}}}\cap B_\delta(x)=\emptyset
	\end{equation*}
	for all $\ell\geq \ell_0$. Since $x\in \spt(\| V_t\|)$, $B_\delta(x)$ open, and $V_t^{j_{k_{\ell}}}\ws V_t$,
	\begin{multline*}
		0<\| V_t\|(B_\delta(x))\leq\liminf_\ell\|  V_t^{j_{k_{\ell}}}\|(B_\delta(x))\\
		=\liminf_\ell\|  V_t^{j_{k_{\ell}}}\|((S_t^{j_{k_{\ell}}}\setminus X_t^{j_{k_{\ell}}})\cap B_\delta(x))+\|  V_t^{j_{k_{\ell}}}\|(X_t^{j_{k_{\ell}}}\cap B_\delta(x))=0,
	\end{multline*}
	where the last equality follows from \cref{Lemma1}\footnote{We can assume that the subsequence is such that $-\Delta_{j_{k_{\ell}}}|T_t^{j_{k_{\ell}}}|_\tau(\R^n)\leq c(t)\Delta t_{j_{k_{\ell}}}$ (\cref{Prop6.4}). Further, the beginning of the proof of \cref{Prop6.1} gives $\Delta t_{j_{k_{\ell}}}\leq \varepsilon_{j_{k_{\ell}}}^{c_1}<j_{k_{\ell}}^{-6c_1}=j_{k_{\ell}}^{-6(3n+20)}$.} and $X_t^{j_{k_{\ell}}}\cap B_\delta(x)=\emptyset$ for $\ell\geq \ell_0$. This yields a contradiction. Now let $r\leq c_2(t)$ be arbitrary (recall \cref{LowerBound}). Since eventually $\overline B_{r/2}(x_\ell)\subset\overline B_r(x)$, we obtain
	\begin{equation*}
		\frac{\| V_t\|(\overline B_r(x)) }{r}\geq\limsup_{\ell}\frac{\| V_t^{j_{k_{\ell}}}\|(\overline B_r(x)) }{r}\geq \limsup_{\ell}\frac{\| V_t^{j_{k_{\ell}}}\|(\overline B_{r/2}(x_\ell)) }{r}\geq\frac{2\lambda}{11}
	\end{equation*}
	by $\| V_t^{j_{k_{\ell}}}\|\ws \| V_t \|$ and \cref{LowerBound}. Hence, we get 
	\begin{equation*}
		\Theta^{*1}(\| V_t \|,x)\geq \frac{\lambda}{11}
	\end{equation*}
	for all $x\in\spt(\| V_t \|)$, which shows the applicability of \cref{AllardRect}.
	By \cite[Sec.\ 3.5, Thm.\ 1]{A72} and \cite[Sec.\ 2.8, Thm.\ 5]{A72} we have
	\begin{equation}
		\label{9.22}
		\int\psi(x,P)\e V_t(x,P)=\int\psi(x,\textup{Tan}^1(\| V_t \|,x))\Theta^1(\| V_t \|,x)\e \Ha(x)=\int\psi(x,\textup{Tan}^1(\| V_t \|,x))\e \| V_t\|(x)
	\end{equation}
	for all $\psi\in C_c(\mathcal G_1(\R^n))$. 
	This indicates that the varifold $V_t$ is uniquely characterized by its weight measure $\| V_t \|=\mu_t$ (recall that $\mu_t$ was obtained without passing to a subsequence of $(j_k)$).
	\Cref{LowerBound} can actually be applied to any subsequence of $(j_k)$ yielding another subsequence $(j_{k_\ell})$ which can be further refined as in this proof.
	In particular, every subsequence of $(j_k)$ admits a subsequence  $(j_{k_\ell})$ such that $V_t$ equals the limit (in the weak-$*$ sense) of the corresponding induced varifolds.
	Therefore, the whole sequence $(V_t^{j_{k}})$ satisfies $V_t^{j_{k}}\ws V_t$.
	Finally, \cref{in1:timeIntmean} follows from the beginning of the proof of \cref{LowerBound}. Indeed, for a.e.\ $t\in\R_+$ there is a subsequence $N_t:\mathbb N\to \{ j_k\:|\: k\in\mathbb N \}$ with 
	\begin{equation*}
		\lim_\ell\int\frac{ |\Psi_{\varepsilon_{N_t(\ell)}}\ast\delta_\tau T_t^{N_t(\ell)}|^2 }{\Psi_{\varepsilon_{N_t(\ell)}}\ast|T_t^{N_t(\ell)}|_\tau +\varepsilon_{N_t(\ell)}}\e\mathcal L^n=	\liminf_k\int\frac{ |\Psi_{\varepsilon_{j_{k}}}\ast\delta_\tau T_t^{j_{k}}|^2 }{\Psi_{\varepsilon_{j_{k}}}\ast|T_t^{j_{k}}|_\tau +\varepsilon_{j_{k}}}\e\mathcal L^n
	\end{equation*}
	and such that \cref{in1:GenMean} is satisfied with $V=V_t$ and $m\mapsto i_m$ replaced by $\ell\mapsto N_t(\ell)$.
	This gives (using $\phi\equiv 1$ and recalling \cref{in1:6.19}).
	\begin{equation*}
		\int_{[0,R]}\int |h_{V_t}|^2\e\| V_t\|\e\Leb(t)\leq \int_{[0,R]}\liminf_\ell \int\frac{ |\Psi_{\varepsilon_{N_t(\ell)}}\ast\delta_\tau T_t^{N_t(\ell)}|^2 }{\Psi_{\varepsilon_{N_t(\ell)}}\ast|T_t^{N_t(\ell)}|_\tau +\varepsilon_{N_t(\ell)}}\e\mathcal L^n\e\Leb(t)<\infty
	\end{equation*}
	for all $R\in\R_+$.
\end{proof}
\section{Motion law of geometric flow $(V_t)$}
\label{MotionLawSection}
We are now ready to show a Brakke-type inequality for $(V_t)$.
We will follow the proof of \cite[Thm.\ 9.3]{KT17}, see also \cite[Sec.\ 4.29]{B78}.
Note that the $V_t$ cannot be expected to be integral due to the definition of $V_t^{j_k}$ in \cref{rectifiability} for a.e.\ $t\in\R_+$, which explains the presence of the term $P^\perp$. In particular, Brakke's perpendicularity theorem \cite[Sec.\ 5]{B78} is not applicable.
\begin{proof}[Proof of \cref{MotionLaw}]
	First, take $\phi\in C_c^\infty(\R^n;\R_+)$ independent of $t$. 
	We can assume $\phi<1$ in $\R^n$ by rescaling the inequality in the statement.
	Let $i\in\mathbb N$ such that $\hat\phi=\phi+1/i<1$.
	We have $\min\hat\phi\in\R_+^*$. Hence, there is $p\in\mathbb N$ with $\hat\phi\in\mathcal F_p^+$.
	Fix $t_1,t_2\in\R_+$ with $t_1<t_2$. Let $k$ sufficiently large such that $j_k>\max\{ p,t_2 \}$ and $\Dt_{j_k}<t_2-t_1$.
	There exist $N_1=N_1(k),N_2=N_2(k)\in\mathbb N$ with $t_1\in ( (N_1-2)\Dt_{j_k},(N_1-1)\Dt_{j_k} ]$ and $t_2\in ( (N_2-1)\Dt_{j_k},N_2\Dt_{j_k} ]$.
	By \cref{in1:6.5} we have (cf.\ \cite[(9.6)]{KT17})
	\begin{equation}
		\label[ineq]{in:9.6}
		|T^{j_k,\ell}|_\tau(\hat\phi)-|T^{j_k,\ell-1}|_\tau(\hat\phi)\leq\Dt_{j_k}\left( \delta_\tau(T^{j_k,\ell},\hat\phi)(h_{\tau,\varepsilon_{j_k}}(T^{j_k,\ell}))+\varepsilon_{j_k}^{1/8} \right)
	\end{equation}
	for all $\ell=1,\ldots,j_k2^{p_{j_k}}$. 
	Summation over $\ell=N_1,\ldots,N_2$ yields (see \cite[(9.7)]{KT17})
	\begin{equation}
		\label[ineq]{in1:9.7}
		|T^{j_k,N_2}|_\tau(\hat\phi)-|T^{j_k,N_1-1}|_\tau(\hat\phi)\leq\sum_{\ell=N_1}^{N_2}\Dt_{j_k}\left( \delta_\tau(T^{j_k,\ell},\hat\phi)(h_{\tau,\varepsilon_{j_k}}(T^{j_k,\ell}))+\varepsilon_{j_k}^{1/8} \right).
	\end{equation}
	Using the definition of $\hat\phi $ and \cref{in1:6.3}, we can estimate (\cite[(9.8)]{KT17})
	\begin{multline*}
		|T^{j_k,N_2}|_\tau(\hat\phi)-|T^{j_k,N_1-1}|_\tau(\hat\phi)\geq|T^{j_k,N_2}|_\tau(\phi)-|T^{j_k,N_1-1}|_\tau(\phi)-\frac1i|T^{j_k,N_1-1}|_\tau(\R^n)\\
		\geq |T^{j_k,N_2}|_\tau(\phi)-|T^{j_k,N_1-1}|_\tau(\phi)-\frac1i( |T_0|_\tau(\R^n)+\varepsilon_{j_k}^{1/8}(N_1-1)\Dt_{j_k} ),
	\end{multline*}
	hence
	\begin{multline}
		\limsup_k|T^{j_k,N_2}|_\tau(\hat\phi)-|T^{j_k,N_1-1}|_\tau(\hat\phi)\geq\limsup_k |T^{j_k,N_2}|_\tau(\phi)-|T^{j_k,N_1-1}|_\tau(\phi)-\frac1i |T_0|_\tau(\R^n)\\
		=\| V_{t_2} \|(\phi)-\| V_{t_1} \|(\phi)-\frac1i |T_0|_\tau(\R^n).\label[ineq]{in1:9.9}
	\end{multline}
	Now abbreviate $T=\theta\Ha\mres S=T^{j_k,\ell},h=h_{\tau,\varepsilon_{j_k}}(T^{j_k,\ell})$, and $\varepsilon=\varepsilon_{j_k}$.
	Recall \cref{TauVariation,phiWeightedTauVariation}\footnote{Note that $\delta_\tau T(\hat\phi h)=\int\hat\phi\textup{div}_S(h)\e|T|_\tau+\int h^\intercal P\nabla\hat\phi\e|T|_\tau=\delta_\tau(T,\hat\phi)(h)-\int\nabla\hat\phi^\intercal  h\e|T|_\tau+\int h^\intercal P \nabla\hat\phi\e |T|_\tau=\delta_\tau(T,\hat\phi)(h)-\int h^\intercal P^\perp \nabla\hat\phi\e|T|_\tau$, cf.\ calculation in \cite[(2.5)]{KT17}.},
	\begin{equation}
		\label{9.10}
		\delta_\tau(T,\hat\phi)(h)=\delta_\tau T(\hat\phi h)+\int h^\intercal P^\perp \nabla\hat\phi\e|T|_\tau,
	\end{equation}
	where $P=P(x)$ denotes the projection onto $T_xS$ for $\Ha$-a.e.\ $x\in S$.
	Applying the Cauchy\textendash Schwarz inequality to $\int h^\intercal P^\perp \nabla\hat\phi\e|T|_\tau$ yields (cf.\ \cite[(9.13)]{KT17})
	\begin{equation*}
		\int h^\intercal P^\perp \nabla\hat\phi\e|T|_\tau\leq \left( \int \frac{|\nabla\hat\phi|^2}{\hat\phi}\e|T|_\tau \right)^{1/2}\left( \int \hat\phi|h|^2\e|T|_\tau \right)^{1/2}.
	\end{equation*}
	By definition of $\hat\phi$, \cref{in1:6.27}, and $\hat\phi\in\mathcal F_p^+$ we get
	\begin{equation*}
		\frac{|\nabla\hat\phi|^2}{\hat\phi}\leq \frac{|\nabla\phi|^2}{\phi}\leq 2||\nabla^2\phi|_2|_\infty.
	\end{equation*}
	This gives
	\begin{equation*}
		\left( \int \frac{|\nabla\hat\phi|^2}{\hat\phi}\e|T|_\tau \right)^{1/2}\leq \sqrt{2}||\nabla^2\phi|_2|_\infty^{1/2}|T|_\tau(\R^n)^{1/2}.
	\end{equation*}
	Abbreviating $\hat I=\int\frac{\hat\phi|\Psi_{\varepsilon}\ast\delta_\tau T|^2}{\Psi_{\varepsilon}\ast|T|_\tau+\varepsilon}\e\Lebn$, we obtain (using \cref{in1:5.24})
	\begin{equation*}
		\int h^\intercal P^\perp \nabla\hat\phi\e|T|_\tau\leq c(\nabla^2\phi)|T|_\tau(\R^n)^{1/2}\left( (1+\varepsilon^{1/4})\hat I+\varepsilon^{1/4} \right)^{1/2}.
	\end{equation*}
	\Cref{in1:5.23} yields $|\delta_\tau T(\hat\phi h)+\hat I|\leq \varepsilon^{1/4}(\hat I+1)$, hence (revoking the abbreviations $T=T^{j_k,\ell},h=h_{\tau,\varepsilon_{j_k}}(T^{j_k,\ell})$, and $\varepsilon=\varepsilon_{j_k}$)
	\begin{equation*}
		\delta_\tau(T^{j_k,\ell},\hat\phi)(h_{\tau,\varepsilon_{j_k}}(T^{j_k,\ell}))\leq \varepsilon_{j_k}^{1/4}(\hat I+1)-\hat I+c(\nabla^2\phi)|T^{j_k,\ell}|_\tau(\R^n)\left( (1+\varepsilon_{j_k}^{1/4})\hat I+\varepsilon_{j_k}^{1/4} \right)^{1/2}.
	\end{equation*}
	Now it is not difficult to see that this implies that $\sup_{\ell=N_1-1,\ldots,N_2+1}\delta_\tau(T^{j_k,\ell},\hat\phi)(h_{\tau,\varepsilon_{j_k}}(T^{j_k,\ell}))$ is bounded by some constant $A=A(|T_0|_\tau(\R^n),t_2,\nabla^2\phi)\in\R_+$ (using also \cref{in1:6.3,in1:6.7}), see \cite[(9.14)]{KT17}\footnote{Indeed, we have $ \delta_\tau(T^{j_k,\ell},\hat\phi)(h_{\tau,\varepsilon_{j_k}}(T^{j_k,\ell}))\leq \varepsilon_{j_k}^{1/4}(\hat I+1)-\hat I+c(|T_0|_\tau(\R^n),\nabla^2\phi)( \hat I^{1/2}+\varepsilon_{j_k}^{1/8}\hat I^{1/2}+\varepsilon_{j_k}^{1/8} )$ and the functions (depending on $x$) $\varepsilon_{j_k}^{1/4}x-x/3,c(|T_0|_\tau(\R^n),\nabla^2\phi)x^{1/2}-x/3,$ and $c(|T_0|_\tau(\R^n),\nabla^2\phi)\varepsilon_{j_k}^{1/8}x^{1/2}-x/3$ are bounded on any compact interval in $\R_+$ for $k$ sufficiently large.}. This gives (see \cite[(9.15)]{KT17})
	\begin{multline}
		\limsup_k \sum_{\ell=N_1}^{N_2}\Dt_{j_k} \delta_\tau(T^{j_k,\ell},\hat\phi)(h_{\tau,\varepsilon_{j_k}}(T^{j_k,\ell}))\leq\limsup_k\int_{[t_1,t_2+\Dt_{j_k}]}  \delta_\tau(T_t^{j_k},\hat\phi)(h_{\tau,\varepsilon_{j_k}}(T_t^{j_k}))\e\Leb(t)\\
		=\limsup_k\int_{[t_1,t_2]}  \delta_\tau(T_t^{j_k},\hat\phi)(h_{\tau,\varepsilon_{j_k}}(T_t^{j_k}))\e\Leb(t)\leq \int_{[t_1,t_2]}  \limsup_k\delta_\tau(T_t^{j_k},\hat\phi)(h_{\tau,\varepsilon_{j_k}}(T_t^{j_k}))\e\Leb(t),\label[ineq]{in1:9.15}
	\end{multline}
	where (reversed) Fatou's lemma and the bound $A$ were used in the last inequality. Next, we use the same calculations as in \cite[(9.16) \& (9.17)]{KT17} applied to the integrand in the latter integral. Fix any $t\in[t_1,t_2]\setminus \mathcal N$, where $\Leb$-zero set $\mathcal N\subset [t_1,t_2]$ is chosen such that the subsequent conclusions hold. Let $(j_{k_m})\subset (j_k)$ be a subsequence such that $\lim_m\delta_\tau(T_t^{j_{k_m}},\hat\phi)(h_{\tau,\varepsilon_{j_{k_m}}}(T_t^{j_{k_m}}))=\limsup_k\delta_\tau(T_t^{j_k},\hat\phi)(h_{\tau,\varepsilon_{j_k}}(T_t^{j_k}))$. By \cref{9.10} we have
	\begin{multline*}
		\lim_m -\delta_\tau T_t^{j_{k_m}}(\hat\phi h_{\tau,\varepsilon_{j_{k_m}}}(T_t^{j_{k_m}}))-\int h_{\tau,\varepsilon_{j_{k_m}}}(T_t^{j_{k_m}})^\intercal (P_t^{j_{k_m}})^\perp\nabla\hat\phi \e|T_t^{j_{k_m}}|_\tau\\
		=\liminf_k-\delta_\tau T_t^{j_k}(\hat\phi h_{\tau,\varepsilon_{j_k}}(T_t^{j_k}))-\int h_{\tau,\varepsilon_{j_k}}(T_t^{j_k})^\intercal (P_t^{j_{k_m}})^\perp\nabla\hat\phi \e|T_t^{j_k}|_\tau.
	\end{multline*}
	Again, by the two estimates which were previously used to bound \cref{9.10}, the right-hand side is bounded from above by $\liminf_k 2\hat I_t^{j_k}+A$, and the left-hand side can be estimated from below by $\limsup_m \hat I_t^{j_{k_m}}-A$\footnote{Formally, $-\delta_\tau-\int=-(\delta_\tau+\hat I+\int)+\hat I\geq -A+\hat I$.}. This gives
	\begin{equation*}
		\limsup_m \hat I_t^{j_{k_m}}\leq \liminf_k 2\hat I_t^{j_k}+2A.
	\end{equation*}
	Define $f(t)=\liminf_k 2\hat I_t^{j_k}+2A$. Note that $f(t)<\infty$ (\cref{Prop6.4} and Fatou's lemma). Recall that $\hat\phi\geq 1/i$, thus (cf.\ \cite[(9.19)]{KT17})
	\begin{equation}
		\label[ineq]{in1:9.19}
		\limsup_m\int\frac{|\Psi_{\varepsilon_{j_{k_m}}}\ast\delta_\tau T_t^{j_{k_m}}|^2}{\Psi_{\varepsilon_{j_{k_m}}}\ast|T_t^{j_{k_m}}|_\tau+\varepsilon_{j_{k_m}}}\e\Lebn \leq \limsup_m i\hat I_t^{j_{k_m}}\leq if(t)<\infty.
	\end{equation}
	By $|\delta_\tau T(\hat\phi h)+\hat I|\leq \varepsilon^{1/4}(\hat I+1)$, the above, $f(t)<\infty$, and \cref{Prop5.6} we have
	\begin{equation*}
		\limsup_m\delta_\tau T_t^{j_{k_m}}(\hat\phi h_{\tau,\varepsilon_{j_{k_m}}}(T_t^{j_{k_m}}))\leq \limsup_m -\hat I_t^{j_{k_m}}= -\liminf_m \hat I_t^{j_{k_m}}\leq-\int |h_{ V_t}|^2\hat\phi\e\|  V_t \|.
	\end{equation*}
	Using \cref{9.10} this gives (see \cite[(9.21)]{KT17})
	\begin{multline}
		\limsup_k\delta_\tau(T_t^{j_k},\hat\phi)(h_{\tau,\varepsilon_{j_k}}(T_t^{j_k}))=\lim_m\delta_\tau(T_t^{j_{k_m}},\hat\phi)(h_{\tau,\varepsilon_{j_{k_m}}}(T_t^{j_{k_m}}))\\
		\leq-\int |h_{ V_t}|^2\hat\phi\e\|  V_t \|+\limsup_m\int h_{\tau,\varepsilon_{j_{k_m}}}(T_t^{j_{k_m}})^\intercal (P_t^{j_{k_m}})^\perp\nabla\hat\phi \e|T_t^{j_{k_m}}|_\tau.\label[ineq]{in1:9.21}
	\end{multline}
	Let $\varepsilon>0$ be arbitrary.
	Recall that (\cref{9.22})
	\begin{equation}
		\label{9.22recall}
		\int\psi(x,P)\e V_t(x,P)=\int\psi(x,\textup{Tan}^1(\| V_t \|,x))\Theta^1(\| V_t \|,x)\e \Ha(x)=\int\psi(x,\textup{Tan}^1(\| V_t \|,x))\e \| V_t\|(x)
	\end{equation}
	for all $\psi\in C_c(\mathcal G_1(\R^n))$. 
	By density there is $g\in C_c^\infty(\R^n;\R^n)$ with
	\begin{equation}
		\label[ineq]{in1:9.23}
		\int| \textup{Tan}^1(\| V_t\|,x)^\perp\nabla\hat\phi(x)-g(x) |^2\e\| V_t\|(x)<\varepsilon^2.
	\end{equation}
	Write \cite[(9.24)]{KT17}
	\begin{multline}
		\label{9.24}
		\int h_{\tau,\varepsilon_{j_{k_m}}}(T_t^{j_{k_m}})^\intercal (P_t^{j_{k_m}})^\perp\nabla\hat\phi \e|T_t^{j_{k_m}}|_\tau=\underbrace{\int ((P_t^{j_{k_m}})^\perp\nabla\hat\phi -g)\cdot h_{\tau,\varepsilon_{j_{k_m}}}(T_t^{j_{k_m}})\e|T_t^{j_{k_m}}|_\tau}_{E_1}\\
		+\underbrace{\int g\cdot h_{\tau,\varepsilon_{j_{k_m}}}(T_t^{j_{k_m}})\e|T_t^{j_{k_m}}|_\tau+\delta_\tau T_t^{j_{k_m}}(g)}_{E_2}+\underbrace{-\delta_\tau T_t^{j_{k_m}}(g)+\delta V_t(g)}_{E_3}\\
		+\underbrace{\int h_{V_t}\cdot(g-\textup{Tan}^1(\| V_t\|,\cdot)^\perp\nabla\hat\phi)\e\| V_t\|}_{E_4}+\int h_{V_t}(x)\cdot (P^\perp\nabla\hat\phi(x))\e V_t(x,P).
	\end{multline}
	Using the Cauch\textendash Schwarz inequality together with \cref{9.22recall,in1:9.23,in1:9.19,in1:GenMean}, we get \cite[(9.27)]{KT17}
	\begin{equation}
		\label[ineq]{in1:9.27}
		|E_1|\leq \left(  \int |(P_t^{j_{k_m}})^\perp\nabla\hat\phi -g|^2\e|T_t^{j_{k_m}}|_\tau\right)^{1/2}\left( \int |h_{\tau,\varepsilon_{j_{k_m}}}(T_t^{j_{k_m}})|^2\e|T_t^{j_{k_m}}|_\tau \right)^{1/2}\leq\varepsilon (if(t))^{1/2}.
	\end{equation}
	Application of \cref{Prop5.5,in1:9.19} yields
	\begin{equation}
		\label{9.28}
		\lim_m|E_2|=0.
	\end{equation}
	Property $v(\tau(|\theta_t^{j_{k_m}}|),S_t^{j_{k_m}})\ws V_t$ gives
	\begin{equation}
		\label{9.29}
		\lim_m|E_3|=0.
	\end{equation}
	Again, invoking the Cauchy\textendash Schwarz inequality together with \cref{in1:GenMean,in1:9.19,in1:9.23}, we obtain \cite[(9.30)]{KT17}
	\begin{multline}
		\label[ineq]{in1:9.30}
		|E_4|\leq \int |h_{V_t}||g-\textup{Tan}^1(\| V_t\|,\cdot)^\perp\nabla\hat\phi|\e\| V_t\|\leq \left( \int |h_{V_t}|^2\e\| V_t\| \right)^{1/2}\left( \int |g-\textup{Tan}^1(\| V_t\|,\cdot)^\perp\nabla\hat\phi|^2\e\| V_t\| \right)^{1/2}\\
		\leq\varepsilon (if(t))^{1/2}.
	\end{multline}
	In summary, \cref{in1:9.27}, \cref{9.28,9.29}, and \cref{in1:9.30} give (letting $\varepsilon\to 0$) \cite[(9.32)]{KT17}
	\begin{equation}
		\label[ineq]{in1:9.32}
		\limsup_m \int h_{\tau,\varepsilon_{j_{k_m}}}(T_t^{j_{k_m}})^\intercal (P_t^{j_{k_m}})^\perp\nabla\hat\phi \e|T_t^{j_{k_m}}|_\tau\leq \int h_{V_t}(x)\cdot (P^\perp\nabla\phi(x))\e V_t(x,P),
	\end{equation}
	where we also used $\nabla\hat\phi=\nabla\phi$.
	Using \cref{in1:9.32} in \cref{in1:9.21} and $\phi\leq\hat \phi$ gives \cite[(9.33)]{KT17}
	\begin{equation}
		\limsup_k\delta_\tau(T_t^{j_k},\hat\phi)(h_{\tau,\varepsilon_{j_k}}(T_t^{j_k}))\leq-\int |h_{ V_t}|^2\phi\e\|  V_t \|+\int h_{V_t}(x)\cdot (P^\perp\nabla\phi(x))\e V_t(x,P).\label[ineq]{in1:9.33}
	\end{equation}
	\Cref{in1:9.9,in1:9.7,in1:9.15,in1:9.33} imply
	\begin{multline*}
		\| V_{t_2} \|(\phi)-\| V_{t_1} \|(\phi)-\frac1i |T_0|_\tau(\R^n)\leq\limsup_k |T^{j_k,N_2}|_\tau(\phi)-|T^{j_k,N_1-1}|_\tau(\phi)-\frac{1}{i}|T_0|_\tau(\R^n)\\
		\leq \limsup_k\sum_{\ell=N_1}^{N_2}\Delta t_{j_k}(\delta_\tau(T^{j_k,\ell},\hat\phi)(h_{\tau,\varepsilon_{j_k}}(T^{j_k,\ell}))+\varepsilon_{j_k}^{1/8})\leq\int_{[t_1,t_2]}\limsup_k \delta_\tau(T_t^{j_k},\hat\phi)(h_{\tau,\varepsilon_{j_k}}(T_t^{j_k}))\e\Leb(t)\\
		\leq\int_{[t_1,t_2]} -\int |h_{V_t}|^2\phi\e\| V_t\|+\int h_{V_t}(x)\cdot (P^\perp\nabla\phi(x))\e V_t(x,P)\e\Leb(t). 
	\end{multline*}
	Letting $i\to\infty$, we obtain
	\begin{equation*}
		\| V_{t_2} \|(\phi)-\| V_{t_1} \|(\phi) \leq-\int_{[t_1,t_2]}\int |h_{V_t}|^2\phi\e\| V_t\|\e\Leb(t)+\int_{[t_1,t_2]}\int h_{V_t}(x)\cdot (P^\perp\nabla\phi(x))\e V_t(x,P)\e\Leb(t),
	\end{equation*}
	which shows the statement for time-independent $\phi\in C_c^\infty(\R^n;\R_+)$.
	Now assume that $\phi=\phi(x,t)\in C_c^\infty(\R^n\times\R_+;\R_+)$.
	Again, we can assume that $\phi\in(0,1)$ and define $\tilde\phi=\phi+1/i<1$ for sufficiently large $i\in\mathbb N$.
	Then, as before, there is $q\in\mathbb N $ such that $\tilde\phi\in\mathcal F_q^+$ for all $t\in\R_+$. 
	We may choose $k$ sufficiently large such that $j_k>\max\{ q,t_2 \}$.
	Application of \cref{in1:6.5} gives (cf.\ \cref{in:9.6})
	\begin{multline*}
		|T^{j_k,\ell}|_\tau(\tilde\phi(.,\ell\Delta t_{j_k}))-|T^{j_k,\ell-1}|_\tau(\tilde\phi(.,(\ell-1)\Delta t_{j_k}))\\
		=|T^{j_k,\ell}|_\tau(\tilde\phi(.,\ell\Delta t_{j_k}))-|T^{j_k,\ell-1}|_\tau(\tilde\phi(.,\ell\Delta t_{j_k}))+|T^{j_k,\ell-1}|_\tau(\tilde\phi(.,\ell\Delta t_{j_k}))-|T^{j_k,\ell-1}|_\tau(\tilde\phi(.,(\ell-1)\Delta t_{j_k}))\\
		\leq \Delta t_{j_k}(\delta_\tau (T^{j_k,\ell},\tilde\phi(.,\ell\Delta t_{j_k}))(h_{\tau,\varepsilon_{j_k}}(T^{j_k,\ell}))+\varepsilon_{j_k}^{1/8})+|T^{j_k,\ell-1}|_\tau(\phi(.,\ell\Delta t_{j_k})-\phi(.,(\ell-1)\Delta t_{j_k}))
	\end{multline*}
	for all $\ell=1,\ldots,j_k2^{p_{j_k}}$.
	As in \cref{in1:9.7}, we take the telescope sum,
	\begin{multline*}
		|T^{j_k,N_2}|_\tau(\tilde\phi(.,N_2\Delta t_{j_k}))-|T^{j_k,N_1-1}|_\tau(\tilde\phi(.,(N_1-1)\Delta t_{j_k}))\\
		\leq \sum_{\ell=N_1}^{N_2}\Delta t_{j_k}(\delta_\tau (T^{j_k,\ell},\tilde\phi(.,\ell\Delta t_{j_k}))(h_{\tau,\varepsilon_{j_k}}(T^{j_k,\ell}))+\varepsilon_{j_k}^{1/8})+\sum_{r=N_1}^{N_2}|T^{j_k,r-1}|_\tau(\phi(.,r\Delta t_{j_k})-\phi(.,(r-1)\Delta t_{j_k})).
	\end{multline*}
	Applying $\limsup_k$ to the last term yields (applying the calculation in \cite[(9.36)]{KT17})
	\begin{equation*}
		\limsup_k \sum_{r=N_1}^{N_2}|T^{j_k,r-1}|_\tau(\phi(.,r\Delta t_{j_k})-\phi(.,(r-1)\Delta t_{j_k}))\leq\int_{[t_1,t_2]}\| V_t\|(\partial_t\phi(.,t))\e\Leb(t).
	\end{equation*}
	Next, for $\ell=0,1,\ldots,j_k2^{p_{j_k}}$ and $t\in[0,j_k]$, we set
	\begin{equation*}
		\tilde\phi_t^{j_k}=\tilde\phi(.,\ell\Delta t_{j_k})\qquad\textup{if}\qquad t\in( (\ell-1)\Delta t_{j_k},\ell\Delta t_{j_k} ].
	\end{equation*}
	Then \cref{in1:9.15} becomes
	\begin{equation*}
		\limsup_k\sum_{\ell=N_1}^{N_2}\Delta t_{j_k}(\delta_\tau (T^{j_k,\ell},\tilde\phi(.\ell\Delta t_{j_k}))(h_{\tau,\varepsilon_{j_k}}(T^{j_k,\ell}))+\varepsilon_{j_k}^{1/8})\leq\int_{[t_1,t_2]}\limsup_k \delta_\tau (T_t^{j_k},\tilde\phi_t^{j_k})(h_{\tau,\varepsilon_{j_k}}(T_t^{j_k}))\e\Leb(t).
	\end{equation*}
	Exactly the same argument as on \cite[p.\ 129]{KT17} finishes the proof.
\end{proof}
 \section*{Acknowledgements}
J.L.’s research was supported by the JSPS Postdoctoral Fellowship for Research in Japan and the KAKENHI Grant-in-Aid for Scientific Research (Grant Number JP24KF0215) awarded by the Japan Society for the Promotion of Science. Y.T.’s was partially supported by the KAKENHI Grant-in-Aid for Scientific Research (Grant Number 23H00085) awarded by the Japan Society for the Promotion of Science.

\printbibliography
\end{document}